\documentclass[10pt]{article}
\usepackage{authblk}
\usepackage[colorlinks,citecolor=blue,linkcolor=blue,urlcolor=blue,filecolor=blue,breaklinks]{hyperref}
\usepackage{footnotebackref}

\usepackage[utf8]{inputenc}
\usepackage{amssymb}
\usepackage{amsfonts}
\usepackage{amsmath}
\usepackage{amsthm}
\usepackage{mathtools}
\usepackage{mathrsfs}
\usepackage{cleveref}
\usepackage{relsize}
\usepackage{tikz-cd}
\usepackage{geometry}
\geometry{a4paper}
\usepackage{graphicx}
\usepackage{booktabs}
\usepackage{array}
\usepackage{paralist}
\usepackage{verbatim}
\usepackage{subfig}
\usepackage{epsfig}
\usepackage{changepage}

\usepackage{fancyhdr}
\pagestyle{fancy}

\lhead{}\chead{}\rhead{}
\lfoot{}\cfoot{\thepage}\rfoot{}

\usepackage{sectsty}
\allsectionsfont{\sffamily\mdseries\upshape}
\usepackage[nottoc,notlof,notlot]{tocbibind}
\usepackage[titles,subfigure]{tocloft}

\definecolor{lightblue}{rgb}{0.8,0.8,1}

\setlength{\cftbeforepartskip}{0ex}
\setlength{\cftbeforesecskip}{0ex}
\setcounter{tocdepth}{1}

\theoremstyle{plain}
\newtheorem{theorem}{Theorem}
\newtheorem{atheorem}{Theorem}
\newtheorem{aproposition}[atheorem]{Proposition}

\newtheorem{corollary}[theorem]{Corollary}

\newtheorem{lemma}[theorem]{Lemma}

\newtheorem{proposition}[theorem]{Proposition}

\theoremstyle{definition}
\newtheorem{definition}[theorem]{Definition}
\newtheorem{remark}[theorem]{Remark}
\newtheorem{observation}[theorem]{Observation}
\newtheorem{notation}[theorem]{Notation}

\newtheoremstyle{named}%
    {}{}{\itshape}{}{\bfseries}{.}{.5em}{\thmnote{#3}}
\theoremstyle{named}

\newcommand{\R}{\mathbb{R}}
\newcommand{\Z}{\mathbb{Z}}
\newcommand{\Q}{\mathbb{Q}}
\newcommand{\C}{\mathbb{C}}

\newcommand{\id}{\mathrm{Id}}
\renewenvironment{proof}[1][Proof]{\noindent\textbf{#1.} }{\ \rule{0.5em}{0.5em}\par\addvspace{\baselineskip}}
\newenvironment{proofnobox}[1][Proof]{\noindent\textbf{#1.} }{}
\newcommand{\Heis}{\mathcal{H}}
\newcommand{\Heisr}{\mathcal{H}_{\mathbb{R}}}
\newcommand{\Diff}{\mathrm{Diff}}
\newcommand{\cL}{\mathcal{L}}
\newcommand{\Hom}{\mathrm{Hom}}
\newcommand{\bimod}[2]{\ensuremath{{}_{#1}\mathrm{Mod}_{#2}}}
\newcommand{\hlf}{\ensuremath{\mathrm{hlf}}}
\newcommand{\actiongroupoidleft}{\ensuremath{\mathbin{\setminus\!\!\setminus}}}
\newcommand{\actiongroupoidright}{\ensuremath{\mathbin{/\!\!/}}}

\renewcommand{\geq}{\geqslant}
\renewcommand{\leq}{\leqslant}

\definecolor{cycleBM}{RGB}{0,150,0}
\definecolor{cycleD}{RGB}{255,0,0}

\usepackage[breakable]{tcolorbox}

\usepackage{xcolor} 
\usepackage{enumerate} 
\usepackage{textcomp} 
\AtBeginDocument{%
}
\usepackage{upquote} 

\usepackage{iftex}
\ifPDFTeX
    \usepackage[T1]{fontenc}
    \IfFileExists{alphabeta.sty}{
          \usepackage{alphabeta}
      }{
          \usepackage[mathletters]{ucs}
          \usepackage[utf8x]{inputenc}
      }
\else
    \usepackage{fontspec}
    \usepackage{unicode-math}
\fi

\usepackage{fancyvrb} 


\DefineVerbatimEnvironment{Highlighting}{Verbatim}{commandchars=\\\{\}}




\let\Oldtex\TeX
\let\Oldlatex\LaTeX
\renewcommand{\TeX}{\textrm{\Oldtex}}
\renewcommand{\LaTeX}{\textrm{\Oldlatex}}

\makeatletter
\def\PY@reset{\let\PY@it=\relax \let\PY@bf=\relax%
    \let\PY@ul=\relax \let\PY@tc=\relax%
    \let\PY@bc=\relax \let\PY@ff=\relax}
\def\PY@tok#1{\csname PY@tok@#1\endcsname}
\def\PY@toks#1+{\ifx\relax#1\empty\else%
    \PY@tok{#1}\expandafter\PY@toks\fi}
\def\PY@do#1{\PY@bc{\PY@tc{\PY@ul{%
    \PY@it{\PY@bf{\PY@ff{#1}}}}}}}
\def\PY#1#2{\PY@reset\PY@toks#1+\relax+\PY@do{#2}}

\@namedef{PY@tok@w}{\def\PY@tc##1{\textcolor[rgb]{0.73,0.73,0.73}{##1}}}
\@namedef{PY@tok@c}{\let\PY@it=\textit\def\PY@tc##1{\textcolor[rgb]{0.24,0.48,0.48}{##1}}}
\@namedef{PY@tok@cp}{\def\PY@tc##1{\textcolor[rgb]{0.61,0.40,0.00}{##1}}}
\@namedef{PY@tok@k}{\let\PY@bf=\textbf\def\PY@tc##1{\textcolor[rgb]{0.00,0.50,0.00}{##1}}}
\@namedef{PY@tok@kp}{\def\PY@tc##1{\textcolor[rgb]{0.00,0.50,0.00}{##1}}}
\@namedef{PY@tok@kt}{\def\PY@tc##1{\textcolor[rgb]{0.69,0.00,0.25}{##1}}}
\@namedef{PY@tok@o}{\def\PY@tc##1{\textcolor[rgb]{0.40,0.40,0.40}{##1}}}
\@namedef{PY@tok@ow}{\let\PY@bf=\textbf\def\PY@tc##1{\textcolor[rgb]{0.67,0.13,1.00}{##1}}}
\@namedef{PY@tok@nb}{\def\PY@tc##1{\textcolor[rgb]{0.00,0.50,0.00}{##1}}}
\@namedef{PY@tok@nf}{\def\PY@tc##1{\textcolor[rgb]{0.00,0.00,1.00}{##1}}}
\@namedef{PY@tok@nc}{\let\PY@bf=\textbf\def\PY@tc##1{\textcolor[rgb]{0.00,0.00,1.00}{##1}}}
\@namedef{PY@tok@nn}{\let\PY@bf=\textbf\def\PY@tc##1{\textcolor[rgb]{0.00,0.00,1.00}{##1}}}
\@namedef{PY@tok@ne}{\let\PY@bf=\textbf\def\PY@tc##1{\textcolor[rgb]{0.80,0.25,0.22}{##1}}}
\@namedef{PY@tok@nv}{\def\PY@tc##1{\textcolor[rgb]{0.10,0.09,0.49}{##1}}}
\@namedef{PY@tok@no}{\def\PY@tc##1{\textcolor[rgb]{0.53,0.00,0.00}{##1}}}
\@namedef{PY@tok@nl}{\def\PY@tc##1{\textcolor[rgb]{0.46,0.46,0.00}{##1}}}
\@namedef{PY@tok@ni}{\let\PY@bf=\textbf\def\PY@tc##1{\textcolor[rgb]{0.44,0.44,0.44}{##1}}}
\@namedef{PY@tok@na}{\def\PY@tc##1{\textcolor[rgb]{0.41,0.47,0.13}{##1}}}
\@namedef{PY@tok@nt}{\let\PY@bf=\textbf\def\PY@tc##1{\textcolor[rgb]{0.00,0.50,0.00}{##1}}}
\@namedef{PY@tok@nd}{\def\PY@tc##1{\textcolor[rgb]{0.67,0.13,1.00}{##1}}}
\@namedef{PY@tok@s}{\def\PY@tc##1{\textcolor[rgb]{0.73,0.13,0.13}{##1}}}
\@namedef{PY@tok@sd}{\let\PY@it=\textit\def\PY@tc##1{\textcolor[rgb]{0.73,0.13,0.13}{##1}}}
\@namedef{PY@tok@si}{\let\PY@bf=\textbf\def\PY@tc##1{\textcolor[rgb]{0.64,0.35,0.47}{##1}}}
\@namedef{PY@tok@se}{\let\PY@bf=\textbf\def\PY@tc##1{\textcolor[rgb]{0.67,0.36,0.12}{##1}}}
\@namedef{PY@tok@sr}{\def\PY@tc##1{\textcolor[rgb]{0.64,0.35,0.47}{##1}}}
\@namedef{PY@tok@ss}{\def\PY@tc##1{\textcolor[rgb]{0.10,0.09,0.49}{##1}}}
\@namedef{PY@tok@sx}{\def\PY@tc##1{\textcolor[rgb]{0.00,0.50,0.00}{##1}}}
\@namedef{PY@tok@m}{\def\PY@tc##1{\textcolor[rgb]{0.40,0.40,0.40}{##1}}}
\@namedef{PY@tok@gh}{\let\PY@bf=\textbf\def\PY@tc##1{\textcolor[rgb]{0.00,0.00,0.50}{##1}}}
\@namedef{PY@tok@gu}{\let\PY@bf=\textbf\def\PY@tc##1{\textcolor[rgb]{0.50,0.00,0.50}{##1}}}
\@namedef{PY@tok@gd}{\def\PY@tc##1{\textcolor[rgb]{0.63,0.00,0.00}{##1}}}
\@namedef{PY@tok@gi}{\def\PY@tc##1{\textcolor[rgb]{0.00,0.52,0.00}{##1}}}
\@namedef{PY@tok@gr}{\def\PY@tc##1{\textcolor[rgb]{0.89,0.00,0.00}{##1}}}
\@namedef{PY@tok@ge}{\let\PY@it=\textit}
\@namedef{PY@tok@gs}{\let\PY@bf=\textbf}
\@namedef{PY@tok@gp}{\let\PY@bf=\textbf\def\PY@tc##1{\textcolor[rgb]{0.00,0.00,0.50}{##1}}}
\@namedef{PY@tok@go}{\def\PY@tc##1{\textcolor[rgb]{0.44,0.44,0.44}{##1}}}
\@namedef{PY@tok@gt}{\def\PY@tc##1{\textcolor[rgb]{0.00,0.27,0.87}{##1}}}
\@namedef{PY@tok@err}{\def\PY@bc##1{{\setlength{\fboxsep}{\string -\fboxrule}\fcolorbox[rgb]{1.00,0.00,0.00}{1,1,1}{\strut ##1}}}}
\@namedef{PY@tok@kc}{\let\PY@bf=\textbf\def\PY@tc##1{\textcolor[rgb]{0.00,0.50,0.00}{##1}}}
\@namedef{PY@tok@kd}{\let\PY@bf=\textbf\def\PY@tc##1{\textcolor[rgb]{0.00,0.50,0.00}{##1}}}
\@namedef{PY@tok@kn}{\let\PY@bf=\textbf\def\PY@tc##1{\textcolor[rgb]{0.00,0.50,0.00}{##1}}}
\@namedef{PY@tok@kr}{\let\PY@bf=\textbf\def\PY@tc##1{\textcolor[rgb]{0.00,0.50,0.00}{##1}}}
\@namedef{PY@tok@bp}{\def\PY@tc##1{\textcolor[rgb]{0.00,0.50,0.00}{##1}}}
\@namedef{PY@tok@fm}{\def\PY@tc##1{\textcolor[rgb]{0.00,0.00,1.00}{##1}}}
\@namedef{PY@tok@vc}{\def\PY@tc##1{\textcolor[rgb]{0.10,0.09,0.49}{##1}}}
\@namedef{PY@tok@vg}{\def\PY@tc##1{\textcolor[rgb]{0.10,0.09,0.49}{##1}}}
\@namedef{PY@tok@vi}{\def\PY@tc##1{\textcolor[rgb]{0.10,0.09,0.49}{##1}}}
\@namedef{PY@tok@vm}{\def\PY@tc##1{\textcolor[rgb]{0.10,0.09,0.49}{##1}}}
\@namedef{PY@tok@sa}{\def\PY@tc##1{\textcolor[rgb]{0.73,0.13,0.13}{##1}}}
\@namedef{PY@tok@sb}{\def\PY@tc##1{\textcolor[rgb]{0.73,0.13,0.13}{##1}}}
\@namedef{PY@tok@sc}{\def\PY@tc##1{\textcolor[rgb]{0.73,0.13,0.13}{##1}}}
\@namedef{PY@tok@dl}{\def\PY@tc##1{\textcolor[rgb]{0.73,0.13,0.13}{##1}}}
\@namedef{PY@tok@s2}{\def\PY@tc##1{\textcolor[rgb]{0.73,0.13,0.13}{##1}}}
\@namedef{PY@tok@sh}{\def\PY@tc##1{\textcolor[rgb]{0.73,0.13,0.13}{##1}}}
\@namedef{PY@tok@s1}{\def\PY@tc##1{\textcolor[rgb]{0.73,0.13,0.13}{##1}}}
\@namedef{PY@tok@mb}{\def\PY@tc##1{\textcolor[rgb]{0.40,0.40,0.40}{##1}}}
\@namedef{PY@tok@mf}{\def\PY@tc##1{\textcolor[rgb]{0.40,0.40,0.40}{##1}}}
\@namedef{PY@tok@mh}{\def\PY@tc##1{\textcolor[rgb]{0.40,0.40,0.40}{##1}}}
\@namedef{PY@tok@mi}{\def\PY@tc##1{\textcolor[rgb]{0.40,0.40,0.40}{##1}}}
\@namedef{PY@tok@il}{\def\PY@tc##1{\textcolor[rgb]{0.40,0.40,0.40}{##1}}}
\@namedef{PY@tok@mo}{\def\PY@tc##1{\textcolor[rgb]{0.40,0.40,0.40}{##1}}}
\@namedef{PY@tok@ch}{\let\PY@it=\textit\def\PY@tc##1{\textcolor[rgb]{0.24,0.48,0.48}{##1}}}
\@namedef{PY@tok@cm}{\let\PY@it=\textit\def\PY@tc##1{\textcolor[rgb]{0.24,0.48,0.48}{##1}}}
\@namedef{PY@tok@cpf}{\let\PY@it=\textit\def\PY@tc##1{\textcolor[rgb]{0.24,0.48,0.48}{##1}}}
\@namedef{PY@tok@c1}{\let\PY@it=\textit\def\PY@tc##1{\textcolor[rgb]{0.24,0.48,0.48}{##1}}}
\@namedef{PY@tok@cs}{\let\PY@it=\textit\def\PY@tc##1{\textcolor[rgb]{0.24,0.48,0.48}{##1}}}


\makeatother

\makeatletter
    \newbox\Wrappedcontinuationbox 
    \newbox\Wrappedvisiblespacebox 
    \newcommand*\Wrappedvisiblespace {\textcolor{red}{\textvisiblespace}} 
    \newcommand*\Wrappedcontinuationsymbol {\textcolor{red}{\llap{\tiny$\m@th\hookrightarrow$}}} 
    \newcommand*\Wrappedcontinuationindent {3ex } 
    \newcommand*\Wrappedafterbreak {\kern\Wrappedcontinuationindent\copy\Wrappedcontinuationbox} 
    \newcommand*\Wrappedbreaksatspecials {%
        \def\PYGZus{\discretionary{\char`\_}{\Wrappedafterbreak}{\char`\_}}%
        \def\PYGZob{\discretionary{}{\Wrappedafterbreak\char`\{}{\char`\{}}%
        \def\PYGZcb{\discretionary{\char`\}}{\Wrappedafterbreak}{\char`\}}}%
        \def\PYGZca{\discretionary{\char`\^}{\Wrappedafterbreak}{\char`\^}}%
        \def\PYGZam{\discretionary{\char`\&}{\Wrappedafterbreak}{\char`\&}}%
        \def\PYGZlt{\discretionary{}{\Wrappedafterbreak\char`\<}{\char`\<}}%
        \def\PYGZgt{\discretionary{\char`\>}{\Wrappedafterbreak}{\char`\>}}%
        \def\PYGZsh{\discretionary{}{\Wrappedafterbreak\char`\#}{\char`\#}}%
        \def\PYGZpc{\discretionary{}{\Wrappedafterbreak\char`\%}{\char`\%}}%
        \def\PYGZdl{\discretionary{}{\Wrappedafterbreak\char`\$}{\char`\$}}%
        \def\PYGZhy{\discretionary{\char`\-}{\Wrappedafterbreak}{\char`\-}}%
        \def\PYGZsq{\discretionary{}{\Wrappedafterbreak\textquotesingle}{\textquotesingle}}%
        \def\PYGZdq{\discretionary{}{\Wrappedafterbreak\char`\"}{\char`\"}}%
        \def\PYGZti{\discretionary{\char`\~}{\Wrappedafterbreak}{\char`\~}}%
    } 
    \newcommand*\Wrappedbreaksatpunct {%
        \lccode`\~`\.\lowercase{\def~}{\discretionary{\hbox{\char`\.}}{\Wrappedafterbreak}{\hbox{\char`\.}}}%
        \lccode`\~`\,\lowercase{\def~}{\discretionary{\hbox{\char`\,}}{\Wrappedafterbreak}{\hbox{\char`\,}}}%
        \lccode`\~`\;\lowercase{\def~}{\discretionary{\hbox{\char`\;}}{\Wrappedafterbreak}{\hbox{\char`\;}}}%
        \lccode`\~`\:\lowercase{\def~}{\discretionary{\hbox{\char`\:}}{\Wrappedafterbreak}{\hbox{\char`\:}}}%
        \lccode`\~`\?\lowercase{\def~}{\discretionary{\hbox{\char`\?}}{\Wrappedafterbreak}{\hbox{\char`\?}}}%
        \lccode`\~`\!\lowercase{\def~}{\discretionary{\hbox{\char`\!}}{\Wrappedafterbreak}{\hbox{\char`\!}}}%
        \lccode`\~`\/\lowercase{\def~}{\discretionary{\hbox{\char`\/}}{\Wrappedafterbreak}{\hbox{\char`\/}}}%
        \catcode`\.\active
        \catcode`\,\active 
        \catcode`\;\active
        \catcode`\:\active
        \catcode`\?\active
        \catcode`\!\active
        \catcode`\/\active 
        \lccode`\~`\~ 	
    }
\makeatother

\let\OriginalVerbatim=\Verbatim
\makeatletter
\renewcommand{\Verbatim}[1][1]{%
    \sbox\Wrappedcontinuationbox {\Wrappedcontinuationsymbol}%
    \sbox\Wrappedvisiblespacebox {\FV@SetupFont\Wrappedvisiblespace}%
    \def\FancyVerbFormatLine ##1{\hsize\linewidth
        \vtop{\raggedright\hyphenpenalty\z@\exhyphenpenalty\z@
            \doublehyphendemerits\z@\finalhyphendemerits\z@
            \strut ##1\strut}%
    }%
    \def\FV@Space {%
        \nobreak\hskip\z@ plus\fontdimen3\font minus\fontdimen4\font
        \discretionary{\copy\Wrappedvisiblespacebox}{\Wrappedafterbreak}
        {\kern\fontdimen2\font}%
    }%
    
    \Wrappedbreaksatspecials
    \OriginalVerbatim[#1,codes*=\Wrappedbreaksatpunct]%
}
\makeatother

\definecolor{incolor}{HTML}{303F9F}
\definecolor{outcolor}{HTML}{D84315}
\definecolor{cellborder}{HTML}{CFCFCF}
\definecolor{cellbackground}{HTML}{F7F7F7}

\makeatletter
\newcommand{\boxspacing}{\kern\kvtcb@left@rule\kern\kvtcb@boxsep}
\makeatother
\newcommand{\prompt}[4]{
    {\ttfamily\llap{{\color{#2}[#3]:\hspace{3pt}#4}}\vspace{-\baselineskip}}
}

\begin{document}
\title{Heisenberg homology on surface configurations}
\author{Christian Blanchet\protect\footnote{Universit{\'e} Paris Cit\'e and Sorbonne Universit{\'e}, CNRS, IMJ-PRG, F-75006 Paris, France}, Martin Palmer\protect\footnote{Simion Stoilow Mathematical Institute of the Romanian Academy, Bucharest, Romania; School of Mathematics, University of Leeds, Leeds, LS2 9JT, UK}, Awais Shaukat\protect\footnote{Department of Mathematics, Namal University, 30 km Talagang Road, Mianwali, 42250, Pakistan }}
\date{10 March 2025}
\maketitle
{
\makeatletter
\renewcommand*{\BHFN@OldMakefntext}{}
\makeatother
\footnotetext{\hspace{1em} ORCIDs: 0000-0002-8948-5993 (CB), 0000-0002-1449-5767 (MP), 0000-0002-0249-2227 (AS)}
\footnotetext{\hspace{1em} Acknowledgements: This paper forms a part of the PhD thesis of the third author. The first and third authors are thankful for the support of the Abdus Salam School of Mathematical Sciences, Lahore. The third author is also thankful for the support of Namal University, Mianwali. The second author is grateful to Arthur Souli{\'e} for several enlightening discussions about the Moriyama and Magnus representations and their kernels, and for pointing out the reference \cite{Suzuki2003}. The second author was partially supported by a grant of the Romanian Ministry of Education and Research, CNCS - UEFISCDI, project number PN-III-P4-ID-PCE-2020-2798, within PNCDI III, as well as a grant of the Ministry of Research, Innovation and Digitization, CNCS - UEFISCDI, project number PN-IV-P1-PCE-2023-2001, within PNCDI IV.}
}

\begin{abstract}
Motivated by the Lawrence-Krammer-Bigelow representations of the classical braid groups, we study the homology of unordered configurations in an orientable genus-$g$ surface with one boundary component, over non-commutative local systems defined from representations of the discrete Heisenberg group. Mapping classes act on the local systems and for a general representation of the Heisenberg group we obtain a representation of the mapping class group that is twisted by this action. For the linearisation of the affine translation action of the Heisenberg group we obtain a genuine, \emph{untwisted} representation of the mapping class group. In the case of the generic Schr{\"o}dinger representation, by composing with a Stone-von Neumann isomorphism we obtain a projective representation by bounded operators on a Hilbert space, which lifts to a representation of the \emph{stably universal} central extension of the mapping class group. We also discuss the finite dimensional  Schr{\"o}dinger representations, especially in the even case. Based on a natural intersection pairing, we show that our representations preserve a sesquilinear form.

\noindent 
\textbf{2020 MSC}: 57K20, 55R80, 55N25, 20C12, 19C09 \\
\textbf{Key words}: Mapping class groups, configuration spaces, homological representations, discrete Heisenberg group, Schr{\"o}dinger representation, Morita's crossed homomorphism.
\end{abstract}

\section*{Introduction}

The braid group $B_m$ was defined by Artin in terms of geometric braids in $\R^3$; equivalently, it is the fundamental group of the configuration space $\mathcal{C}_m(\R^2)$ of $m$ unordered points in the plane. Another equivalent description is as the mapping class group $\mathfrak{M}(\mathbb{D}_m)=\Diff(\mathbb{D}_m,S^1)/\Diff_0(\mathbb{D}_m,S^1)$ of the closed $2$-disc with $m$ interior points removed. (The \emph{mapping class group} of a surface is the group of isotopy classes of self-diffeomorphisms fixing the boundary pointwise.)

There is also a natural action of $\Diff(\mathbb{D}_m,S^1)$ on configuration spaces $\mathcal{C}_n(\mathbb{D}_m)$; considering the induced action on the homology of these configuration spaces, Lawrence \cite{Lawrence1990}  defined a representation of $B_m$ for each $n \geq 1$. The $n=2$ version is known as the \emph{Lawrence-Krammer-Bigelow representation}, and a celebrated result of Bigelow~\cite{Bigelow2001} and Krammer~\cite{Krammer2002} states that this representation of $B_m$ is \emph{faithful}, i.e.~injective. 

On the other hand, for almost all other surfaces $\Sigma$, the question of whether the mapping class group $\mathfrak{M}(\Sigma)$ admits a faithful, finite-dimensional representation over a field (whether it is \emph{linear}) is open. The mapping class group of the torus is $SL_2(\Z)$, which is evidently linear, and the mapping class group of the closed orientable surface of genus $2$ was shown to be linear by Bigelow and Budney~\cite{Bigelow-Budney}, as a corollary of the linearity of $B_5$. However, nothing is known in genus $g \geq 3$.

Our programme is to study the action of the positive-genus and connected-boundary mapping class groups $\mathfrak{M}(\Sigma_{g,1})$ on the homology of the configuration spaces $\mathcal{C}_n(\Sigma_{g,1})$, equipped with local systems that are similar to the Lawrence-Krammer-Bigelow construction. We first argue that \emph{abelian} local systems would not retain enough information, in the sense that they cannot faithfully encode the ``writhe'' of loops of configurations.
In general, for any surface $\Sigma$ and $n \geq 2$, the abelianisation of $\pi_1(\mathcal{C}_n(\Sigma))$ is canonically isomorphic to $H_1(\Sigma) \times C$, where $C$ is a cyclic group of order $\infty$ if $\Sigma$ is planar (embeds into $\R^2$), of order $2n-2$ if $\Sigma = \mathbb{S}^2$ and of order $2$ in all other cases (see for example \cite[Proposition 6.32]{DPS}). In the case $\Sigma = \mathbb{D}_m$, the abelianisation is $\Z^m \times \Z$, and the Lawrence representations are defined using the local system given by the quotient $\pi_1(\mathcal{C}_n(\mathbb{D}_m)) \twoheadrightarrow \Z^m \times \Z \twoheadrightarrow \Z \times \Z$, where the second map is addition of the first $m$ factors. However, in the non-planar case (in particular if $\Sigma = \Sigma_{g,1}$), we \emph{lose information} by passing to the abelianisation, since the cyclic factor $C$ -- which counts the self-winding or ``writhe'' of a loop of configurations -- has order $2$ rather than order $\infty$.

To obtain a better analogue of the Lawrence representations in the setting $\Sigma = \Sigma_{g,1}$ for $g>0$, we consider instead a larger, non-abelian quotient of $\pi_1(\mathcal{C}_n(\Sigma))$, which is isomorphic to the discrete Heisenberg group $\Heis = \Heis(\Sigma)$, defined as the central extension of the first homology $H=H_1(\Sigma,\Z)$ associated to the intersection $2$-cocycle, which concretely means $\Heis=\Z\times H$ as a set, with group law $(k,x)(l,y)=(k+l+x.y,x+y)$. This is a $2$-nilpotent group that arises very naturally as a quotient of the surface braid group $\pi_1(\mathcal{C}_n(\Sigma))$ by forcing a single element to be central. It may also be realised concretely as a group of $(g+2) \times (g+2)$ matrices, as explained in Remark \ref{rmk:Heisenberg-matrices} below. In the case $n\geq 3$ it is known by \cite{B_al2008} to be the \emph{$2$-nilpotentisation} of the surface braid group (in fact it is the maximal nilpotent quotient of the surface braid group), but for $n=2$ it differs from the $2$-nilpotentisation. A key property of this Heisenberg quotient is that it still detects the self-winding (or ``writhe'') of a loop of configurations \emph{without reducing modulo two}. Any representation $V$ of the discrete Heisenberg group $\Heis(\Sigma)$ defines a local system on the configuration space $\mathcal{C}_n(\Sigma)$. 

An and Ko studied in \cite{An} extensions of the Lawrence-Krammer-Bigelow representations to homological representations of surface braid groups; see also \cite{BGG2017}. Their purpose was to extend the homological representation of the classical braid group to some homology of configurations in an $n$-punctured surface and produce representations of the surface braid groups. In our case the surface has no punctures, and the goal is to represent the full mapping class group. Our constructions based on the Heisenberg quotient of the surface braid group have a similar flavour but are significantly simpler; moreover we obtain strong improvements by specialising to explicit representations.

We speculate about faithfulness results for our representations and linearity results for the mapping class group. This would involve two steps.

\begin{enumerate}
\item Prove that the action on the homology of the Heisenberg covering space of $\mathcal{C}_n(\Sigma)$ is faithful. Following Bigelow's strategy, this would follow from a key lemma showing that an algebraic intersection form on homology  detects the geometric intersection of curves on the surface.
\item Find a good finite-dimensional representation of the Heisenberg group that retains faithfulness.
\end{enumerate}

It was shown in  \cite{Crivelli93,Crivelli94} that the adjoint representation of quantum $sl(2)$ at roots of $1$ has a topological realisation as homology of configurations  with local coefficients in the once-punctured torus. Following this programme, De Renzi and Martel \cite{DeRenziMartel} have recently produced a homological model for non-semisimple TQFT representations derived from quantum $sl(2)$. They use the local system on surface configurations given by the Schr{\"o}dinger representation at an odd root of $1$, which is a special case in our construction. We believe that our work contributes to a promising programme for topological interpretations of quantum constructions and possible classical constructions of quantum invariants and TQFTs. 

\begin{notation}
Henceforth we will use the abbreviation $\Sigma = \Sigma_{g,1}$ for an integer $g\geq 1$.
\end{notation}

\paragraph{General representations.}
Our first main result is a calculation of a Borel--Moore relative homology group with coefficients twisted by any representation of the Heisenberg group, together with a twisted action of the mapping class group. In the following, $H_*^{BM}$ denotes Borel--Moore homology and $\mathcal{C}_n(\Sigma,\partial^-(\Sigma))$ is the properly embedded subspace of $\mathcal{C}_n(\Sigma)$ consisting of all configurations intersecting a given closed arc  $\partial^-\Sigma\subset \partial\Sigma$. The twisted action is formulated as a representation of an \emph{action groupoid}. The key point is that the mapping class group acts on the Heisenberg group, which induces an action on our local systems. We denote by $f_\Heis \in \mathrm{Aut(\Heis)}$ the automorphism induced by $f\in \mathfrak{M}(\Sigma)$. For a left representation $\rho \colon \Heis \rightarrow GL(V)$ and $\tau \in  \mathrm{Aut}(\Heis)$, the $\tau$-twisted representation $\rho\circ \tau$ is denoted by ${}_\tau\!V$.

\begin{atheorem}[Theorems \ref{basis} and \ref{thm:MCG}]
\label{athm:twisted}
Let $n\geq 2$, $g\geq 1$ and let $V$ be a left representation of the discrete Heisenberg group $\Heis = \Heis(\Sigma)$ in $(R,S)$-bimodules, for unital rings $R$ and $S$.\\
\textup{(a)} The Borel--Moore homology $H_n^{BM}(\mathcal{C}_{n}(\Sigma),\mathcal{C}_{n}(\Sigma,\partial^-(\Sigma)) ;V)$ is isomorphic, as an $(R,S)$-bimodule, to the direct sum of 
$\bigl(
\begin{smallmatrix}
2g+n-1 \\
n \\
\end{smallmatrix}
\bigr)$
copies of $V$.
Furthermore, it is the only non-zero bimodule in the graded bimodule $H_*^{BM}(\mathcal{C}_{n}(\Sigma),\mathcal{C}_{n}(\Sigma,\partial^-(\Sigma)) ;V)$.
The corresponding statements are also true if Borel--Moore homology $H_*^{BM}$ is replaced with compactly-supported cohomology $H^*_c$. \\
\textup{(b)} There is a natural twisted representation of the mapping class group $\mathfrak{M}(\Sigma)$ on the collection of $(R,S)$-bimodules
\begin{equation*}
H_n^{BM}\bigl( \mathcal{C}_n(\Sigma) , \mathcal{C}_n(\Sigma,\partial^-(\Sigma)) ;{}_\tau\!V \bigr) \ , \quad \tau \in \mathrm{Aut}(\Heis)\ ,\ 
\end{equation*}
where the action of $f\in \mathfrak{M}(\Sigma)$ is
\begin{equation}
\label{eq:athm:twisted}
\mathcal{C}_n(f)_* \colon H_n^{BM}\bigl( \mathcal{C}_n(\Sigma) , \mathcal{C}_n(\Sigma,\partial^-(\Sigma)) ;{}_{\tau\circ f_\Heis}\!V \bigr) \longrightarrow H_n^{BM}\bigl( \mathcal{C}_n(\Sigma) , \mathcal{C}_n(\Sigma,\partial^-(\Sigma)) ;{}_{\tau}\!V \bigr) .
\end{equation}
\end{atheorem}

\begin{remark}
The case of a trivial representation of $\Heis$ is already something interesting; indeed, connecting with Moriyama's work \cite{Moriyama}, we show that the Johnson filtration is recovered; see \S\ref{relation-to-Moriyama}.
\end{remark}

\begin{remark}
The Heisenberg group $\Heis(\Sigma)$ can be realised as a group of $(g+2) \times (g+2)$ matrices (see Remark \ref{rmk:Heisenberg-matrices}); this gives a $(g+2)$-dimensional representation, which we refer to as its \emph{tautological} representation. We then obtain, for each $n\geq 2$, a family of twisted representations with polynomially growing dimension equal to
$(g+2) \bigl(
\begin{smallmatrix}
2g+n-1 \\
n \\
\end{smallmatrix}
\bigr)$.
\end{remark}

\paragraph{The linearised translation action.}

The discrete Heisenberg group $\Heis$ has a natural affine structure over $\Z$ for which the left translation action $\Heis \curvearrowright \Heis$ is by affine automorphisms. The linearisation functor, from the category of affine spaces over $\Z$ to the category of $\Z$-modules, applied to this affine action, gives a representation $L = \Heis \oplus \Z \cong \Z^{2g+2}$ of $\Heis$ over $\Z$. A key feature of this representation is that, for an automorphism $\tau$ of $\Heis$, the twisted representation ${}_\tau L$ is \emph{canonically} isomorphic to $L$.
We deduce a genuine (i.e.~\emph{un}twisted) representation of the mapping class group.

\begin{atheorem}[Theorem \ref{thm:l-regular}]
\label{athm:l-regular}
For each $n\geq 2$ and $g\geq 1$ there is a representation of the mapping class group ${\mathfrak{M}}(\Sigma)$ on the free $\Z$-module of rank
$(2g+2) \bigl(
\begin{smallmatrix}
2g+n-1 \\
n \\
\end{smallmatrix}
\bigr)$,
\begin{equation}
\label{eq:athm:l-regular}
H_n^{BM}\bigl( \mathcal{C}_n(\Sigma) , \mathcal{C}_n(\Sigma,\partial^-(\Sigma)) ; L\bigr) .
\end{equation}
\end{atheorem}

\paragraph{The Schr\"odinger representation.}

The centre of the real Heisenberg group $\Heisr(\Sigma)$ is one-dimensional, and acts by scalars on the Hilbert space $W = L^2(\R^g)$ by $t \mapsto e^{it\hbar/2}$, where $\hbar$ is a fixed non-zero real number (the Planck constant, for physicists). The famous Stone-von Neumann Theorem (see for example \cite[page 19]{LionVergne}, recalled as Theorem \ref{th:LionVergne} below) states that there is, up to isomorphism, a unique irreducible unitary representation of $\Heisr(\Sigma)$ on $W$ extending this action of its centre; this is the \emph{Schr\"odinger representation}. We also denote by $W$ this representation restricted to the discrete subgroup $\Heis = \Heis(\Sigma) \subset \Heisr(\Sigma)$. It depends on the parameter $\hbar$, so that we have a continuous family of Schrödinger representations $W=W(\hbar)$. For $\tau \in \mathrm{Aut}(\Heis)$ the twisted representation ${}_\tau W$ is isomorphic to $W$ as a unitary representation and this isomorphism is unique up to a unit complex number. Using such isomorphisms we may identify the twisted local system with the original one and obtain an untwisted representation of the mapping class group to the projective group of bounded operators on the  homology with local coefficients $W$. Here, the Hilbert structure on homology is specified by a choice of CW-complex structure. We build a linear lift of this projective action to the stably universal central extension $\widetilde{\mathfrak{M}}(\Sigma)$.

\begin{atheorem}[Theorem \ref{thm:Schroedinger}]
\label{athm:universal-central}
For each $n\geq 2$ and $g\geq 1$ there is a representation of $\widetilde{\mathfrak{M}}(\Sigma)$ on the complex Hilbert space
\begin{equation}
\label{eq:athm:universal-central}
\mathcal{V}_n=H_n^{BM}\bigl( \mathcal{C}_n(\Sigma) , \mathcal{C}_n(\Sigma,\partial^-(\Sigma)) ; W\bigr) 
\end{equation}
by bounded operators, which lifts the natural projective action of
$\mathfrak{M}(\Sigma)$.
\end{atheorem}

The group $\widetilde{\mathfrak{M}}(\Sigma)$ on which we construct our linear representation is a central extension of the mapping class group $\mathfrak{M}(\Sigma)$ of the form:
\begin{equation}
\label{eq:univ-central-ext}
0 \to \Z \longrightarrow \widetilde{\mathfrak{M}}(\Sigma) \longrightarrow \mathfrak{M}(\Sigma) \to 1,
\end{equation}
and is the \emph{stably universal central extension} of $\mathfrak{M}(\Sigma)$, which we explain next.

\paragraph{The stably universal central extension.}

A group $G$ has a \emph{universal central extension} (an initial object in the category of central extensions of $G$) if and only if $H_1(G;\Z)=0$, and it is of the form $0 \to H_2(G;\Z) \to \widetilde{G} \to G \to 1$ when it exists (see \cite[Theorem 6.9.5]{Weibel}). For genus $g\geq 4$, we have $H_1(\mathfrak{M}(\Sigma_{g,1});\Z) = 0$ and $H_2(\mathfrak{M}(\Sigma_{g,1});\Z) \cong \Z$ (see \cite[Theorems 5.1 and 6.1]{Korkmaz}). Moreover, there are natural inclusion maps
\begin{equation}
\label{eq:stabilisation-maps}
\mathfrak{M}(\Sigma_{1,1}) \longrightarrow \mathfrak{M}(\Sigma_{2,1}) \longrightarrow \cdots \longrightarrow \mathfrak{M}(\Sigma_{g,1}) \longrightarrow \mathfrak{M}(\Sigma_{g+1,1}) \longrightarrow \cdots ,
\end{equation}
which induce isomorphisms on $H_1(-;\Z)$ and $H_2(-;\Z)$ for $g\geq 4$ (by homological stability for mapping class groups of surfaces, due originally to Harer~\cite{Harer1985}; see \cite[Theorem 1.1]{Wahl2013} for the optimal stability range). This implies that, for $g\geq 4$, the pullback along \eqref{eq:stabilisation-maps} of the universal central extension of $\mathfrak{M}(\Sigma_{g+1,1})$ to $\mathfrak{M}(\Sigma_{g,1})$ is the universal central extension of $\mathfrak{M}(\Sigma_{g,1})$. Hence we may define, for all $g\geq 1$, the \emph{stably universal central extension} of $\mathfrak{M}(\Sigma_{g,1})$ to be the pullback along \eqref{eq:stabilisation-maps} of the universal central extension of $\mathfrak{M}(\Sigma_{h,1})$ for any $h \geq \mathrm{max}(g,4)$.

\paragraph{A finite-dimensional Schr\"odinger representation.}
When the parameter $\hbar$ controlling the action of the centre is $2\pi$ times a rational number, the discrete Heisenberg group has finite-dimensional Schr\"odinger representations, which may be realised either by theta functions, by induction or by an abelian TQFT. For a positive even integer $N$, we will follow \cite{Gelca_Hamilton,Gelca_Uribe_TQFT,Gelca_Uribe}, which connect nicely the different approaches when $\hbar=\frac{2\pi}{N}$.  We denote by $W_N = L^2((\Z/N)^g)$ the $N^g$-dimensional representation that is the unique unitary irreducible representation of the finite quotient $\Heis_N = \Heis/I_N$ of $\Heis$ by the normal subgroup $I_N=\{(2Nk,Nx) \mid k\in \Z , x\in H\}\subset \Heis=\Z\times H$, where each central element $(k,0)$ acts by $e^\frac{i\pi k}{N}$. The analogue of the Stone-von Neumann Theorem in this context \cite[Theorem~2.4]{Gelca_Uribe_TQFT} allows us to construct an untwisted representation of a finite-index subgroup of the mapping class group to a projective linear group. We identify this subgroup as the stabiliser subgroup $\mathfrak{M}(\Sigma,q_0)$ for the spin structure represented by the quadratic form $q_0 \colon H_1(\Sigma;\Z/2) \to \Z/2$ that is zero on the preferred basis.

\begin{atheorem}[Theorem \ref{thm:Schroedinger-fd}]
\label{athm:universal-central-fd}
For each $g\geq 1$, $n\geq 2$ and $N\geq 2$ with $N$ even, there is a complex projective  representation of $\mathfrak{M}(\Sigma,q_0)$ on the
$\bigl(
\begin{smallmatrix}
2g+n-1 \\
n \\
\end{smallmatrix}
\bigr) N^{g}$-dimensional
complex Hilbert space
\begin{equation}
\label{eq:athm:universal-central-fd}
\mathcal{V}_{N,n}=H_n^{BM}\bigl( \mathcal{C}_n(\Sigma) , \mathcal{C}_n(\Sigma,\partial^-(\Sigma)) ; W_N\bigr) 
\end{equation}
\end{atheorem}

\begin{remark}
A similar construction for odd $N$ is used in \cite{DeRenziMartel}. In this case the Stone-von Neumann Theorem applies to the quotient $\Heis_N=\Heis/I_N$, $I_N=\{(Nk,Nx) \mid k\in \Z , x\in H\}\subset \Heis=\Z\times H$, and produces a projective  action of the full mapping class group on the homology spaces $\mathcal{V}_{N,n}$.
\end{remark}

For any complex vector space $V$, the adjoint action of $GL(V)$ on $\mathrm{End}_{\C}(V)$ induces a canonical embedding $PGL(V) \hookrightarrow GL(\mathrm{End}_{\C}(V))$. Applying this to the natural projective action $\mathfrak{M}(\Sigma)\rightarrow PGL(\mathcal{V}_{N,n})$ for odd $N$, we obtain an \emph{untwisted complex representation}
\begin{equation}
\label{eq:fd-rep-of-mcg}
\mathfrak{M}(\Sigma) \longrightarrow GL (\mathrm{End}_{\C}(\mathcal{V}_{N,n}))
\end{equation}
of dimension
$\bigl(
\begin{smallmatrix}
2g+n-1 \\
n \\
\end{smallmatrix}
\bigr)^2 N^{2g}$.
Because $PGL(\mathcal{V}_{N,n}) \hookrightarrow GL(\mathrm{End}_{\C}(\mathcal{V}_{N,n}))$ is injective, we see that:

\begin{observation}
Injectivity of the representation \eqref{eq:fd-rep-of-mcg} is equivalent to injectivity of the projective representation $\mathfrak{M}(\Sigma)\rightarrow PGL(\mathcal{V}_{N,n})$. Thus a proof of injectivity of $\mathfrak{M}(\Sigma)\rightarrow PGL(\mathcal{V}_{N,n})$ for any $(N,n)$ with $N,n\geq 2$ and $N$ odd would imply that the mapping class group $\mathfrak{M}(\Sigma)$ is linear. The same observation holds for $N$ even, with $\mathfrak{M}(\Sigma)$ replaced by its finite-index subgroup $\mathfrak{M}(\Sigma,q_0)$.
\end{observation}

\paragraph{Unitarity.}

When using a Hilbert space as local coefficients, although a CW-complex structure can be used to specify a Hilbert structure on (cellular) homology, it is not true that mapping classes will act as unitary operators on cellular chains. This is due to the fact that the cellular approximation theorem produces a homotopic map that may fail to be a homeomorphism. It is nevertheless possible to find a kind of unitarity property similar to the one stated for the Burau and Gassner representations in \cite{Squier,BarNatan}. We will state this as the property that a certain perfect sesquilinear form on homology is preserved; see \S\ref{ss:sesquilinear-form}, in particular Proposition \ref{prop:perfect-sesquilinear-form}.

\paragraph{Kernels.}

To describe an upper bound on the kernels of our representations, we first recall the \emph{Johnson filtration} of the mapping class group.

The mapping class group $\mathfrak{M}(\Sigma)$ acts naturally on the fundamental group $\pi_1(\Sigma) =: \Gamma_1$ of the surface. Denote by $\Gamma_i$, $i\geq 2$, the subgroups of the lower central series defined recursively by $\Gamma_i := [\Gamma_1,\Gamma_{i-1}]$.  Each term of the lower central series of a group is fully invariant, so there is a well-defined induced action of $\mathfrak{M}(\Sigma)$ on the quotient $\pi_1(\Sigma) / \Gamma_{i+1}$, which is the largest $(i+1)$-step nilpotent quotient of $\pi_1(\Sigma)$. The \emph{Johnson filtration} $\mathfrak{J}(*)$ is then defined by setting $\mathfrak{J}(i)$ to be the kernel of this induced action. Thus $\mathfrak{J}(0)$ is the whole mapping class group and $\mathfrak{J}(1)$ is the Torelli group. The intersection of all terms in the filtration is trivial, i.e., it is an \emph{exhaustive} filtration of the mapping class group \cite{Johnson-survey}.

One may also consider the induced action of the mapping class group $\mathfrak{M}(\Sigma)$ on the universal metabelian quotient $\pi_1(\Sigma) / \pi_1(\Sigma)^{(2)}$ of the fundamental group of the surface (the quotient by its second derived subgroup); its kernel is the \emph{Magnus kernel} of $\mathfrak{M}(\Sigma)$, which we denote by $\mathrm{Mag}(\Sigma) \subseteq \mathfrak{M}(\Sigma)$. In \S\ref{relation-to-Moriyama} (Proposition \ref{prop:kernel}) we prove:

\begin{aproposition}[Proposition \ref{prop:kernel}]
\label{aprop:kernel}
For each $n\geq 2$, $g\geq 1$, considering the regular representation $V = \Z[\Heis]$ of the discrete Heisenberg group $\Heis = \Heis(\Sigma)$, the kernel of the representation constructed in Theorem \ref{athm:twisted} is contained in $\mathfrak{J}(n) \cap \mathrm{Mag}(\Sigma)$.
\end{aproposition}

\paragraph{Computability.}

We emphasise that our representations are explicit and computable. First, the underlying $(R,S)$-bimodule in Theorem \ref{athm:twisted} is a direct sum of finitely many copies of the $(R,S)$-bimodule $V$ that underlies the chosen representation of the discrete Heisenberg group $\Heis(\Sigma)$. This is Theorem \ref{athm:twisted}(a); an explicit basis is described in Theorem \ref{basis}.

Moreover, the actions of elements of the mapping class group on the canonical basis provided by Theorem \ref{basis} may be explicitly computed. To demonstrate this, we calculate in \S\ref{s:computations} explicit matrices for our representations in the case when $n=2$ and $V = \Z[\Heis]$ is the regular representation of $\Heis = \Heis(\Sigma)$. For example, when $g=1$, the Dehn twist around the boundary of $\Sigma_{1,1}$ acts by the $3 \times 3$ matrix over $\Z[\Heis] = \Z[u^{\pm 1}]\langle a^{\pm 1},b^{\pm 1} \rangle / (ab=u^2ba)$ depicted in Figure \ref{fig:bigmatrix} (page \pageref{fig:bigmatrix}).

\paragraph{Outline.}

In \S\ref{s:Heisenberg-system} we define and study the quotient $\Heis$ of the surface braid group. In \S\ref{s:Heisenberg-homology} we study the Borel--Moore homology with local coefficients of configuration spaces on $\Sigma$, proving Theorem \ref{athm:twisted}(a) and showing in particular that, with coefficients in $V=\Z[\Heis]$, it is a free module with an explicit free generating set. Next, in \S\ref{s:action}, we show that the action of the mapping class group on the surface braid group descends to the Heisenberg quotient $\Heis$.

In \S\ref{rep-MCG} we construct twisted representations (Theorem \ref{athm:twisted}(b)) of the full mapping class group, as well as the \emph{un}twisted representations associated to the linearised translation action $L = \Heis \oplus \Z$ (Theorem \ref{athm:l-regular}).
In \S\ref{Schroedinger} we prove Theorems \ref{athm:universal-central} and \ref{athm:universal-central-fd} for the Schr{\"o}dinger representation of $\Heis$ and its finite-dimensional analogues. In \S\ref{relation-to-Moriyama} we discuss connections with the Moriyama and Magnus representations of mapping class groups and deduce that the kernels of our twisted representations of $\mathfrak{M}(\Sigma)$ from Theorem \ref{athm:twisted}, with coefficients in $V = \Z[\Heis]$, are contained in the intersection of the Johnson filtration with the Magnus kernel.

In \S\ref{s:computations} we explain how to compute explicit matrices for our representations with respect to the free basis coming from \S\ref{s:Heisenberg-homology}. We carry out this computation in the case of configurations of $n=2$ points and where $V = \Z[\Heis]$ is the regular representation of $\Heis$; this special case of our construction is a direct analogue of the Lawrence--Krammer--Bigelow representations of the braid groups.

The first version of this paper also contained further results about untwisted representations of subgroups of the mapping class group on Heisenberg homology. In order to improve readability, we have moved this part to a separate article \cite{BPS2}.

\clearpage
\tableofcontents

\section{A non-commutative local system on configuration spaces of surfaces}
\label{s:Heisenberg-system}

Let $\Sigma=\Sigma_{g,1}$ be a compact, connected, orientable surface of genus $g\geq 1$ with one boundary component. For $n\geq 2$, the $n$-point unordered configuration space of $\Sigma$ is
\[
\mathcal{C}_{n}(\Sigma )= \{ \{c_{1},c_{2},...,c_{n}\} \subset \Sigma \mid c_i\neq c_j\text{ for }i\neq j\},
\]
topologised as a quotient of a subspace of $\Sigma^n$. The surface braid group $\mathbb{B}_{n}(\Sigma)$ is then defined as $\mathbb{B}_{n}(\Sigma)=\pi_{1}(\mathcal{C}_{n}(\Sigma))$. We will use the presentation of this group given by Bellingeri and Godelle \cite{BellingeriGodelle2007}, which in turn follows from Bellingeri's presentation \cite{Bellingeri}. We fix based loops, $\alpha_1,\dots,\alpha_g,\beta_1,\dots,\beta_g$ on $\Sigma$, as depicted in Figure \ref{modelSurf}. The basepoint $*_1$ on $\Sigma$ belongs to the base configuration $*$ in $\mathcal{C}_{n}(\Sigma)$. We use the same notation $\alpha_r$, $\beta_r$ for the $\pi_1$-type generators of $\mathbb{B}_{n}(\Sigma)$, which are loops in $\mathcal{C}_{n}(\Sigma)$ where only the first point moves.
\begin{figure}
\centering
\includegraphics[scale=0.7]{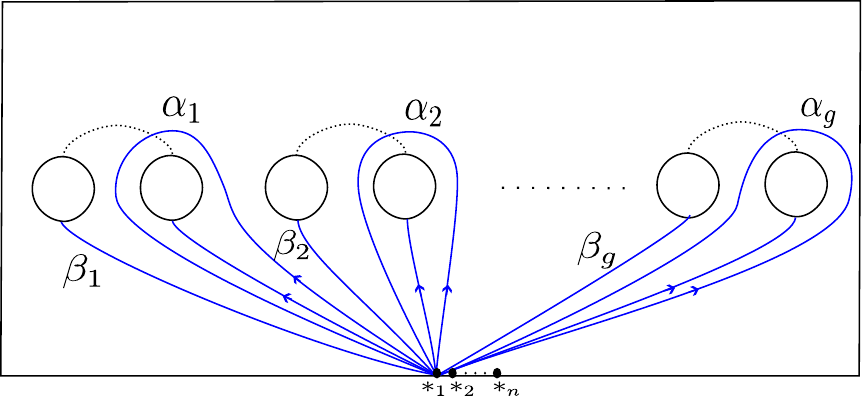}
\caption{Model surface; inner circles are identified in pairs according to the dotted arcs.}
\label{modelSurf}
\end{figure}
The braid group $\mathbb{B}_{n}(\Sigma)$ has generators $\alpha_1,\ldots,\alpha_g$, $\beta_1,\ldots,\beta_g$, together with the classical braid generators $\sigma_1,\ldots,\sigma_{n-1}$ obtained from embedding a disc around the base configuration, and relations:
\[
\begin{cases}
\,\text{(\textbf{BR1}) }\, [\sigma_{i},\sigma_{j}] = 1 & \text{for } \lvert i-j \rvert \geq 2, \\
\,\text{(\textbf{BR2}) }\, \sigma_{i}\sigma_{j}\sigma_{i}=\sigma_{j}\sigma_{i}\sigma_{j} & \text{for } \lvert i-j \rvert = 1, \\
\,\text{(\textbf{CR1}) }\, [\alpha_{r},\sigma_{i}] = [\beta_{r},\sigma_{i}] = 1 & \text{for } i>1 \text{ and all } r, \\
\,\text{(\textbf{CR2}) }\, [\alpha_{r},\sigma_{1}\alpha_{r}\sigma_{1}] = [\beta_{r},\sigma_{1}\beta_{r}\sigma_{1}] = 1 & \text{for all } r, \\
\,\text{(\textbf{CR3}) }\, [\alpha_{r},\sigma^{-1}_{1}\alpha_{s}\sigma_{1}] = [\alpha_{r},\sigma^{-1}_{1}\beta_{s}\sigma_{1}] = & \\
\qquad\qquad\qquad = [\beta_{r},\sigma^{-1}_{1}\alpha_{s}\sigma_{1}] = [\beta_{r},\sigma^{-1}_{1}\beta_{s}\sigma_{1}] = 1 & \text{for all } r<s, \\
\,\text{(\textbf{SCR}) }\, \sigma_{1}\beta_{r}\sigma_{1}\alpha_{r}\sigma_{1}=\alpha_{r}\sigma_{1}\beta_{r} & \text{for all } r.
\end{cases}
\]
We note that composition of loops is written from right to left. Our relation (\textbf{CR3}) is a slight modification of the relation (\textbf{CR3}) of \cite{BellingeriGodelle2007}, but it is equivalent to it via the relation (\textbf{CR2}).

The first homology group $H_1(\Sigma)= H_1(\Sigma;\Z)$ is equipped with a symplectic intersection form $H_1(\Sigma)\times H_1(\Sigma) \to \Z$, denoted by $x.y$, and the \emph{Heisenberg group} $\Heis=\Heis(\Sigma)$ is defined to be the central extension of $H_1(\Sigma)$ determined by this intersection $2$-cocycle. Concretely, it is the set-theoretic product $\Z \times H_{1}(\Sigma)$ with the operation
\begin{equation}
\label{eq:Heisenberg-product}
(k,x)(l,y)=(k+l+x.y,x+y).
\end{equation}
Denote by $\psi \colon \Heis \twoheadrightarrow H_1(\Sigma)$ the projection onto the second factor and by $i \colon \Z \hookrightarrow \Heis$ the inclusion of the first factor; the central extension may then be written as:
\[
\begin{tikzcd}
0 \ar[r] & \Z \ar[r,"i"] & \Heis \ar[r,"\psi"] & H_1(\Sigma) \ar[r] & 0
\end{tikzcd}
\]

\begin{remark}
\label{rmk:Heisenberg-matrices}
The Heisenberg group $\Heis$ may be realised as a group of matrices, which gives a faithful finite-dimensional representation, defined as follows:
\[
\left( k,x=\sum_{i=1}^gp_ia_i+q_ib_i \right) \longmapsto \left(\begin{array}{ccc}
1&p&\frac{k+p\cdot q}{2}\\
0&I_g&q\\
0&0&1
\end{array}\right)\ ,
\]
where $p=(p_i)$ is a row vector and $q=(q_i)$ is a column vector. This matrix form is often given as the definition of the Heisenberg group; we therefore refer to this representation of $\Heis$ as its \emph{tautological representation}.
\end{remark}

There is a general recipe for computing a presentation of an extension of two groups, given presentations of these two groups and some information about the structure of the extension (we will use the formulation of \cite[Appendix B]{DPS}; an alternative reference is \cite[\S 2.4.3]{HBE}). In particular, for a \emph{central} extension $1 \to H \to G \to K \to 1$ with $H = \langle X|R \rangle$ and $K = \langle Y|S \rangle$, we have $G = \langle X \sqcup Y | R \sqcup \widetilde{S} \sqcup T \rangle$, where $\widetilde{S}$ is any collection of relations that are true in $G$ and that project to the relations $S$ in $K$ and where $T$ is a collection of relations saying that the generators $X$ are central in $G$.

Applying this to our setting, we obtain the following presentation of $\Heis$, where we write $u = (1,0)$ and where $a_1,\ldots,a_g$, $b_1,\ldots,b_g$ is a symplectic basis of $H_1(\Sigma)$.

\begin{proposition}
\label{p:presentation}
The Heisenberg group $\Heis = \Heis(\Sigma)$ admits a presentation with generators $u$, $\tilde{a}_i = (0,a_i)$, $\tilde{b}_i = (0,b_i)$ for $1\leq i\leq g$ and relations:
\begin{equation}
\label{eq:relations}
\begin{cases}
\,\text{all pairs of generators commute, except:} \\
\,\tilde{a}_i \tilde{b}_i = u^2 \tilde{b}_i \tilde{a}_i \qquad \text{for each } i.
\end{cases}
\end{equation}
\end{proposition}
\begin{proof}
We apply the above procedure to the presentations $\Z = \langle X|R \rangle$ and $H_1(\Sigma) = \langle Y|S \rangle$ where $X = \{u\}$, $Y = \{\tilde{a}_1,\ldots,\tilde{a}_g,\tilde{b}_1,\ldots,\tilde{b}_g\}$, the relations $R$ are empty and the relations $S$ say that all pairs of elements of $Y$ commute. The relations $T$ say that $u$ commutes with each of $\{\tilde{a}_1,\ldots,\tilde{a}_g,\tilde{b}_1,\ldots,\tilde{b}_g\}$, so to show that \eqref{eq:relations} is a correct presentation of $\Heis$ it will suffice to show that the relations $\tilde{a}_i \tilde{b}_i = u^2 \tilde{b}_i \tilde{a}_i$ and $\tilde{a}_i \tilde{b}_j = \tilde{b}_j \tilde{a}_i$ for $i\neq j$ are true in $\Heis$, because we may then take $\widetilde{S}$ to be this collection of relations, since it projects to $S$. To verify these, we compute that
\[
\tilde{a}_i \tilde{b}_j = (0,a_i+b_j) = (0,b_j+a_i) = \tilde{b}_i \tilde{a}_j
\]
since $a_i.b_j =0$ when $i\neq j$, and
\[
\tilde{a}_i \tilde{b}_i = (1,a_i+b_i) = (1,b_i+a_i) = (2,0)(-1,b_i+a_i) = u^2 \tilde{b}_i \tilde{a}_i,
\]
since $a_i.b_i = 1$ and $b_i.a_i = -1$.
\end{proof}

It follows immediately from this presentation that:

\begin{corollary}
\label{hom_phi}
For each $g\geq 1$ and $n\geq 2$, there is a surjective homomorphism
\[
\phi \colon \mathbb{B}_{n}(\Sigma) \relbar\joinrel\twoheadrightarrow \Heis(\Sigma)
\]
sending each $\sigma_i$ to $u$ and sending $\alpha_i \mapsto \tilde{a}_i$, $\beta_i \mapsto \tilde{b}_i$.
\end{corollary}

In the case $n\geq 3$, this quotient of the surface braid group has previously been considered in \cite{B_al2008,B_al2011,BGG2017}, which also consider the more general setting where $\Sigma$ is closed or has several boundary components. The alternative approach in these articles allows one to identify the kernel of $\phi$ as a characteristic subgroup. We include below a description of the kernel valid for all $n\geq 2$.

\begin{proposition}
\label{kernel}
\textup{(a)} For $n\geq 2$, the kernel of $\phi$ is the normal subgroup generated by the commutators $[\sigma_1,x]$ for $x\in \mathbb{B}_{n}(\Sigma)$.\\
\textup{(b)} For $n\geq 3$, the kernel of $\phi$ is the subgroup of $3$-commutators $\Gamma_3(\mathbb{B}_{n}(\Sigma))$.
\end{proposition}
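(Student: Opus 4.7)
For part (a), let $N$ be the normal closure in $\mathbb{B}_n(\Sigma)$ of the commutators $[\sigma_1, x]$. The inclusion $N \subseteq \ker(\phi)$ is immediate since $\phi(\sigma_1) = u$ is central in $\mathcal{H}$, so $\phi([\sigma_1, x]) = [u, \phi(x)] = 1$. For the converse, the plan is to exhibit a two-sided inverse to the induced surjection $\overline\phi \colon \mathbb{B}_n(\Sigma)/N \twoheadrightarrow \mathcal{H}$. In $\mathbb{B}_n(\Sigma)/N$, $\sigma_1$ is central by construction; when $n \geq 3$, iterating (BR2) realises each $\sigma_{i+1}$ as a conjugate of $\sigma_i$ in $\mathbb{B}_n(\Sigma)$, and since conjugation fixes central elements, in the quotient all the $\sigma_i$ collapse to a single central element $u$ (this is vacuous when $n = 2$). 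Substituting $\sigma_i \mapsto u$ into Bellingeri's presentation makes (BR1), (BR2) and (CR2) automatic, converts (CR1) and (CR3) into the relations $[u,\alpha_r] = [u,\beta_r] = [\alpha_r,\alpha_s] = [\alpha_r,\beta_s] = [\beta_r,\beta_s] = 1$ for $r \neq s$, and reduces (SCR) to $\alpha_r \beta_r = u^2 \beta_r \alpha_r$. These contain all the defining relations of $\mathcal{H}$ from Proposition~\ref{p:presentation}, so the assignment $u \mapsto \sigma_1$, $\tilde a_i \mapsto \alpha_i$, $\tilde b_i \mapsto \beta_i$ defines a homomorphism $\mathcal{H} \to \mathbb{B}_n(\Sigma)/N$ which is inverse to $\overline\phi$ on generators, and hence a two-sided inverse.

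For part (b), given (a) it suffices to show that $N = \Gamma_3(\mathbb{B}_n(\Sigma))$ when $n \geq 3$. The inclusion $\Gamma_3 \subseteq N$ follows from the $2$-nilpotence of $\mathbb{B}_n(\Sigma)/N \cong \mathcal{H}$. For the reverse inclusion it suffices to prove that $\sigma_1$ is central in $\mathbb{B}_n(\Sigma)/\Gamma_3$. The single step that requires $n \geq 3$ is the claim $\sigma_1 \equiv \sigma_2 \pmod{\Gamma_3}$, which I would obtain by the following commutator calculation. Modulo $\Gamma_3$ all commutators are central, so $[x,yz] \equiv [x,y][x,z]$ and $[x,y^{-1}] \equiv [x,y]^{-1}$; expanding yields $[\sigma_1, \sigma_2\sigma_1\sigma_2^{-1}] \equiv 1$. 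Combining this with the rewriting of (BR2) as $\sigma_1(\sigma_2\sigma_1\sigma_2^{-1})\sigma_1^{-1} = \sigma_2$ gives $\sigma_2 \equiv \sigma_2 \sigma_1 \sigma_2^{-1}$, which rearranges to $\sigma_1 \equiv \sigma_2$. Iterating this argument along successive instances of (BR2) yields $\sigma_i \equiv \sigma_1$ modulo $\Gamma_3$ for all $i$, after which (BR1) and (CR1) immediately show that $\sigma_1$ commutes modulo $\Gamma_3$ with every remaining generator.

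The main technical obstacle is the commutator calculation in part (b); part (a) reduces to careful but routine bookkeeping with Bellingeri's presentation.
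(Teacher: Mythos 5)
Your part~(a) coincides with the paper's argument: both impose centrality of $\sigma_1$ on Bellingeri's presentation, simplify, and observe that the result is exactly the presentation of $\Heis$ from Proposition~\ref{p:presentation} (the paper phrases this as the two presentations coinciding; you phrase it as constructing a two-sided inverse to $\overline\phi$, but these are the same manipulation).

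For part~(b), however, you take a genuinely different route. The paper does not give a proof at all: it cites \cite[Theorem 2]{B_al2008} (more precisely statement (10) on page 1416), which treats closed surfaces and a finite quotient $\Heis_g / \langle u^{2(n+g-1)}\rangle$, and remarks that the proof there adapts to the present setting. You instead give a short self-contained argument: the inclusion $\Gamma_3 \subseteq N$ is immediate from $2$-nilpotence of $\Heis$, and for $N \subseteq \Gamma_3$ you show directly that $\sigma_1$ becomes central in $\mathbb{B}_n(\Sigma)/\Gamma_3$ when $n \geq 3$. Your commutator computation is correct: modulo $\Gamma_3$ the commutator bracket is bilinear with values in the centre, so $[\sigma_1,\sigma_2\sigma_1\sigma_2^{-1}] \equiv 1$, and combined with the exact identity $\sigma_1(\sigma_2\sigma_1\sigma_2^{-1})\sigma_1^{-1} = \sigma_2$ (a rearrangement of (BR2)) this forces $\sigma_1 \equiv \sigma_2 \pmod{\Gamma_3}$; centrality of $\overline{\sigma_1}$ then follows from (BR1) and (CR1) applied through $\overline{\sigma_2}$. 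This has the virtue of being elementary and not requiring the reader to adapt a result stated in a slightly different setting, at the cost of being a computation that must be checked; the paper's approach outsources the work to the literature. Both are valid.

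One small presentational quibble on part~(a): you write that the simplified relations ``contain all the defining relations of $\Heis$''; what you actually need (and what is true) is that each defining relation of $\Heis$ \emph{holds} in $\mathbb{B}_n(\Sigma)/N$ under the assignment $u \mapsto \sigma_1$, $\tilde a_i \mapsto \alpha_i$, $\tilde b_i \mapsto \beta_i$, so that the assignment extends to a homomorphism $\Heis \to \mathbb{B}_n(\Sigma)/N$. Your verification does establish this, so the slip is only in the phrasing.
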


For a proof of statement (b), we refer to \cite[Theorem 2]{B_al2008}. More precisely, statement (10) on page 1416 of \cite{B_al2008} is the analogous fact for the closed surface $\Sigma_g$: that there is a surjective homomorphism $\mathbb{B}_n(\Sigma_g) \twoheadrightarrow \Heis_g / \langle u^{2(n+g-1)} \rangle$ whose kernel is exactly $\Gamma_3(\mathbb{B}_n(\Sigma_g))$. The proof given there works also in our case where the surface has one boundary component and we do not quotient by $\langle u^{2(n+g-1)} \rangle$. In this paper we will use statement (a) and focus on the case $n=2$ in our explicit computations.

\begin{proof}
Let $K_n \subseteq \mathbb{B}_{n}(\Sigma)$ be the normal subgroup generated by the commutators $[\sigma_1,x]$ for $x\in \mathbb{B}_{n}(\Sigma)$. The image $\phi(\sigma_1)$ being central, we have $K_n \subseteq \ker(\phi)$, hence we see that $\phi$ may be factored through a surjective  homomorphism $\overline \phi \colon \mathbb{B}_{n}(\Sigma)/K_n \rightarrow \Heis$. If we add centrality of $\sigma_1$ to the defining relations for $\mathbb{B}_{n}(\Sigma)$, we may:
\begin{compactitem}
\item replace (\textbf{BR2}) by $\sigma_i=\sigma_1$ for all $i$,
\item remove (\textbf{BR1}), (\textbf{CR1}) and (\textbf{CR2}),
\item replace (\textbf{CR3}) by commutators of all pairs of generators except for $(\alpha_r,\beta_r)$,
\item replace (\textbf{SCR}) with $\alpha_r\beta_r=\sigma_1^2\beta_r\alpha_r$.
\end{compactitem}
Finally the presentations of $\mathbb{B}_{n}(\Sigma)/K_n$ and $\Heis$ coincide and $\overline \phi$ is an isomorphism, which proves (a).
\end{proof}

In contrast to the case of $n \geq 3$, the kernel $\mathrm{ker}(\phi)$ when $n=2$ lies strictly between the terms $\Gamma_2$ and $\Gamma_3$ of the lower central series of $\mathbb{B}_2(\Sigma)$.

\begin{proposition}
There are proper inclusions
\[
\Gamma_3(\mathbb{B}_2(\Sigma)) \hookrightarrow \mathrm{ker}(\phi) \hookrightarrow \Gamma_2(\mathbb{B}_2(\Sigma)).
\]
\end{proposition}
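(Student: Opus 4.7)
\noindent The inclusion $\ker(\phi)\subseteq\Gamma_2(\mathbb{B}_2(\Sigma_{g,1}))$ is immediate from the fact that $\phi$ induces an isomorphism on abelianisations: abelianising Bellingeri's presentation, only (SCR) produces a non-commutator relation, yielding $2\sigma_1=0$, so $\mathbb{B}_2^{\mathrm{ab}}\cong H_1(\Sigma)\oplus\mathbb{Z}/2$; abelianising the presentation of Proposition \ref{p:presentation} gives the same group, and $\phi$ matches generators. This inclusion is strict because $\mathbb{B}_2/\ker(\phi)=\Heis$ is non-abelian. The reverse inclusion $\Gamma_3(\mathbb{B}_2)\subseteq\ker(\phi)$ follows from $\Heis$ being $2$-step nilpotent: by Proposition \ref{p:presentation}, $[\Heis,\Heis]$ is generated by the central element $u^2=[\tilde a_i,\tilde b_i]$, so $\Gamma_3(\Heis)=1$ and hence $\phi(\Gamma_3(\mathbb{B}_2))=1$.

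The nontrivial content is the strict inclusion $\Gamma_3(\mathbb{B}_2)\subsetneq\ker(\phi)$. I would take $c:=[\sigma_1,\alpha_1]$ as the witness: $c\in\ker(\phi)$ because $\phi(c)=[u,\tilde a_1]=1$ by centrality of $u$, so the task reduces to showing $c\notin\Gamma_3(\mathbb{B}_2)$. My strategy is to produce an explicit $2$-step nilpotent quotient $\psi\colon\mathbb{B}_2(\Sigma_{g,1})\twoheadrightarrow N$ with $\psi(c)\neq 0$. The candidate $N$ is a central extension of $H=H_1(\Sigma)\oplus\mathbb{Z}/2$ by $A=\mathbb{Z}\oplus(\mathbb{Z}/2)^{2g}$, classified by a $2$-cocycle $\omega'\colon H\times H\to A$ that combines (i) an ordered lift of the symplectic form $\omega$ (valued in $\mathbb{Z}$, recovering $[\alpha_r,\beta_r]$), (ii) the standard binomial cocycle on the $\mathbb{Z}/2\cdot\sigma_1$-factor (valued in $\mathbb{Z}$, recovering $\sigma_1^2$), and (iii) the bilinear mod-$2$ cross pairing $(v,k\sigma_1)\mapsto k\bar{v}\in(\mathbb{Z}/2)^{2g}$, which is precisely what allows $[\sigma_1,\alpha_1]$ to survive as a generator of a $\mathbb{Z}/2$-summand of $A$.

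The main obstacle is verifying that $\psi$ is well-defined, i.e.\ that all of Bellingeri's relations hold in $N$. For $n=2$ the relations (BR1), (BR2) and (CR1) are vacuous; (CR2) and (CR3) reduce to commutator identities in $A$ that hold automatically because $\sigma_1^{\pm 1}$-conjugation acts trivially on the abelian $H$, combined with the $2$-torsion of the $(\mathbb{Z}/2)^{2g}$-summand; and (SCR), after absorbing the vanishing term $[\sigma_1,\beta_r]^2=0$, reduces to the identity $\sigma_1^2=[\alpha_r,\beta_r]$ in $N$, which is built into the cocycle by design. This is a bookkeeping-heavy but direct computation in the spirit of Theorem \ref{hom_phi}, and once $N$ is in hand the conclusion is immediate: by construction $\psi(c)=\omega'(\sigma_1,\alpha_1)-\omega'(\alpha_1,\sigma_1)$ is a generator of the $\mathbb{Z}/2$-summand of $A$ indexed by $\alpha_1$, hence nonzero, whence $c\notin\Gamma_3(\mathbb{B}_2)$.
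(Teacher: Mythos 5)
Your argument is correct and agrees with the paper on three of the four pieces (the two inclusions and the strictness of the first), up to one cosmetic variation: for $\ker(\phi)\subseteq\Gamma_2$ you argue via the isomorphism $\mathbb{B}_2^{\mathrm{ab}}\cong\Heis^{\mathrm{ab}}\cong H_1(\Sigma)\oplus\mathbb{Z}/2$, whereas the paper simply cites Proposition \ref{kernel}(a), that $\ker(\phi)$ is normally generated by commutators $[\sigma_1,x]$; both are valid one-liners. The genuine divergence is in the strictness $\Gamma_3\subsetneq\ker(\phi)$. Like the paper, you pick the witness $c=[\sigma_1,\alpha_1]$ and aim for a $2$-nilpotent quotient $\psi$ with $\psi(c)\neq 1$. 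The paper achieves this with the dihedral group $D_4=\langle g,\tau\mid g^2=\tau^2=(g\tau)^4=1\rangle$ of order $8$, via $\alpha_i,\beta_i\mapsto g$ and $\sigma_1\mapsto\tau$: the relations reduce to $(g\tau)^4=1$ and $(\tau g)^2=(g\tau)^2$, and $\psi(c)=(\tau g)^2\neq 1$. You instead build a considerably larger $2$-nilpotent group $N$ as a central extension of $H_1(\Sigma)\oplus\mathbb{Z}/2$ by $\mathbb{Z}\oplus(\mathbb{Z}/2)^{2g}$, with a hand-crafted cocycle whose mod-$2$ cross-pairing component is engineered precisely so that $[\sigma_1,\alpha_1]$ survives into a $\mathbb{Z}/2$ summand. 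Your construction does work: the components (i) and (iii) are bilinear and (ii) is the standard binomial cocycle, so $\omega'$ is indeed a $2$-cocycle, and one checks that for $n=2$ the relation (SCR) in a $2$-nilpotent group amounts to $\sigma_1^2=[\alpha_r,\beta_r]\cdot[\sigma_1,\beta_r]^2$, with the last factor killed by the $2$-torsion, exactly as you say, while (CR2) and (CR3) reduce to $[\alpha_r,\sigma_1]^2=1$ and $[\alpha_r,\alpha_s]=[\alpha_r,\beta_s]=1$ ($r<s$), both built into $A$ and $\omega$. But it is much heavier machinery for the same payoff; the paper's $D_4$ is a minimal witness verifiable in two lines, while your $N$ captures more of the $2$-nilpotent quotient of $\mathbb{B}_2(\Sigma_{g,1})$ at the cost of the bookkeeping you sketch but do not fully write out.
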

\begin{proof}
By the above proposition, $\mathrm{ker}(\phi)$ is normally generated by commutators, so it must lie inside $\Gamma_2(\mathbb{B}_2(\Sigma))$. On the other hand, the Heisenberg group $\Heis = \Heis_g$ is a central extension of an abelian group, hence $2$-nilpotent. The kernel of any homomorphism $G \to H$ with target a $2$-nilpotent group contains $\Gamma_3(G)$, so $\mathrm{ker}(\phi)$ contains $\Gamma_3(\mathbb{B}_2(\Sigma))$. To see that $\mathrm{ker}(\phi)$ is not equal to $\Gamma_2$, it suffices to note that the Heisenberg group is not abelian. To see that $\mathrm{ker}(\phi)$ is not equal to $\Gamma_3$, we will construct a quotient
\[
\psi \colon \mathbb{B}_2(\Sigma) \longrightarrow Q
\]
where $Q$ is $2$-nilpotent and $[\sigma_1,\alpha_1] \not\in \mathrm{ker}(\psi)$. Given this for the moment, suppose for a contradiction that $\mathrm{ker}(\phi) = \Gamma_3$. Then we have $[\sigma_1,\alpha_1] \in \mathrm{ker}(\phi) = \Gamma_3 \subseteq \mathrm{ker}(\psi)$, due to the fact that $Q$ is $2$-nilpotent, which is a contradiction.

It therefore remains to show that there exists a quotient $Q$ with the claimed properties. We will take $Q = D_4 = \langle \tau,\tau' \mid \tau^2 = (\tau')^2 = (\tau\tau')^4 = 1 \rangle$, the dihedral group with $8$ elements. Let us set $\psi(\alpha_i) = \psi(\beta_i) = \tau'$ and $\psi(\sigma_1) = \tau$. It is easy to verify from the presentations that this is a well-defined surjective homomorphism. The dihedral group $D_4$ is $2$-nilpotent (its centre is generated by $(\tau\tau')^2$ and the quotient by this element is isomorphic to the abelian group $(\Z/2)^2$), and we compute that $\psi([\sigma_1,\alpha_1]) = (\tau \tau')^2 \neq 1$, which completes the proof.
\end{proof}

\section{Heisenberg homology}
\label{s:Heisenberg-homology}

Using the homomorphism $\phi$, any left representation $V$ of the Heisenberg group $\Heis$ over a ring $R$ becomes a left module over $R[\mathbb{B}_{n}(\Sigma)]$. If $V$ also has a right module structure over another ring $S$, i.e.\ if it is a left representation of $\Heis$ in $(R,S)$-bimodules, then it becomes an $(R[\mathbb{B}_{n}(\Sigma)],S)$-bimodule. Following for example \cite[Ch.~3.H]{Hatcher} or \cite[Ch.~5]{Davis} we then have homology groups with local coefficients $H_*(\mathcal{C}_{n}(\Sigma);V)$, which are again $(R,S)$-bimodules.
Let $\widetilde{\mathcal{C}}_{n}(\Sigma)$ be the regular covering of $\mathcal{C}_{n}(\Sigma)$ associated with the kernel of $\phi$. When $V$ is the regular representation $R[\Heis]$, the homology $H_*(\mathcal{C}_{n}(\Sigma);R[\Heis])$ is the homology of the singular chain complex $\mathcal{S}_*(\widetilde{\mathcal{C}}_{n}(\Sigma))$ considered as a right $R[\Heis]$-module by deck transformations. In general, given any $V$ as above, the $(R,S)$-bimodule $H_*(\mathcal{C}_{n}(\Sigma);V)$ is the homology of the chain complex $\mathcal{S}_*(\widetilde{\mathcal{C}}_{n}(\Sigma)) \otimes_{R[\Heis]} V$.

Relative homology with local coefficients is defined in the usual way. We also use \textit{Borel--Moore homology}, defined by
\begin{equation}
\label{eq:BM-homology-limit-definition}
H_n^{BM}(\mathcal{C}_{n}(\Sigma);V) = 
{\varprojlim_T}\, H_n(\mathcal{C}_{n}(\Sigma), \mathcal{C}_{n}(\Sigma) \setminus T ; V),
\end{equation}
where the inverse limit is taken over all compact subsets  $T\subset\mathcal{C}_{n}(\Sigma)$. In general, writing $\mathcal{K}(X)$ for the poset of compact subsets of a space $X$, the Borel--Moore homology module $H_n^{BM}(X,A;V)$ is the limit of the functor $H_n(X,A \cup (X\setminus -);V) \colon \mathcal{K}(X)^{\mathrm{op}} \to \bimod{R}{S}$ for any local system $V$ on $X$ and any properly embedded subspace $A \subseteq X$.

All of the properties concerning Borel--Moore homology that will be used here may be checked by elementary arguments. The interested reader will also find a general exposition based on cosheaves in \cite[Chapter 5]{Bredon}, which includes the case of local coefficients.
One could also work with locally finite singular homology. From \cite[Theorem 7.3]{Spanier} this gives homology groups isomorphic to the inverse limit \eqref{eq:BM-homology-limit-definition} provided that there exists an exhausting sequence of compact subsets $T$ for which the $\lim^1$ contribution vanishes. This is satisfied in our case. Indeed, the configuration space $\mathcal{C}_n(\Sigma)$ is the complement of the big diagonal in the symmetric power $\mathrm{Sym}^n(\Sigma)$. By removing an open tubular neighbourhood of the big diagonal in $\mathrm{Sym}^n(\Sigma)$ we obtain a manifold with boundary that is a compactification of $\mathcal{C}_n(\Sigma)$. This shows that the limit process is stationary with limit the homology of the compactification relative to its boundary.

Borel--Moore homology is functorial with respect to proper maps: If $f \colon Y \to X$ is a proper map taking $B \subseteq Y$ into $A \subseteq X$, then there is an induced functor $f^{-1} \colon \mathcal{K}(X) \to \mathcal{K}(Y)$ by taking pre-images, and a natural transformation $H_n(Y,B \cup (Y\setminus -);f^*(V)) \circ f^{-1} \Rightarrow H_n(X,A \cup (X\setminus -);V)$ (where $f^*(V)$ denotes the pullback of the local system $V$ on $X$ to $Y$) arising from the naturality of singular homology. Taking limits, these induce maps
\begin{align*}
H_n^{BM}(Y,B;f^*(V)) &= \mathrm{lim}\, H_n(Y,B \cup (Y\setminus -);f^*(V)) \\
&\longrightarrow \mathrm{lim}\, \bigl( H_n(Y,B \cup (Y\setminus -);f^*(V)) \circ f^{-1} \bigr) \\
&\longrightarrow \mathrm{lim}\, H_n(X,A \cup (X\setminus -);V) = H_n^{BM}(X,A;V).
\end{align*}
In particular, homeomorphisms are proper maps, so self-homeomorphisms of a space act on its Borel--Moore homology.

We will adapt a method used by Bigelow in the genus-zero case \cite{Bigelow_Hecke} (see also \cite{An,Martel,AP}) for computing the relative Borel--Moore homology
\[
H_*^{BM}(\mathcal{C}_{n}(\Sigma),\mathcal{C}_{n}(\Sigma,\partial^-(\Sigma)) ;V) = {\varprojlim_T}(\mathcal{C}_{n}(\Sigma),\mathcal{C}_{n}(\Sigma,\partial^-(\Sigma)) \cup (\mathcal{C}_{n}(\Sigma)\setminus T) ; V),
\]
where $\mathcal{C}_{n}(\Sigma,\partial^-(\Sigma))$ is the closed (thus properly embedded) subspace of configurations containing at least one point in a fixed closed interval $\partial^-(\Sigma)\subset \partial \Sigma$. In general for a pair $(X,Y)$ the notation $\mathcal{C}_{n}(X,Y)$ will be used for configurations of $n$ points in $X$ containing at least one point in $Y$.

The surface $\Sigma$ can be represented as a thickened interval $[0,1]\times I$ with $2g$ handles, whose cores are attached along $\{1\}\times \{ w_1,w_2,w'_1,w'_2,\dots,w_{2g-1},w_{2g},w'_{2g-1},w'_{2g} \}$ as depicted in Figure~\ref{fig:model-surface}. We view $\Sigma$ as a relative cobordism from $\partial^-(\Sigma)=\{0\}\times I$ (in blue below) to  $\partial^+(\Sigma)$ (in green below), where $\partial^+(\Sigma)$ is the closure of the complement of $\partial^-(\Sigma)$ in $\partial(\Sigma)$.
For $1\leq i\leq 2g$, we denote by $\gamma_i$ the union of the core of the $i$-th handle with $[0,1]\times \{w_i,w'_i\}$, oriented from $w_i$ to $w'_i$, and we set $\Gamma = \amalg_i \gamma_i$ (in red in Figure~\ref{fig:model-surface}).

\begin{figure}[h]
    \centering
    \includegraphics[scale=0.7]{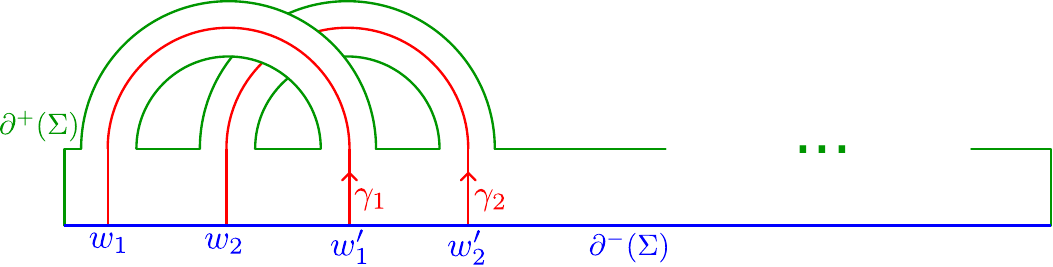}
    \caption{The surface $\Sigma$ together with the decomposition of its boundary into $\partial^+(\Sigma)$ and $\partial^-(\Sigma)$.}
    \label{fig:model-surface}
\end{figure}

Let $\mathcal{K}$ be the set of sequences $k=(k_1,k_2,...,k_{2g})$ such that $k_i$ is a non-negative integer and $\sum_i k_i = n$. 
We will associate to each $k \in \mathcal{K}$ an element of the relative Borel--Moore homology $H_n^{BM}(\mathcal{C}_{n}(\Sigma),\mathcal{C}_{n}(\Sigma,\partial^-(\Sigma)) ;V)$, as follows.

For $k\in \mathcal{K}$ we consider the submanifold $E_k \subset \mathcal{C}_{n}(\Sigma)$ consisting of all configurations having $k_i$ points on $\gamma_i$. This manifold inherits an \emph{orientation} from the orientations of the arcs $\gamma_i$ together with the ordering of the points on $\Gamma$ defined by declaring that $x<y$ for $x \in \gamma_i$, $y \in \gamma_j$ if either $i<j$ or $i=j$ and $x$ comes before $y$ according to the orientation of $\gamma_i$. Moreover, it is a properly embedded Euclidean half-space $\R^n_+$ in $\mathcal{C}_n(\Sigma)$ with boundary in $\mathcal{C}_n(\Sigma,\partial^-(\Sigma))$.
After choosing a path connecting it to the basepoint in $\mathcal{C}_n(\Sigma)$, $E_k$ represents a homology class in $H_n^{BM}(\mathcal{C}_{n}(\Sigma),\mathcal{C}_{n}(\Sigma,\partial^-(\Sigma));V)$, which we also denote by $E_k$.

\begin{theorem}[Theorem \ref{athm:twisted}(a)]
\label{basis}
Let $V$ be any left representation of the discrete Heisenberg group $\Heis$ in $(R,S)$-bimodules, for two rings $R,S$. Then, for $n\geq 2$, there is an isomorphism of $(R,S)$-bimodules
\begin{equation}
\label{eq:module-decomposition}
H_n^{BM}(\mathcal{C}_{n}(\Sigma),\mathcal{C}_{n}(\Sigma,\partial^-(\Sigma)) ;V) \;\cong\; \bigoplus_{k\in \mathcal{K}} V.
\end{equation}
Furthermore, this is the only non-zero bimodule in the graded bimodule $H_*^{BM}(\mathcal{C}_{n}(\Sigma),\mathcal{C}_{n}(\Sigma,\partial^-(\Sigma)) ;V)$.
In particular, in the case when $(R,S) = (\Z,\Z[\Heis])$ and $V = \Z[\Heis]$, the graded right $\Z[\Heis]$-module $H_*^{BM}(\mathcal{C}_{n}(\Sigma),\mathcal{C}_{n}(\Sigma,\partial^-(\Sigma)) ;\Z[\Heis])$ is concentrated in degree $n$ and free of dimension
$\bigl(
\begin{smallmatrix}
2g+n-1 \\
n \\
\end{smallmatrix}
\bigr)$
with basis $\{ E_k \}_{k\in \mathcal{K}}$.
The corresponding statements are also true if Borel--Moore homology $H_*^{BM}$ is replaced by compactly-supported cohomology $H^*_c$ and $V$ is a right representation of $\Heis$ in $(R,S)$-bimodules.
\end{theorem}

\begin{remark}
\label{rmk:more-general-coefficients}
Theorem \ref{basis} is true (with the same proof) more generally for Borel--Moore homology (or compactly-supported cohomology) with coefficients in any representation $V$ of the surface braid group $\mathbb{B}_n(\Sigma) = \pi_1(\mathcal{C}_n(\Sigma))$, not necessarily factoring through the quotient $\mathbb{B}_n(\Sigma) \twoheadrightarrow \Heis$. However, we will only need Theorem \ref{basis} for representations of the Heisenberg group.
\end{remark}

The isomorphism \eqref{eq:module-decomposition} of Theorem \ref{basis} is natural in $V$ in the following sense.

\begin{proposition}
\label{prop:basis-naturality}
The decomposition \eqref{eq:module-decomposition} is natural in the following sense: for any morphism $\xi \colon V \to V'$ of left representations of $\Heis$ over a pair of rings $(R,S)$, the induced map on homology
\[
H_n^{BM}(\mathcal{C}_{n}(\Sigma),\mathcal{C}_{n}(\Sigma,\partial^-(\Sigma)) ;V) \longrightarrow H_n^{BM}(\mathcal{C}_{n}(\Sigma),\mathcal{C}_{n}(\Sigma,\partial^-(\Sigma)) ;V') ,
\]
under the identifications \eqref{eq:module-decomposition}, is equal to $\bigoplus_{k\in \mathcal{K}} \xi$. Thus we have an isomorphism
\[
H_n^{BM}(\mathcal{C}_{n}(\Sigma),\mathcal{C}_{n}(\Sigma,\partial^-(\Sigma));-) \;\cong\; (-)^{\oplus \mathcal{K}}
\]
of functors $\bimod{R[\Heis]}{S} \to \bimod{R}{S}$ for any pair of rings $(R,S)$.
\end{proposition}

In order to prove Theorem \ref{basis} (and Proposition \ref{prop:basis-naturality}), we need a preliminary lemma. To state it, we recall that a deformation retraction $h \colon [0,1] \times \Sigma \rightarrow \Sigma $ from $\Sigma$ to $Y\subset \Sigma$ is a continuous map $(t,x) \mapsto h(t,x)=h_t (x)$ such that $h_0 = \id_\Sigma$, $h_1(\Sigma) = Y$ and $(h_t)_{|_Y} = \id_Y$ for all $0\leq t\leq 1$.

\begin{lemma}\label{deformation}
There exists a metric $d$ on $\Sigma$, inducing the standard topology, and a deformation retraction $h$ from  $\Sigma$ to $\Gamma\cup \partial^-(\Sigma)$, such that for all  $0 \leq t < 1$, the map $h_t \colon \Sigma \to \Sigma$ is a $1$-Lipschitz embedding.
\end{lemma}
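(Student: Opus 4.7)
The plan is to build $d$ and $h$ piecewise along the handle decomposition of $\Sigma$: define them separately on each handle $H_i$ and on each of the sub-rectangles $S_j$ into which the horizontal arcs $[0,1] \times \{w\}$ (for $w \in W$) partition the main body, then glue along the common arcs. The general principle is that whenever a piece is isometric to a flat rectangle with one distinguished straight boundary edge which is to serve as the target of the retraction, the map $(x,y) \mapsto (x, (1-\tau) y)$ in those local coordinates is manifestly $1$-Lipschitz, is an embedding for $\tau < 1$, fixes the distinguished edge, and retracts the piece onto it at $\tau = 1$.

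On each handle $H_i$, identified with a flat Euclidean rectangle $[0, L_i] \times [-\epsilon_i, \epsilon_i]$ with core $[0, L_i] \times \{0\}$, this principle applies directly and produces the required retraction onto $\gamma_i \cap H_i$. The principal difficulty is on the sub-rectangles of the main body: the intersection of $S_j$ with $\Gamma \cup \partial^-(\Sigma)$ is a U-shape (for interior $S_j$) or an L-shape (for the top and bottom extremal pieces), i.e.\ three or two boundary edges meeting at right-angle corners. In the standard Euclidean metric every natural retraction onto such a bent target -- nearest-point projection, radial projection from a focal point on the free right edge, piecewise definitions partitioned by the Voronoi bisectors -- either fails continuity at the bisector diagonals or fails the $1$-Lipschitz inequality; this is the main obstacle.

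My approach is to replace the Euclidean metric on each sub-rectangle by an adapted flat metric in which the L- or U-shape becomes a single straight geodesic edge. Concretely, identify each interior $S_j$ (with vertical extent $[a_j, a_{j+1}]$) isometrically with a flat half-disk of radius $(a_{j+1} - a_j + 2)/2$ via a homeomorphism carrying the U-shape onto the diameter by its arc-length parametrisation, and identify each extremal $S_j$ with a flat quarter-disk in a similar fashion. Under this identification the retraction above is manifestly $1$-Lipschitz and an embedding for $\tau < 1$. Every horizontal arc $[0, 1] \times \{w\}$ shared by two adjacent sub-rectangles has natural length $1$ in both adapted models, and the handle widths $\epsilon_i$ may be prescribed so that each attaching arc of $H_i$ matches the corresponding arc-length along the semicircular boundary of the adjacent half-disk; this freedom is exactly what is needed to glue the models isometrically.

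Gluing produces a piecewise-flat metric $d$ on $\Sigma$ inducing the standard topology, and the piecewise homotopies assemble into a globally continuous $h$ because each of them fixes $G := \Gamma \cup \partial^-(\Sigma)$ pointwise on the shared arcs. The global $1$-Lipschitz bound for $h_\tau$ then follows from the piecewise bound together with a triangle-inequality argument along a minimising path decomposed at piece boundaries, using that each $h_\tau$ maps each piece into itself. The main technical subtlety I expect to have to address is that the half-disk identification sends the two corners $(0, a_j), (0, a_{j+1})$ of each interior $S_j$ to smooth interior points of the diameter, so the interior angle contributed by each $S_j$ at such a corner is $\pi$ rather than $\pi/2$, and the angle sums at the vertices of $G$ lying on $\partial^-(\Sigma)$ need to be checked for compatibility with those vertices lying on $\partial(\Sigma)$; carrying this out correctly is likely to require modifying the adapted model near each such vertex, and this is the expected source of book-keeping in the proof.
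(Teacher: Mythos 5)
Your continuity argument has a real gap at the gluings between a handle piece $H_i$ and an adjacent main-body piece $S_j$. You argue that the piecewise homotopies assemble into a continuous $h$ ``because each of them fixes $G = \Gamma \cup \partial^-(\Sigma)$ pointwise on the shared arcs.'' This covers only the shared arcs $[0,1] \times \{w\}$ between two adjacent $S_j$'s, which do lie in $G$. The arcs along which a handle is attached to the main body do \emph{not} lie in $G$, and there the two piecewise-defined retractions genuinely disagree. On the handle side, $h_\tau(0,y) = (0,(1-\tau)y)$ carries the attaching edge $\{0\} \times [-\epsilon_i,\epsilon_i]$ into itself, shrinking it along itself toward the core-endpoint $(0,0) \in \Gamma$. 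On the $S_j$ side, after your identification with the flat half-disk $\{x^2+y^2 \le R_j^2,\ y \ge 0\}$, the attaching sub-arc sits on the semicircle, and $h_\tau(x,y) = (x,(1-\tau)y)$ carries every point with $y>0$ strictly into the interior of the half-disk -- off the boundary of $S_j$, in particular off the attaching arc. So for $0<\tau<1$ the two formulae give different points of $\Sigma$ at every interior point of the attaching arc, and $h_\tau$ is not even well-defined there. By contrast, the cone angle $2\pi$ you flag at the corners of $\partial^-(\Sigma)$ is harmless: the metric still induces the standard topology, the corners lie on $G$ and are fixed, and the path-length argument for the global Lipschitz bound is unaffected by cone points.

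This difficulty disappears with a different cell decomposition, and that is in fact what the paper does. Rather than cutting $\Sigma$ into handles plus the horizontal strips of the main body (which produces the U-shaped targets you then have to straighten), the paper cuts $\Sigma$ into $2g$ \emph{bands} -- thickened neighbourhoods of the \emph{entire} arcs $\gamma_i$, including their two straight portions $[0,1]\times\{w_i\}$ and $[0,1]\times\{w'_i\}$ in the main body -- and $4g+1$ \emph{squares}, which are thickenings of the $4g+1$ sub-arcs of $\partial^-(\Sigma)$ cut out by the endpoints of the $\gamma_i$. In that decomposition each piece meets $\Gamma \cup \partial^-(\Sigma)$ in a single straight edge (the band's core or the square's bottom edge), so the linear retraction with the \emph{Euclidean} metric on each piece already works and no adapted metric is needed; moreover the band's short edge and the square's side edge, where the two kinds of pieces meet, are each carried into themselves by the respective retractions, shrinking linearly towards the common corner on $\partial^-(\Sigma)$, so the piecewise formulae agree on every shared arc. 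Salvaging your approach would require either a more elaborate (non-linear) retraction on $S_j$ that preserves the attaching arcs, or adopting a decomposition of this kind in which no piece has a bent target.
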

\begin{proof}
We have a model for $(\Sigma,\Gamma)$ by gluing $2g$ bands $b_j=[-1,1] \times [-l,l]$, $1\leq j\leq 2g$ and $4g+1$ squares $c_\nu=[0,1]\times [0,1]$, $0\leq j\leq 4g$ according to the identifications depicted in Figure \ref{model_sigma}.
We obtain a  deformation retraction $h$ which is defined on each band by the formula $h_t(u,v) = ((1-t)u,v)$ and on each square by $h_t(u,v)=(u,(1-t)v)$. It remains to show that for an appropriate metric $d$ the map $h_t$, $0\leq t<1$, is a $1$-Lipschitz embedding. 
On each band and square we use the standard Euclidean metric. Then for points $x,y \in \Sigma$, the distance $d(x,y)$ is defined as the shortest length of a path from $x$ to $y$. 
It is convenient to assume that $l$ is big enough so that no shortest path can go across a handle. Then $d$ is a metric which is flat outside $4g$ boundary points where the curvature is concentrated.
We have that $h_t$, for $0\leq t<1$, is a $1$-Lipshitz embedding in each band or square, from which we deduce that $h_t$, for $0\leq t<1$, is globally a $1$-Lipschitz embedding.
\end{proof}

\begin{figure}[t]
\begin{center}
\includegraphics[scale=1]{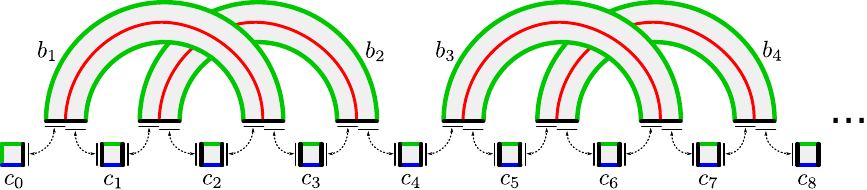}
\end{center}
\caption{A model for $\Sigma$}
\label{model_sigma}
\end{figure}

\begin{proof}[Proof of Theorem \ref{basis}]
We use a metric $d$ and a deformation retraction $h$ from Lemma \ref{deformation}.
For $\epsilon > 0$ and $Y\subset \Sigma$ we denote by $\mathcal{C}_n^\epsilon(Y)$ the subspace of configurations ${x}=\{ x_1,x_2,...,x_n\}\subset Y$ such that $d(x_i , x_j) <\epsilon$ for some $i\ne j$. For $0\leq t\leq 1$, let us write $\Sigma_t=h_t(\Sigma)$. Also, in order to shorten the notation in this proof, we will abbreviate $\mathcal{C}_n^{\epsilon,-}(\Sigma_t) := \mathcal{C}_{n}(\Sigma_t,\partial^-(\Sigma)) \cup \mathcal{C}_n^{\epsilon}(\Sigma_t)$ (in particular when $t=0$, in which case $\Sigma_t = \Sigma$).

For $0\leq t<1$ we have an inclusion
\begin{equation}
\label{eq:inc1}
\left( \mathcal{C}_{n}(\Sigma_t),\mathcal{C}_n^{\epsilon,-}(\Sigma_t) \right) \,\subset\, \left( \mathcal{C}_{n}(\Sigma),\mathcal{C}_n^{\epsilon,-}(\Sigma) \right) ,
\end{equation}
which is a homotopy equivalence with homotopy inverse $\mathcal{C}_n(h_t)$, which is a map of pairs because $h_t$ is $1$-Lipschitz. This implies that we also have a homotopy equivalence of pairs of covering spaces, as follows. Let us write $\pi \colon \widetilde{\mathcal{C}}_{n}(\Sigma) \to \mathcal{C}_{n}(\Sigma)$ for the universal covering of $\mathcal{C}_{n}(\Sigma)$ and denote by $\widetilde{X} := \pi^{-1}(X)$ the corresponding lift of each subspace $X \subseteq \mathcal{C}_{n}(\Sigma)$.\footnote{Note that this is not necessarily the universal covering of $X$.} By taking (homotopy) pullbacks along the covering maps, the homotopy equivalence \eqref{eq:inc1} induces a homotopy equivalence
\begin{equation}
\label{eq:heq1}
\left( \widetilde{\mathcal{C}}_{n}(\Sigma_t),\widetilde{\mathcal{C}}_n^{\epsilon,-}(\Sigma_t) \right) \,\subset\, \left( \widetilde{\mathcal{C}}_{n}(\Sigma),\widetilde{\mathcal{C}}_n^{\epsilon,-}(\Sigma) \right) ,
\end{equation}
and therefore a chain homotopy equivalence of the corresponding relative chain complexes
\begin{equation}
\label{eq:chheq1}
\mathcal{S}_* \left( \widetilde{\mathcal{C}}_{n}(\Sigma_t),\widetilde{\mathcal{C}}_n^{\epsilon,-}(\Sigma_t) \right) \,\simeq\, \mathcal{S}_* \left( \widetilde{\mathcal{C}}_{n}(\Sigma),\widetilde{\mathcal{C}}_n^{\epsilon,-}(\Sigma) \right) .
\end{equation}

The compactness of $\Sigma$ ensures that $h_1$ is the uniform limit of $h_t$ as $t\rightarrow 1$, which implies that for each $\epsilon>0$ we may choose $t=t_\epsilon <1$ such that for all $p \in \Sigma$ we have \mbox{$ d(h_t(p) , h_1(p)) < \tfrac{\epsilon}{2}$}.
For such $t$, let $A_t\subset \mathcal{C}_n(\Sigma_t)$ be the subset of configurations $x=\{x_1,\dots,x_n\}\subset \Sigma_t$ such that $(h_1\circ h_t^{-1})(x_i)=(h_1\circ h_t^{-1})(x_j)$ for some $i\neq j$. We have that $A_t$ is closed and (by our definition of $t=t_\epsilon$) contained in the open subset $\mathcal{C}_n^\epsilon(\Sigma_t) \subset \mathcal{C}_n(\Sigma_t)$ and hence in the interior of the subset $\mathcal{C}_n^{\epsilon,-}(\Sigma_t) \subset \mathcal{C}_n(\Sigma_t)$. The excision theorem therefore implies that the inclusion of pairs
\begin{equation}
\label{eq:inc2}
\left( \mathcal{C}_{n}(\Sigma_t) \setminus A_t , \mathcal{C}_n^{\epsilon,-}(\Sigma_t) \setminus A_t \right) \,\subset\, \left( \mathcal{C}_{n}(\Sigma_t) , \mathcal{C}_n^{\epsilon,-}(\Sigma_t) \right)
\end{equation}
induces isomorphisms on homology with any twisted coefficients pulled back from $\mathcal{C}_{n}(\Sigma_t)$. (We recall from \cite[Theorem~5.13]{Davis} that the excision theorem for homology with twisted coefficients may be formulated in exactly the same way as for untwisted coefficients.) In fact, the proof of the excision theorem shows that this isomorphism is a consequence of a stronger property: the inclusion of pairs \eqref{eq:inc2} induces a chain homotopy equivalence of relative chain complexes of pairs of covering spaces pulled back from any covering space of $\mathcal{C}_{n}(\Sigma_t)$. (Cf.\ \cite[Proposition~2.21]{Hatcher}; see also \cite{Heistad1967} for a slightly different formulation of the excision theorem in terms of homotopy equivalences of chain complexes.) In particular, taking this covering space to be $\widetilde{\mathcal{C}}_{n}(\Sigma_t)$, we have a chain homotopy equivalence:
\begin{equation}
\label{eq:chheq2}
\mathcal{S}_* \left( (\mathcal{C}_{n}(\Sigma_t) \setminus A_t)^{\sim} , (\mathcal{C}_n^{\epsilon,-}(\Sigma_t) \setminus A_t)^{\sim} \right) \,\simeq\, \mathcal{S}_* \left( \widetilde{\mathcal{C}}_{n}(\Sigma_t) , \widetilde{\mathcal{C}}_n^{\epsilon,-}(\Sigma_t) \right) ,
\end{equation}
where we have written $(-)^{\sim} = \widetilde{(-)}$ on the left-hand side for typographical reasons.

The map $\mathcal{C}_n(h_1)\circ \mathcal{C}_n(h_t^{-1})$ gives a well-defined map of pairs
\[
\left( \mathcal{C}_{n}(\Sigma_t)\setminus A_t , \mathcal{C}_n^{\epsilon,-}(\Sigma_t) \setminus A_t \right) \, \longrightarrow \, \left( \mathcal{C}_{n}(\Sigma_1) , \mathcal{C}_n^{\epsilon,-}(\Sigma_1) \right) ,
\]
which is a homotopy inverse to the inclusion. Taking (homotopy) pullbacks along covering maps and passing to relative chain complexes, it follows that the inclusion induces a chain homotopy equivalence:
\begin{equation}
\label{eq:chheq3}
\mathcal{S}_* \left( \widetilde{\mathcal{C}}_{n}(\Sigma_1) , \widetilde{\mathcal{C}}_n^{\epsilon,-}(\Sigma_1) \right) \,\simeq\, \mathcal{S}_* \left( (\mathcal{C}_{n}(\Sigma_t) \setminus A_t)^{\sim} , (\mathcal{C}_n^{\epsilon,-}(\Sigma_t) \setminus A_t)^{\sim} \right) .
\end{equation}

Combining the chain homotopy equivalences \eqref{eq:chheq1}, \eqref{eq:chheq2} and \eqref{eq:chheq3}, we deduce that the inclusion of pairs $(\mathcal{C}_{n}(\Sigma_1) , \mathcal{C}_n^{\epsilon,-}(\Sigma_1)) \subset (\mathcal{C}_{n}(\Sigma) , \mathcal{C}_n^{\epsilon,-}(\Sigma))$ induces a chain homotopy equivalence
\begin{equation}
\label{eq:chheqcombined1}
\mathcal{S}_* \left( \widetilde{\mathcal{C}}_{n}(\Sigma_1) , \widetilde{\mathcal{C}}_n^{\epsilon,-}(\Sigma_1) \right) \,\simeq\, \mathcal{S}_* \left( \widetilde{\mathcal{C}}_{n}(\Sigma),\widetilde{\mathcal{C}}_n^{\epsilon,-}(\Sigma) \right) .
\end{equation}
At the level of relative chain complexes, we have therefore ``compressed'' configurations on the surface $\Sigma$ to configurations on the subspace $\Sigma_1=h_1(\Sigma)$; recall that this is equal to $\Gamma \cup \partial^-(\Sigma)$. The next step is to compress further to configurations on $\Gamma$. In the following, we will use the abbreviation $\mathcal{C}_n^{\epsilon,-}(\Gamma) := \mathcal{C}_{n}(\Gamma,W^-) \cup \mathcal{C}_n^{\epsilon}(\Gamma)$, where $W^-$ is defined by
\[
W^- := \{0\} \times \{ w_1,w_2,w'_1,w'_2,\dots,w_{2g-1},w_{2g},w'_{2g-1},w'_{2g} \} \subset \partial^-(\Sigma)
\]
in other words it is the finite set consisting of the $4g$ endpoints of the arcs $\gamma_1,\ldots,\gamma_{2g}$ in Figure~\ref{fig:model-surface} (recall that $\Gamma$ is the disjoint union of these arcs).

Let $U_\epsilon \subset \partial^-(\Sigma)$ be the open subset given by $x\in U_\epsilon \Leftrightarrow d(x,W^-)<\tfrac{\epsilon}{2}$ and define $B_\epsilon \subset \mathcal{C}_n(\Sigma_1)$ to be the subspace of configurations $x=\{x_1,\dots,x_n\} \subset \Sigma_1$ such that either $x_i \in \partial^-(\Sigma) \setminus U_\epsilon$ for some $i$ or there are indices $i\neq j$ such that $x_i$ and $x_j$ lie in the same component of $U_\epsilon$. It is straightforward to see that $B_\epsilon$ is closed in $\mathcal{C}_n(\Sigma_1)$. Moreover, $B_\epsilon$ is also contained in the interior of $\mathcal{C}_n^{\epsilon,-}(\Sigma_1) = \mathcal{C}_{n}(\Sigma_1,\partial^-(\Sigma)) \cup \mathcal{C}_n^{\epsilon}(\Sigma_1)$ because, for any configuration $x=\{x_1,\ldots,x_n\} \in B_\epsilon$ and any other configuration $y=\{y_1,\ldots,y_n\}$ in a sufficiently small neighbourhood of $x$ in $\mathcal{C}_n(\Sigma_1)$:
\begin{itemize}
\item if $x_i \in \partial^-(\Sigma) \setminus U_\epsilon$ for some $i$, then $y_i \in \partial^-(\Sigma)$ and so we have $y \in \mathcal{C}_{n}(\Sigma_1,\partial^-(\Sigma))$;
\item if $x_i$ and $x_j$ lie in the same component of $U_\epsilon$ for some $i\neq j$, then $d(y_i,y_j)<\epsilon$ and so we have $y \in \mathcal{C}_n^{\epsilon}(\Sigma_1)$.
\end{itemize}
Hence we may apply excision (in its formulation with relative chain complexes of pairs of covering spaces) to deduce that the inclusion induces a chain homotopy equivalence:
\begin{equation}
\label{eq:chheq4}
\mathcal{S}_* \left( (\mathcal{C}_{n}(\Sigma_1) \setminus B_\epsilon)^{\sim} , (\mathcal{C}_n^{\epsilon,-}(\Sigma_1) \setminus B_\epsilon)^{\sim} \right) \,\simeq\, \mathcal{S}_* \left( \widetilde{\mathcal{C}}_{n}(\Sigma_1) , \widetilde{\mathcal{C}}_n^{\epsilon,-}(\Sigma_1) \right) .
\end{equation}

Next, since configurations in $\mathcal{C}_{n}(\Sigma_1) \setminus B_\epsilon$ are contained in $\Gamma \cup U_\epsilon$ and no component of $U_\epsilon$ contains more than one configuration point, we may deformation retract $\mathcal{C}_{n}(\Sigma_1) \setminus B_\epsilon$ onto $\mathcal{C}_{n}(\Gamma)$ by contracting each (interval) component of $U_\epsilon$ to its midpoint. The deformation retraction $\Gamma \cup U_\epsilon \simeq \Gamma$ is through $1$-Lipschitz maps and sends $U_\epsilon$ into itself, so the induced deformation retraction $\mathcal{C}_{n}(\Sigma_1) \setminus B_\epsilon \simeq \mathcal{C}_{n}(\Gamma)$ preserves the subspace $\mathcal{C}_n^{\epsilon,-}(\Sigma_1)$. Thus it provides a homotopy inverse for the inclusion of pairs
\begin{equation}
\label{eq:inc5}
\left( \mathcal{C}_{n}(\Gamma) , \mathcal{C}_n^{\epsilon,-}(\Gamma) \right) \,\subset\, \left( \mathcal{C}_{n}(\Sigma_1) \setminus B_\epsilon , \mathcal{C}_n^{\epsilon,-}(\Sigma_1) \setminus B_\epsilon \right) ,
\end{equation}
where we note that $\mathcal{C}_n^{\epsilon,-}(\Gamma) = \mathcal{C}_{n}(\Gamma) \cap \mathcal{C}_n^{\epsilon,-}(\Sigma_1)$. Taking (homotopy) pullbacks along covering maps and passing to relative chain complexes, \eqref{eq:inc5} therefore induces a chain homotopy equivalence:
\begin{equation}
\label{eq:chheq5}
\mathcal{S}_* \left( \widetilde{\mathcal{C}}_{n}(\Gamma) , \widetilde{\mathcal{C}}_n^{\epsilon,-}(\Gamma) \right) \,\simeq\, \mathcal{S}_* \left( (\mathcal{C}_{n}(\Sigma_1) \setminus B_\epsilon)^{\sim} , (\mathcal{C}_n^{\epsilon,-}(\Sigma_1) \setminus B_\epsilon)^{\sim} \right) .
\end{equation}

Combining the chain homotopy equivalences \eqref{eq:chheqcombined1}, \eqref{eq:chheq4} and \eqref{eq:chheq5}, we have shown that the inclusion of pairs $(\mathcal{C}_{n}(\Gamma) , \mathcal{C}_n^{\epsilon,-}(\Gamma)) \subset (\mathcal{C}_{n}(\Sigma) , \mathcal{C}_n^{\epsilon,-}(\Sigma))$ induces a chain homotopy equivalence
\begin{equation}
\label{eq:chheqcombined2}
\mathcal{S}_* \left( \widetilde{\mathcal{C}}_{n}(\Gamma) , \widetilde{\mathcal{C}}_n^{\epsilon,-}(\Gamma) \right) \,\simeq\, \mathcal{S}_* \left( \widetilde{\mathcal{C}}_{n}(\Sigma),\widetilde{\mathcal{C}}_n^{\epsilon,-}(\Sigma) \right) ,
\end{equation}
in other words we have (at the level of relative chain complexes) ``compressed'' configurations on the surface $\Sigma$ to configurations on the disjoint union of arcs $\Gamma$.

The fundamental chain homotopy equivalence \eqref{eq:chheqcombined2} immediately implies isomorphisms both for twisted relative Borel--Moore homology and for twisted relative compactly-supported cohomology. First, we may tensor \eqref{eq:chheqcombined2} over $R[\pi_1(\mathcal{C}_{n}(\Sigma))]$ with $V$ and take homology to obtain an isomorphism of twisted relative homology groups for each $\epsilon > 0$; then taking the inverse limit as $0 \leftarrow \epsilon$, we obtain an isomorphism of twisted relative Borel--Moore homology:
\begin{equation}
\label{eq:combined-isomorphism-BM}
H_{*}^{BM}(\mathcal{C}_{n}(\Gamma),\mathcal{C}_{n}(\Gamma,W^-) ;V) \;\cong\; H_*^{BM}(\mathcal{C}_{n}(\Sigma),\mathcal{C}_{n}(\Sigma,\partial^-(\Sigma)) ;V).
\end{equation}
Here we are using the fact that, if $Y \subset \Sigma$ is closed, then $\mathcal{C}_n^\epsilon(Y)$ is a cofinal family of co-compact subsets of $\mathcal{C}_n(Y)$, which implies that for a pair $(Y,Z)$ of closed subspaces of $\Sigma$, we have
\begin{equation}
\label{eq:BM-as-limit}
H^{BM}_{*}(\mathcal{C}_{n}(Y),\mathcal{C}_n(Y,Z );V) \;\cong\; \lim_{0\leftarrow \epsilon}H_{*}(\mathcal{C}_{n}(Y),\mathcal{C}_n(Y,Z )\cup \mathcal{C}_n^\epsilon(Y);V).
\end{equation}
(As a notational point, we note that we simply write $V$ for the restriction to subspaces of $\mathcal{C}_n(\Sigma)$ of the local system $V$, which is defined a priori on $\mathcal{C}_n(\Sigma)$.)

Alternatively, if $V$ is a right (rather than left) representation of $\Heis$ in $(R,S)$-bimodules, we may apply the operation $\Hom_{S[\pi_1(\mathcal{C}_{n}(\Sigma))]}(-,V)$ to \eqref{eq:chheqcombined2} and take homology to obtain an isomorphism of twisted relative cohomology groups for each $\epsilon > 0$; then taking the direct limit as $\epsilon \to 0$, we obtain an isomorphism of twisted relative compactly-supported cohomology:
\begin{equation}
\label{eq:combined-isomorphism-cc}
H^{*}_{c}(\mathcal{C}_{n}(\Gamma),\mathcal{C}_{n}(\Gamma,W^-) ;V) \;\cong\; H^*_{c}(\mathcal{C}_{n}(\Sigma),\mathcal{C}_{n}(\Sigma,\partial^-(\Sigma)) ;V).
\end{equation}
In each case, to justify taking the limit, we need to know that \eqref{eq:chheqcombined2} is is a chain homotopy equivalence \emph{of inverse systems} as $\epsilon > 0$ varies. However, this is clear since it is induced by the inclusion of (pairs of) configuration spaces $(\mathcal{C}_{n}(\Gamma) , \mathcal{C}_n^{\epsilon,-}(\Gamma)) \subset (\mathcal{C}_{n}(\Sigma) , \mathcal{C}_n^{\epsilon,-}(\Sigma))$.

Finally, to complete the proof of the theorem, we need to calculate the left-hand sides of \eqref{eq:combined-isomorphism-BM} and \eqref{eq:combined-isomorphism-cc}. We will do this in the first case (for Borel--Moore homology); the calculation in the second case (for compactly-supported cohomology) is exactly dual.

We first observe that $\mathcal{C}_n(\Gamma)$ is a disjoint union indexed by $\mathcal{K}$, where each connected component $E_k$ ($k \in \mathcal{K}$) is a product of configuration spaces on the intervals $\gamma_i$. Configurations of at least two points in an interval form a simplex where the diagonal part of the boundary has been removed and the remaining boundary is the union of two faces.
Hence, for each $k\in \mathcal{K}$, the product $E_k$ is a topological ball where part of the boundary has been removed. The disjoint union of the topological boundaries $\partial E_k$ is precisely the subspace $\mathcal{C}_{n}(\Gamma,W^-) \subset \mathcal{C}_{n}(\Gamma)$.

For $\epsilon > 0$, let us consider the subspace $\mathcal{C}_n^\epsilon(\Gamma) = \amalg_{k\in \mathcal{K}} E^\epsilon_k$ of configurations where two points are $\epsilon$-close.
For sufficiently small $\epsilon > 0$, the pair $(E_k,E_k^\epsilon\cup \partial E_k)$ is homotopy equivalent to the pair $(D^n,\partial D^n)$. Using that the complements $\mathcal{C}_n(\Gamma) \setminus \mathcal{C}_n^\epsilon(\Gamma)$ form a family of compact subspaces cofinal to all compact subspaces in $\mathcal{C}_n(\Gamma)$, we deduce the computation of Borel--Moore homology:
For each $k$ we have $H_*^{BM}(E_k,\partial E_k;V)=H_n^{BM}(E_k,\partial E_k;V) \cong V$. Here we use the restriction of the local system $V$ to $E_k$, which is constant (i.e.~trivialisable).
We obtain that the Borel--Moore homology \eqref{eq:combined-isomorphism-BM} is trivial when $* \neq n$ and that each Borel--Moore homology class $E_k$ generates a direct summand isomorphic to the coefficients $V$ in degree $* = n$. In particular, when $V$ is the regular representation $\Z[\Heis]$, these classes form a basis over $\Z[\Heis]$ for the degree-$n$ Borel--Moore homology.
\end{proof}

\begin{remark}
\label{rmk:alternative-argument}
If one is just interested in the version of Theorem \ref{basis} for Borel--Moore homology (and not compactly-supported cohomology), then one could work directly with isomorphisms of twisted relative homology groups at each stage, rather than chain homotopy equivalences of relative chain complexes of pairs of covering spaces. The unified proof that we give above has the advantage that it simultaneously provides explicit bases both for twisted relative Borel--Moore homology and twisted relative compactly-supported cohomology, from which one may easily deduce a perfect pairing between the two; this is discussed further in \S\ref{ss:sesquilinear-form}.

We note that one could also deduce both results (for Borel--Moore homology and for compactly-supported cohomology) from the result for Borel--Moore homology in a specific case (i.e.~with a specific choice of $V$); this is explained in \hyperref[appendixB]{Appendix B}.
\end{remark}

\begin{proof}[Proof of Proposition \ref{prop:basis-naturality}]
The statement of Theorem \ref{basis} in the case $S = V = R[\Heis]$ implies that $H_*^{BM}(\mathcal{C}_{n}(\Sigma),\mathcal{C}_{n}(\Sigma,\partial^-(\Sigma)) ;R[\Heis])$ is free in each degree as a right $R[\Heis]$-module. The universal coefficient theorem\footnote{Compare the proof of Lemma \ref{lem:universal-coefficients} in \hyperref[appendixB]{Appendix B}.} provides $(R,S)$-module isomorphisms
\begin{equation}
\label{eq:UCT-identification}
H_n^{BM}(\mathcal{C}_{n}(\Sigma),\mathcal{C}_{n}(\Sigma,\partial^-(\Sigma)) ;V) \;\cong\; H_n^{BM}(\mathcal{C}_{n}(\Sigma),\mathcal{C}_{n}(\Sigma,\partial^-(\Sigma)) ;R[\Heis]) \otimes_{R[\Heis]} V
\end{equation}
for any $(R[\Heis],S)$-bimodule $V$. Moreover, both sides are functorial in $V$ and \eqref{eq:UCT-identification} is a natural isomorphism between these functors, i.e.\ the map on homology induced by $\xi \colon V \to V'$, under the identification \eqref{eq:UCT-identification}, is of the form $\id \otimes \xi$. The left-hand identity component of $\id \otimes \xi$ decomposes into a direct sum over $k \in \mathcal{K}$ of copies of $\id_{R[\Heis]}$ under the decomposition \eqref{eq:module-decomposition} of Theorem \ref{basis} in the case $S = V = R[\Heis]$. (We note that this is not circular, because here we are only using the tautological fact that the \emph{identity} decomposes as a direct sum of identities.) Since $\otimes$ distributes (naturally) over $\oplus$, we deduce that the map on homology induced by $\xi \colon V \to V'$, under the decomposition \eqref{eq:module-decomposition} of Theorem \ref{basis}, is the direct sum over $k \in \mathcal{K}$ of copies of $\xi$.
\end{proof}

\section{Action of mapping classes}
\label{s:action}

The \emph{mapping class group} of $\Sigma$, denoted by $\mathfrak{M}(\Sigma)$, is the group of orientation-preserving diffeomorphisms of $\Sigma$ fixing the boundary pointwise, modulo isotopies relative to the boundary. The isotopy class of a diffeomorphism $f$ is denoted by $[f]$. An oriented self-diffeomorphism fixing the boundary pointwise $f \colon \Sigma \rightarrow \Sigma$ gives us a homeomorphism $\mathcal{C}_{n}(f) \colon \mathcal{C}_{n}(\Sigma) \rightarrow \mathcal{C}_{n}(\Sigma)$, defined by $\{x_{1},x_{2},\ldots ,x_{n}\} \mapsto \{f(x_{1}),f(x_{2}),\ldots ,f(x_{n})\}$. If we ensure that the basepoint configuration of $\mathcal{C}_n(\Sigma)$ is contained in $\partial\Sigma$, then it is fixed by $\mathcal{C}_n(f)$ and this in turn induces a homomorphism $f_{\mathbb{B}_{n}(\Sigma)} = \pi_{1}(\mathcal{C}_{n}(f)) \colon \mathbb{B}_{n}(\Sigma) \rightarrow \mathbb{B}_{n}(\Sigma)$, which depends only on the isotopy class $[f]$ of $f$.

\subsection{Action on the Heisenberg group}

We first study the induced action on the Heisenberg group quotient.

\begin{proposition}
\label{f_Heisenberg}
There exists a unique homomorphism $f_{\Heis} \colon \Heis \rightarrow \Heis$ such that the following square commutes:
\begin{equation}
\label{eq:projection-equivariance}
  \begin{tikzcd}
     \mathbb{B}_{n}(\Sigma) \arrow[d,swap, "\phi"] \arrow[rr, "f_{\mathbb{B}_{n}(\Sigma)}"] && \mathbb{B}_{n}(\Sigma) \arrow[d,"\phi"]\\
     \Heis \arrow[rr, "f_{\Heis}"] && \Heis
  \end{tikzcd}
\end{equation}
Thus, there is an action of $\mathfrak{M}(\Sigma)$ on the Heisenberg group $\Heis$ given by
\begin{equation}\label{eq:action_on_Heis}
\Psi \colon f \mapsto f_\Heis \colon \mathfrak{M}(\Sigma) \longrightarrow \mathrm{Aut}(\Heis).
\end{equation}
\end{proposition}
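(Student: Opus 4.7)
The plan exploits the surjectivity of $\phi$ (Theorem \ref{hom_phi}), which makes uniqueness of $f_\Heis$ automatic and reduces existence to the single claim $f_{\mathbb{B}_{n}(\Sigma)}(\ker\phi) \subseteq \ker\phi$. Once this inclusion is established, applying the same reasoning to $f^{-1}$ shows that $f_\Heis$ is an automorphism with inverse $(f^{-1})_\Heis$, whence $\Psi \colon f \mapsto f_\Heis$ is a group homomorphism to $\mathrm{Aut}(\Heis)$, since $f \mapsto f_{\mathbb{B}_{n}(\Sigma)}$ is one and depends only on the isotopy class of $f$.

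To prove $f_{\mathbb{B}_{n}(\Sigma)}(\ker\phi) \subseteq \ker\phi$, I would use the description of $\ker\phi$ from Proposition \ref{kernel}(a): the normal closure of the commutators $\{[\sigma_{1},x] : x \in \mathbb{B}_{n}(\Sigma)\}$. Since $f_{\mathbb{B}_{n}(\Sigma)}^{-1}(\ker\phi)$ is a normal subgroup, it suffices to show it contains each such commutator, i.e.\ that
\[
\phi\bigl(f_{\mathbb{B}_{n}(\Sigma)}([\sigma_{1},x])\bigr) \;=\; \bigl[\phi(f_{\mathbb{B}_{n}(\Sigma)}(\sigma_{1})),\, \phi(f_{\mathbb{B}_{n}(\Sigma)}(x))\bigr] \;=\; 1
\]
in $\Heis$ for every $x$. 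Because $\phi \circ f_{\mathbb{B}_{n}(\Sigma)}$ is surjective (both factors are), this identity across all $x$ is equivalent to the single claim that $\phi(f_{\mathbb{B}_{n}(\Sigma)}(\sigma_{1}))$ lies in the centre $Z(\Heis) = \ker\psi = \langle u\rangle$.

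The main obstacle is verifying this centrality. The cleanest route is to identify the abelian composition $\psi \circ \phi \colon \mathbb{B}_{n}(\Sigma) \to H_{1}(\Sigma)$ with the natural ``sum of strand homology classes'' map: a braid $\beta$ with strands $\gamma_{1},\ldots,\gamma_{n}$ (each $\gamma_{i}$ running from $b_{i}$ to $b_{\pi(i)}$ for the underlying permutation $\pi$) yields a $1$-cycle $\gamma_{1}+\cdots+\gamma_{n}$ in $\Sigma$ whose class is taken as the image. Agreement with $\psi\circ\phi$ is immediate on Bellingeri's generators ($\sigma_{i}\mapsto 0$, $\alpha_{j}\mapsto a_{j}$, $\beta_{j}\mapsto b_{j}$). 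This topological description is manifestly $\mathfrak{M}(\Sigma)$-equivariant: the strands of $f_{\mathbb{B}_{n}(\Sigma)}(\beta)$ are $f\circ\gamma_{i}$, so the total homology class transforms by $f_{*}$ on $H_{1}(\Sigma)$. Applying this to $\sigma_{1}$, which has trivial strand-sum (both strands lie in a small disc),
\[
\psi\bigl(\phi(f_{\mathbb{B}_{n}(\Sigma)}(\sigma_{1}))\bigr) \;=\; f_{*}\bigl(\psi(\phi(\sigma_{1}))\bigr) \;=\; f_{*}(0) \;=\; 0,
\]
yielding the desired centrality and completing the proof.
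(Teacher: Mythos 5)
Your proof is correct and takes a genuinely different route from the paper's. Both proofs share the framing: uniqueness is automatic from the surjectivity of $\phi$, and existence reduces to showing $f_{\mathbb{B}_n(\Sigma)}(\ker\phi)\subseteq\ker\phi$, which one checks on the normal generating set $\{[\sigma_1,x]\}$ from Proposition~\ref{kernel}(a). Where they diverge is in how they handle $\sigma_1$. The paper establishes the stronger fact that $f_{\mathbb{B}_n(\Sigma)}(\sigma_1)=\sigma_1$ outright, by observing that $\sigma_1$ is supported in a disc near $\partial\Sigma$ and isotoping $f$ to the identity on a collar. You instead prove only that $\phi\bigl(f_{\mathbb{B}_n(\Sigma)}(\sigma_1)\bigr)$ lies in the centre $\langle u\rangle=\ker\psi$ of $\Heis$, by identifying $\psi\circ\phi$ with the total-strand-homology map $\mathbb{B}_n(\Sigma)\to H_1(\Sigma)$ (they agree on Bellingeri's generators), noting its manifest $\mathfrak{M}(\Sigma)$-equivariance, and using $\psi(\phi(\sigma_1))=0$. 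Both arguments are sound and prove the proposition. The paper's version has a small extra payoff worth flagging: since $\phi(\sigma_1)=u$ is literally fixed, one reads off immediately that $f_\Heis(u)=u$, i.e.\ that $f_\Heis$ is orientation-preserving in the sense of Definition~\ref{def:Heis-orientation} — a fact the paper invokes right afterwards to refine $\Psi$ into a map to $\mathrm{Aut}^+(\Heis)$. Your argument directly yields only $f_\Heis(u)\in\langle u\rangle$, hence $f_\Heis(u)=u^{\pm1}$ once $f_\Heis$ is known to be invertible; pinning down the sign would still require something like the paper's geometric observation.
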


\begin{proof}
Since $\phi$ is surjective, the homomorphism $f_\Heis$ will be uniquely determined by the formula $f_{\Heis}(\phi(\gamma))=\phi(f_{\mathbb{B}_{n}(\Sigma)}(\gamma))$ if it exists. To show that it exists, we need to show that the composition $\phi\circ f_{\mathbb{B}_{n}(\Sigma)}$ factors through $\phi$, which is equivalent to saying that $f_{\mathbb{B}_{n}(\Sigma)}$ sends $\mathrm{ker}(\phi)$ into itself.

Recall that the classical generator $\sigma_1$ is represented by a loop of configurations on a disc $D \subset \Sigma$ containing the base configuration. Let $T \subset \Sigma$ be a tubular neighbourhood of $\partial \Sigma$ containing $D$. Since $f$ fixes $\partial \Sigma$ pointwise, we may isotope $f$ so that it is the identity on $T$, in particular on $D$, which implies that $f_{\mathbb{B}_{n}(\Sigma)}$ fixes $\sigma_1$. We then deduce from part (a) of Proposition \ref{kernel} that $f_{\mathbb{B}_{n}(\Sigma)}$ sends $\ker(\phi)$ to itself, which completes the proof.
\end{proof}

\subsection{Structure of automorphisms of the Heisenberg group.}
\label{structure-of-Aut-Heis}

Recall that the centre of the Heisenberg group $\Heis$ is infinite cyclic, generated by the element $u$. Any automorphism of $\Heis$ must therefore send $u$ to $u^{\pm 1}$.

\begin{definition}\label{def:Heis-orientation}
We denote the index-$2$ subgroup of those automorphisms of $\Heis$ that fix $u$ by $\mathrm{Aut}^+(\Heis)$, and call these \emph{orientation-preserving}.
\end{definition}
From the proof of Proposition \ref{f_Heisenberg}, we observe that, for any $f \in \mathfrak{M}(\Sigma)$, the automorphism $f_\Heis$ is orientation-preserving in the sense of Definition \ref{def:Heis-orientation}. We may therefore refine the action $\Psi$  as follows:
\begin{equation}\label{eq:action_on_Heis_orientation}
\Psi \colon f \mapsto f_\Heis \colon \mathfrak{M}(\Sigma) \longrightarrow \mathrm{Aut}^+(\Heis).
\end{equation}

The quotient of $\Heis$ by its centre may be canonically identified with $H = H_1(\Sigma)$, so every automorphism of $\Heis$ induces an automorphism of $H$. Moreover, if it is orientation-preserving, then the induced automorphism of $H$ preserves the symplectic form $.$ on $H$: to see this, apply the automorphism to the equation $(0,x)(0,y)(0,-x)=(2x.y,y)$ in $\Heis$.
Thus we have a homomorphism $\cL \colon \mathrm{Aut}^+(\Heis) \to Sp(H)$ denoted by $\varphi\mapsto \overline \varphi$.

\begin{lemma}
\label{lem:split-ses}
There exists a split short exact sequence
\[
\begin{tikzcd}
1 \ar[r] & H^1(\Sigma;\Z) \ar[r,"j"] & \mathrm{Aut}^+(\Heis) \ar[r,"\cL"] & Sp(H) \ar[r] & 1
\end{tikzcd}
\]
where $j(c)=[(k,x)\mapsto (k+c(x),x)]$.
\end{lemma}
\begin{proof}
We observe that for an automorphism $\varphi \in \mathrm{Aut}^+(\Heis)$ we have $\varphi(k,x)=(k+c(x),\overline{\varphi}(x))$.
By applying $\varphi$ to $(k,x)(l,y)=(k+l+x.y,x+y)$), we deduce that $c$ is a homomorphism. We thus have $c\in \Hom(H_1(\Sigma;\Z),\Z)\cong H^1(\Sigma;\Z)$ .
We see that $j \colon H^1(\Sigma;\Z)\rightarrow \mathrm{Aut}^+(\Heis)$ is a group homomorphism whose image is in $\ker(\cL)$.
We next identify the kernel of $\cL$: an automorphism $\varphi\in \ker(\cL)$ takes the form $\varphi(k,x)=(k+c(x),x)$ where $c\in H^1(\Sigma;\Z)$ and $\varphi=j(c)$. This proves exactness in the middle of the sequence above. Injectivity of $j$ and surjectivity of $\cL$ may also be checked easily. Finally, a splitting of $\cL$ is given by the assignment $g\mapsto \varphi_g=[(k,x)\mapsto (k,g(x))]$.
\end{proof}

As corollary, we obtain that $\mathrm{Aut}^+(\Heis)$ is the \emph{affine symplectic group}. The splitting gives a decomposition as $Sp(H) \ltimes H^1(\Sigma;\Z)$, where the semi-direct product structure on the right-hand side is induced by the natural action of $Sp(H)$.
Corresponding to the splitting given in the proof, there is a function (which is not a group homomorphism) $\mathrm{Aut}^+(\Heis) \to H^1(\Sigma;\Z)\cong\Hom(H,\Z)$ defined by $\varphi\mapsto  \varphi^\diamond $, where $\varphi(0,x)=( \varphi^\diamond(x),\cL(x))$. We formulate the result below.
\begin{corollary}\label{lem:AutHeis}
The homomorphism $\cL \colon \mathrm{Aut}^+(\Heis) \to Sp(H)$ and function $(-)^\diamond \colon \mathrm{Aut}^+(\Heis) \to H^*$ induce an isomorphism
\begin{equation}
\label{eq:Heis_identification}
\mathrm{Aut}^+(\Heis) \cong Sp(H) \ltimes H^1(\Sigma;\Z) , \qquad \varphi \mapsto (\overline{\varphi} , \varphi^\diamond),
\end{equation}
where the semi-direct product structure on the right-hand side is induced by the natural action of $Sp(H)$ on $H^1(\Sigma;\Z)$.
\end{corollary}

\begin{remark}
\label{rmk:linearising}
Fixing a symplectic basis of $H$, the right-hand side of \eqref{eq:Heis_identification} is a subgroup of $GL_{2g}(\Z) \ltimes \Z^{2g}$, which may be embedded into $GL_{2g+1}(\Z)$. In this way, any orientation-preserving action of a group $G$ on $\Heis$ may be viewed as a linear representation of $G$ over $\Z$ of rank $2g+1$.
\end{remark}

The general form of an oriented automorphism $\varphi$ is therefore
\[
\varphi(k,x)=(k+\varphi^\diamond(x),\overline \varphi(x))\ ,
\]
where $\varphi^\diamond\in H^*$ and $\overline \varphi\in Sp(H)$ is the induced symplectic automorphism.
From the proof of Proposition \ref{f_Heisenberg} we observe that, for any $f \in \mathfrak{M}(\Sigma)$, the automorphism $f_\Heis$ is orientation-preserving in the sense of Definition \ref{def:Heis-orientation}. Hence for a mapping class $f\in \mathfrak{M}(\Sigma)$, the map $f_\Heis$ is represented as follows:
\begin{equation}\label{eq:action}
f_\Heis \colon (k,x)\mapsto (k+\delta_f(x),f_*(x)),
\end{equation}
where $\delta_f = (f_\Heis)^\diamond \in H^1(\Sigma;\Z)$.

\subsection{Recovering Morita's crossed homomorphism.}
\label{ss:Morita-crossed-hom}

In \cite{Morita1989}, Morita introduced a crossed homomorphism $\mathfrak{d} \colon \mathfrak{M}(\Sigma)\rightarrow H^1(\Sigma)$, $f\mapsto \mathfrak{d}_f$ representing a generator for $H^1(\mathfrak{M}(\Sigma);H^1(\Sigma))\cong \Z$. We will recover this crossed homomorphism from the action $f \mapsto f_\Heis$ on the Heisenberg group.

Recall that, for a given action of a group $G$ on an abelian group $K$, a \emph{crossed homomorphism} $\theta \colon G \to K$ is a function with the property that $\theta(g_2 g_1) = \theta(g_1) + g_1\theta(g_2)$ for all $g_1,g_2 \in G$.

\begin{remark}
\label{rmk:crossed-hom-lifts}
Crossed homomorphisms $G\rightarrow K$ are in one-to-one correspondence with lifts
\[
\begin{tikzcd}
G \arrow[rr,dashed] \arrow[drr] &&
\mathrm{Aut}(K) \ltimes K \arrow[d,twoheadrightarrow]
\\
&& \mathrm{Aut}(K),
\end{tikzcd}
\]
where the diagonal arrow is the given action of $G$ on $K$.
\end{remark}

\begin{proposition}
\label{p:Morita-crossed-hom}
The map $\delta \colon \mathfrak{M}(\Sigma)\rightarrow H^1(\Sigma)$, $f\mapsto \delta_f$, is a crossed homomorphism equal to Morita's crossed homomorphism $\mathfrak{d}$.
\end{proposition}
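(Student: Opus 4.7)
The plan is in two parts: first, show that $\delta$ is a crossed homomorphism, which is essentially formal given the structural results of \S\ref{structure-of-Aut-Heis}; and second, identify $\delta$ with Morita's $\mathfrak{d}$ by matching values on a suitable generating set.

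For the first part, I would combine the action $\Psi$ from \eqref{eq:action_on_Heis_orientation} with the isomorphism of Lemma \ref{lem:AutHeis} to view $\Psi$ as a homomorphism $\mathfrak{M}(\Sigma) \to H^* \rtimes Sp(H)$ whose projection to $Sp(H)$ is the standard symplectic action of the mapping class group on $H = H_1(\Sigma)$. By Remark \ref{rmk:crossed-hom-lifts}, any such lift corresponds bijectively to a crossed homomorphism $\Psi^\diamond \colon \mathfrak{M}(\Sigma) \to H^*$; unpacking the definitions via formula \eqref{eq:action} shows that $\Psi^\diamond = \delta$, so $\delta$ is a crossed homomorphism.

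For the identification with Morita's $\mathfrak{d}$, the strategy is to apply Lemma \ref{lem:crossed-hom-agree} with $G = \mathfrak{M}(\Sigma)$, $N = \mathfrak{T}(\Sigma)$ (the kernel of the symplectic action), and $S$ a finite collection of bounding pair maps (together with genus-one separating twists in low genus) whose $\mathfrak{M}(\Sigma)$-conjugates generate $\mathfrak{T}(\Sigma)$. For each $f \in S$, I would compute $\delta_f$ directly by choosing a convenient diffeomorphism representative, evaluating $f_{\mathbb{B}_n(\Sigma)}$ on the Bellingeri generators, and reading off the $\mathbb{Z}$-coordinate of the image under $\phi$; the resulting linear functional on $H$ is then compared against Morita's explicit formula for $\mathfrak{d}_f$ on bounding pair maps, which restricts to the Johnson homomorphism on the Torelli group. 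Lemma \ref{lem:crossed-hom-agree} then yields $\delta = \mathfrak{d}$ on $\mathfrak{T}(\Sigma)$. To extend to the full mapping class group, observe that $\delta - \mathfrak{d}$ descends to a crossed homomorphism on $\mathfrak{M}(\Sigma)/\mathfrak{T}(\Sigma) \cong Sp(2g,\mathbb{Z})$; invoking either the known vanishing of $H^1(Sp(2g,\mathbb{Z}), H)$ for $g \geq 2$ together with a single normalisation on a chosen mapping class, or alternatively a direct check on lifts of a symplectic generating set, forces $\delta = \mathfrak{d}$ on all of $\mathfrak{M}(\Sigma)$.

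The main obstacle is the explicit computation of $\delta_f$ on the bounding-pair (and separating-twist) generators of the Torelli group: this requires carefully tracking how such a diffeomorphism moves a chosen base configuration in $\mathcal{C}_n(\Sigma)$ and then extracting the writhing contribution in $\Heis$ via $\phi$, with the correct sign convention to match Morita's normalisation. All remaining steps are formal consequences of the cohomology of $\mathfrak{M}(\Sigma)$ and the structure of its symplectic quotient.
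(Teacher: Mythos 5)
Your proposal is correct in outline, but it takes a substantially different and computationally heavier route than the paper. On the first point (that $\delta$ is a crossed homomorphism), your argument via Lemma~\ref{lem:AutHeis} and Remark~\ref{rmk:crossed-hom-lifts} is equivalent to the paper's one-line direct computation of $(g\circ f)_\Heis$. The real divergence is in identifying $\delta$ with $\mathfrak{d}$. You propose a generator-by-generator comparison: check agreement on bounding-pair generators of $\mathfrak{T}(\Sigma)$ via Lemma~\ref{lem:crossed-hom-agree}, then extend over $Sp(2g,\mathbb{Z})$ either by a vanishing-of-$H^1$-plus-normalisation argument or by checking on lifts of symplectic generators. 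This is workable, but it requires the explicit evaluations you flag as the main obstacle, and the normalisation step is delicate — a principal crossed homomorphism $\bar g\mapsto \bar g h - h$ on $Sp(2g,\mathbb{Z})$ can vanish on a chosen element without vanishing identically, so you would need to pick $g_0$ with $1$ not an eigenvalue of $\bar g_0$, or simply check a full symplectic generating set. The paper sidesteps all of this by proving the closed-form identity $\phi(\gamma) = (\sum_i d_i(\gamma), [\gamma])$ for the first strand $\gamma$ of a pure braid, where $d_i$ is Morita's combinatorial quantity; applying $f_\Heis$ to both sides and reading off the $\mathbb{Z}$-coordinate immediately gives $\delta_f = \mathfrak{d}_f$ for \emph{every} $f$, with no generator bookkeeping and no cohomological input. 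One small slip in your proposal: Morita's $\mathfrak{d}$ restricted to the Torelli group is the Chillingworth homomorphism $e$ (taking values in $H^*$, image $2H^1(\Sigma)$), not the Johnson homomorphism $\tau$ to $\wedge^3 H$; the two are related by contraction but are different maps.
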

\begin{proof}
We first show that $\delta$ is a crossed homomorphism. Let $f,g$ be mapping classes; then we have, for $(k,x)\in \Heis$,
\[
(g\circ f)_\Heis(k,x)=g_\Heis(k+\delta_f(x),f_*(x))=(k+\delta_f(x)+\delta_g(f_*(x)),(g\circ f)_*(x))\ ,
\]
and so we obtain $\delta_{g\circ f}(x) = \delta_f(x)+f^*(\delta_g)(x)$, as required.

Recall that we use the same notation for the (free) generators $\alpha_i$, $\beta_i$, $1\leq i\leq g$, for $\pi_1(\Sigma)$ and the corresponding $\pi_1$ generators of the braid group $\mathbb{B}_n(\Sigma)$.
For $\gamma\in \pi_1(\Sigma)$, let us denote by $\gamma_i$ the element in the free group generated by $\alpha_i$, $\beta_i$ that is the image of $\gamma$ under the homomorphism that maps the other generators to $1$. Then we have a decomposition
\[
\gamma_i=\alpha_i^{\nu_1}\beta_i^{\mu_1}\dots \alpha_i^{\nu_m}\beta_i^{\mu_m}\ ,
\]
where $\nu_j$ and $\mu_j$ are $0$, $-1$ or $1$. The integer $d_i(\gamma)$ is then defined\footnote{There is a small misprint in \cite{Morita1989}.} by
\begin{equation}
\label{eq:definition-of-d}
\begin{aligned}
d_i(\gamma) &= \sum_{j=1}^m\nu_j\sum_{k=j}^m\mu_k-\sum_{j=1}^m\mu_j\sum_{k=j+1}^m\nu_k \\
&= \sum_{j=1}^m \sum_{k=1}^m \iota_{jk} \nu_j \mu_k,
\end{aligned}
\end{equation}
where $\iota_{jk} = +1$ when $j\leq k$ and $\iota_{jk} = -1$ when $j > k$.
The definition for the Morita crossed homomorphism is as follows:
\begin{equation}
\label{eq:Morita-crossed-hom}
\mathfrak{d}_f(\gamma)=\sum_{i=1}^g d_i(\pi_1(f)(\gamma))-d_i(\gamma)\ .
\end{equation}
For $\gamma\in \pi_1(\Sigma)$, consider the pure braid obtained by adding $n-1$ trivial strands to $\gamma$, which we also denote by $\gamma$. The above decomposition of $\gamma$ used for the definition of $d_i$ is also a decomposition in the generators of the braid group, and from the definition of the product in $\Heis$ we have that
\[
\phi(\gamma) = \left(\sum_{i=1}^gd_i(\gamma),[\gamma] \right) \in \Heis\ .
\]
This formula may be checked by recursion on the length of $\gamma$ as a word in the free generators of $\pi_1(\Sigma)$. It can also be deduced from \cite[Lemma 6.1]{Morita1989}.
The equality $\mathfrak{d}_f=\delta_f$ follows.
\end{proof}

\section{Constructing the representations}
\label{rep-MCG}

In this section we construct (\S\ref{ss:twisted-rep}) the twisted representation of Theorem \ref{athm:twisted}, as well as (\S\ref{ss:untwisted-rep-linearised}) the untwisted representation of Theorem \ref{athm:l-regular} associated to the linearised translation action of $\Heis$.

\subsection{A twisted representation of the mapping class group.}
\label{ss:twisted-rep}

The quotient homomorphism $\phi \colon \mathbb{B}_n(\Sigma) \twoheadrightarrow \Heis$ (Corollary \ref{hom_phi}) corresponds to a regular covering $\widetilde{\mathcal{C}}_n(\Sigma) \to \mathcal{C}_n(\Sigma)$. Let $f\in \mathfrak{M}(\Sigma)$ and write $f_\Heis$ for its action on the Heisenberg group $\Heis$ and $\mathcal{C}_n(f)$ for its action on the configuration space $\mathcal{C}_n(\Sigma)$. From Proposition \ref{f_Heisenberg} we know that $\pi_1(\mathcal{C}_n(f))=f_{\mathbb{B}_n(\Sigma)}$ preserves $\ker(\phi)$, which implies that there exists a unique lift of $\mathcal{C}_n(f)$ fixing the basepoint:
\begin{equation}
\label{eq:lifted-action}
\widetilde{\mathcal{C}}_n(f) \colon \widetilde{\mathcal{C}}_n(\Sigma) \longrightarrow \widetilde{\mathcal{C}}_n(\Sigma)\ .
\end{equation}
Following a classical construction in covering spaces \cite{Hatcher}, a model for $\widetilde{\mathcal{C}}_n(\Sigma)$ is given by equivalence classes $[\delta]$ of paths $\delta$ starting at the base configuration in $\mathcal{C}_n(\Sigma)$, with $[\gamma]=[\delta]$ if and only if $\phi(\overline{\gamma}\,\delta)=(0,0)$, where $\overline \gamma$ denotes the inverse path. In this model we have $\widetilde{\mathcal{C}}_n(f)([\delta])=[\mathcal{C}_n(f)\circ \delta]$ and the deck action of $h=\phi([\gamma])$ is $[\delta]\cdot h=[\delta\,\gamma]$. Then we get 
\begin{align*}
\widetilde{\mathcal{C}}_n(f)([\delta]\cdot h) = [\mathcal{C}_n(f)\circ (\delta\,\gamma)] &= [(\mathcal{C}_n(f)\circ \delta)\,(\mathcal{C}_n(f)\circ \gamma)] \\
&= [\mathcal{C}_n(f)\circ \delta] \cdot \phi([\mathcal{C}_n(f)\circ \gamma]) \\
&= \widetilde{\mathcal{C}}_n(f)([\delta])\cdot f_\Heis(h)\ .
\end{align*}
We have therefore proven the formula
\begin{equation}
\label{deck}
\widetilde{\mathcal{C}}_n(f)(x\cdot h)=\widetilde{\mathcal{C}}_n(f)(x)\cdot f_\Heis(h)
\end{equation}
for any $x \in \widetilde{\mathcal{C}}_n(\Sigma)$ and $h \in \Heis$. It follows that the induced action on the singular chain complex $\mathcal{S}_*(\widetilde{\mathcal{C}}_n(\Sigma))$ is twisted $R[\Heis]$-linear, which may be formulated as an $R[\Heis]$-linear isomorphism
\[
\mathcal{S}_*(\widetilde{\mathcal{C}}_n(f)) \colon \mathcal{S}_*(\widetilde{\mathcal{C}}_{n}(\Sigma))_{f_\Heis^{-1}} \longrightarrow  \mathcal{S}_*(\widetilde{\mathcal{C}}_{n}(\Sigma)) \ .
\]
Here the subscript on the domain means that the right action of $\Heis$ is twisted by $f_\Heis^{-1}$. The result for $R[\Heis]$-local homology is an $R[\Heis]$-linear isomorphism
\begin{equation}
\label{eq:twisted-linear-isomorphism}
\mathcal{C}_n(f)_* \colon H_*^{BM}(\mathcal{C}_{n}(\Sigma),\mathcal{C}_{n}(\Sigma,\partial^-(\Sigma)) ;R[\Heis])_{f_\Heis^{-1}} \longrightarrow H_*^{BM}(\mathcal{C}_{n}(\Sigma),\mathcal{C}_{n}(\Sigma,\partial^-(\Sigma)) ;R[\Heis]) \ .
\end{equation}
More generally, if $V$ is a left representation of the Heisenberg group in $(R,S)$-bimodules, then we obtain an $(R,S)$-linear isomorphism
\begin{equation}
\label{eq:twisted-linear-isomorphism-R}
\mathcal{C}_n(f)_* \colon H_*^{BM}\bigl(\mathcal{C}_{n}(\Sigma),\mathcal{C}_{n}(\Sigma,\partial^-(\Sigma)) ;{}_{f_\Heis}\! V\bigr) \longrightarrow H_*^{BM}\bigl(\mathcal{C}_{n}(\Sigma),\mathcal{C}_{n}(\Sigma,\partial^-(\Sigma)) ;V\bigr) \ ,
\end{equation}
where the left-hand homology group is obtained from the chain complex
\begin{equation}
\label{eq:twisting-on-left-or-right}
\Bigl( \mathcal{S}_* \bigl( \widetilde{\mathcal{C}}_n(\Sigma) \bigr)_{\! f_\Heis^{-1}} \Bigr) \, \underset{R[\Heis]}{\otimes} V \;\cong\;  \mathcal{S}_* \bigl( \widetilde{\mathcal{C}}_n(\Sigma) \bigr) \underset{R[\Heis]}{\otimes} \, \Bigl( {}_{f_\Heis}\! V \Bigr)\ .
\end{equation}
Here, ``obtained from'' means that we consider the quotients of this chain complex given by the relative singular complexes for all subspaces of $\widetilde{\mathcal{C}}_n(\Sigma)$ of the form $\pi^{-1}(\mathcal{C}_n(\Sigma , \partial^-(\Sigma)) \cup (\mathcal{C}_n(\Sigma) \smallsetminus T))$ for compact subsets $T \subset \mathcal{C}_n(\Sigma)$, where $\pi$ denotes the covering map $\widetilde{\mathcal{C}}_n(\Sigma) \to \mathcal{C}_n(\Sigma)$; we then take the homology of each of these quotients and take the inverse limit of this diagram.

Another way of describing this construction, and of keeping track of the twisting on each side, is to write the lifted action \eqref{eq:lifted-action} of $f$ as an $\Heis$-equivariant map
\begin{equation}
\label{eq:lifted-action-deck}
\widetilde{\mathcal{C}}_n(\Sigma)^{f_\Heis \circ \phi} \longrightarrow \widetilde{\mathcal{C}}_n(\Sigma)^{\phi},
\end{equation}
where the superscript indicates the quotient $\pi_1(\mathcal{C}_n(\Sigma)) = \mathbb{B}_n(\Sigma) \twoheadrightarrow \Heis$ determining the covering space as a space equipped with a right $\Heis$-action. Applying relative twisted Borel--Moore homology to \eqref{eq:lifted-action-deck}, considered as a map of regular covering spaces, we obtain \eqref{eq:twisted-linear-isomorphism} with $R[\Heis]$-local coefficients and \eqref{eq:twisted-linear-isomorphism-R} with $V$-local coefficients.

We may easily generalise this discussion by twisting both sides by an element $\tau \in \mathrm{Aut}(\Heis)$. The action $\mathcal{C}_n(f) \colon \mathcal{C}_n(\Sigma) \to \mathcal{C}_n(\Sigma)$ lifts to a map of regular covering spaces
\begin{equation}
\label{eq:lifted-action-deck-theta}
\widetilde{\mathcal{C}}_n(\Sigma)^{\tau \circ f_\Heis \circ \phi} \longrightarrow \widetilde{\mathcal{C}}_n(\Sigma)^{\tau \circ \phi}
\end{equation}
and, applying relative twisted Borel--Moore homology, we obtain an $R[\Heis]$-linear isomorphism
\begin{equation}
\label{eq:twisted-linear-isomorphism-theta}
H_*^{BM}(\mathcal{C}_{n}(\Sigma),\mathcal{C}_{n}(\Sigma,\partial^-(\Sigma)) ;R[\Heis])_{f_\Heis^{-1} \circ \tau^{-1}} \longrightarrow H_*^{BM}(\mathcal{C}_{n}(\Sigma),\mathcal{C}_{n}(\Sigma,\partial^-(\Sigma)) ;R[\Heis])_{\tau^{-1}}
\end{equation}
with $R[\Heis]$-local coefficients and an $(R,S)$-linear isomorphism
\begin{equation}
\label{eq:twisted-linear-isomorphism-R-theta}
H_*^{BM}\bigl(\mathcal{C}_{n}(\Sigma),\mathcal{C}_{n}(\Sigma,\partial^-(\Sigma)) ;{}_{\tau \circ f_\Heis}\!V\bigr) \longrightarrow H_*^{BM}\bigl(\mathcal{C}_{n}(\Sigma),\mathcal{C}_{n}(\Sigma,\partial^-(\Sigma)) ;{}_{\tau}\!V\bigr)
\end{equation}
with $V$-local coefficients.

These isomorphisms together form a twisted representation of the mapping class group $\mathfrak{M}(\Sigma)$. To formulate precisely the meaning of this statement, we consider mapping classes as morphisms in a groupoid whose objects are elements of $\mathrm{Aut}^+(\Heis)$. In standard terminology, this is called the \emph{action groupoid} for the left action $\mathfrak{M}(\Sigma) \to \mathrm{Aut}^+(\Heis)$, which we denote by $\mathfrak{M}(\Sigma) \actiongroupoidleft \mathrm{Aut}^+(\Heis)$. Morphisms $\sigma \to \tau$ are the mapping classes $f$ such that $\tau\circ f_\Heis=\sigma$.\footnote{\label{fn:action-groupoid}In general, for a group homomorphism $\theta \colon G \to H$, the groupoid $G \actiongroupoidleft H$ has object set $H$ and the morphisms $h \to h'$ are elements $g \in G$ such that $h'\theta(g)=h$, with composition given by the group operation of $G$. The notation comes from the fact that the connected components $\pi_0(G \actiongroupoidleft H)$ are given by the left cosets $\theta(G) \setminus H$. The terminology \emph{action groupoid} comes from the special case of a left action $\theta \colon G \to \mathrm{Aut}(X)$ on an object $X$. There is also a dual notion of a (right) action groupoid $H \actiongroupoidright G$ associated to an anti-homomorphism $\theta \colon G \nrightarrow H$, for example a right action $\theta \colon G \nrightarrow \mathrm{Aut}(X)$ on an object $X$.}

The above discussion proves the following, which is a functorial formulation of the twisted representation announced in Theorem \ref{athm:twisted}.

\begin{theorem}[{Theorem \ref{athm:twisted}(b)}]
\label{thm:MCG}
Associated to any left representation $V$ of $\Heis$ in $(R,S)$-bimodules, there is a functor 
\begin{equation}
\label{eq:representation-MCG-thm}
\mathfrak{M}(\Sigma) \actiongroupoidleft \mathrm{Aut}^+(\Heis) \longrightarrow \bimod{R}{S}
\end{equation}
where each object $\tau \colon \Heis \to \Heis$ is sent to the $(R,S)$-bimodule
\[
H_n^{BM}\bigl(\mathcal{C}_{n}(\Sigma),\mathcal{C}_{n}(\Sigma,\partial^-(\Sigma)) ;{}_{\tau}\!V\bigr)
\]
and the morphism $f \colon \tau \circ f_\Heis \to \tau$ is sent to the $(R,S)$-linear isomorphism \eqref{eq:twisted-linear-isomorphism-R-theta}.
\end{theorem}

As a corollary of this theorem, with $V$ equal to the tautological representation (see Remark~\ref{rmk:Heisenberg-matrices}), we obtain a twisted finite-dimensional representation of the mapping class group.

\subsection{The linearised translation action}
\label{ss:untwisted-rep-linearised}

The underlying set of the Heisenberg group $\Heis$ is $\Z\times H_1(\Sigma;\Z) \cong \Z^{2g+1}$, which we may endow with its usual affine structure (a simply transitive action of the abelian group $\Z^{2g+1}$). The first key observation is that left multiplication in $\Heis$ preserves this affine structure, in other words, for any $h_0=(k_0,x_0)\in \Heis$, the left translation action $l_{h_0} \colon \Heis \to \Heis$ is an affine automorphism. Indeed $l_{(k_0,x_0)}(k,x)=(k_0+k+x_0.x,x_0+x)$.
Left multiplication therefore gives us an affine action
\begin{equation}
\label{eq:affine-action}
\Heis \longrightarrow \mathrm{Aff}(\Z^{2g+1}).
\end{equation}

Recall that an \emph{affine space over a ring $R$} consists of an $R$-module $M$ and a set $A$ equipped with a simply transitive action of $(M,+)$, the underlying additive group of $M$. By a usual abuse of notation in affine geometry, we denote this (simply transitive) action of $(M,+)$ on $A$ also by `$+$'.
An \emph{affine automorphism} of $A$ is a bijection $f \colon A \to A$ such that $f(a+m)=f(a)+\varphi(m)$ for all $a\in A$ and $m\in M$ and some (necessarily unique) $R$-linear automorphism $\varphi \in \mathrm{Aut}_R(M)$.
After choosing an element $a_0\in A$ the affine space $A$ embeds as $M\times \{1\} \subset M\oplus R$ via $a_0 + m \mapsto (m,1)$, and any affine automorphism extends uniquely to an $R$-linear automorphism of $M \oplus R$, which is given by
\begin{align*}
\left( \begin{matrix}
\varphi & v_0 \\
0 & 1
\end{matrix} \right) ,
\end{align*}
where $v_0 \in \Hom_R(R,M) \cong M$ is the unique element such that $f(a_0) = a_0 + v_0$. This gives an injective group homomorphism, depending on $a_0 \in A$:
\[
\mathrm{Aff}(A) \lhook\joinrel\longrightarrow \mathrm{Aut}_R(M \oplus R),
\]
where $\mathrm{Aff}(A)$ denotes the group of affine automorphisms of $A$.
Applying this to the affine space $A = \Z^{2g+1}$ over $\Z$ with $a_0 = 0$, we obtain an injective group homomorphism
\begin{equation}
\label{eq:linearisation}
\mathrm{Aff}(\Z^{2g+1}) \lhook\joinrel\longrightarrow GL_{2g+2}(\Z)
\end{equation}
given by the above formula with $v_0 = f(0)$. The $\Z$-linear automorphism $\varphi$ underlying the affine automorphism $f = l_{(k_0,x_0)}$, given by the left translation action on $\Heis$, is $\varphi(k,x) = (k + x_0.x , x)$. We also have $v_0 = f(0) = l_{(k_0,x_0)}(0) = (k_0,x_0)$ in this case. The linearised action
\begin{equation}
\label{eq:linearised-action}
\rho_L = \eqref{eq:linearisation} \circ \eqref{eq:affine-action} \colon \Heis \longrightarrow GL_{2g+2}(\Z)
\end{equation}
on $L = \Heis \oplus \Z \cong \Z^{2g+2}$ is therefore given by the formula
\begin{equation}
\label{eq:rho-L-formula}
(k_0,x_0) \longmapsto \left( \begin{matrix}
1 & x_0.- & k_0 \\
0 & I & x_0 \\
0 & 0 & 1
\end{matrix} \right) ,
\end{equation}
in other words $\rho_L(k_0,x_0)$ acts by $(k,x,t) \mapsto (k',x',t')$, where
\[
\begin{cases}
k'=k+t\,k_0+x_0.x\\
x'=x+t\,x_0\\
t'=t.
\end{cases}
\]
The nice feature of this representation is that the twisted representation ${}_{\tau}\!L$  is canonically isomorphic to $L$, for any $\tau \in \mathrm{Aut}^+(\Heis)$.

\begin{lemma}
\label{lem:untwisting-tautological}
For any $\tau \in \mathrm{Aut}^+(\Heis)$, the linear map $\tau \oplus \id_\Z \colon L \to {}_{\tau}\!L$ gives an isomorphism of  
$\Z[\Heis]$-modules.
\end{lemma}
\begin{proof}
We first observe that any orientation-preserving automorphism of $\Heis$ preserves the structure of $\Heis = \Z^{2g+1}$ as a free $\Z$-module (see Corollary~\ref{lem:AutHeis} and Remark~\ref{rmk:linearising}). We therefore have a tautological homomorphism
\[
\mathrm{Aut}^+(\Heis) \longrightarrow GL_{2g+1}(\Z)
\]
given by sending $\tau$ to $\tau$ via the identification of the underlying set of $\Heis$ with $\Z^{2g+1}$. Composing this with the inclusion $GL_{2g+1}(\Z) \subset GL_{2g+2}(\Z)$ given by $- \oplus \id_{\Z}$, we obtain a $\Z$-linear automorphism $\tau \oplus \id_\Z \colon L \to L$. Notice that this inclusion is the linearisation homomorphism \eqref{eq:linearisation} restricted to $GL_{2g+1}(\Z) \subset \mathrm{Aff}(\Z^{2g+1})$.

We next check that $\tau$ intertwines the affine action $l_{h_0}$ and the twisted affine action $l_{\tau(h_0)}$, for any $h_0 \in \Heis$. For any other $h \in \Heis$, we have
\begin{align*}
l_{\tau(h_0)}(h) &= \tau(h_0)h \\
&= \tau(h_0\tau^{-1}(h)) = \tau\left(l_{h_0}(\tau^{-1}(h))\right) ,
\end{align*}
so we have the identity
\[
l_{\tau(h_0)}=\tau\circ l_{h_0}\circ \tau^{-1}
\]
in $\mathrm{Aff}(\Z^{2g+1})$. After linearisation, we obtain the formula
\begin{equation}
\label{eq:intertwining-formula}
\rho_L(\tau(h_0)) = (\tau \oplus \id_\Z) \circ \rho_L(h_0) \circ (\tau^{-1} \oplus \id_\Z) ,
\end{equation}
which is precisely the statement that $\tau \oplus \id_\Z$ intertwines the linear action $\rho_L(h_0)$ and its twist $\rho_L(\tau(h_0))$ by $\tau$.
\end{proof}

\begin{remark}
Alternatively, we may check formula \eqref{eq:intertwining-formula} in coordinates. By Corollary~\ref{lem:AutHeis} and Remark~\ref{rmk:linearising} we may identify $\mathrm{Aut}^+(\Heis)$ with the subgroup
\[
Sp(H) \ltimes H^\vee = \left( \begin{matrix}
1 & H^\vee \\
0 & Sp(H)
\end{matrix} \right) \subset GL(\Z \oplus H),
\]
where $H^\vee$ denotes $\Hom(H,\Z)$. Each element $\tau$ of $\mathrm{Aut}^+(\Heis)$ is then of the form $\left( \begin{matrix}
1 & v.- \\
0 & M
\end{matrix} \right)$ for $M \in Sp(H)$ and $v \in H$. Each $h_0 = (k_0,x_0) \in \Heis$ acts on $L = \Heis \oplus \Z = (\Z \oplus H) \oplus \Z$ by the block matrix \eqref{eq:rho-L-formula}. We have $\tau(k_0,x_0) = (k_0 + v.x_0 , Mx_0)$, which acts by the block matrix
\[
\left( \begin{matrix}
1 & Mx_0.- & k_0 + v.x_0 \\
0 & I & Mx_0 \\
0 & 0 & 1
\end{matrix} \right) .
\]
The intertwining formula \eqref{eq:intertwining-formula} then corresponds to the calculation:
\begin{align*}
\rho_L(\tau(h_0)) \circ (\tau \oplus \id_\Z) &= \left(\begin{matrix}
1 & Mx_0.- & k + v.x_0 \\
0 & I & Mx_0 \\
0 & 0 & 1
\end{matrix}\right) \left(\begin{matrix}
1 & v.- & 0 \\
0 & M & 0 \\
0 & 0 & 1
\end{matrix}\right) \\
&= \left(\begin{matrix}
1 & (v+x_0).- & k_0 + v.x_0 \\
0 & M & Mx_0 \\
0 & 0 & 1
\end{matrix}\right) \\
&= \left(\begin{matrix}
1 & v.- & 0 \\
0 & M & 0 \\
0 & 0 & 1
\end{matrix}\right) \left(\begin{matrix}
1 & x_0.- & k_0 \\
0 & I & x_0 \\
0 & 0 & 1
\end{matrix}\right) \\
&= (\tau \oplus \id_\Z) \circ \rho_L(h_0),
\end{align*}
where for the second equality we use the fact that $(Mx_0.-)\circ M = x_0.- \colon H \to \Z$ since $M \in Sp(H)$ preserves the symplectic form $-.-$.
\end{remark}

The following theorem is then immediate from Lemma~\ref{lem:untwisting-tautological}.

\begin{theorem}[Theorem \ref{athm:l-regular}]
\label{thm:l-regular}
There is a representation
\[
\mathfrak{M}(\Sigma) \longrightarrow \mathrm{Aut}_{\Z} \bigl(H_n^{BM}\bigl( \mathcal{C}_n(\Sigma),\mathcal{C}_{n}(\Sigma,\partial^-(\Sigma)) ; L\bigr) \bigr)
\]
associating to $f \in \mathfrak{M}(\Sigma)$ the composition of the isomorphism
\[
H_n^{BM}\bigl( \mathcal{C}_n(\Sigma),\mathcal{C}_{n}(\Sigma,\partial^-(\Sigma));L \bigr) \longrightarrow H_n^{BM}\bigl( \mathcal{C}_n(\Sigma),\mathcal{C}_{n}(\Sigma,\partial^-(\Sigma)) ;{}_{f_\Heis}\!L \bigr)
\]
induced by the coefficient isomorphism $f_\Heis \oplus \id_\Z$ with the functorial homology isomorphism
\[
\mathcal{C}_n(f)_* \colon H_n^{BM}\bigl( \mathcal{C}_n(\Sigma) , \mathcal{C}_{n}(\Sigma,\partial^-(\Sigma));{}_{f_\Heis}\!L\bigr) \longrightarrow H_n^{BM}\bigl( \mathcal{C}_n(\Sigma),\mathcal{C}_{n}(\Sigma,\partial^-(\Sigma));L \bigr)\ .
\]
\end{theorem}

\section{The Schr{\"o}dinger local system}
\label{Schroedinger}

A well-known representation of the Heisenberg group, which is infinite-dimensional and unitary, is the \emph{Schr{\"o}dinger representation}, which is parametrised by a non-zero real number $\hbar$. The left action on the Hilbert space $L^{2}(\R^{g})$ is given by the following formula:

\begin{equation}
\label{eq:Schroedinger-formula}
\left[\Pi_{\hbar }\left(k,x=\sum_{i=1}^g p_i a_i + q_i b_i \right) \psi \right](s)=e^{i\hbar \frac{k+p\cdot q}{2}}e^{i\hbar p\cdot s}\psi (s+q) .
\end{equation}

The Schr{\"o}dinger representation occupies a special place in the representation theory of the Heisenberg group, and in this section we explain how to leverage its properties to construct an untwisted representation of the full mapping class group $\mathfrak{M}(\Sigma)$, after passing to a central extension.

In \S\ref{ss:Stone-vN} we first discuss the Schr\"odinger representation in more detail, as well as the Stone--von Neumann theorem and its consequences. In \S\ref{ss:universal} we discuss the universal central extension of the mapping class group. We then prove Theorem \ref{athm:universal-central} in \S\ref{ss:inf-dim-unitary}, constructing untwisted representations of the universal central extension of the mapping class group. In \S\ref{ss:fd-unitary} we explain how to adapt our construction to the finite-dimensional analogues of the Schr\"odinger representation to prove Theorem \ref{athm:universal-central-fd}. Finally, in \S\ref{ss:sesquilinear-form} we show that, although these representations are not unitary in an obvious way, they do preserve a certain perfect sesquilinear pairing between two different homology groups (Proposition \ref{prop:perfect-sesquilinear-form}).

\subsection{The Schr\"odinger representation and the Stone--von Neumann theorem.}
\label{ss:Stone-vN}

The \emph{continuous Heisenberg group} is defined similarly to the discrete Heisenberg group. As a set it is $\R \times H_1(\Sigma;\R)$, with multiplication given by $(s,x)(t,y) = (s+t+x.y,x+y)$, where $.$ is the intersection form on $H_1(\Sigma;\R) = H_\R$. We denote it by $\Heisr$ and note that the discrete Heisenberg group $\Heis$ is naturally a subgroup of $\Heisr$. The proofs of Lemma \ref{lem:split-ses} and Corollary \ref{lem:AutHeis} work similarly for $\Heisr$, and the group $\mathrm{Aut}^+(\Heisr)$ of automorphisms of $\Heisr$ acting trivially on the centre decomposes as a semi-direct product $\mathrm{Aut}^+(\Heisr) \cong Sp(H_1(\Sigma;\R)) \ltimes H^1(\Sigma;\R)$.
There is a natural inclusion
\[
\mathrm{Aut}^+(\Heis) \lhook\joinrel\longrightarrow \mathrm{Aut}^+(\Heisr),
\]
denoted by $\varphi \mapsto \varphi_\R$, such that $\varphi_\R$ is an extension of $\varphi$. This inclusion is compatible with the decompositions into semi-direct products.

As an alternative to the explicit formula \eqref{eq:Schroedinger-formula}, the Schr{\"o}dinger representation may also be defined more abstractly as follows. First note that $\Heisr$ may be written as a semi-direct product
\[
\Heisr = \R \{ (0,b_1) ,\ldots, (0,b_g) \} \ltimes \R \{ (1,0) , (0,a_1) ,\ldots, (0,a_g) \} ,
\]
where $a_1,\ldots,a_g,b_1,\ldots,b_g$ form a symplectic basis for $H_1(\Sigma;\R)$. Fix a real number $\hbar > 0$. There is a one-dimensional complex unitary representation
\[
\R \{ (1,0) , (0,a_1) ,\ldots, (0,a_g) \} \longrightarrow \mathbb{S}^1 = U(1)
\]
defined by $(t,x) \mapsto e^{\hbar i t / 2}$. This may then be induced to a complex unitary representation of the whole group $\Heisr$ on the complex Hilbert space $L^2(\R \{ (0,b_1) ,\ldots, (0,b_g) \}) = L^2(\R^g)$. This is the Schr{\"o}dinger representation of $\Heisr$. From now on, let us denote this representation by
\begin{equation}
\label{eq:Schroedinger}
W = L^2(\R^g) \qquad\text{and}\qquad
\rho_W \colon \Heisr \longrightarrow U(W).
\end{equation}
We will usually not make the dependence on $\hbar$ explicit in the notation; in particular we write $\rho_W$ instead of $\rho_{W,\hbar}$. The key properties of $\rho_W$ that we shall need are the following.

\begin{theorem}[{The \emph{Stone--von Neumann theorem}; \cite[page 19]{LionVergne}}]
\label{th:LionVergne}
\hspace{0pt}

\begin{itemize}
\item[\textup{(a)}] The representation \eqref{eq:Schroedinger} is irreducible.
\item[\textup{(b)}] If $V$ is a complex Hilbert space and
\[
\rho \colon \Heisr \longrightarrow U(V)
\]
is an irreducible unitary representation such that $\rho(t,0) = e^{\hbar i t / 2} \id_{V}$ for all $t \in \R$, then there is an isomorphism $\kappa \colon V \to W$ such that, for any $(t,x) \in \Heisr$, the following diagram commutes:
\[
\begin{tikzcd}
V \ar[rr,"\kappa"] \ar[d,"{\rho(t,x)}",swap] && W \ar[d,"{\rho_W(t,x)}"] \\
V \ar[rr,"\kappa"] && W.
\end{tikzcd}
\]
\end{itemize}
\end{theorem}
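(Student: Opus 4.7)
The plan is to derive both parts by identifying the Schr\"odinger representation as the representation of $\Heisr$ induced from a one-dimensional character of a maximal abelian subgroup, and then applying standard Mackey machine / Gaussian projection arguments. Split the generators as $A = \mathbb{R}\{(1,0),(0,a_1),\ldots,(0,a_g)\}$ and $B = \mathbb{R}\{(0,b_1),\ldots,(0,b_g)\}$, so that $\Heisr = A \rtimes B$ with $A$ maximal abelian. The one-dimensional unitary character $\chi_\hbar \colon A \to U(1)$, $(t,\sum p_i a_i) \mapsto e^{i\hbar t/2}$, is precisely what is induced up to $\Heisr$ to produce $W$; this is essentially the abstract description already given before \eqref{eq:Schroedinger}.

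For part (a), I would apply Mackey's irreducibility criterion for induced representations: $\mathrm{Ind}_A^{\Heisr}(\chi_\hbar)$ is irreducible provided the $B$-conjugates of $\chi_\hbar$ (restricted to $A$) are pairwise inequivalent. A direct calculation using the Heisenberg product \eqref{eq:Heisenberg-product} shows that conjugation by $(0,\sum q_j b_j)$ sends $\chi_\hbar$ on $A$ to the character $(t,\sum p_i a_i) \mapsto e^{i\hbar t/2} e^{2i\hbar \sum p_i q_i}$, and non-degeneracy of $\omega$ forces these characters to be distinct for distinct $q$. Alternatively one can argue directly from \eqref{eq:Schroedinger-formula}: a bounded operator $T$ commuting with all the multiplication operators $W(0,\sum q_i b_i)$ must itself be a multiplication operator by functional calculus, and then commutation with the translations $W(0,\sum p_i a_i)$ forces the multiplier to be constant, so $T$ is a scalar.

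For part (b), given $W' \colon \Heisr \to U(H')$ satisfying $W'(t,0) = e^{i\hbar t/2}\,\mathrm{id}$, I would construct $\kappa$ via the Gaussian fiducial vector $f_0(s) = (\hbar\pi)^{-g/4} e^{-|s|^2/(2\hbar)}$. A short calculation from \eqref{eq:Schroedinger-formula} gives $\langle W(t,x)f_0, f_0\rangle = e^{i\hbar t/2} e^{-Q(x)/2}$ for an explicit positive-definite quadratic form $Q$ on $H_1(\Sigma;\mathbb{R})$. Form the (formal) integral
\[
P' \;=\; c \int_{H_1(\Sigma;\mathbb{R})} e^{-Q(x)/2}\, W'(0,-x)\, dx,
\]
normalised to be an orthogonal projection; the central character hypothesis ensures that the relevant twisted convolution identity holds, so $P'$ is a self-adjoint idempotent. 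Set $H'' = \mathrm{im}(P')$, and define
\[
\kappa \colon H' \longrightarrow L^2(\mathbb{R}^g) \otimes H'', \qquad v \longmapsto \bigl(s \mapsto P' \circ W'\bigl(0,\textstyle\sum s_i b_i\bigr)^{-1} v\bigr).
\]
Then intertwining is a direct verification from the multiplication rule, and the Stone--von Neumann equivariance square commutes by construction.

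The main obstacle is the analytic verification that $\kappa$ is an isometric isomorphism, which boils down to the reproducing-kernel identity for matrix coefficients of $f_0$: one needs $\int e^{-Q(x)/2}\overline{e^{-Q(x-y)/2}}\, dx$ to be a constant multiple of $e^{-Q(y)/2}$ (up to the phase coming from $\omega$), and that $P'$ has the correct rank on each isotypic fibre. Once this reproducing identity is established, surjectivity of $\kappa$ follows from the fact that vectors of the form $W'(0,\sum s_i b_i) w$ with $w \in H''$ span a dense subspace of $H'$, which in turn follows from irreducibility of $W$ (part (a)) applied fibrewise. All remaining steps are formal manipulations with the semi-direct product structure and the fixed central character.
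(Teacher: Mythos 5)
The paper does not prove this theorem; it imports it directly from \cite{LionVergne}, so there is no in-paper argument to compare against. Your task was therefore to supply a proof of the Stone--von Neumann theorem from scratch.

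Part (a) is fine. Both routes you offer work: Mackey's irreducibility criterion applies because the $B$-conjugates of $\chi_\hbar|_A$ are pairwise distinct (your sign and factor of two in the conjugated character are off, but non-degeneracy of $\omega$ does force the conclusion), and your direct argument is correct because the operators $W(0,q\cdot b)$ are multiplication operators generating the maximal abelian von Neumann algebra $L^\infty(\mathbb{R}^g)$, so their commutant consists of multiplication operators, which the translations then force to be constant.

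Part (b) has a genuine gap. The overall strategy via the Gaussian projector $P'$ is von Neumann's original one and is sound, and your matrix-coefficient claim is correct: one computes $\langle W(t,x)f_0,f_0\rangle = e^{i\hbar t/2}e^{-Q(x)/2}$ with $Q\bigl(\sum p_ia_i + q_ib_i\bigr) = \tfrac{\hbar}{2}(|p|^2+|q|^2)$. But the explicit formula you propose for the intertwiner, $\kappa(v)(s) = P'\,W'\bigl(0,\sum s_i b_i\bigr)^{-1}v$, does not intertwine and is not an isomorphism. To see it fail already in the model case $W'=W$: there $P'$ is the projection onto $\mathbb{C}f_0$, and since $W(0,-s\cdot b)$ is multiplication by $e^{-is\cdot r}$, the formula reduces (up to normalisation) to $\kappa(v)(s) = \widehat{v f_0}(s)\cdot f_0$, the Fourier transform of $v\cdot f_0$. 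That map is injective but neither surjective nor isometric, so it cannot be the required isomorphism. One can also trace the failure directly: equivariance would force $P'W'(t',y)$ to be a scalar multiple of $P'$ for $y\neq 0$, which is false (in the Schr\"odinger model $P'W(t',y) = \lvert f_0\rangle\langle W(-t',-y)f_0\rvert$, and a displaced or modulated Gaussian is not proportional to $f_0$). The correct intertwiner must use both the $a$- and $b$-directions at once; it is essentially the coherent-state (Bargmann/FBI) transform $v\mapsto\bigl(x\mapsto P'W'(0,-x)v\bigr)$ over the full phase space $\mathbb{R}^{2g}$, followed by the identification of the resulting reproducing-kernel space with $L^2(\mathbb{R}^g)\otimes H''$; isometry and equivariance then follow from precisely the twisted-convolution identity that you defer. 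A clean account of this is Theorem 1.50 in Folland's \emph{Harmonic Analysis in Phase Space}.
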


\begin{corollary}
\label{uniqueness-of-Schroedinger}
If $\rho \colon \Heisr \to U(W)$ is an irreducible unitary representation such that $\rho(t,0) = e^{\hbar i t / 2}  \id_W$ for all $t \in \R$, then there is a commutative diagram
\[
\begin{tikzcd}
\Heisr \ar[rr,"{\rho_W}"] \ar[drr,"{\rho}",swap] && U(W) \ar[d,"{\mathrm{ad}_u}"] \\
&& U(W)
\end{tikzcd}
\]
for some element $u \in U(W)$, which is unique up to rescaling by an element of $\mathbb{S}^1$. Here, $\mathrm{ad}_u$ denotes the adjoint action of $u$ given by $\mathrm{ad}_u(v) = uvu^{-1}$.
\end{corollary}
\begin{proof}
Applying Theorem \ref{th:LionVergne} to the case $V=W$, the unitary isomorphism $\kappa$ provides an element $u$ as claimed. To see uniqueness up to a scalar in $\mathbb{S}^1$, note that any two such elements $u$ differ by an automorphism of the irreducible representation $\rho_W$, which must therefore be a scalar (in $\C^*$) multiple of the identity, by Schur's lemma. Moreover, since $\rho_W$ is unitary, this scalar must lie in $\mathbb{S}^1 \subset \C^*$.
\end{proof}

\begin{definition}
Denote by $PU(W) = U(W) / \mathbb{S}^1$ the \emph{projective unitary group} of the Hilbert space $W$. Since scalar multiples of the identity are central, this fits into a central extension
\begin{equation}
\label{eq:projective-unitary-central-extension}
\begin{tikzcd}
1 \ar[r] & \mathbb{S}^1 \ar[r] & U(W) \ar[r] & PU(W) \ar[r] & 1.
\end{tikzcd}
\end{equation}
We denote by $\omega_{PU} \colon PU(W) \times PU(W) \to \mathbb{S}^1$ a choice of $2$-cocycle corresponding to this central extension; in other words we write $U(W) \cong \mathbb{S}^1 \times PU(W)$ with multiplication given by $(s,g)(t,h) = (st\omega_{PU}(g,h),gh)$.
\end{definition}

\begin{definition}
\label{def:Shale-Weil}
For an automorphism $\varphi \in \mathrm{Aut}(\Heisr)$, Corollary \ref{uniqueness-of-Schroedinger} applied to the representation $\rho = \rho_W \circ \varphi$ tells us that there is a unique element $u = T(\varphi) \in PU(W)$ such that $\rho_W \circ \varphi = T(\varphi) \rho_W T(\varphi)^{-1}$. The assignment $\varphi \mapsto T(\varphi)$ defines a group homomorphism
\begin{equation}
\label{eq:Shale-Weil-projective-on-Aut}
T \colon \mathrm{Aut}(\Heisr) \longrightarrow PU(W).
\end{equation}
Restricting the homomorphism \eqref{eq:Shale-Weil-projective-on-Aut} to the subgroup $Sp_{2g}(\R) = Sp(H_\R) \subset \mathrm{Aut}^+(\Heisr) \subset \mathrm{Aut}(\Heisr)$, we obtain a projective representation
\begin{equation}
\label{eq:Shale-Weil-projective}
R = T|_{Sp_{2g}(\R)} \colon Sp_{2g}(\R) \longrightarrow PU(W).
\end{equation}
This is the \emph{Shale--Weil projective representation} of the symplectic group. (It is sometimes also called the \emph{Segal--Shale--Weil projective representation}, see for example \cite[page 53]{LionVergne}.) Pulling back the central extension \eqref{eq:projective-unitary-central-extension} along the homomorphism \eqref{eq:Shale-Weil-projective}, we then obtain a central extension
\begin{equation}
\label{eq:lifted-central-extension}
\begin{tikzcd}
1 \ar[r] & \mathbb{S}^1 \ar[r] & \overline{Sp}_{2g}(\R) \ar[r] & Sp_{2g}(\R) \ar[r] & 1
\end{tikzcd}
\end{equation}
and a lifted representation
\begin{equation}
\label{eq:Shale-Weil-lifted}
\overline{R} \colon \overline{Sp}_{2g}(\R) \longrightarrow U(W).
\end{equation}
The group $\overline{Sp}_{2g}(\R)$ is sometimes known as the \emph{Mackey obstruction group} of the projective representation \eqref{eq:Shale-Weil-projective}. Since \eqref{eq:lifted-central-extension} is pulled back from \eqref{eq:projective-unitary-central-extension} along $R$, we may write $\overline{Sp}_{2g}(\R) \cong \mathbb{S}^1 \times Sp_{2g}(\R)$ with multiplication given by $(s,g)(t,h) = (s.t.\omega_{Sp}(g,h),gh)$, where
\[
\omega_{Sp} = \omega_{PU} \circ (R \times R) \colon Sp_{2g}(\R) \times Sp_{2g}(\R) \longrightarrow PU(W) \times PU(W) \longrightarrow \mathbb{S}^1 .
\]
\end{definition}

\subsection{Universal central extensions.}
\label{ss:universal}

We recall the definition of the universal central extension of a group $G$ (see for example \cite[\S 6.9]{Weibel} for more details).

\begin{definition}
\label{def:stably-universal-extension}
If $G$ is a perfect group, i.e.~if we have $H_1(G;\Z)=0$, then there is an isomorphism $H^2(G;H_2(G;\Z)) \cong \Hom(H_2(G;\Z),H_2(G;\Z))$ by the universal coefficient theorem, and the $H_2(G;\Z)$-central extension of $G$ corresponding to the identity map is the \emph{universal central extension} of $G$. For $G = \mathfrak{M}(\Sigma)$ (recall that $\Sigma = \Sigma_{g,1}$), we have that $G$ is perfect when $g\geq 3$ and we have $H_2(G;\Z) \cong \Z$ when $g\geq 4$ (see \cite[Theorems 5.1 and 6.1]{Korkmaz}). In particular, for $g\geq 4$, let us denote by
\[
1 \longrightarrow \Z \longrightarrow \widetilde{\mathfrak{M}}(\Sigma) \xrightarrow{\;\mathfrak{p}\;} \mathfrak{M}(\Sigma) \longrightarrow 1
\]
the \emph{universal central extension of $\mathfrak{M}(\Sigma)$}.
\end{definition}

Consider the inclusion of surfaces $\Sigma_{g,1} \hookrightarrow \Sigma_{h,1}$ given by boundary connected sum with $\Sigma_{h-g,1}$. This induces an inclusion of mapping class groups
\begin{equation}
\label{eq:stabilisation-MCG}
\mathfrak{M}(\Sigma_{g,1}) \lhook\joinrel\longrightarrow \mathfrak{M}(\Sigma_{h,1})
\end{equation}
by extending diffeomorphisms by the identity on $\Sigma_{h-g,1}$. Recall from the introduction that the inclusion map \eqref{eq:stabilisation-MCG} induces isomorphisms on first and second (co)homology for all $h\geq g\geq 4$ (see \cite{Harer1985} or \cite{Wahl2013}), so the pullback of $\widetilde{\mathfrak{M}}(\Sigma_{h,1})$ along this inclusion is $\widetilde{\mathfrak{M}}(\Sigma_{g,1})$. The following definition is therefore consistent for any $g\geq 1$.

\begin{definition}
\label{def:stably-universal-extension-2}
We define the \emph{stably universal central extension} $\widetilde{\mathfrak{M}}(\Sigma_{g,1})$ of $\mathfrak{M}(\Sigma_{g,1})$ to be the pullback of $\widetilde{\mathfrak{M}}(\Sigma_{h,1})$ for any $h \geq \mathrm{max}(g,4)$.
\end{definition}

The following lemma explains how Morita's crossed homomorphism $\mathfrak{d}$ behaves with respect to increasing the genus via this inclusion. We first remark that the boundary connected sum decomposition $\Sigma_{h,1} \cong \Sigma_{g,1} \natural \Sigma_{h-g,1}$, which induces the inclusion \eqref{eq:stabilisation-MCG} above, also induces a free product decomposition $\pi_1(\Sigma_{h,1}) \cong \pi_1(\Sigma_{g,1}) * \pi_1(\Sigma_{h-g,1})$ of fundamental groups. This, in turn, induces a direct sum decomposition $H^1(\Sigma_{h,1}) \cong H^1(\Sigma_{g,1}) \oplus H^1(\Sigma_{h-g,1})$ on first cohomology, using the identification $H^1(-) \cong \Hom(\pi_1(-),\Z)$.

\begin{lemma}
\label{lem:Morita-crossed-hom-stabilisation}
The diagram
\begin{equation}
\label{eq:Morita-crossed-hom-stabilisation}
\begin{tikzcd}
\mathfrak{M}(\Sigma_{g,1}) \ar[rr,hook,"\eqref{eq:stabilisation-MCG}"] \ar[d,"\mathfrak{d}",swap] && \mathfrak{M}(\Sigma_{h,1}) \ar[d,"\mathfrak{d}"] \\
H^1(\Sigma_{g,1}) \ar[rr] && H^1(\Sigma_{h,1})
\end{tikzcd}
\end{equation}
commutes, where the bottom horizontal arrow is the inclusion of the left-hand summand of the decomposition $H^1(\Sigma_{h,1}) \cong H^1(\Sigma_{g,1}) \oplus H^1(\Sigma_{h-g,1})$.
\end{lemma}

\begin{proof}[Proof of Lemma \ref{lem:Morita-crossed-hom-stabilisation}]
As in the definition of the Morita crossed homomorphism (see equation \eqref{eq:Morita-crossed-hom} in \S\ref{ss:Morita-crossed-hom}), we use the the identification $H^1(-) \cong \Hom(\pi_1(-),\Z)$.
Under this identification, the bottom horizontal arrow in \eqref{eq:Morita-crossed-hom-stabilisation} is given by pre-composition with the projection $\mathrm{pr}_1 \colon \pi_1(\Sigma_{h,1}) \cong \pi_1(\Sigma_{g,1}) * \pi_1(\Sigma_{h-g,1}) \twoheadrightarrow \pi_1(\Sigma_{g,1})$ onto the first factor of the free product.

Let $f \in \mathfrak{M}(\Sigma_{g,1})$ and write $\hat{f} \in \mathfrak{M}(\Sigma_{h,1})$ for its image under \eqref{eq:stabilisation-MCG}. Let $\gamma \in \pi_1(\Sigma_{h,1})$ and write $\gamma_1 = \mathrm{pr}_1(\gamma) \in \pi_1(\Sigma_{g,1})$ and $\gamma_2 = \mathrm{pr}_2(\gamma) \in \pi_1(\Sigma_{h-g,1})$ for its images under the projections onto the two free factors.
Recall that the definition of $d_i(\gamma)$ (see equation \eqref{eq:definition-of-d}) depends only on the decomposition of $\gamma$ into the standard generators $\alpha_j,\beta_j$ of $\pi_1(\Sigma_{h,1})$ after forgetting those with $j>i$. This implies in particular that we have
\[
d_i(\gamma) = d_i(\gamma_1) \qquad\text{and}\qquad d_i(\pi_1(\hat{f})(\gamma)) = d_i(\pi_1(f)(\gamma_1))
\]
for $1 \leq i \leq g$. Moreover, since $\hat{f}$ acts by the identity on $\Sigma_{h-g,1}$, we also have
\[
d_i(\pi_1(\hat{f})(\gamma)) = d_i(\gamma)
\]
for $g+1 \leq i \leq h$.
From the defining formula \eqref{eq:Morita-crossed-hom} we deduce that
\[
\mathfrak{d}_{\hat{f}}([\gamma]) = \sum_{i=1}^h d_i(\pi_1(\hat{f})(\gamma)) - d_i(\gamma) = \sum_{i=1}^g d_i(\pi_1(f)(\gamma_1)) - d_i(\gamma_1) = \mathfrak{d}_f([\gamma_1]) = (\mathfrak{d}_f \circ \mathrm{pr}_1)([\gamma]),
\]
and so \eqref{eq:Morita-crossed-hom-stabilisation} commutes.
\end{proof}

\subsection{Constructing the representations.}
\label{ss:inf-dim-unitary}

We now prove Theorem \ref{athm:universal-central}.

From the previous two subsections, we have the following diagram:
\begin{equation}
\label{eq:big-diagram}
\begin{tikzcd}
\mathfrak{M}(\Sigma) \ar[r,"{\Phi = (\mathfrak{s},\mathfrak{d})}"] & \mathrm{Aut}^+(\Heis) \ar[r] & \mathrm{Aut}^+(\Heisr) \ar[r,"T"] & PU(W) \\
& Sp(H) \ltimes H \ar[u,"{\cong}"] \ar[r] & Sp(H_\R) \ltimes H_\R \ar[u,"{\cong}"] & 
\end{tikzcd}
\end{equation}
where unmarked arrows denote inclusions. For $g\geq 4$, by the universality of $\widetilde{\mathfrak{M}}(\Sigma)$, there is a morphism of central extensions
\begin{equation}
\label{eq:map-of-central-ext-1}
\begin{tikzcd}
\widetilde{\mathfrak{M}}(\Sigma) \ar[d,"\mathfrak{p}",swap] \ar[rr] && U(W) \ar[d] \\
\mathfrak{M}(\Sigma) \ar[rr] && PU(W)
\end{tikzcd}
\end{equation}
where the bottom horizontal arrow is the composition along the top of \eqref{eq:big-diagram}. Moreover, this extends to all $g\geq 1$ as follows. Consider the commutative diagram\footnote{We freely pass between the different notations $Sp_{2g}(\R) = Sp(H_\R)$ and $\R^{2g} = H_\R$, and similarly for the integral versions, depending on whether or not we wish to emphasise the genus $g$.}
\begin{equation}
\label{eq:Shale-Weil-representation-stabilisation-2}
\begin{tikzcd}
\mathfrak{M}(\Sigma_{g,1}) \ar[d,hook] \ar[rr,"{(\mathfrak{s},\mathfrak{d})}"] && Sp_{2g}(\R) \ltimes \R^{2g} \ar[rr,"T"] \ar[d] && PU(L^2(\R^g)) \ar[d] & U(L^2(\R^g)) \ar[l,two heads] \ar[d] \\
\mathfrak{M}(\Sigma_{h,1}) \ar[rr,"{(\mathfrak{s},\mathfrak{d})}"] && Sp_{2h}(\R) \ltimes \R^{2h} \ar[rr,"T"] && PU(L^2(\R^h)) & U(L^2(\R^h)) \ar[l,two heads]
\end{tikzcd}
\end{equation}
The right-hand side of this diagram arises as follows. We consider $L^2(\R^g)$ as the (closed) subspace of $L^2(\R^h)$ of those $L^2$-functions that factor through $\R^h = \R^g \times \R^{h-g} \twoheadrightarrow \R^g$. Any closed subspace of a Hilbert space has an orthogonal complement, so we may extend unitary automorphisms by the identity on this complement to obtain a homomorphism $U(L^2(\R^g)) \to U(L^2(\R^h))$, which descends to the projective unitary groups. The right-hand square of \eqref{eq:Shale-Weil-representation-stabilisation-2} is a pullback square (this is true for any closed subspace of a Hilbert space). Commutativity of the left-hand square follows from Lemma \ref{lem:Morita-crossed-hom-stabilisation} and commutativity of the middle square follows from the defining property of $T$ (Definition \ref{def:Shale-Weil}). Let us write $\overline{\mathfrak{M}}(\Sigma_{g,1})$ for the pullback of $U(W) \to PU(W)$ along $T \circ (\mathfrak{s},\mathfrak{d})$, and similarly for $\overline{\mathfrak{M}}(\Sigma_{h,1})$. Then $\overline{\mathfrak{M}}(\Sigma_{g,1})$ is the pullback of $\overline{\mathfrak{M}}(\Sigma_{h,1})$ along the inclusion of mapping class groups. From Definitions \ref{def:stably-universal-extension} and \ref{def:stably-universal-extension-2}, we also have that $\widetilde{\mathfrak{M}}(\Sigma_{g,1})$ is the pullback of $\widetilde{\mathfrak{M}}(\Sigma_{h,1})$ along the inclusion.

If we now take $h\geq 4$, then $\widetilde{\mathfrak{M}}(\Sigma_{h,1})$ is by definition the \emph{universal} central extension, so there is a unique morphism of central extensions $\widetilde{\mathfrak{M}}(\Sigma_{h,1}) \to \overline{\mathfrak{M}}(\Sigma_{h,1})$. Pulling back along the inclusion, we obtain a canonical morphism of central extensions $\widetilde{\mathfrak{M}}(\Sigma_{g,1}) \to \overline{\mathfrak{M}}(\Sigma_{g,1})$, even though $\widetilde{\mathfrak{M}}(\Sigma_{g,1})$ is not universal for $g\leq 3$. This gives us the desired morphism of central extensions \eqref{eq:map-of-central-ext-1}.

\begin{notation}
We denote by
\[
S \colon \widetilde{\mathfrak{M}}(\Sigma) \longrightarrow U(W)
\]
the top horizontal map of \eqref{eq:map-of-central-ext-1}.
\end{notation}

\begin{notation}
By abuse of notation, we write
\[
\rho_W \colon \Heis \longrightarrow U(W)
\]
for the restriction of the Schr{\"o}dinger representation \eqref{eq:Schroedinger} to the subgroup $\Heis \subset \Heisr$.
\end{notation}

A consequence of Definition \ref{def:Shale-Weil} is the following.

\begin{lemma}
\label{lem:SandW}
For $g \in \widetilde{\mathfrak{M}}(\Sigma)$ and $h \in \Heis$, we have the following equation in $U(W)$:
\begin{equation}
\label{eq:SandW}
S(g).\rho_W(h).S(g)^{-1} = \rho_W(\Phi(\mathfrak{p}(g))(h)).
\end{equation}
\end{lemma}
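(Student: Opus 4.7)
The plan is to unwind the definition of $S$ all the way back to the defining property of the Shale-Weil projective representation from Definition \ref{def:Shale-Weil}, and observe that the desired conjugation relation is essentially that defining property, restricted from $\Heisr$ to $\Heis$.

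First, I would trace the element $\widetilde{g} \in \widetilde{\mathfrak{M}}(\Sigma)$ along the top row of the large commutative diagram preceding the lemma. Let $\varphi = \Phi(\pi(\widetilde{g})) \in \mathrm{Aut}^+(\Heis)$. By the inclusion $\mathrm{Aut}^+(\Heis) \hookrightarrow \mathrm{Aut}^+(\Heisr)$ recalled just before \eqref{eq:Schroedinger}, $\varphi$ extends uniquely to $\varphi_\mathbb{R} \in \mathrm{Aut}^+(\Heisr)$, whose image in $Sp_{2g}(\mathbb{R})$ is the symplectic automorphism under consideration. By construction of the diagram as a sequence of pullbacks, the image of $\widetilde g$ in $\overline{Sp}_{2g}(\mathbb{R})$ is some lift $\widetilde{\varphi}_\mathbb{R}$ of $\varphi_\mathbb{R} \in Sp_{2g}(\mathbb{R})$, and $S(\widetilde{g}) = \overline{R}(\widetilde{\varphi}_\mathbb{R}) \in U(L^2(\mathbb{R}^g))$.

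Next, I would invoke Definition \ref{def:Shale-Weil} directly: by construction, the class $R(\varphi_\mathbb{R}) \in PU(L^2(\mathbb{R}^g))$ satisfies
\[
W \circ \varphi_\mathbb{R} \;=\; R(\varphi_\mathbb{R}).\,W\,.R(\varphi_\mathbb{R})^{-1}
\]
as homomorphisms $\Heisr \to PU(L^2(\mathbb{R}^g))$. The element $\overline{R}(\widetilde{\varphi}_\mathbb{R}) \in U(L^2(\mathbb{R}^g))$ is, by the pullback definition of $\overline{Sp}_{2g}(\mathbb{R})$ along $R$, a lift of $R(\varphi_\mathbb{R})$ through the central extension \eqref{eq:projective-unitary-central-extension}. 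Any two lifts of $R(\varphi_\mathbb{R})$ differ by multiplication by an element of $\mathbb{S}^1$, which is central in $U(L^2(\mathbb{R}^g))$; hence conjugation by $\overline{R}(\widetilde{\varphi}_\mathbb{R})$ in $U(L^2(\mathbb{R}^g))$ agrees with conjugation by $R(\varphi_\mathbb{R})$ in $PU(L^2(\mathbb{R}^g))$, on any element whose value is well-defined in $U(L^2(\mathbb{R}^g))$ (not merely projectively). Applied to $W(k)$ for $k \in \Heisr$, this gives the honest equation in $U(L^2(\mathbb{R}^g))$
\[
\overline{R}(\widetilde{\varphi}_\mathbb{R}).\,W(k)\,.\overline{R}(\widetilde{\varphi}_\mathbb{R})^{-1} \;=\; W(\varphi_\mathbb{R}(k)).
\]

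Finally, specialising to $k = h \in \Heis \subset \Heisr$, we have $\varphi_\mathbb{R}(h) = \varphi(h) = \Phi(\pi(\widetilde{g}))(h)$ since $\varphi_\mathbb{R}$ extends $\varphi$, and $S(\widetilde g) = \overline{R}(\widetilde{\varphi}_\mathbb{R})$, yielding exactly \eqref{eq:SandW}.

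I do not expect a serious obstacle here: the only subtle point is making sure that the conjugation relation, which a priori holds in the projective unitary group, also holds in $U(L^2(\mathbb{R}^g))$. This is handled by the trivial observation that $\mathbb{S}^1 \subseteq U(L^2(\mathbb{R}^g))$ is central, so the ambiguity in lifting from $PU$ to $U$ is killed when we conjugate. Everything else is bookkeeping through the pullback squares in the diagram defining $S$.
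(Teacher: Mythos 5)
The paper offers no explicit proof of this lemma, merely recording it as ``a consequence of Definition~\ref{def:Shale-Weil}''; your unpacking of the pullback diagram, and your observation that conjugation in $U(L^2(\mathbb{R}^g))$ by a lift of a projective unitary agrees with conjugation by that projective unitary (because $\mathbb{S}^1$ is central), is exactly the argument the authors intend, and that bookkeeping is correct.

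There is, however, a step that you (and the paper) pass over without justification, and it is where a genuine gap could hide. You move from ``$W\circ\varphi_\mathbb{R} = R(\varphi_\mathbb{R})\,W\,R(\varphi_\mathbb{R})^{-1}$'' (with $\varphi_\mathbb{R}\in\mathrm{Aut}^+(\Heisr)$ the actual automorphism extending $\Phi(\pi(\widetilde g))$) to ``$\overline R(\widetilde\varphi_\mathbb{R})$ is a lift of $R(\varphi_\mathbb{R})$'', silently identifying $R(\varphi_\mathbb{R})$ with the value of the Shale--Weil projective representation $R\colon Sp_{2g}(\mathbb{R})\to PU(L^2(\mathbb{R}^g))$ at the symplectic image of $\varphi_\mathbb{R}$. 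That identification is precisely the ``factorization'' assertion in Definition~\ref{def:Shale-Weil}, and it is not automatic. By Lemma~\ref{lem:AutHeis}, $\mathrm{Aut}^+(\Heis)\cong H^*\rtimes Sp(H)$, and $\Phi(\pi(\widetilde g))$ generically has nonzero $H^*$-component (Morita's crossed homomorphism $\delta$), so $\varphi_\mathbb{R}$ does \emph{not} lie in the canonical section $Sp(H_\mathbb{R})\hookrightarrow\mathrm{Aut}^+(\Heisr)$. The kernel of $\mathrm{Aut}^+(\Heisr)\to Sp(H_\mathbb{R})$ is $\mathrm{Hom}(H_\mathbb{R},\mathbb{R})$, which over $\mathbb{R}$ consists entirely of inner automorphisms (every functional is $2\omega(y,-)$ for a unique $y$); and for an inner automorphism $c_k$ of $\Heisr$ the defining relation gives $R(c_k)=[W(k)]\in PU(L^2(\mathbb{R}^g))$, which is trivial only when $k$ is central. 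So $R$ does not visibly kill the kernel of the projection to $Sp(H_\mathbb{R})$, and without the factorization, \eqref{eq:SandW} would acquire a scalar discrepancy of $\exp\!\bigl(i\hbar\,\delta_{\pi(\widetilde g)}(\bar h)/2\bigr)$, where $\bar h$ is the image of $h$ in $H$ --- which would also undermine the well-definedness check in Theorem~\ref{thm:metaplectic-action}. Before invoking Definition~\ref{def:Shale-Weil} as a black box, you should either supply a proof of the factorization claim or explain explicitly why the $H^*$-component of $\Phi(\pi(\widetilde g))$ does not contribute a nontrivial scalar.
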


We now use this to construct \emph{untwisted} representations of the universal central extension $\widetilde{\mathfrak{M}}(\Sigma)$ of $\mathfrak{M}(\Sigma)$ on the homology of configuration spaces with coefficients in the Schr{\"o}dinger representation.

Let $\widetilde{\mathcal{C}}_n(\Sigma) \to \mathcal{C}_n(\Sigma)$ denote the connected covering of $\mathcal{C}_n(\Sigma)$ corresponding to the kernel of the surjective homomorphism $\pi_1(\mathcal{C}_n(\Sigma)) \twoheadrightarrow \Heis$. This is a principal $\Heis$-bundle. Taking free abelian groups fibrewise, we obtain
\begin{equation}
\label{eq:bundle-of-ZH-modules}
\Z[ \widetilde{\mathcal{C}}_n(\Sigma) ] \longrightarrow \mathcal{C}_n(\Sigma),
\end{equation}
which is a bundle of right $\Z[\Heis]$-modules. Via the Schr{\"o}dinger representation $\rho_W$, the Hilbert space $W$ becomes a left $\Z[\Heis]$-module, and we may take a fibrewise tensor product to obtain
\begin{equation}
\label{eq:bundle-of-Hilbert-spaces}
\Z[ \widetilde{\mathcal{C}}_n(\Sigma) ] \otimes_{\Z[\Heis]} W \longrightarrow \mathcal{C}_n(\Sigma),
\end{equation}
which is a bundle of Hilbert spaces. There is a natural action of the mapping class group $\mathfrak{M}(\Sigma)$ (up to homotopy) on the base space $\mathcal{C}_n(\Sigma)$, and the induced action on $\pi_1(\mathcal{C}_n(\Sigma))$ preserves the kernel of the surjection $\pi_1(\mathcal{C}_n(\Sigma)) \twoheadrightarrow \Heis$ (Proposition \ref{f_Heisenberg}), so that there is a well-defined twisted action of $\mathfrak{M}(\Sigma)$ on the bundle \eqref{eq:bundle-of-ZH-modules}, in the following sense. There are homomorphisms
\begin{align*}
\alpha \colon \mathfrak{M}(\Sigma) &\longrightarrow \mathrm{Aut}_{\Z}\bigl( \Z[ \widetilde{\mathcal{C}}_n(\Sigma)] \longrightarrow \mathcal{C}_n(\Sigma) \bigr) \\
\Phi \colon \mathfrak{M}(\Sigma) &\longrightarrow \mathrm{Aut}(\Heis)
\end{align*}
such that, for any $g \in \mathfrak{M}(\Sigma)$, $h \in \Heis$ and $m \in \Z[ \widetilde{\mathcal{C}}_n(\Sigma) ]$, we have
\begin{equation}
\label{eq:twisted-action}
\alpha(g)(m.h) = \alpha(g)(m).\Phi(g)(h).
\end{equation}
In other words, $\Phi$ measures the failure of $\alpha$ to be an action by fibrewise $\Z[\Heis]$-module automorphisms. In the above, the target of $\alpha$ is the group of $\Z$-module automorphisms of the bundle \eqref{eq:bundle-of-ZH-modules}, in other words the group of self-homeomorphisms of the total space $\Z[\widetilde{\mathcal{C}}_n(\Sigma)]$ that preserve the fibres of the projection and that are $\Z$-linear (but not necessarily $\Z[\Heis]$-linear) on each fibre.

\begin{theorem}
\label{thm:universal-ce-action}
The stably universal central extension $\widetilde{\mathfrak{M}}(\Sigma)$ of $\mathfrak{M}(\Sigma)$ acts on \eqref{eq:bundle-of-Hilbert-spaces} by Hilbert space bundle automorphisms
\[
\gamma \colon \widetilde{\mathfrak{M}}(\Sigma) \longrightarrow U \Bigl( \Z[ \widetilde{\mathcal{C}}_n(\Sigma) ] \otimes_{\Z[\Heis]} W \longrightarrow \mathcal{C}_n(\Sigma) \Bigr)
\]
via the formula
\begin{equation}
\label{eq:universal-ce-formula}
\gamma(g)(m \otimes v) = \alpha(\mathfrak{p}(g))(m) \otimes S(g)(v)
\end{equation}
for all $g \in \widetilde{\mathfrak{M}}(\Sigma)$, $m \in \Z[ \widetilde{\mathcal{C}}_n(\Sigma) ]$ and $v \in W$.
\end{theorem}
\begin{proof}
We must verify that the formula \eqref{eq:universal-ce-formula} is additive in $m$, unitary in $v$ and that it is $\Z[\Heis]$-balanced. The first two properties are evident by the definitions of $\alpha$ and $S$ respectively. The key property to be verified is therefore the third one, which in more detail says the following. Since we are taking the (fibrewise) tensor product over $\Z[\Heis]$, we have that $m.h \otimes v =m\otimes  \rho_W(h)(v)$ for any $h \in \Heis$, $m \in \Z[ \widetilde{\mathcal{C}}_n(\Sigma) ]$ and $v \in W$. (Note here that we denote the right $\Heis$-action on the fibres of $\Z[ \widetilde{\mathcal{C}}_n(\Sigma) ]$ simply by juxtaposition, whereas the left $\Heis$-action on $W$ is the Schr{\"o}dinger representation, denoted by $\rho_W$.) We therefore have to verify that, for each fixed $g \in \widetilde{\mathfrak{M}}(\Sigma)$, the formula \eqref{eq:universal-ce-formula} gives the same answer when applied to $m.h \otimes v$ or to $m \otimes \rho_W(h)(v)$. To see this, we calculate:
\begin{align*}
\gamma(g)(m.h \otimes v)
&= \alpha(\mathfrak{p}(g))(m.h) \otimes S(g)(v) && \text{by definition} \\
&= \alpha(\mathfrak{p}(g))(m) . \Phi(\mathfrak{p}(g))(h) \otimes S(g)(v) && \text{by eq.~\eqref{eq:twisted-action}} \\
&= \alpha(\mathfrak{p}(g))(m) \otimes  \rho_W(\Phi(\mathfrak{p}(g))(h)) (S(g)(v)) && \text{since $\otimes$ is over $\Z[\Heis]$} \\
&= \alpha(\mathfrak{p}(g))(m) \otimes  S(g) \circ \rho_W(h) \circ S(g)^{-1} (S(g)(v)) && \text{by eq.~\eqref{eq:SandW} [Lemma \ref{lem:SandW}]} \\
&= \alpha(\mathfrak{p}(g))(m) \otimes  S(g) (\rho_W(h)(v)) && \text{simplifying} \\
&= \gamma(g)(m \otimes \rho_W(h)(v)). && \text{by definition}
\end{align*}
This tells us that the formula \eqref{eq:universal-ce-formula} gives a well-defined fibrewise unitary bundle automorphism (i.e.~an automorphism of Hilbert space bundles) of \eqref{eq:bundle-of-Hilbert-spaces} for each fixed $g \in \widetilde{\mathfrak{M}}(\Sigma)$. It is then clear from the formula \eqref{eq:universal-ce-formula} that $\gamma$ is a group homomorphism.
\end{proof}

\begin{theorem}[{Theorem \ref{athm:universal-central}}]
\label{thm:Schroedinger}
The action of the mapping class group on the Borel--Moore homology of the configuration space $\mathcal{C}_n(\Sigma)$ with coefficients in the Schr{\"o}dinger representation induces a well-defined complex representation of the stably universal central extension $\widetilde{\mathfrak{M}}(\Sigma)$ of the mapping class group $\mathfrak{M}(\Sigma)$:
\begin{equation}
\label{eq:Schroedinger-rep-MCG}
\widetilde{\mathfrak{M}}(\Sigma) \longrightarrow GL^{\mathrm{bd}} \bigl( H_n^{BM}\bigl( \mathcal{C}_n(\Sigma) , \mathcal{C}_n(\Sigma,\partial^-(\Sigma)) ; W \bigr) \bigr)
\end{equation}
lifting a natural projective representation of $\mathfrak{M}(\Sigma)$ on the same space. Here, $GL^{\mathrm{bd}}$ denotes the group of bounded linear operators with respect to a certain Hilbert structure.
\end{theorem}

\begin{proof}
By Theorem \ref{thm:universal-ce-action}, we have a well-defined functor from the group $\widetilde{\mathfrak{M}}(\Sigma)$ to the category of spaces equipped with bundles of Hilbert spaces. Moreover, elements of the mapping class group fix the boundary of $\Sigma$ pointwise, so the action of the mapping class group on $\mathcal{C}_n(\Sigma)$ preserves the subspace $\mathcal{C}_n(\Sigma,\partial^-(\Sigma))$. Thus we have a functor from the group $\widetilde{\mathfrak{M}}(\Sigma)$ to the category of pairs of spaces equipped with bundles of Hilbert spaces.

On the other hand, relative twisted Borel--Moore homology $H_n^{BM}(-)$ is a functor from the category of pairs of spaces equipped with bundles of Hilbert spaces (and bundle maps whose underlying map of spaces is proper) to the category of complex vector spaces (cf.~\cite[\S V.4 and \S V.5]{Bredon}). Moreover, on the full subcategory of pairs of spaces admitting a finite relative Borel--Moore CW-complex structure, it may be augmented to take values in the category of Hilbert spaces and bounded operators. For objects, the Hilbert structure on Borel--Moore homology is induced by the Hilbert structure on Borel--Moore cellular chain complexes given by an orthogonal direct sum, over all cells, of copies of the Hilbert space coefficients. For morphisms, we may assume by cellular approximation that the underlying map is cellular, so it induces a bounded operator of Borel--Moore cellular chain complexes and hence a bounded operator on Borel--Moore homology. (We note that for boundedness it is essential that the relative Borel--Moore CW-complex structure has finitely many cells.)

The pair $(\mathcal{C}_n(\Sigma) , \mathcal{C}_n(\Sigma,\partial^-(\Sigma)))$ admits a finite relative Borel--Moore CW-complex structure, so we may compose these two functors to obtain the representation \eqref{eq:Schroedinger-rep-MCG} of $\widetilde{\mathfrak{M}}(\Sigma)$ by bounded linear operators on a Hilbert space, as desired.

This automatically descends to a projective representation of $\mathfrak{M}(\Sigma)$ since it sends the kernel of the central extension $\widetilde{\mathfrak{M}}(\Sigma) \to \mathfrak{M}(\Sigma)$ into the centre of the bounded linear automorphism group, which is contained in the kernel of the projection onto the projective bounded linear automorphism group.
\end{proof}

\subsection{Finite-dimensional Schr\"odinger representations.}
\label{ss:fd-unitary}

For an integer $N\geq 2$, the finite-dimensional Schr{\"o}dinger representation is a left action of the discrete Heisenberg group $\Heis$ on the Hilbert space $W_N = L^2((\Z/N)^g)$, which may be defined
as follows:
\begin{equation}
\label{eq:F_Schroedinger-formula}
\left[\varpi_N\left(k,x=\sum_{i=1}^g p_i a_i + q_i b_i \right) \psi \right](s)=e^{i\pi \frac{k+p\cdot q}{N}}e^{i\frac{2\pi}{N} p\cdot s}\psi (s+q) .
\end{equation}
Note that this matches the generic formula with $\hbar=\frac{2\pi}{N}$. It may also be constructed by composing the natural finite quotient
\[
\Heis = \Z^g \ltimes \Z^{g+1} \relbar\joinrel\twoheadrightarrow \Heis_N=(\Z/N)^g \ltimes \bigl( \Z/2N \times (\Z/N)^g \bigr)
\]
with the representation of $\Heis_N$ obtained by induction from the one-dimensional representation $\Z/2N \times (\Z/N)^g \twoheadrightarrow \Z/2N \hookrightarrow \mathbb{S}^1 = U(\C)$, where the second map is $t \mapsto \mathrm{exp}\left(\frac{\pi it}{N}\right)$.
Note that the kernel of the quotient map $\Heis \twoheadrightarrow \Heis_N$ may be written, in the original definition $\Heis=\Z\times H$, as the normal subgroup $I_N=\{(2Nk,Nx) \mid k\in \Z, \ x\in H\}$.

We may adapt the above construction using $W_N$ in place of $W$ and using the analogue of the Stone--von Neumann theorem for $W_N$, proven for $N$ even in \cite[Theorem~2.4]{Gelca_Uribe_TQFT} (see also \cite[Theorem~3.2]{Gelca_Uribe} and \cite[Theorem~2.6]{Gelca_Hamilton}). The proof for odd $N$ works similarly. The theorem states that $W_N$ is, up to projectively unique unitary isomorphism, the unique unitary representation of the finite group $\Heis_N$ where the action of $u=(1,0)$ is multiplication by $e^\frac{i\pi}{N}$.
The odd case is studied in \cite{DeRenziMartel} with explicit formulas for the untwisting process. We quote from \cite{DeRenziMartel} that for odd $N$ the mapping class group action on $\Heis$ fixes the kernel $I_N$ so that the Stone-von Neumann theorem constructs a projective representation of the mapping class group on the Borel--Moore homology of the configuration space $\mathcal{C}_n(\Sigma)$ with coefficients in $W_N$. 

If $N$ is even, then the action $f_\Heis$ of a mapping class $f$ on the Heisenberg group $\Heis$ may fail to preserve the normal subgroup $I_N$. From the formula \eqref{eq:action} we see that $f_\Heis(I_N)=I_N$ if and only if $\delta_f(x)$ is even for every $x$, equivalently $f$ is in the kernel of the reduced-modulo-$2$ crossed homomorphism $\overline{\delta} \colon \mathfrak{M}(\Sigma) \rightarrow H^1(\Sigma;\Z/2)$.
From equation \eqref{eq:Morita-crossed-hom} (and Proposition \ref{p:Morita-crossed-hom}), we see that $\overline{\delta}_f(x)\in \Z/2$ depends only on the modulo-$2$ symplectic action $f_*$ on $H_1(\Sigma;\Z/2)$. Denoting by $A_f=(\alpha_{i\, j})$ the matrix of this action in the symplectic basis $(a_1,\dots,a_g,b_1,\dots,b_g)$, we have
\begin{equation}\label{eq:Morita-crossed-2}
\overline \delta_f\left(\sum_{i=1}^g s_ia_i+s_{i+g}b_i\right)=\sum_{j=1}^{2g}\sum_{i=1}^{g} \alpha_{i\, j}\alpha_{i+g\, j}s_j
\end{equation}
This crossed homomorphism has a spin structure interpretation; indeed a spin structure can be defined as a quadratic form $q \colon H_1(\Sigma;\Z/2)\rightarrow \Z/2$, $q(0)=0$ and $q(x+y)=q(x)+q(y)+x.y$. It defines a modulo-$2$ crossed homomorphism $d_q$ via the formula $d_q(x)=q(f_*(x))-q(x)$. One can check that $\overline{\delta}=d_{q_0}$, where $q_0$ is the quadratic form that vanishes on the canonical basis.
It follows that the kernel of $\overline{\delta}$ is the spin mapping class group $\mathfrak{M}(\Sigma,q_0)$. The Arf invariant of $q_0$ is $0$, meaning that the index of this subgroup is $2^{g-1}(2^g+1)$. From the Stone--von Neumann theorem we obtain with the previous untwisting method:

\begin{theorem}[{Theorem \ref{athm:universal-central-fd}}]
\label{thm:Schroedinger-fd}
For $N$ even, there exists a projective action of the spin  mapping class group $\mathfrak{M}(\Sigma,q_0)$ on the Borel--Moore homology of the configuration space $\mathcal{C}_n(\Sigma)$ with coefficients in $W_N$ obtained by composing a coefficient isomorphism with the homological action. This gives a projective  representation of the of the spin mapping class group  on the
$\bigl(
\begin{smallmatrix}
2g+n-1 \\
n \\
\end{smallmatrix}
\bigr) N^{g}$-dimensional
complex Hilbert space
\begin{equation}
\label{eq:Schroedinger-rep-MCG-fd}
\mathcal{V}_{N,n} = H_n^{BM}\bigl( \mathcal{C}_n(\Sigma) , \mathcal{C}_n(\Sigma,\partial^-(\Sigma)) ; W_N \bigr) .
\end{equation}
\end{theorem}

\subsection{Preservation of a sesquilinear form}
\label{ss:sesquilinear-form}

As explained in the proof of Theorem \ref{thm:Schroedinger}, when using a Hilbert space as local coefficients, after choosing a CW-complex structure, which gives a Hilbert structure on the cellular chain groups as an orthogonal sum indexed by the cells, we get a Hilbert structure on homology.\footnote{In general we would have to quotient the closed subspace of cycles by the closure of the boundary subspace, but here we can use a finite cell structure.} However, it is not true that mapping classes will act as unitary operators on chains (or on homology). This is because, although we may use the cellular approximation theorem to represent the action of any mapping class by a cellular self-map of the configuration space, this self-map will in general fail to be an automorphism of the CW-complex structure, in particular it will fail to be a homeomorphism. Nevertheless, we will exhibit, in this section, a perfect sesquilinear form on homologies that is preserved by the action of mapping classes.

Suppose that $V$ is a representation of the discrete Heisenberg group $\Heis$ defined over a commutative ring with involution $R$, equipped with a perfect\footnote{Recall that a \emph{perfect} pairing $A \otimes B \to R$ is an $R$-linear map such that both dual maps $A \to \mathrm{Hom}_R(B,R)$ and $B \to \mathrm{Hom}_R(A,R)$ are isomorphisms.} Hermitian pairing $V\otimes V\rightarrow R$.
By Poincar{\'e} duality, and the fact that $\mathcal{C}_n(\Sigma)$ is a connected, oriented $2n$-manifold with boundary $\mathcal{C}_n(\Sigma,\partial\Sigma) = \{ c \in \mathcal{C}_n(\Sigma) \mid c \cap \partial\Sigma \neq \varnothing \}$, we obtain a sesquilinear pairing
\begin{equation}
\label{eq:non-self-pairing}
\langle -,- \rangle \colon H_n^{BM}(\mathcal{C}_n(\Sigma),\partial^-;V) \otimes H_n(\mathcal{C}_n(\Sigma),\partial^+;V) \longrightarrow R,
\end{equation}
where $\partial^\pm$ is an abbreviation of $\mathcal{C}_n(\Sigma,\partial^{\pm}(\Sigma))$, and we note that the boundary $\partial\mathcal{C}_n(\Sigma) = \mathcal{C}_n(\Sigma,\partial\Sigma)$ decomposes as $\partial^+ \cup \partial^-$ with $\partial\partial^+ = \partial\partial^- = \partial^+ \cap \partial^-$ (i.e.\ forming a manifold triad), corresponding to the decomposition of the boundary of the surface $\partial\Sigma = \partial^+(\Sigma) \cup \partial^-(\Sigma)$.
In more detail, the pairing \eqref{eq:non-self-pairing} is constructed by pre-composing the evaluation map $\mathbf{H} \otimes \Hom_R(\mathbf{H},R) \to R$, where $\mathbf{H} = H_n^{BM}(\mathcal{C}_n(\Sigma),\partial^-;V)$, with $\id \otimes \eta$, where $\eta$ is Poincar{\'e} duality composed with the canonical morphism from compactly-supported cohomology to the dual of Borel--Moore homology:
\begin{equation}
\label{eq:eta}
\eta = \varepsilon \circ \mathrm{PD} \colon H_n(\mathcal{C}_n(\Sigma),\partial^+;V) \xrightarrow{\;\cong\;} H^n_c(\mathcal{C}_n(\Sigma),\partial^-;V) \longrightarrow \Hom_R( \mathbf{H} , R ).
\end{equation}
By naturality of the homomorphism $\varepsilon$ and of Poincar{\'e} duality, the pairing \eqref{eq:non-self-pairing} is preserved by the actions of homeomorphisms of $\mathcal{C}_n(\Sigma)$ (that fix its boundary) and of coefficient isomorphisms. Since the mapping class group acts via homeomorphisms of the configuration space and coefficient isomorphisms, this means that \eqref{eq:non-self-pairing} is preserved by the mapping class group action.

Let us from now on assume that $V$ is a \emph{free} $R$-module and choose a basis $(v_j)_{j\in J}$ for $V$. Since the Hermitian pairing on $V$ is perfect, we may also choose another basis $(v'_j)_{j\in J}$ for $V$ that is dual to $(v_j)_{j\in J}$ with respect to this pairing.
It follows from Theorem \ref{basis} that the $R$-module $\mathbf{H} = H_n^{BM}(\mathcal{C}_n(\Sigma),\partial^-;V)$ has a basis of the form $\widetilde{E}_k\otimes v_j$, indexed by $k\in \mathcal{K}$ and $j \in J$, where $\widetilde{E}_k$ is a lift to the Heisenberg cover $\widetilde{\mathcal{C}}_n(\Sigma)$ of the product of simplices $\mathcal{C}_{k_i}(\gamma_i)$ for $1\leq i\leq 2g$ and $k_1 + \cdots + k_{2g} = n$. Moreover, in Theorem \ref{basis}, we also computed the compactly-supported cohomology $H^n_c(\mathcal{C}_n(\Sigma),\partial^-;V)$, which is Poincaré dual to $H_n(\mathcal{C}_n(\Sigma),\partial^+;V)$. As an $R$-module, it has a basis of the form $\widetilde{E}'_k \otimes v'_j$, indexed by $k\in \mathcal{K}$ and $j \in J$, where $\widetilde{E}'_k$ is a lift to the Heisenberg cover $\widetilde{\mathcal{C}}_n(\Sigma)$ of the $n$-cube $E'_k$ given by the product of $n$ pairwise disjoint arcs in $\Sigma$ with boundary on $\partial^+(\Sigma)$ where exactly $k_i$ of them intersect $\gamma_i$ transversely for each $1\leq i\leq 2g$. (This dual basis is described in more detail in \S\ref{s:computations}.) With respect to the pairing \eqref{eq:non-self-pairing}, we have that $\langle \widetilde{E}_k \otimes v_j , \widetilde{E}'_{k'} \otimes v'_{j'} \rangle$ evaluates to the Kronecker $\delta_{(k,j)}^{(k',j')}$ up to a sign. This proves that the pairing \eqref{eq:non-self-pairing} is perfect:

\begin{proposition}
\label{prop:perfect-sesquilinear-form}
Let $R$ be a commutative ring with involution and $V$ be a free $R$-module equipped with a perfect Hermitian pairing $V \otimes V \to R$. Suppose that we have a representation of the discrete Heisenberg group $\Heis$ on $V$ respecting this pairing. Then the sesquilinear pairing \eqref{eq:non-self-pairing} is perfect.
\end{proposition}

\begin{remark}
This applies in particular if $V$ is a complex Hilbert space with a unitary representation of $\Heis$, so it applies to the (projective) representations of $\mathfrak{M}(\Sigma)$ and of $\mathfrak{M}(\Sigma,q_0)$ constructed in Theorems \ref{thm:Schroedinger} and \ref{thm:Schroedinger-fd} (Theorems \ref{athm:universal-central} and \ref{athm:universal-central-fd}).
\end{remark}

\section{Relation to the Moriyama and Magnus representations}
\label{relation-to-Moriyama}

In this section we study the kernels of the twisted representations that we have constructed in Theorem \ref{athm:twisted} in the case when the coefficients are $V=\Z[\Heis]$, and prove Proposition \ref{aprop:kernel}. The proof will use:
\begin{itemize}
\item a theorem of Moriyama~\cite{Moriyama}, which identifies each $\mathfrak{J}(i)$ with the kernel of a certain homological representation of $\mathfrak{M}(\Sigma)$;
\item a theorem of Suzuki~\cite{Suzuki2005}, which identifies the Magnus kernel with the kernel of a certain twisted homological representation of $\mathfrak{M}(\Sigma)$ (a homological interpretation of the Magnus representation, which was originally defined via Fox calculus);
\end{itemize}
together with a study of the connections between our representations and those of Moriyama and Suzuki.

\subsection{The Moriyama representation.}
\label{ss:Moriyama}

Moriyama \cite{Moriyama} studied the action of the mapping class group $\mathfrak{M}(\Sigma)$ on the homology group $H_n^{BM}(\mathcal{F}_n(\Sigma');\Z)$ with trivial coefficients, where $\Sigma'$ denotes $\Sigma$ minus a point on its boundary and $\mathcal{F}_n(-)$ denotes the ordered configuration space, where elements are ordered $n$-tuples of distinct points. On the other hand, our construction \eqref{eq:representation-MCG-thm} (Theorem \ref{thm:MCG}) may be re-interpreted as a twisted representation
\begin{equation}
\label{eq:twisted-representation}
\mathfrak{M}(\Sigma) \longrightarrow \mathrm{Aut}_{\Z[\Heis]}^{\mathrm{tw}}\Bigl( H_n^{BM}(\mathcal{C}_n(\Sigma');\Z[\Heis]) \Bigr) .
\end{equation}
We pause to explain this re-interpretation. We must first of all explain the twisted automorphism group on the right-hand side of \eqref{eq:twisted-representation}. Let us write $\mathrm{Mod}_{\bullet}$ for the category whose objects are pairs $(R,M)$ consisting of a ring $R$ and a right $R$-module $M$, and whose morphisms are pairs $(\theta \colon R \to R' , \varphi \colon M \to M')$ such that $\varphi(mr) = \varphi(m)\theta(r)$. The automorphism group of $(R,M)$ in $\mathrm{Mod}_{\bullet}$ is written $\mathrm{Aut}_R^{\mathrm{tw}}(M)$; note that this is generally larger than the automorphism group $\mathrm{Aut}_R(M)$ of $M$ in $\mathrm{Mod}_R$.

If we set $R = V = \Z[\Heis]$ in Theorem \ref{thm:MCG}, then the functor \eqref{eq:representation-MCG-thm} that it supplies is of the form $\mathfrak{M}(\Sigma) \actiongroupoidleft \mathrm{Aut}^+(\Heis) \to \mathrm{Mod}_{\Z[\Heis]}$. In general, for any left group action $\theta \colon G \to \mathrm{Aut}(K)$, each functor $F \colon G \actiongroupoidleft \mathrm{Im}(\theta) \to \mathrm{Mod}_{\Z[K]}$ corresponds to a group homomorphism $G \to \mathrm{Aut}_{\Z[K]}^{\mathrm{tw}}(F(\id_K))$.\footnote{See footnote \ref{fn:action-groupoid} on page \pageref{fn:action-groupoid} for an explanation of the groupoid $G \actiongroupoidleft H$ in general.}
Thus \eqref{eq:representation-MCG-thm} corresponds to a homomorphism
\[
\mathfrak{M}(\Sigma) \longrightarrow \mathrm{Aut}_{\Z[\Heis]}^{\mathrm{tw}} \Bigl( H_n^{BM}\bigl(\mathcal{C}_n(\Sigma),\mathcal{C}_n(\Sigma,\partial^-(\Sigma)) ;\Z[\Heis]\bigr) \Bigr) .
\]
Finally, removing a point (equivalently, removing the closed interval $\partial^-(\Sigma)$) from the boundary of $\Sigma$ corresponds, on Borel--Moore homology of configuration spaces $\mathcal{C}_n(\Sigma)$, to taking homology relative to the subspace $\mathcal{C}_n(\Sigma,\partial^-(\Sigma))$ of configurations having at least one point in the interval. Thus $H_n^{BM}(\mathcal{C}_n(\Sigma),\mathcal{C}_n(\Sigma,\partial^-(\Sigma)) ;\Z[\Heis])$ and $H_n^{BM}(\mathcal{C}_n(\Sigma');\Z[\Heis])$ are isomorphic as $\Z[\Heis]$-modules, and we obtain \eqref{eq:twisted-representation}.

\begin{remark}
Forgetting the $\Z[\Heis]$-module structure gives an embedding of the right-hand side of \eqref{eq:twisted-representation} into the (untwisted) automorphism group $\mathrm{Aut}_\Z(H_n^{BM}(\mathcal{C}_n(\Sigma');\Z[\Heis]))$ over $\Z$.
\end{remark}

When $n=2$, Moriyama's representation is a quotient of ours. To see this, we consider the quotient of groups $\Heis \twoheadrightarrow \Z/2 = \mathfrak{S}_2$ given by sending $\sigma \mapsto \sigma$ and $a_i,b_i \mapsto 1$, which induces a map of twisted $\mathfrak{M}(\Sigma)$-representations
\begin{equation}
\label{eq:Heisenberg-to-Moriyama}
H_2^{BM}(\mathcal{C}_2(\Sigma');\Z[\Heis]) \relbar\joinrel\twoheadrightarrow H_2^{BM}(\mathcal{C}_2(\Sigma');\Z[\mathfrak{S}_2]) \cong H_2^{BM}(\mathcal{F}_2(\Sigma');\Z).
\end{equation}
The map \eqref{eq:Heisenberg-to-Moriyama} is surjective by Proposition \ref{prop:basis-naturality}, which tells us that it is isomorphic to a direct sum of copies of the surjective ring homomorphism $\Z[\Heis] \twoheadrightarrow \Z[\mathfrak{S}_2]$ induced by the quotient of groups $\Heis \twoheadrightarrow \mathfrak{S}_2$. The isomorphism on the right-hand side of \eqref{eq:Heisenberg-to-Moriyama} follows from Shapiro's lemma. (Shapiro's lemma holds for arbitrary coverings with ordinary homology, and for \emph{finite-sheeted} coverings with Borel--Moore homology. The proof for Borel--Moore homology, interpreted as the homology of the complex of locally-finite chains, is exactly the same as for ordinary homology, using the assumption that the covering is finite-sheeted to preserve the locally-finite property when projecting down the covering.) It therefore follows that the kernel of our representation is a subgroup of the kernel of $H_2^{BM}(\mathcal{F}_2(\Sigma');\Z)$, which was proven by Moriyama to be the Johnson kernel $\mathfrak{J}(2)$. In \S\ref{s:computations} we will compute the action of a genus-$1$ separating twist $T_\gamma \in \mathfrak{J}(2)$ on $H_2^{BM}(\mathcal{C}_2(\Sigma');\Z[\Heis])$, and in particular show that it is (very) non-trivial; see Theorem \ref{thm_calculation}. Thus the kernel of $H_2^{BM}(\mathcal{C}_2(\Sigma');\Z[\Heis])$ is \emph{strictly} smaller than $\mathfrak{J}(2)$.

For any $n \geq 2$, we have a map of twisted $\mathfrak{M}(\Sigma)$-representations
\[
H_n^{BM}(\mathcal{C}_n(\Sigma');\Z[\Heis]) \relbar\joinrel\twoheadrightarrow H_n^{BM}(\mathcal{C}_n(\Sigma');\Z),
\]
which is surjective by Proposition \ref{prop:basis-naturality} and the fact that the augmentation map $\Z[\Heis] \twoheadrightarrow \Z$ is surjective. By Shapiro's lemma and the calculations in \S\ref{s:Heisenberg-homology} of $H_n^{BM}(\mathcal{C}_n(\Sigma');V)$ for any local system $V$ on $\mathcal{C}_n(\Sigma')$, there are isomorphisms of abelian groups:
\begin{equation}
\label{eq:ordered-vs-unordered}
H_n^{BM}(\mathcal{F}_n(\Sigma');\Z) \cong H_n^{BM}(\mathcal{C}_n(\Sigma');\Z[\mathfrak{S}_n]) \cong H_n^{BM}(\mathcal{C}_n(\Sigma');\Z) \otimes \Z[\mathfrak{S}_n] .
\end{equation}
Moreover, these are in fact both isomorphisms of $\mathfrak{M}(\Sigma)$-representations over $\Z$: for the left-hand side this is due to the naturality of Shapiro's lemma; for the right-hand side, it is because this isomorphism is induced by the natural map $H_n^{BM}(\mathcal{C}_n(\Sigma');\Z) \otimes \Z[\mathfrak{S}_n] \to H_n^{BM}(\mathcal{C}_n(\Sigma');\Z[\mathfrak{S}_n])$ (one of the maps involved in the universal coefficient theorem) and the $\mathfrak{M}(\Sigma)$-action is induced from an action (up to homotopy) at the level of spaces. Since $\mathfrak{M}(\Sigma)$ acts trivially on $\mathfrak{S}_n$, the right-hand side of \eqref{eq:ordered-vs-unordered} is a direct sum of $n!$ copies of $H_n^{BM}(\mathcal{C}_n(\Sigma');\Z)$. We therefore deduce that the kernel of the $\mathfrak{M}(\Sigma)$-representation $H_n^{BM}(\mathcal{C}_n(\Sigma');\Z)$ is the same as the kernel of the $\mathfrak{M}(\Sigma)$-representation $H_n^{BM}(\mathcal{F}_n(\Sigma');\Z)$. (This is also shown in \cite{PS}.) The latter kernel was proven by Moriyama to be the $n$th term $\mathfrak{J}(n)$ of the Johnson filtration.

Summarising this discussion, we have:

\begin{proposition}
\label{prop:kernel-Johnson-filtration}
The kernel of the twisted $\mathfrak{M}(\Sigma)$-representation \eqref{eq:twisted-representation} is contained in the $n$th term $\mathfrak{J}(n)$ of the Johnson filtration. When $n=2$ it is moreover a \emph{proper} subgroup of the Johnson kernel $\mathfrak{J}(2)$.
\end{proposition}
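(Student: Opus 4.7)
The plan is to compare the twisted representation \eqref{eq:twisted-representation} with Moriyama's representation by means of a quotient of local systems, invoke Moriyama's theorem to identify the latter kernel, and then produce an explicit element to establish strictness when $n=2$. First I would apply the augmentation $\mathbb{Z}[\Heis]\twoheadrightarrow\mathbb{Z}$ fibrewise to the bundle of $\mathbb{Z}[\Heis]$-modules underlying \eqref{eq:representation-MCG-thm}; this yields a $\mathfrak{M}(\Sigma)$-equivariant surjection
\[
H_n^{BM}(\mathcal{C}_n(\Sigma');\mathbb{Z}[\Heis])\twoheadrightarrow H_n^{BM}(\mathcal{C}_n(\Sigma');\mathbb{Z}),
\]
and since the kernel of a representation is contained in the kernel of any equivariant quotient, this forces $\ker$ of the source to be contained in $\ker$ of the target.

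Next I would identify this target representation with Moriyama's. For this I use Shapiro's lemma applied to the \emph{finite} regular cover $\mathcal{F}_n(\Sigma')\to\mathcal{C}_n(\Sigma')$ with deck group $\Sigma_n$ (the finiteness is important, since Shapiro's lemma for Borel-Moore homology only holds for finite covers), combined with the universal coefficient theorem, made applicable by torsion-freeness of $H_*^{BM}(\mathcal{C}_n(\Sigma');\mathbb{Z})$, to write
\[
H_n^{BM}(\mathcal{F}_n(\Sigma');\mathbb{Z})\;\cong\;H_n^{BM}(\mathcal{C}_n(\Sigma');\mathbb{Z}[\Sigma_n])\;\cong\;H_n^{BM}(\mathcal{C}_n(\Sigma');\mathbb{Z})\otimes\mathbb{Z}[\Sigma_n].
\]
Because $\mathfrak{M}(\Sigma)$ acts trivially on the symmetric-group factor, the kernels of the ordered and unordered versions of this $\mathfrak{M}(\Sigma)$-representation coincide, and by Moriyama's theorem \cite{Moriyama} this common kernel equals the $n$th term $\mathfrak{J}(n)$ of the Johnson filtration. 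Chaining the two inclusions of kernels gives the first assertion of the proposition.

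For the strictness statement when $n=2$, I would cite the explicit matrix calculation carried out in \S\ref{s:computations}: Theorem \ref{thm_calculation} produces the matrix of a genus-$1$ separating Dehn twist $T_\gamma$ acting on $H_2^{BM}(\mathcal{C}_2(\Sigma');\mathbb{Z}[\Heis])$ and exhibits it as very non-trivial. Since every separating twist lies in the Johnson kernel $\mathfrak{J}(2)$, the element $T_\gamma$ witnesses $\mathfrak{J}(2)\not\subseteq\ker(\eqref{eq:twisted-representation})$, so the inclusion is proper.

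The bulk of the argument is formal, once Moriyama's theorem is taken for granted. The one subtle point in the first part is the Shapiro-plus-universal-coefficient step: since Shapiro's lemma for Borel-Moore homology requires the covering to be finite, one cannot simply apply it directly to the regular $\Heis$-covering $\widetilde{\mathcal{C}}_n(\Sigma)$, and must instead factor through $\mathbb{Z}$-coefficients before tensoring up to $\mathbb{Z}[\Sigma_n]$. The genuine technical weight of the second half lies entirely in Theorem \ref{thm_calculation} itself, whose explicit computation of a separating-twist action is what powers the strict-inclusion conclusion; in the proof proposal I merely harvest it.
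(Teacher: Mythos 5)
Your proposal is correct and follows essentially the same route as the paper: pass to trivial coefficients via the augmentation $\mathbb{Z}[\Heis]\twoheadrightarrow\mathbb{Z}$, identify the quotient with Moriyama's representation via Shapiro's lemma (for the finite cover $\mathcal{F}_n(\Sigma')\to\mathcal{C}_n(\Sigma')$) and the universal coefficient theorem, invoke Moriyama's theorem to get $\mathfrak{J}(n)$, and then cite Theorem \ref{thm_calculation} for strictness when $n=2$. The only cosmetic difference is that the paper, for $n=2$, also gives a direct quotient $\Heis\twoheadrightarrow\mathbb{Z}/2=\Sigma_2$ landing in $\mathbb{Z}[\Sigma_2]$-coefficients in one step, whereas you apply the general-$n$ factorisation through $\mathbb{Z}$ uniformly — either way is fine.
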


\subsection{The Magnus representation.}

The kernel of our representation \eqref{eq:twisted-representation} is also contained in the kernel of the Magnus representation. This may be seen as follows. The $\mathfrak{M}(\Sigma)$-equivariant surjection $\Heis \twoheadrightarrow H$ induces a map of twisted $\mathfrak{M}(\Sigma)$-representations
\begin{equation}
\label{eq:Heisenberg-to-higher-Magnus}
H_n^{BM}(\mathcal{C}_n(\Sigma');\Z[\Heis]) \relbar\joinrel\twoheadrightarrow H_n^{BM}(\mathcal{C}_n(\Sigma');\Z[H]),
\end{equation}
which is surjective by Proposition \ref{prop:basis-naturality}.
By a similar argument as in \S\ref{ss:Moriyama} above, the kernel of the twisted $\mathfrak{M}(\Sigma)$-representation $H_n^{BM}(\mathcal{C}_n(\Sigma');\Z[H])$ is the same as the kernel of the twisted $\mathfrak{M}(\Sigma)$-representation $H_n^{BM}(\mathcal{F}_n(\Sigma');\Z[H])$. Moreover, it is shown in \cite{PS} that there is an inclusion of twisted $\mathfrak{M}(\Sigma)$-representations
\begin{equation}
\label{eq:Magnus-to-higher-Magnus}
\bigl[ H_1^{BM}(\mathcal{F}_1(\Sigma');\Z[H]) \bigr]^{\otimes n} \lhook\joinrel\longrightarrow H_n^{BM}(\mathcal{F}_n(\Sigma');\Z[H]).
\end{equation}
By a result of Suzuki~\cite{Suzuki2005}, $H_1^{BM}(\mathcal{F}_1(\Sigma');\Z[H])$ is the \emph{Magnus representation} of $\mathfrak{M}(\Sigma)$ (this is a homological interpretation of the Magnus representation, which was originally defined via Fox calculus). The maps of representations \eqref{eq:Heisenberg-to-higher-Magnus} and \eqref{eq:Magnus-to-higher-Magnus} imply that
\begin{align*}
\mathrm{ker}\bigl[ H_n^{BM}(\mathcal{C}_n(\Sigma');\Z[\Heis]) \bigr] \; &\subseteq\; \mathrm{ker}\bigl[ H_n^{BM}(\mathcal{C}_n(\Sigma');\Z[H]) \bigr] \\
&=\; \mathrm{ker}\bigl[ H_n^{BM}(\mathcal{F}_n(\Sigma');\Z[H]) \bigr] \;\subseteq\; \mathrm{ker}(\mathrm{Magnus}^{\otimes n}).
\end{align*}
In general, if $V$ is a representation of a group $G$ over an integral domain $R$, the kernel of the tensor power $V^{\otimes n}$ consists of those $g \in G$ that act on $V$ by an element of $\{\lambda \in R \mid \lambda^n = 1\}$. For the Magnus representation, the ground ring is $\Z[H]$, whose only roots of $1$ are $\{1\}$ when $n$ is odd and $\{\pm 1\}$ when $n$ is even. Thus when $n$ is odd we have $\mathrm{ker}(\mathrm{Magnus}^{\otimes n}) = \mathrm{ker}(\mathrm{Magnus})$ and when $n$ is even we either have the same equality or $\mathrm{ker}(\mathrm{Magnus}^{\otimes n})$ contains $\mathrm{ker}(\mathrm{Magnus})$ as an index-$2$ subgroup.

Combining this discussion with the statement of Proposition \ref{prop:kernel-Johnson-filtration} and writing $\mathrm{Mag}(\Sigma)$ for the kernel of the Magnus representation, we may complete the proof of Proposition \ref{aprop:kernel}.

\begin{proposition}[{Proposition \ref{aprop:kernel}}]
\label{prop:kernel}
The kernel of \eqref{eq:twisted-representation} is contained in $\mathfrak{J}(n) \cap \mathrm{Mag}(\Sigma)$.
\end{proposition}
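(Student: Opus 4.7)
The plan is to intersect two kernel containments that have essentially been set up in the paragraphs preceding the statement. Both containments arise by the same strategy: exhibit a morphism from the twisted $\mathfrak{M}(\Sigma)$-representation $H_n^{BM}(\mathcal{C}_n(\Sigma');\mathbb{Z}[\Heis])$ into another $\mathfrak{M}(\Sigma)$-representation whose kernel is already known, and then note that the kernel of the source is contained in the kernel of the target of any equivariant map. Since no new geometric input is needed beyond what is quoted above, the work is essentially just to combine the two pieces cleanly.

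First I would establish that $\mathrm{ker}(\eqref{eq:twisted-representation}) \subseteq \mathfrak{J}(n)$. This is exactly the content of Proposition \ref{prop:kernel-Johnson-filtration}, whose proof has already been given in the preceding subsection using the trivial-coefficient quotient $\mathbb{Z}[\Heis] \twoheadrightarrow \mathbb{Z}$, Shapiro's lemma (applied to the finite cover $\mathcal{F}_n(\Sigma') \to \mathcal{C}_n(\Sigma')$), the universal coefficient theorem, and Moriyama's theorem identifying the kernel of the action on $H_n^{BM}(\mathcal{F}_n(\Sigma');\mathbb{Z})$ as $\mathfrak{J}(n)$.

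Second I would establish that $\mathrm{ker}(\eqref{eq:twisted-representation}) \subseteq \mathrm{ker}(\mathrm{Magnus})$. The $\mathfrak{M}(\Sigma)$-equivariant surjection $\Heis \twoheadrightarrow H = H_1(\Sigma)$ induces the equivariant quotient \eqref{eq:Heisenberg-to-higher-Magnus} of twisted representations
\[
H_n^{BM}(\mathcal{C}_n(\Sigma');\mathbb{Z}[\Heis]) \twoheadrightarrow H_n^{BM}(\mathcal{C}_n(\Sigma');\mathbb{Z}[H]),
\]
and by Shapiro's lemma (applied to the abelian cover of $\mathcal{C}_n(\Sigma')$ corresponding to $H$) together with the universal coefficient argument used in the Moriyama case, the kernel of the action on the right-hand side agrees with the kernel of the action on $H_n^{BM}(\mathcal{F}_n(\Sigma');\mathbb{Z}[H])$. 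Now invoke the equivariant inclusion \eqref{eq:Magnus-to-higher-Magnus} from \cite{PS21}, which exhibits the $n$-fold tensor power of the Magnus representation inside $H_n^{BM}(\mathcal{F}_n(\Sigma');\mathbb{Z}[H])$; the kernel of a tensor power is contained in the kernel of the original representation, so the chain of containments displayed just above the statement goes through.

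Taking the intersection of the two containments gives the proposition. There is no real obstacle here: the only step that requires thought is making sure that the Shapiro-type identifications are indeed $\mathfrak{M}(\Sigma)$-equivariant (which they are, since the mapping class group respects the covering structure in each case) and that the inclusion \eqref{eq:Magnus-to-higher-Magnus} being $\mathfrak{M}(\Sigma)$-equivariant implies the kernel containment (which is immediate for any equivariant injection between $G$-representations). The proof is therefore essentially a two-line corollary of Proposition \ref{prop:kernel-Johnson-filtration} and the preceding discussion involving \eqref{eq:Heisenberg-to-higher-Magnus} and \eqref{eq:Magnus-to-higher-Magnus}.
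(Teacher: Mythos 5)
Your proposal follows the paper's argument exactly: it is the intersection of the containment $\ker(\eqref{eq:twisted-representation}) \subseteq \mathfrak{J}(n)$ given by Proposition \ref{prop:kernel-Johnson-filtration} with the containment $\ker(\eqref{eq:twisted-representation}) \subseteq \ker(\mathrm{Magnus})$ obtained from the quotient \eqref{eq:Heisenberg-to-higher-Magnus} and the inclusion \eqref{eq:Magnus-to-higher-Magnus}, so there is nothing new here beyond citing the two preceding discussions.

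One small but substantive correction to your parenthetical: when passing from $H_n^{BM}(\mathcal{C}_n(\Sigma');\mathbb{Z}[H])$ to $H_n^{BM}(\mathcal{F}_n(\Sigma');\mathbb{Z}[H])$, the Shapiro isomorphism must be applied to the \emph{finite} covering $\mathcal{F}_n(\Sigma') \to \mathcal{C}_n(\Sigma')$ with deck group the symmetric group on $n$ letters, exactly as in the Moriyama case, and not to the abelian cover corresponding to $H$. The $H$-cover is infinite, and as the paper itself emphasises, Shapiro's lemma for \emph{Borel-Moore} homology is only valid for finite coverings, so the version you invoke would fail. Since the rest of your outline correctly reproduces the paper's structure, this amounts to misnaming the cover in a single step, but as written that step would not go through.
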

\begin{proof}
Let $f$ be an element of the kernel of \eqref{eq:twisted-representation}. By Proposition \ref{prop:kernel-Johnson-filtration}, we know that $f \in \mathfrak{J}(n)$. By the discussion above, we know that the action of $f$ under the Magnus representation is either $\id$ or $-\id$. It remains to rule out the possibility that it is $-\id$, so let us suppose this and derive a contradiction. Consider the morphism of representations
\[
H_1^{BM}(\mathcal{F}_1(\Sigma');\Z[H]) \longrightarrow H_1^{BM}(\mathcal{F}_1(\Sigma');\Z)
\]
induced by the augmentation map $\Z[H] \to \Z$ of the coefficients. Assuming that $f$ acts by $-\id$ under the Magnus representation (the left-hand side), it follows that it also acts by $-\id$ on the representation on the right-hand side. But the right-hand side may be identified with the symplectic action of the mapping class group on $H = H_1(\Sigma;\Z)$, so in particular it follows that $f$ does not lie in the Torelli group, i.e.~$f \notin \mathfrak{J}(1)$. But we know from above that $f \in \mathfrak{J}(n) \subseteq \mathfrak{J}(1)$, a contradiction.
\end{proof}

\begin{remark}
It is known \cite[\S 6]{Suzuki2003} that the kernel of the Magnus representation does not contain $\mathfrak{J}(n)$ for any $n\geq 1$, so Proposition \ref{prop:kernel} implies that the kernel of \eqref{eq:twisted-representation} is \emph{strictly} contained in $\mathfrak{J}(n)$.
\end{remark}

\subsection{Other related representations.}

Recently, the representations of $\mathfrak{M}(\Sigma)$ on the ordinary (rather than Borel--Moore) homology of the configuration space $\mathcal{F}_n(\Sigma)$ has been studied\footnote{This is equivalent to studying the homology of $\mathcal{F}_n(\Sigma')$ since the inclusion $\mathcal{F}_n(\Sigma') \hookrightarrow \mathcal{F}_n(\Sigma)$ is a homotopy equivalence. On the other hand, for \emph{Borel--Moore} homology, this would not be equivalent, since the inclusion is not a \emph{proper} homotopy equivalence.} by Bianchi, Miller and Wilson~\cite{BianchiMillerWilson}: they prove that, for each $n$ and $i$, the kernel of the $\mathfrak{M}(\Sigma)$-representation $H_i(\mathcal{F}_n(\Sigma);\Z)$ contains $\mathfrak{J}(i)$, and is in general strictly \emph{larger} than $\mathfrak{J}(i)$. They conjecture that the kernel of the $\mathfrak{M}(\Sigma)$-representation on the total homology $H_*(\mathcal{F}_n(\Sigma);\Z)$ is equal to the subgroup generated by $\mathfrak{J}(n)$ and the Dehn twist around the boundary. Even more recently, Bianchi and Stavrou \cite{BianchiStavrou2022} have shown that, for $g\geq 2$, the kernel of the $\mathfrak{M}(\Sigma)$-representation $H_n(\mathcal{F}_n(\Sigma);\Z)$ does \emph{not} contain $\mathfrak{J}(n-1)$.

The $\mathfrak{M}(\Sigma)$-representation $H_i(\mathcal{C}_n(\Sigma);\mathbb{F})$, for certain field coefficients $\mathbb{F}$, has been completely computed. For $\mathbb{F} = \mathbb{F}_2$ it has been computed in \cite[Theorem 3.2]{Bianchi2020} and is \emph{symplectic}, i.e.~it restricts to the trivial action on the Torelli group $\mathfrak{T}(\Sigma) = \mathfrak{J}(1)$. For $\mathbb{F} = \Q$ it has been computed in \cite[Theorem 1.4]{Stavrou2023} and is not symplectic, i.e.~its kernel does not contain $\mathfrak{J}(1)$, but it restricts to the trivial action on the Johnson kernel $\mathfrak{J}(2)$.

\section{Computations for \texorpdfstring{$n=2$}{n=2}}
\label{s:computations}

In this section we will do some computations in the case $n=2$, when $V$ is the regular representation $\Z[\Heis]$ of the Heisenberg group $\Heis$. The main goal is to obtain in this case an explicit formula for the action of a Dehn twist along a genus-$1$ separating curve on the generic Heisenberg homology $H_2^{BM}(\mathcal{C}_2(\Sigma),\mathcal{C}_2(\Sigma,\partial^-(\Sigma));\Z[\Heis])$. When the surface has genus $1$ this is displayed in Figure \ref{fig:bigmatrix}; in general, the formula is given by Theorem \ref{thm_calculation}. One may compare these calculations to the calculations of An and Ko~\cite[page 274]{An}, although they consider representations of surface braid groups whereas we consider representations of mapping class groups.

We will start with the case where the surface itself has genus $1$, where we first compute the action of the Dehn twists $T_a$, $T_b$, along the standard essential curves $a$, $b$. Since $T_a$ and $T_b$ act non-trivially on the local system $\Z[\Heis]$, they do not act by automorphisms, but give isomorphisms in the category of spaces with local systems, which, after taking homology with local coefficients, give isomorphisms in the category of $\Z[\Heis]$-modules. We refer to \cite[Chapter 5]{Davis} for functoriality results concerning homology with local coefficients. The upshot is a twisted representation of the full mapping class group $\mathfrak{M}(\Sigma)$. Recall that in Theorem \ref{thm:MCG} we obtained a groupoid formulation of the twisted mapping class group representation as a functor on the \emph{action groupoid} $\mathfrak{M}(\Sigma) \actiongroupoidleft \mathrm{Aut}^+(\Heis)$, which gives here a functor

\begin{equation}
\label{eq:twisted-representation-sec8}
\mathfrak{M}(\Sigma) \actiongroupoidleft \mathrm{Aut}^+(\Heis) \longrightarrow \mathrm{Mod}_{\Z[\Heis]}.
\end{equation}

We briefly recall from \S\ref{rep-MCG} some of the relevant details of the construction of this twisted representation. Let $f\in \mathfrak{M}(\Sigma)$ and let $f_\Heis$ be its action on the Heisenberg group. Then the Heisenberg homology $H_*^{BM}(\mathcal{C}_{n}(\Sigma),\mathcal{C}_{n}(\Sigma,\partial^-(\Sigma)) ;\Z[\Heis])$ is defined from the regular covering space $\widetilde{\mathcal{C}}_{n}(\Sigma)$ associated with the quotient $\phi \colon \mathbb{B}_n(\Sigma)\twoheadrightarrow \Heis$. As explained in \S\ref{rep-MCG}, at the level of homology there is a twisted functoriality and, in particular, associated with $f$, we get a right $\Z[\Heis]$-linear isomorphism 
\[
\mathcal{C}_n(f)_* \colon H_*^{BM}(\mathcal{C}_{n}(\Sigma),\mathcal{C}_{n}(\Sigma,\partial^-(\Sigma)) ;\Z[\Heis])_{f_\Heis^{-1}} \longrightarrow H_*^{BM}(\mathcal{C}_{n}(\Sigma),\mathcal{C}_{n}(\Sigma,\partial^-(\Sigma)) ;\Z[\Heis])\ .
\]
Our choice for twisting on the source with $f_\Heis^{-1}$ rather than on the target with $f_\Heis$ will slightly simplify the writing of the matrix. Note also that when working with coefficients in a left $\Z[\Heis]$-representation $V$ the twisting on the right by $f_\Heis^{-1}$ will correspond to twisting the action on $V$ by $f_\Heis$ (see \eqref{eq:twisting-on-left-or-right}).
More generally, for any $\tau\in \mathrm{Aut}(\Heis)$, we have a \emph{shifted} isomorphism
\[
(\mathcal{C}_n(f)_*)_\tau \colon H_*^{BM}(\mathcal{C}_{n}(\Sigma),\mathcal{C}_{n}(\Sigma,\partial^-(\Sigma)) ;\Z[\Heis])_{f_\Heis^{-1}\circ \tau} \longrightarrow H_*^{BM}(\mathcal{C}_{n}(\Sigma),\mathcal{C}_{n}(\Sigma,\partial^-(\Sigma)) ;\Z[\Heis])_\tau\ .
\]
In terms of the functor \eqref{eq:twisted-representation-sec8} on the action groupoid, the above map $(\mathcal{C}_n(f)_*)_{\tau}$ is the image of the morphism $f \colon  \tau^{-1} \circ f_\Heis \to \tau^{-1}$ of $\mathfrak{M}(\Sigma) \actiongroupoidleft \mathrm{Aut}^+(\Heis)$. If $f$, $g$ are two mapping classes, the composition formula (functoriality of \eqref{eq:twisted-representation-sec8}) states the following:
\[
\mathcal{C}_n(g\circ f)_*=\mathcal{C}_n( g)_*\circ (\mathcal{C}_n( f)_*)_{g_\Heis^{-1}}\ .
\]
We will need to compute compositions in specific bases. Note that a basis $B$ for a right $\Z[\Heis]$-module $M$ is also a basis for the twisted module $M_\tau$, $\tau\in \mathrm{Aut}(\Heis)$.
\begin{lemma}\label{lem:matrix}
Let $M,M'$ be free right $\Z[\Heis]$-modules with fixed bases $B$, $B'$ and let $\tau\in \mathrm{Aut}(\Heis)$.
If a $\Z[\Heis]$-linear map $F \colon M\rightarrow M'$ has matrix $\mathrm{Mat}(F)$  in the bases $B$, $B'$, then the matrix of the shifted
$\Z[\Heis]$-linear map $F_\tau \colon M_\tau\rightarrow M'_\tau$ is $\tau^{-1}(\mathrm{Mat}(F))$.
\end{lemma}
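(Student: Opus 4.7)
The plan is to unwind the definitions directly. Recall from Remark \ref{rmk:twisted-rep} the convention that the twisted module $M_\tau$ coincides with $M$ as an abelian group, but its right $\Heis$-action is given by $m \cdot_\tau h = m \cdot \tau(h)$, where $\cdot$ denotes the original action on $M$. The first thing I would do is check that $F_\tau$ (defined as $F$ on underlying abelian groups) really is $\mathbb{Z}[\Heis]$-linear for the twisted actions: this is automatic, since $F_\tau(m \cdot_\tau h) = F(m \cdot \tau(h)) = F(m) \cdot \tau(h) = F_\tau(m) \cdot_\tau h$, using $\mathbb{Z}[\Heis]$-linearity of $F$.

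Next, I would fix bases $B = (e_i)$ of $M$ and $B' = (e'_j)$ of $M'$ and write $F(e_i) = \sum_j e'_j \cdot a_{ji}$, so that $\mathrm{Mat}(F) = (a_{ji})$. The same elements $e_i$ and $e'_j$ form bases of $M_\tau$ and $M'_\tau$ respectively, since the twisting does not change underlying sets and $\tau$ is an automorphism of $\Heis$ (hence of $\mathbb{Z}[\Heis]$). To read off the matrix of $F_\tau$, I need to re-express $F_\tau(e_i) = \sum_j e'_j \cdot a_{ji}$ using $\cdot_\tau$ instead of $\cdot$. Extending $\tau^{-1}$ by $\mathbb{Z}$-linearity to $\mathbb{Z}[\Heis]$, the identity $e'_j \cdot a_{ji} = e'_j \cdot \tau(\tau^{-1}(a_{ji})) = e'_j \cdot_\tau \tau^{-1}(a_{ji})$ holds, and therefore
\[
F_\tau(e_i) = \sum_j e'_j \cdot_\tau \tau^{-1}(a_{ji}),
\]
so the matrix of $F_\tau$ in the bases $B, B'$ is $\bigl(\tau^{-1}(a_{ji})\bigr) = \tau^{-1}(\mathrm{Mat}(F))$, as claimed.

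There is no real obstacle; the whole content is bookkeeping of the convention $m \cdot_\tau h = m \cdot \tau(h)$ and observing that pushing the twist across the scalar is exactly what replaces $a_{ji}$ by $\tau^{-1}(a_{ji})$. The only point requiring mild care is making sure the convention used here is consistent with the one used earlier (especially with the shifted map $(\mathcal{C}_n(f)_*)_\tau$ in the paragraph preceding the lemma), so that $\tau^{-1}$ really appears on the right-hand side rather than $\tau$. Given that consistency, the proof is a two-line verification.
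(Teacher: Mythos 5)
Your proof is correct and takes essentially the same route as the paper: unwind the definition $m \cdot_\tau h = m \cdot \tau(h)$, express $F$ on basis elements, and push the twist across the scalar to produce the factor $\tau^{-1}$. The paper applies $F_\tau$ to a general element $\sum_j e_j \cdot_\tau h_j$ while you apply it directly to the basis vectors, but the computation is the same; you also spell out the routine check that $F_\tau$ is $\mathbb{Z}[\Heis]$-linear for the twisted actions, which the paper leaves implicit in the statement.
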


The action of $\tau^{-1}$ on the matrix is given by its action on each individual coefficient.

\begin{proof}
We note that the maps $F$ and $F_\tau$ are equal as maps of $\Z$-modules.
Let $B=(e_j)_{j\in J}$, $B'=(f_i)_{i\in I}$, $\mathrm{Mat}(F)=(m_{i,j})_{i\in I,j\in J}$. Then for coefficients
$h_j\in \Heis$, $j\in J$, we have
\begin{align*}
F_\tau \biggl( \sum_j e_j\cdot_\tau h_j \biggr) &= F \biggl( \sum_j e_j\tau(h_j) \biggr) \\
&= \sum_{i,j}f_im_{ij}\tau(h_j)\\
&= \sum_{i,j}f_i\cdot_\tau \tau^{-1}(m_{ij}) h_j ,
\end{align*}
which gives the stated result.
\end{proof}

\begin{figure}[t]
\centering
\includegraphics[scale=0.4]{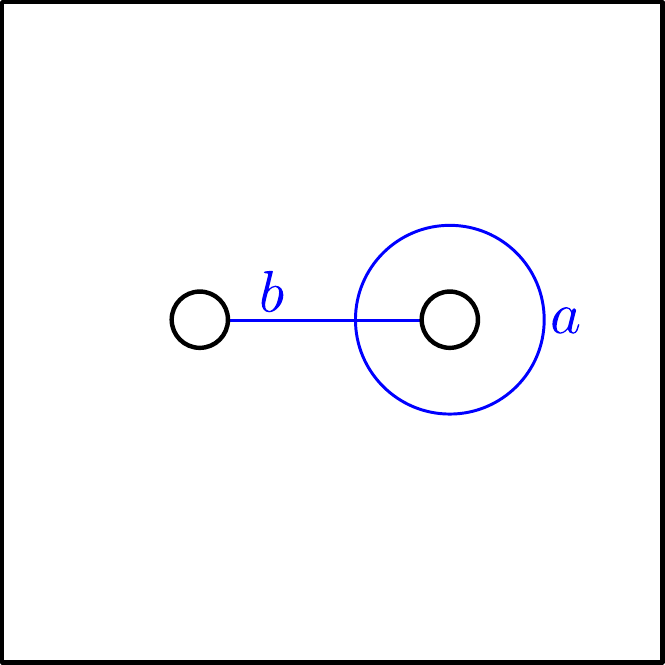}
\includegraphics[scale=0.4]{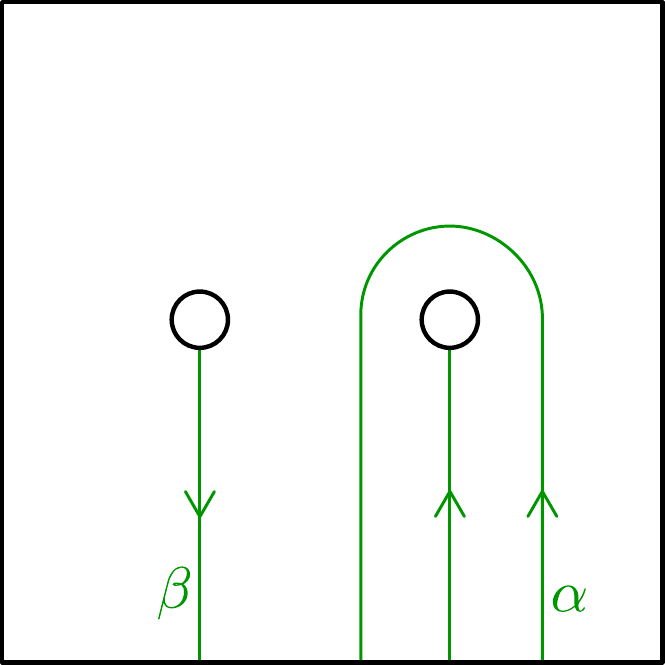}
\includegraphics[scale=0.4]{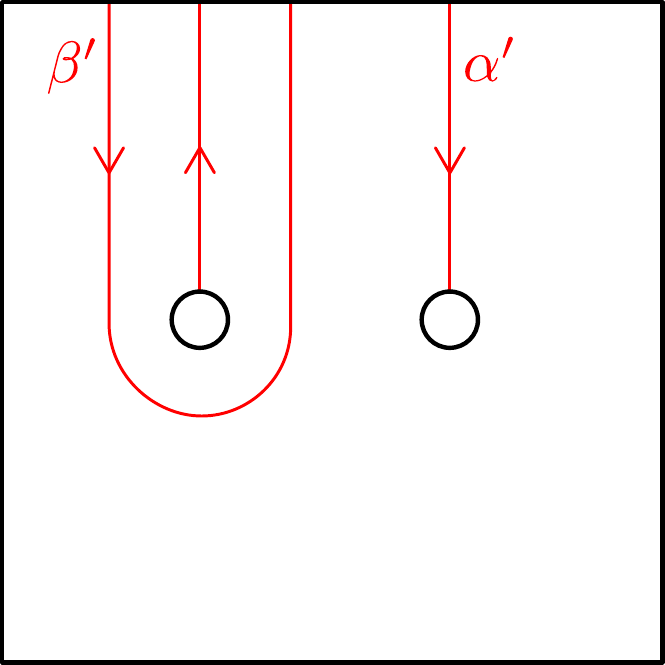}
\caption{The closed curves $a$, $b$ and the arcs $\alpha$, $\beta$, $\alpha'$, $\beta'$. \\ \emph{In all of our figures, we depict surfaces $\Sigma_{g,1}$ as rectangles with $2g$ holes, which are identified in pairs by reflections to form $g$ handles.}}
\label{fig:ab}
\end{figure}

\subsection{Genus one}

Here we consider the genus-$1$ case with $n=2$ configuration points. Let $a$, $b$ be the simple closed curves representing the symplectic basis of $H_1(\Sigma)$ previously denoted by $a_1,b_1$; see Figure \ref{fig:ab}. We will use the same notation $a$, $b$ for the curves, their homology classes and their lifts in $\Heis$ which were previously denoted by $\tilde{a}$, $\tilde{b}$. The corresponding Dehn twists are denoted by $T_a$, $T_b$.
The surface braid group $\mathbb{B}_2(\Sigma)$ is generated by the three elements $\alpha$, $\beta$, $\sigma$, where we again drop the subscript `$1$'. We will depict $\alpha$ and $\beta$ as the arcs in the middle of Figure \ref{fig:ab}. Although these arcs are not based loops, they may be completed to based loops, uniquely up to homotopy, by a path of configurations contained in the bottom edge of the square, since the space of configurations of two points in the bottom edge of the square is contractible and contains the base configuration. In this notation, the quotient $\phi \colon \mathbb{B}_2(\Sigma) \twoheadrightarrow \Heis$ (see Corollary \ref{hom_phi}) sends $\alpha \mapsto a$, $\beta \mapsto b$ and $\sigma \mapsto u$.

The homology module $H_2^{BM}(\mathcal{C}_{2}(\Sigma),\mathcal{C}_{2}(\Sigma,\partial^-(\Sigma)) ;\Z[\Heis])$ was computed using the compression trick in Theorem \ref{basis}. It is free of rank $3$ over $\Z[\Heis]$ with a basis indexed by the ordered partitions of $2$ into two parts.
Here we replace the arcs $\gamma_1$, $\gamma_2$ from Figure \ref{fig:model-surface} with the arcs $\alpha$, $\beta$ depicted in Figure \ref{fig:ab}, and the basis is denoted by $w(\alpha)=E_{(2,0)}$, $w(\beta)=E_{(0,2)}$, $v(\alpha,\beta)=E_{(1,1)}$.
The first two are represented by properly embedded $2$-simplices, while the third one is represented by a properly embedded square.
In more detail, $w(\alpha)$ is represented by the cycle in the $2$-point configuration space given by the subspace where both points lie on the arc $\alpha$. Similarly, $w(\beta)$ is given by the subspace where both points lie on $\beta$ and $v(\alpha,\beta)$ is given by the subspace where exactly one point lies on each of these arcs.

In fact, we have to be even more careful to specify these elements precisely, since the preceding description only determines them \emph{up to the action of the deck transformation group} $\Heis$, because we have just described cycles in the configuration space $\mathcal{C}_2(\Sigma)$, whereas cycles for the Heisenberg-twisted homology are cycles in the covering space $\widetilde{\mathcal{C}}_2(\Sigma)$. Let us fix, once and for all, a lift $\widetilde{c}_0$ of the base configuration $c_0 \subset \partial\Sigma$. Then any contractible subspace $X \subset \mathcal{C}_2(\Sigma)$ has a canonical lift $\widetilde{X} \subset \widetilde{\mathcal{C}}_2(\Sigma)$ to the covering space, after choosing a path in $\mathcal{C}_2(\Sigma)$ from the base configuration $c_0$ to a point in $X$, which is uniquely determined by requiring that the chosen path lifts to a path in $\widetilde{\mathcal{C}}_2(\Sigma)$ from $\widetilde{c}_0$ to $\widetilde{X}$. Once we have a simplex or square in $\mathcal{C}_2(\Sigma)$ representing a relative cycle, a lift to $\widetilde{\mathcal{C}}_2(\Sigma)$ is therefore determined by a choice of a path (called a ``tether'') in $\mathcal{C}_2(\Sigma)$ from $c_0$ to a point in the cycle. For $w(\alpha)$, $w(\beta)$ and $v(\alpha,\beta)$, we choose these tethers as illustrated in the top row of Figure \ref{fig:tethers}.

\begin{figure}[t]
\centering
\includegraphics[scale=0.4]{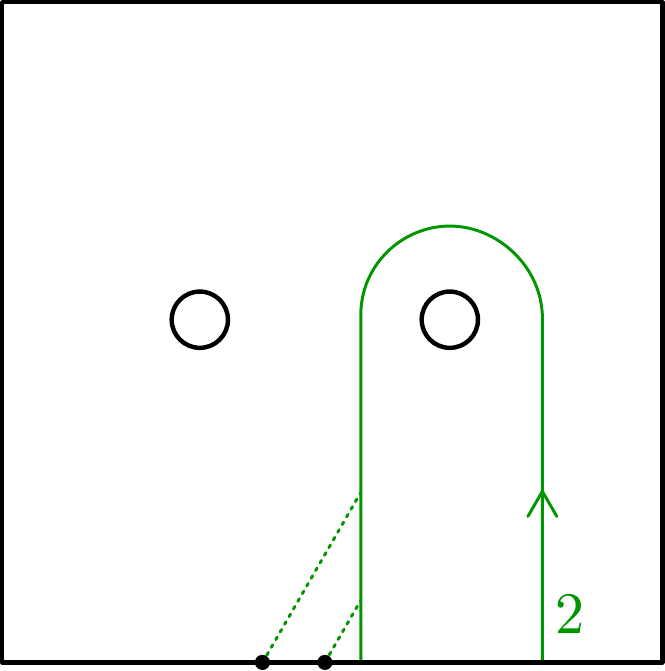}
\includegraphics[scale=0.4]{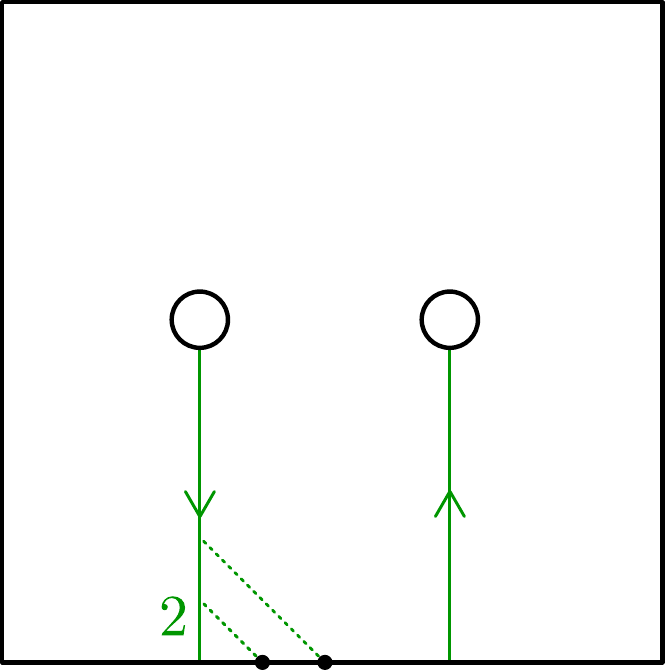}
\includegraphics[scale=0.4]{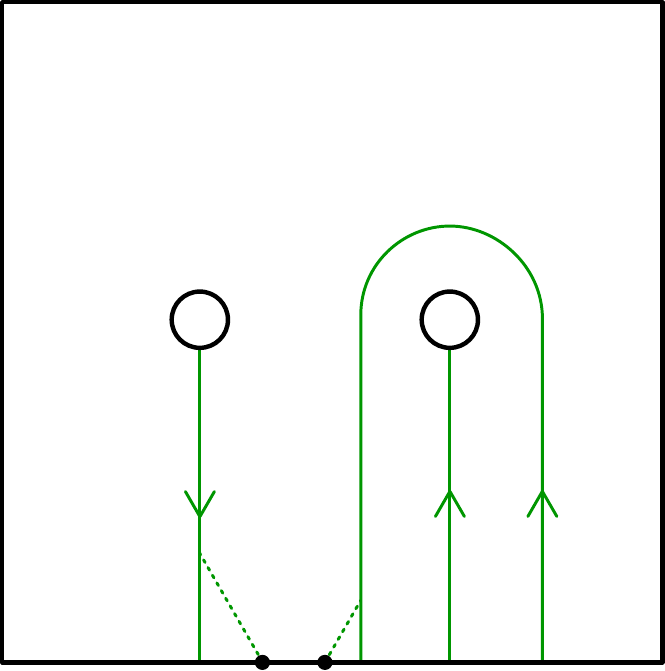}
\\
\includegraphics[scale=0.4]{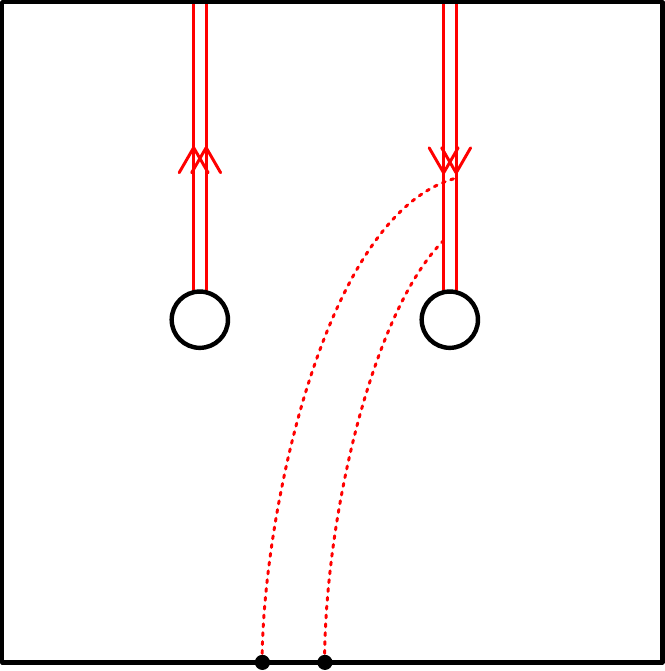}
\includegraphics[scale=0.4]{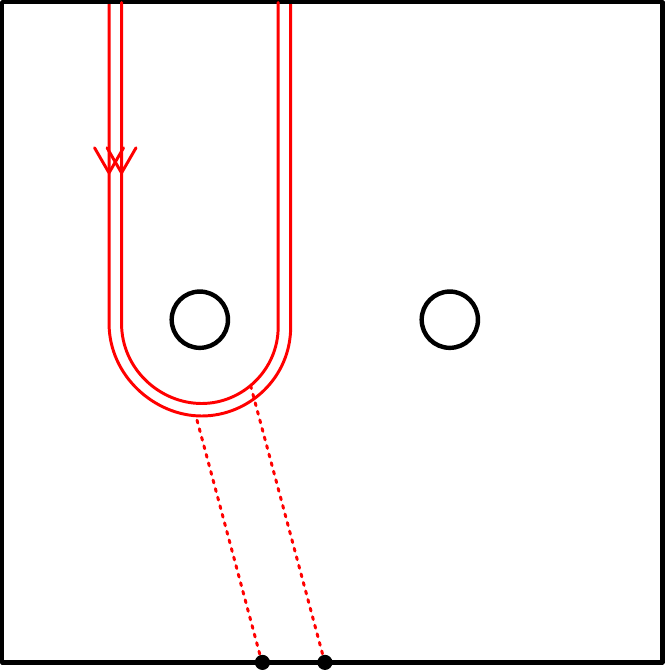}
\includegraphics[scale=0.4]{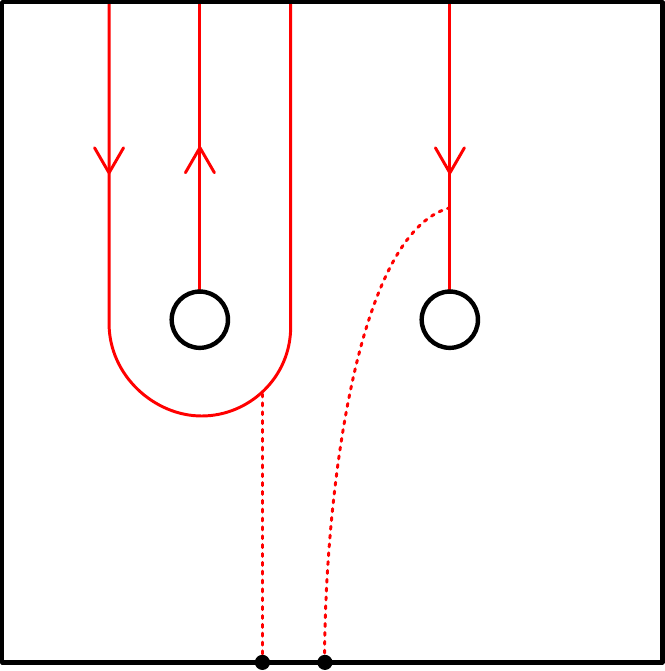}
\caption{Tethers.}
\label{fig:tethers}
\end{figure}

By Poincar{\'e} duality, and the fact that $\mathcal{C}_2(\Sigma)$ is a connected, oriented $4$-manifold with boundary $\mathcal{C}_2(\Sigma,\partial\Sigma) = \{ c \in \mathcal{C}_2(\Sigma) \mid c \cap \partial\Sigma \neq \varnothing \}$, we have a non-degenerate pairing
\begin{equation}
\label{eq:pairing}
\langle -,- \rangle \colon H_2^{BM}(\mathcal{C}_2(\Sigma),\partial^-;\Z[\Heis]) \otimes H_2(\mathcal{C}_2(\Sigma),\partial^+;\Z[\Heis]) \longrightarrow \Z[\Heis],
\end{equation}
where $\partial^\pm$ is an abbreviation of $\mathcal{C}_2(\Sigma,\partial^{\pm}(\Sigma))$, and we note that the boundary $\partial\mathcal{C}_2(\Sigma) = \mathcal{C}_2(\Sigma,\partial\Sigma)$ decomposes as $\partial^+ \cup \partial^-$, corresponding to the decomposition of the boundary of the surface $\partial\Sigma = \partial^+(\Sigma) \cup \partial^-(\Sigma)$. (Formally, it is a \emph{manifold triad}.) The above pairing is linear in the first variable and antilinear in the second one, where we use the anti-involution on the Heisenberg group ring that extends the inverse map. Similarly to the standard case, under transversality hypotheses, the pairing is given by an intersection formula that counts, with signs, the geometric intersections in the Heisenberg cover of a smooth cycle $S$ with all of the $\Heis$-translated copies of a smooth cycle $T$:
\begin{equation}
\label{eq:intersection-formula}
\langle [S],[T] \rangle = \sum_{h \in \Heis}(S\,.\,Th)\,h .
\end{equation}
There are natural elements of $H_2(\mathcal{C}_2(\Sigma),\partial^+;\Z[\Heis])$ that are dual to $w(\alpha)$, $w(\beta)$ and $v(\alpha,\beta)$ with respect to this pairing, which we denote by $\overline{w}(\alpha')$, $\overline{w}(\beta')$ and $v(\alpha',\beta')$ respectively. The element $v(\alpha',\beta')$ is defined exactly as above: it is given by the subspace of $2$-point configurations where one point lies on each of the arcs $\alpha'$ and $\beta'$ of Figure \ref{fig:ab}. The element $\overline{w}(\alpha')$ is defined as follows: first replace the arc $\alpha'$ with two parallel copies $\alpha'_1$ and $\alpha'_2$ (as in the bottom-left of Figure \ref{fig:tethers}), and then $\overline{w}(\alpha')$ is given by the subspace of $2$-point configurations where one point lies on each of $\alpha'_1$ and $\alpha'_2$. The element $\overline{w}(\beta')$ is defined exactly analogously. Again, in order to specify these elements precisely, we have to choose tethers; the choices that we make are illustrated in the bottom row of Figure \ref{fig:tethers}.

A practical description of the pairing \eqref{eq:pairing} is as follows. Let $x = w(\gamma)$ or $v(\gamma,\delta)$ for disjoint arcs $\gamma$, $\delta$ with endpoints on $\partial^-(\Sigma)$, and choose a tether for $x$, namely a path $t_x$ from $c_0$ to a point in $x$. Similarly, let $y = \overline{w}(\epsilon)$ or $v(\epsilon,\zeta)$ for disjoint arcs $\epsilon$, $\zeta$ with endpoints on $\partial^+(\Sigma)$, and choose a tether $t_y$ for $y$. Suppose that the arcs $\gamma \sqcup \delta$ intersect the arcs $\epsilon \sqcup \zeta$ transversely. Then the pairing \eqref{eq:pairing} is given by the formula
\begin{equation}
\label{eq:pairing-formula}
\bigl\langle [x,t_x],[y,t_y] \bigr\rangle = \sum_{p=\{p_1,p_2\} \in x \cap y} \mathrm{sgn}(p_1) . \mathrm{sgn}(p_2) . \mathrm{sgn}(\ell_p) . \phi(\ell_p),
\end{equation}
where $\ell_p \in \mathbb{B}_2(\Sigma)$ is the loop in $\mathcal{C}_2(\Sigma)$ given by concatenating:
\begin{itemize}
\item the tether $t_x$ from $c_0$ to a point in $x$,
\item a path in $x$ to the intersection point $p$,
\item a path in $y$ from $p$ to the endpoint of the tether $t_y$,
\item the reverse of the tether $t_y$ back to $c_0$,
\end{itemize}
the Heisenberg evaluation $\phi(\ell_p)$ of this loop (see Corollary \ref{hom_phi}) detects the contributing translation in the formula \eqref{eq:intersection-formula}, $\mathrm{sgn}(\ell_p) \in \{+1,-1\}$ denotes the sign of the induced permutation in $\mathfrak{S}_2$ and $\mathrm{sgn}(p_i) \in \{+1,-1\}$ is given by the sign convention in Figure \ref{fig:intersection-sign}. 

(In fact, there should be an extra global $-1$ sign on the right-hand side of \eqref{eq:pairing-formula}, which we have suppressed for simplicity. Thus \eqref{eq:pairing-formula} is really a formula for $-\eqref{eq:pairing}$. This global sign ambiguity does not affect our calculations, since all we need is \emph{a} non-degenerate pairing of the form \eqref{eq:pairing}, and any non-degenerate pairing multiplied by a unit is again a non-degenerate pairing. This extra global sign also appears in Bigelow's formula \cite[page 475, ten lines above Lemma 2.1]{Bigelow2001}. See \hyperref[appendixA]{Appendix A} for further explanations of these signs.)

With this description of \eqref{eq:pairing}, it is easy to verify that the matrix
\[
\left(
\begin{matrix}
\langle [w(\alpha)] , [\overline{w}(\alpha')] \rangle &
\langle [w(\alpha)] , [\overline{w}(\beta')] \rangle &
\langle [w(\alpha)] , [v(\alpha',\beta')] \rangle \\
\langle [w(\beta)] , [\overline{w}(\alpha')] \rangle &
\langle [w(\beta)] , [\overline{w}(\beta')] \rangle &
\langle [w(\beta)] , [v(\alpha',\beta')] \rangle \\
\langle [v(\alpha,\beta)] , [\overline{w}(\alpha')] \rangle &
\langle [v(\alpha,\beta)] , [\overline{w}(\beta')] \rangle &
\langle [v(\alpha,\beta)] , [v(\alpha',\beta')] \rangle
\end{matrix}
\right) \;\in\; \mathrm{Mat}_{3,3}(\Z[\Heis])
\]
is the identity; this is the precise sense in which these two $3$-tuples of elements are ``dual'' to each other.\footnote{Since we know that $w(\alpha)$, $w(\beta)$, $v(\alpha,\beta)$ form a basis for the $\Z[\Heis]$-module $H_2^{BM}(\mathcal{C}_2(\Sigma),\partial^-;\Z[\Heis])$, it follows that the elements $\overline{w}(\alpha')$, $\overline{w}(\beta')$, $v(\alpha',\beta')$ are $\Z[\Heis]$-linearly independent in the $\Z[\Heis]$-module $H_2(\mathcal{C}_2(\Sigma),\partial^+;\Z[\Heis])$. In fact, they form a basis for this $\Z[\Heis]$-module (which is therefore free), by Poincar{\'e} duality and the compactly-supported cohomology version of Theorem \ref{basis}.}

\begin{figure}[t]
\centering
\includegraphics[scale=0.4]{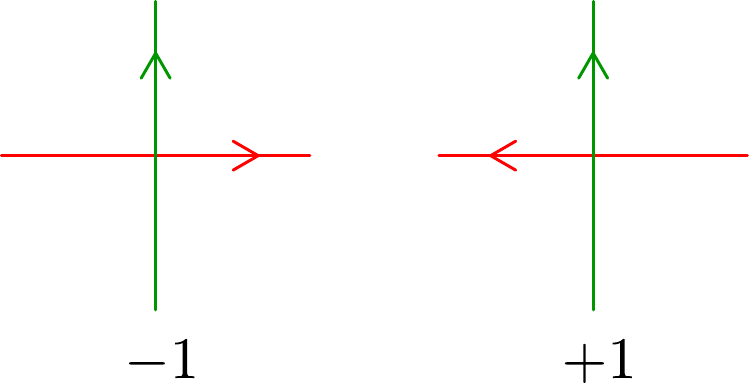}
\caption{Sign convention for intersections between cycles representing elements of the homology groups \textcolor{cycleBM}{$H_n^{BM}(\mathcal{C}_n(\Sigma),\partial^-;\Z[\Heis])$ (vertical arcs)} and \textcolor{cycleD}{$H_n(\mathcal{C}_n(\Sigma),\partial^+;\Z[\Heis])$ (horizontal arcs)}.}
\label{fig:intersection-sign}
\end{figure}

\begin{theorem}
\label{thm:MaMb}
With respect to the ordered basis $(w(\alpha) , w(\beta) , v(\alpha,\beta))$:

\textup{(a)} The matrix for the isomorphism
\[
\mathcal{T}_a=\mathcal{C}_2(T_a)_* \colon H_2^{BM}(\mathcal{C}_{2}(\Sigma),\partial^-;\Z[\Heis])_{(T_a^{-1})_\Heis} \longrightarrow H_2^{BM}(\mathcal{C}_{2}(\Sigma),\partial^-;\Z[\Heis])
\]
is
\[
M_a=\left(\begin{array}{rrr}
1 &  u^{2} a^{ 2 }b^{ -2 } & (u^{-1}-1)ab^{ -1 }\\
0 & 1 & 0 \\
0 & -ab^{ -1 }& 1
\end{array}\right)\ .
\]

\textup{(b)} The matrix for the isomorphism
\[
\mathcal{T}_b=\mathcal{C}_2(T_b)_* \colon H_2^{BM}(\mathcal{C}_{2}(\Sigma),\partial^-;\Z[\Heis])_{(T_b^{-1})_\Heis} \longrightarrow H_2^{BM}(\mathcal{C}_{2}(\Sigma),\partial^-;\Z[\Heis])
\]
is
\[
M_b=\left(\begin{array}{rrr}
u^{-2} b^{ -2 } & 0 & 0 \\
-u^{-1}  & 1& 1 - u^{-1}  \\
-u^{-1} b^{-1}& 0 & b^{-1}
\end{array}\right) \ .
\]
\end{theorem}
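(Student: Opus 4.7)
The plan is to compute each entry of $M_a$ and $M_b$ via the Poincar\'e--Lefschetz intersection pairing \eqref{eq:pairing-formula} with the dual basis $\{\overline{w}(\alpha'), \overline{w}(\beta'), v(\alpha', \beta')\}$ of $H_2(\mathcal{C}_2(\Sigma), \partial^+; \mathbb{Z}[\Heis])$. Since this dual basis was chosen to be biorthogonal to $(w(\alpha), w(\beta), v(\alpha,\beta))$ with respect to this pairing, the coefficient in position $(i,j)$ of $M_a$ is exactly $\langle \mathcal{T}_a(e_j), d_i \rangle \in \mathbb{Z}[\Heis]$, where $e_j$ runs over the source basis and $d_i$ over the dual basis. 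This is the direct analogue in the surface setting of Bigelow's intersection-theoretic computation of the Lawrence--Krammer--Bigelow representation \cite{Bigelow2001}.

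To carry out part (a), I would first fix transverse representatives of $\alpha$ and $\beta$ with respect to $a$, and describe concrete isotopy-class representatives of $T_a(\alpha)$ and $T_a(\beta)$ obtained by inserting one signed copy of $a$ at each transverse intersection. In particular, the arc $\alpha$ can be chosen so that $T_a(\alpha)$ is isotopic to $\alpha$ rel $\partial \Sigma$, which will give the first column of $M_a$ almost for free. After isotoping $T_a(\beta)$ and the image of $v(\alpha,\beta)$ into general position with respect to $\alpha'$ and $\beta'$, the transformed Borel--Moore cycles are supported on $2$-point configurations on these new arcs, equipped with the tethers obtained by applying $T_a$ to the tethers fixed in Figure \ref{fig:tethers}.

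For each of the remaining pairs (cycle, dual cycle) I would enumerate the finite set of transverse $2$-point configurations $p = \{p_1, p_2\}$ and evaluate \eqref{eq:pairing-formula}. For each such $p$ this requires: reading off the product of local intersection signs $\mathrm{sgn}(p_1)\,\mathrm{sgn}(p_2)$ from Figure \ref{fig:intersection-sign}; checking whether the concatenated loop $\ell_p \in \mathbb{B}_2(\Sigma)$ permutes the two configuration points, giving $\mathrm{sgn}(\ell_p)$; and writing $\ell_p$ explicitly as a word in the Bellingeri generators $\sigma_1, \alpha_1, \beta_1$ before applying $\phi$ and normalising with the product rule \eqref{eq:Heisenberg-product} to obtain $\phi(\ell_p) \in \Heis$. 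Summing these contributions then yields the matrix entry.

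The main obstacle is the careful bookkeeping of the Heisenberg coefficients $\phi(\ell_p)$: each $u^{\pm 1}$ records a strand--strand braiding of the two configuration points as the loop is traversed, each $a^{\pm 1}$ (resp.~$b^{\pm 1}$) records a passage along the $a$-handle (resp.~$b$-handle), and the non-commutativity of $\Heis$ means that the ordering of these contributions produces non-trivial powers of $u$. For instance, the entry $u^2 a^{-2} b^2$ in position $(1,2)$ of $M_a$ encodes two intersection points created by the Dehn twist whose compound loop, after reduction via $ab = u^2 ba$ and absorption of the $a^{-2} b^2$ prefactor from the reversed tethers, takes precisely this form; likewise the mixed expression $(u^{-1}-1) a^{-1} b$ in position $(1,3)$ signals a cancellation in $\phi(\ell_p)$ between two contributions whose braiding exponents differ by one. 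Part (b) is then obtained by an entirely analogous computation applied to $T_b$; the asymmetry between $M_a$ and $M_b$ reflects the fact that the chosen arcs $\alpha,\beta$ and their duals $\alpha',\beta'$ play asymmetric geometric roles with respect to $a$ and $b$, as is already visible in Figure \ref{fig:ab}.
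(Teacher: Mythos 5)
Your proposal is correct and follows essentially the same route as the paper: express each matrix entry as $\langle \mathcal{T}_\bullet(e_j), e'_i\rangle$ using the Poincar\'e--Lefschetz pairing \eqref{eq:pairing} with the dual basis $(\overline{w}(\alpha'), \overline{w}(\beta'), v(\alpha',\beta'))$, then evaluate via the geometric formula \eqref{eq:pairing-formula} by twisting the arcs $\alpha,\beta$, isotoping to transverse position with $\alpha',\beta'$, and reading off signs and Heisenberg monomials for each intersection configuration. The only minor slip is in your illustrative gloss of the $(1,2)$ entry $u^2 a^{-2}b^2$: as the paper's worked computation shows, this comes from a \emph{single} transverse $2$-point configuration (with the loop $\ell_p$ evaluating to $a^{-1}ba^{-1}b = u^2a^{-2}b^2$ after applying the relation $ba^{-1} = u^2 a^{-1}b$), not two; it is the $(1,3)$ entry $(u^{-1}-1)a^{-1}b$ that arises from two configurations whose $u$-exponents differ by one, as you describe.
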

\begin{proof}
Let us simplify the notation for the basis and the corresponding dual homology classes by
\[
(e_1,e_2,e_3) = (w(\alpha) , w(\beta) , v(\alpha,\beta)) \qquad (e'_1,e'_2,e'_3) = (\overline{w}(\alpha'), \overline{w}(\beta'), v(\alpha',\beta')).
\]
Using the non-degenerate pairing \eqref{eq:pairing} and elementary linear algebra, we have that
\[
\mathcal{C}_2(f)_*(e_i) = \sum_{j=1}^3 e_j . \langle \mathcal{C}_2(f)_*(e_i) , e'_j \rangle
\]
for any $f \in \mathfrak{M}(\Sigma)$. Computing the matrices $M_a$ and $M_b$ therefore consists in computing $\langle \mathcal{T}_a(e_i) , e'_j \rangle$ and $\langle \mathcal{T}_b(e_i) , e'_j \rangle$ for $i,j \in \{1,2,3\}$. We will explain how to compute two of these 18 elements of $\Z[\Heis]$, the remaining 16 being left as exercises for the reader. In each case the idea is the same: apply the Dehn twist to the explicit cycle (described above) representing the homology class $e_i$, and then use the formula \eqref{eq:pairing-formula} to compute the pairing.

We begin by computing $\langle \mathcal{T}_a(e_2) , e'_1 \rangle = \langle \mathcal{T}_a(w(\beta)) , \overline{w}(\alpha') \rangle$, the top-middle entry of $M_a$.
\begin{align*}
\langle \mathcal{T}_a(w(\beta)) , \overline{w}(\alpha') \rangle
&= \langle w(T_a(\beta)) , \overline{w}(\alpha') \rangle \\
&= \raisebox{-0.5\height}{\includegraphics[scale=0.4]{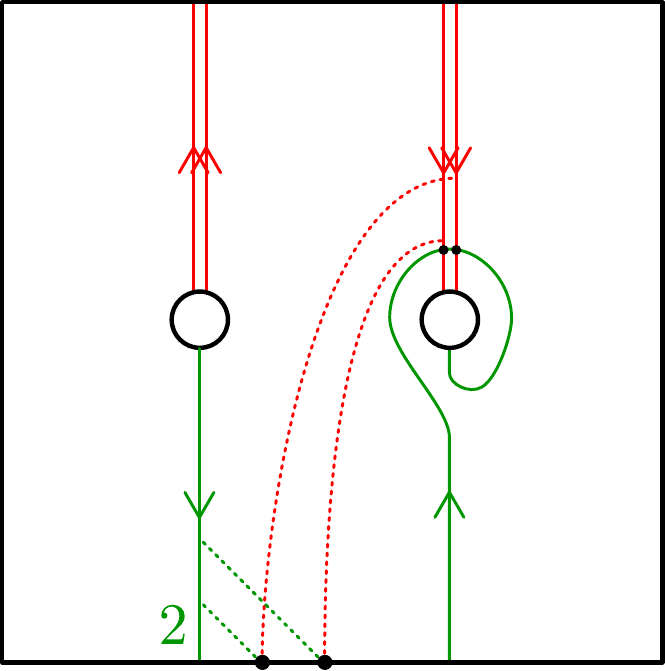}} \\
&= (-1).(-1).(+1).\phi \left(\; \raisebox{-0.48\height}{\includegraphics[scale=0.3]{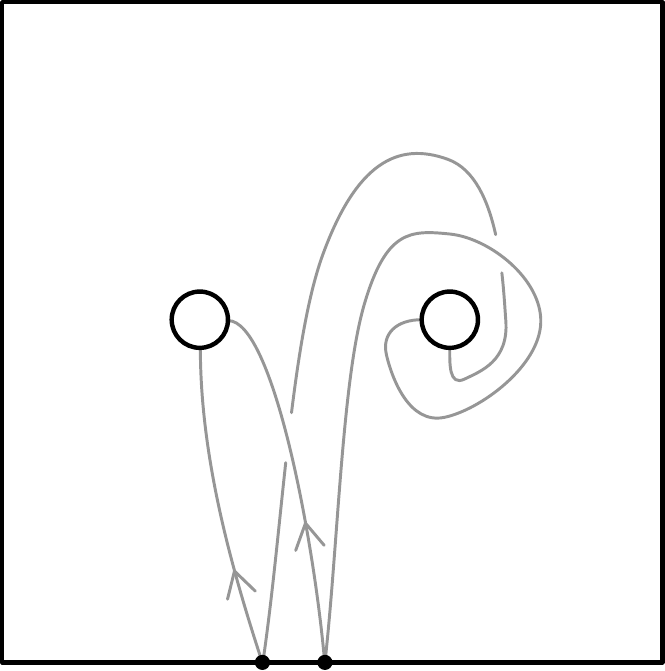}} \;\right) \\
&= \phi(\alpha \beta^{-1} \sigma^{-1} \alpha \beta^{-1} \sigma) \\
&= ab^{-1}ab^{-1} \\
&= u^2 a^{2} b^{-2}.
\end{align*}
Here the grey figure represents the graph of a braid up to vertical isotopy (specifically, the loop $\ell_p$ from \eqref{eq:pairing-formula}), viewed from above, where the braid is moving downwards as we go forwards along the loop. Recall that we concatenate loops from right to left.

We next calculate $\langle \mathcal{T}_a(e_3) , e'_1 \rangle = \langle \mathcal{T}_a(v(\alpha,\beta)) , \overline{w}(\alpha') \rangle$, the top-right entry of $M_a$. This is slightly more complicated, since in this case there are two intersection points in the configuration space $\mathcal{C}_2(\Sigma)$, so we obtain a Heisenberg polynomial (i.e.~element of $\Z[\Heis]$) with two terms.
\begin{align*}
\langle \mathcal{T}_a(v(\alpha,\beta)) , \overline{w}(\alpha') \rangle
&= \langle v(\alpha , T_a(\beta)) , \overline{w}(\alpha') \rangle \\
&= \raisebox{-0.5\height}{\includegraphics[scale=0.4]{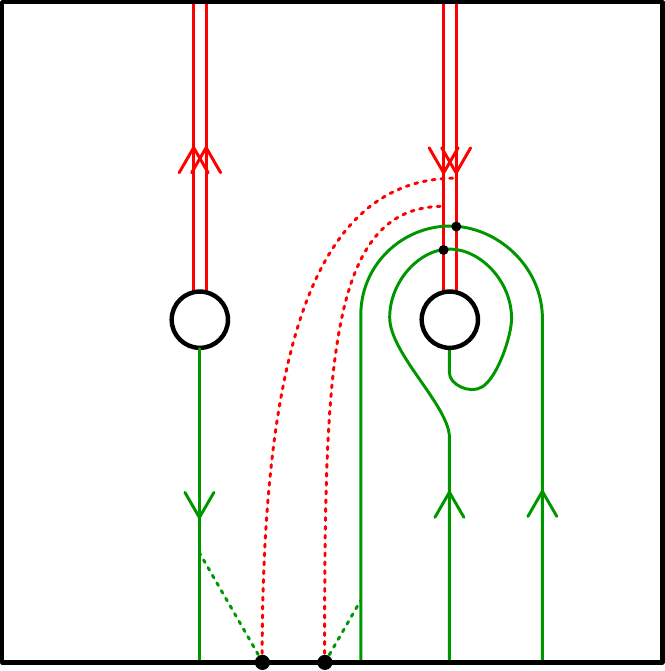}} + \raisebox{-0.5\height}{\includegraphics[scale=0.4]{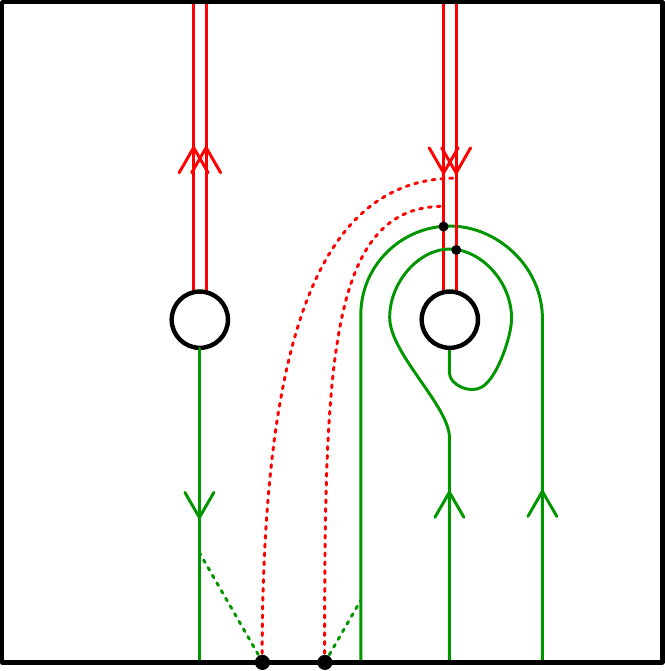}} \\
&= (-1).(+1).(-1).\phi \left(\; \raisebox{-0.48\height}{\includegraphics[scale=0.3]{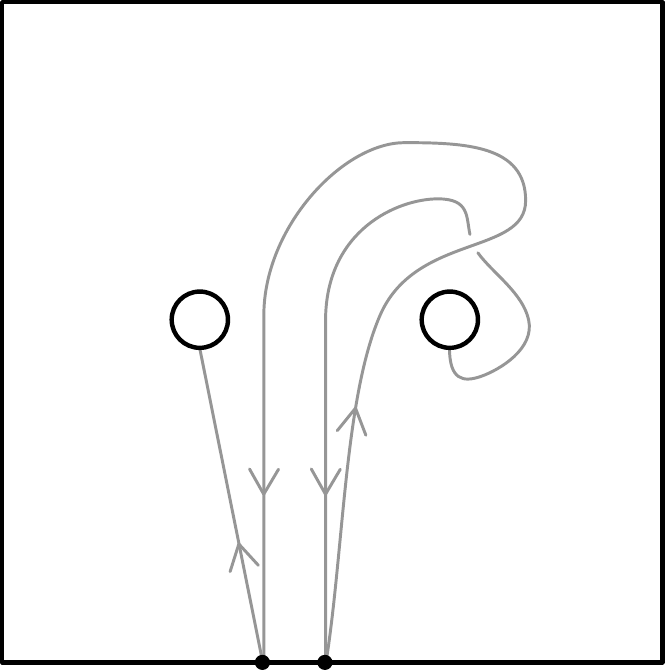}} \;\right) \\
& \qquad\qquad\qquad +(+1).(-1).(+1).\phi \left(\; \raisebox{-0.48\height}{\includegraphics[scale=0.3]{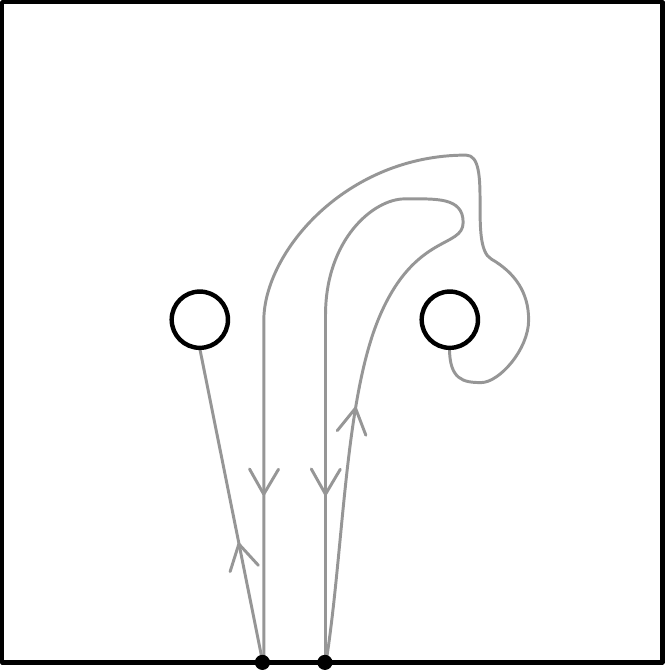}} \;\right) \\
&= \phi(\sigma^{-1} \alpha \beta^{-1}) - \phi(\alpha \beta^{-1}) \\
&= u^{-1}ab^{-1} - ab^{-1} \\
&= (u^{-1}-1)ab^{-1}.
\end{align*}

The other 16 entries of the matrices $M_a$ and $M_b$ may be computed analogously.
\end{proof}

\begin{notation}
To shorten the notation in the following, we will use the abbreviation
\[
A \coloneqq H_2^{BM}(\mathcal{C}_{2}(\Sigma),\mathcal{C}_{2}(\Sigma,\partial^-(\Sigma)) ;\Z[\Heis]) = H_2^{BM}(\mathcal{C}_{2}(\Sigma),\partial^-;\Z[\Heis]).
\]
\end{notation}

\begin{remark}[\emph{Verifying the braid relation.}]
Recall that $\mathfrak{M}(\Sigma_{1,1})$ is generated by $T_a$ and $T_b$ subject to the single relation $T_a T_b T_a = T_b T_a T_b$. It must therefore be the case that the isomorphism
\[\begin{tikzcd}
A_{(T_a T_b T_a)^{-1}_\Heis} \ar[rr," (\mathcal{T}_a)_{(T_a T_b)_\Heis^{-1}}"] && A _{(T_a T_b)_\Heis^{-1} }\ar[rr," (\mathcal{T}_b)_{(T_a)_\Heis^{-1}}"] && A_{(T_a)_\Heis^{-1}} \ar[r,"\mathcal{T}_a"] & A
\end{tikzcd}\]
is equal to the isomorphism
\[\begin{tikzcd}
A_{(T_b T_a T_b)^{-1}_\Heis} \ar[rr," (\mathcal{T}_b)_{(T_b T_a)_\Heis^{-1}}"] && A _{(T_b T_a)_\Heis^{-1} }\ar[rr," (\mathcal{T}_a)_{(T_b)_\Heis^{-1}}"] && A_{(T_b)_\Heis^{-1}} \ar[r,"\mathcal{T}_b"] & A
\end{tikzcd}\]
in other words, using Lemma \ref{lem:matrix}, we must have the following equality of matrices:
\begin{equation}
\label{eq:braid-relation}
M_a . (T_a)_\Heis (M_b) . (T_a T_b)_\Heis (M_a) = M_b . (T_b)_\Heis (M_a) . (T_b T_a)_\Heis (M_b),
\end{equation}
where $M_a$ and $M_b$ are as in Theorem \ref{thm:MaMb} and the automorphisms $(T_a)_\Heis , (T_b)_\Heis \in \mathrm{Aut}(\Heis)$ are extended linearly to automorphisms of $\Z[\Heis]$ and thus to automorphisms of matrices over $\Z[\Heis]$. Indeed, one may calculate that both sides of \eqref{eq:braid-relation} are equal to
\begin{equation}
\label{eq:action-of-aba}
\left(\begin{matrix}
0 & u^2 a^{2} b^{-2} & 0 \\
-u^{-1} & 1 + (u^{-3}-u^{-2})a - u^{-5}a^{2} & (1-u^{-1})(1+u^{-3}a) \\
0 & -ab^{-1} - u^{-1}a^{2}b^{-1} & u^{-1}ab^{-1}
\end{matrix}\right) .
\end{equation}
\end{remark}

\begin{remark}[\emph{The Dehn twist around the boundary.}]
\label{rmk:Dehn-twist-boundary}
In a similar way, we may compute the matrix $M_\partial$ for the action $\mathcal{T}_\partial$ of the Dehn twist $T_\partial$ around the boundary of $\Sigma_{1,1}$. We note that $T_\partial$ lies in the Chillingworth subgroup of $\mathfrak{M}(\Sigma_{1,1})$, so its action on $\Heis$ is trivial and the action $\mathcal{T}_\partial$ is an \emph{automorphism}
\[
\mathcal{T}_\partial \colon A \longrightarrow A.
\]
However, to compute its matrix $M_\partial$, it is convenient to decompose $\mathcal{T}_\partial$ into isomorphisms as follows. By the $2$-chain relation \cite[Proposition~4.12]{FarbMargalit}, the Dehn twist $T_\partial$ decomposes as $T_\partial = (T_a T_b)^6$. If we write $s = T_a T_b T_a = T_b T_a T_b$, this becomes $T_\partial = s^4$. Then $\mathcal{T}_\partial$ decomposes as
\[\begin{tikzcd}
A=A_{s^{-4}_\Heis} \ar[rr,"(\mathcal{T}_{s})_{s^{-3}_\Heis}"] && A_{s^{-3}_\Heis} \ar[rr,"(\mathcal{T}_{s})_{s^{-2}_\Heis}"] && A_{s^{-2}_\Heis} \ar[rr," (\mathcal{T}_s)_{s_\Heis^{-1}}"] && A_{s_\Heis^{-1}  } \ar[r,"\mathcal{T}_s"] & A
\end{tikzcd}\]
where $\mathcal{T}_s$ denotes the action of $s$, given by the matrix \eqref{eq:action-of-aba} above. The matrix $M_\partial$ may therefore be obtained by multiplying together four copies of \eqref{eq:action-of-aba}, shifted by the actions of $\id$, $s_\Heis$, $s^2_\Heis$ and $s^3_\Heis$ respectively. This may be implemented in Sage to show that $M_\partial$ is equal to the matrix displayed in Figure \ref{fig:bigmatrix}. More details of these Sage calculations are given in \hyperref[appendixC]{Appendix C}.

One may verify explicitly by hand that, if we set $a=b=u^2=1$ in the matrix $M_\partial = (\text{Figure \ref{fig:bigmatrix}})$, it simplifies to the identity matrix. This is expected, since applying this specialisation to our representation recovers the second Moriyama representation (as discussed in \S\ref{relation-to-Moriyama}; see in particular the quotient \eqref{eq:Heisenberg-to-Moriyama} of $\mathfrak{M}(\Sigma)$-representations), whose kernel is the Johnson kernel $\mathfrak{J}(2)$ by \cite{Moriyama}, which contains $T_\partial$.
\end{remark}

\begin{figure}[t]
\centering
\begin{adjustwidth}{-2.2cm}{-2.2cm}
{\small
\[
\left(
\begin{matrix}
  \begin{smallmatrix}
   u^{-8} b^{ -2 } + u^{-4} a^{ 2 } -u a^{ 2 }b^{ -2 } +\\
   ( u^{-1} - u^{-2} )a^{ 2 }b^{ -1 }+
   ( u^{-3} - u^{-4} )ab^{ -2 } +\\
   ( u^{-4} - u^{-5} )ab^{ -1 }
  \end{smallmatrix}
&
  \begin{smallmatrix}
  (u^2 + 1 - 2u^{-1} + u^{-2} + u^{-4})a^2b^{-2}-ua^2b^{-4}+\\
  (-u^2 + u + u^{-1} - u^{-2})a^2b^{-3}-u^{-3}a^2+\\
  (-1 + u^{-1} + u^{-3} - u^{-4})a^2b^{-1}
  \end{smallmatrix}
&
  \begin{smallmatrix}
  (-1 + 2u^{-1} - u^{-2} - u^{-4} + u^{-5})a^2b^{-1}+\\
  (u - 1)a^2b^{-3}+(u^2 - u - u^{-1} + 2u^{-2} - u^{-3})a^2b^{-2}+\\
  (-u^{-3} + u^{-4})ab^{-1}+(u^{-4} - u^{-5})ab^{-3}+\\
  (-u^{-2} + u^{-3} + u^{-5} - u^{-6})ab^{-2}+\\
  (-u^{-3} + u^{-4})a^2
  \end{smallmatrix}
\\[10ex]
  \begin{smallmatrix}
  -u^{-1} - u^{-3} + 2 u^{-4} - u^{-5} - u^{-7} + u^{-2} a^{ -2 }+ \\
  ( u^{-1} - u^{-2} - u^{-4} + u^{-5} )a^{ -1 } + u^{-6} a^{ 2 }+\\
   ( u^{-3} - u^{-4} - u^{-6} + u^{-7} )a
  \end{smallmatrix}
&
  \begin{smallmatrix}
  (1 + u^{-2} - u^{-3} + u^{-6})+u^{-6}a^2b^{-2}-u^{-1}b^{-2}+\\
  (u^{-3} - u^{-4})ab^{-2}+(-1 + u^{-1} + u^{-3} - u^{-4})b^{-1}+\\
  (u^{-2} - 2u^{-3} + u^{-4} + u^{-6} - u^{-7})ab^{-1}-u^{-5}a^2+\\
  (-u^{-2} + u^{-3} + u^{-5} - u^{-6})a+(u^{-5} - u^{-6})a^2b^{-1}
  \end{smallmatrix}
&
  \begin{smallmatrix}
  (-u^{-6} + u^{-7})a^2b^{-1}+\\
  (u^{-1} - u^{-2} - u^{-4} + 2u^{-5} - u^{-6})b^{-1}+\\
  (-u^{-3} + 2u^{-4} - u^{-5} - u^{-7} + u^{-8})ab^{-1}+\\
  1 - u^{-1} + u^{-2} - 3u^{-3} + 2u^{-4} + u^{-6} - u^{-7}+\\
  (-u^{-2} + 2u^{-3} - u^{-4} + u^{-5} - 2u^{-6} + u^{-7})a+\\
  (u^{-2} - u^{-3})a^{-1}b^{-1}+(-1 + u^{-1} + u^{-3} - u^{-4})a^{-1}+\\
  (-u^{-5} + u^{-6})a^2
  \end{smallmatrix}
\\[10ex]
  \begin{smallmatrix}
  -u^{-6} a^{ -1 }b^{ -1 } +( -u^{-3} +  u^{-4}   - u^{-7} )b^{ -1 } -u^{-4} +\\
   ( u^{-1} - u^{-4} + u^{-5} )ab^{ -1 } + u^{-2} a^{ 2 }b^{ -1 }+ \\
   ( -u^{-3} + u^{-6} )a + u^{-5} a^{ 2 }
  \end{smallmatrix}
&
  \begin{smallmatrix}
  (-1 - u^{-2} + 2u^{-3} - u^{-6})ab^{-1}+u^{-1}ab^{-3}+\\
  u^{-2}a^2b^{-3} +(1 - u^{-1} - u^{-3} + u^{-4})ab^{-2}+\\
  (u^{-1 }- u^{-2} + u^{-5})a^2b^{-2}+\\
  (-u^{-1} + u^{-4} - u^{-5})a^2b^{-1}+(u^{-2} - u^{-5})a-u^{-4}a^2
  \end{smallmatrix}
&
  \begin{smallmatrix}
  u^{-3}+(u^{-2}- u^{-3} - u^{-5} + u^{-6})a +\\
  (-u^{-1} + u^{-2} - u^{-5} + u^{-6})ab^{-2} +
  (-u^{-2} + u^{-3})a^2b^{-2}+\\
  (-1 + u^{-1} + 2u^{-3} - 3u^{-4} + u^{-7})ab^{-1} +\\
  (-u^{-1} + u^{-2} - u^{-5} + u^{-6})a^2b^{-1}+(-u^{-4 }+ u^{-5})b^{-2}+\\
  (u^{-2} - u^{-3} - u^{-5} + u^{-6})b^{-1}+(-u^{-4} + u^{-5})a^2
  \end{smallmatrix}
\end{matrix}
\right)
\]
}
\end{adjustwidth}
\caption{The action of the Dehn twist around the boundary of $\Sigma_{1,1}$.}
\label{fig:bigmatrix}
\end{figure}

\subsection{Higher genus}

\begin{figure}[t]
\centering
\includegraphics[scale=0.4]{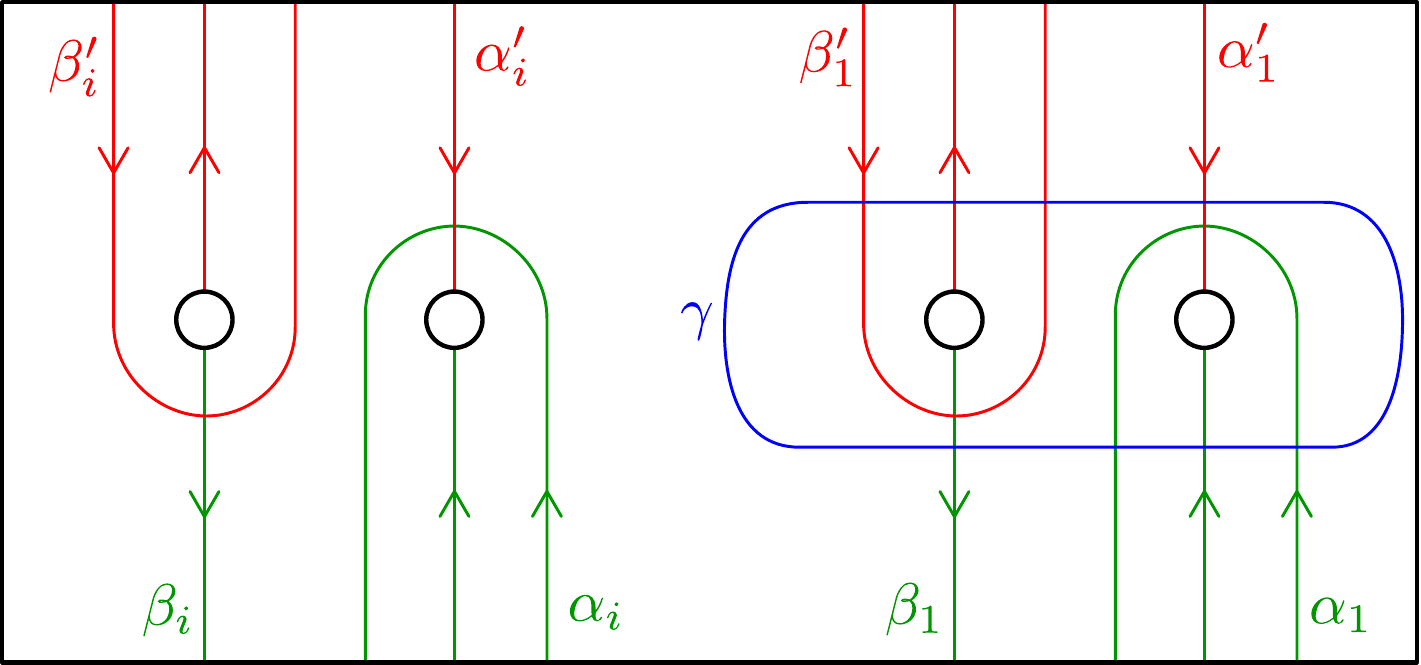}
\caption{The arcs $\alpha_i,\beta_i,\alpha'_i,\beta'_i$ and the closed genus-one-separating curve $\gamma$.}
\label{fig:generators-higher-genus}
\end{figure}

For arbitrary genus $g \geq 1$, we view the surface $\Sigma = \Sigma_{g,1}$ as the quotient of the punctured rectangle depicted in Figure \ref{fig:generators-higher-genus}, where the $2g$ holes are identified in pairs by reflection. The arcs $\alpha_i,\beta_i$ for $i \in \{1,\ldots,g\}$ form a symplectic basis for the first homology of $\Sigma$ relative to the lower edge of the rectangle. By Theorem \ref{basis}, a basis for the free $\Z[\Heis]$-module $H_2^{BM}(\mathcal{C}_2(\Sigma),\mathcal{C}_2(\Sigma,\partial^-(\Sigma));\Z[\Heis])$ is given by the homology classes represented by the $2$-cycles
\begin{itemize}
\item $w(\epsilon)$ for $\epsilon \in \{\alpha_1,\beta_1,\alpha_2,\beta_2,\ldots,\alpha_g,\beta_g\}$,
\item $v(\delta,\epsilon)$ for $\delta,\epsilon \in \{\alpha_1,\beta_1,\alpha_2,\beta_2,\ldots,\alpha_g,\beta_g\}$ with $\delta < \epsilon$
\end{itemize}
where we use the ordering $\alpha_1 < \beta_1 < \alpha_2 < \cdots < \alpha_g < \beta_g$. Here $w(\epsilon)$ denotes the subspace of configurations where both points lie on $\epsilon$ and $v(\delta,\epsilon)$ denotes the subspace of configurations where one point lies on each of $\delta$ and $\epsilon$. As in the genus-$1$ setting, we have to be more careful to specify these elements precisely; this is done by choosing, for each of the $2$-cycles listed above, a path (called a ``tether'') in $\mathcal{C}_2(\Sigma)$ from a point in the cycle to $c_0$, the base configuration, which is contained in the bottom edge of the rectangle. Since the space of configurations of two points in the bottom edge of the rectangle is contractible, it is equivalent to choose a path in $\mathcal{C}_2(\Sigma)$ from a point in the cycle to \emph{any} configuration contained in the bottom edge of the rectangle.

For cycles of the form $w(\epsilon)$, we may choose tethers exactly as in the genus-$1$ setting: see the top-left and top-middle of Figure \ref{fig:tethers}. For cycles of the form $v(\alpha_i,\beta_i)$, we may also choose tethers exactly as in the genus-$1$ setting: see the top-right of Figure \ref{fig:tethers}. For other cycles of the form $v(\delta,\epsilon)$, we choose tethers as illustrated in Figure \ref{fig:tethers-higher-genus}.

\begin{figure}[t]
\centering
\includegraphics[scale=0.3]{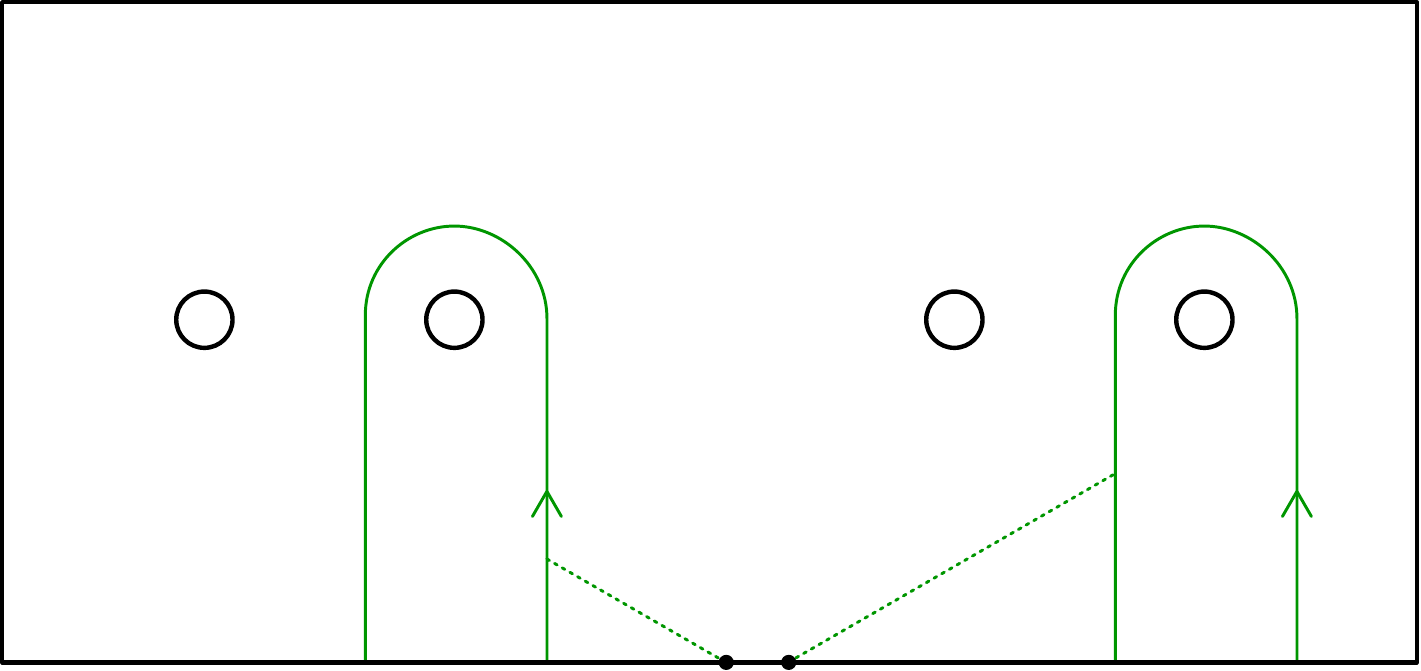}
\includegraphics[scale=0.3]{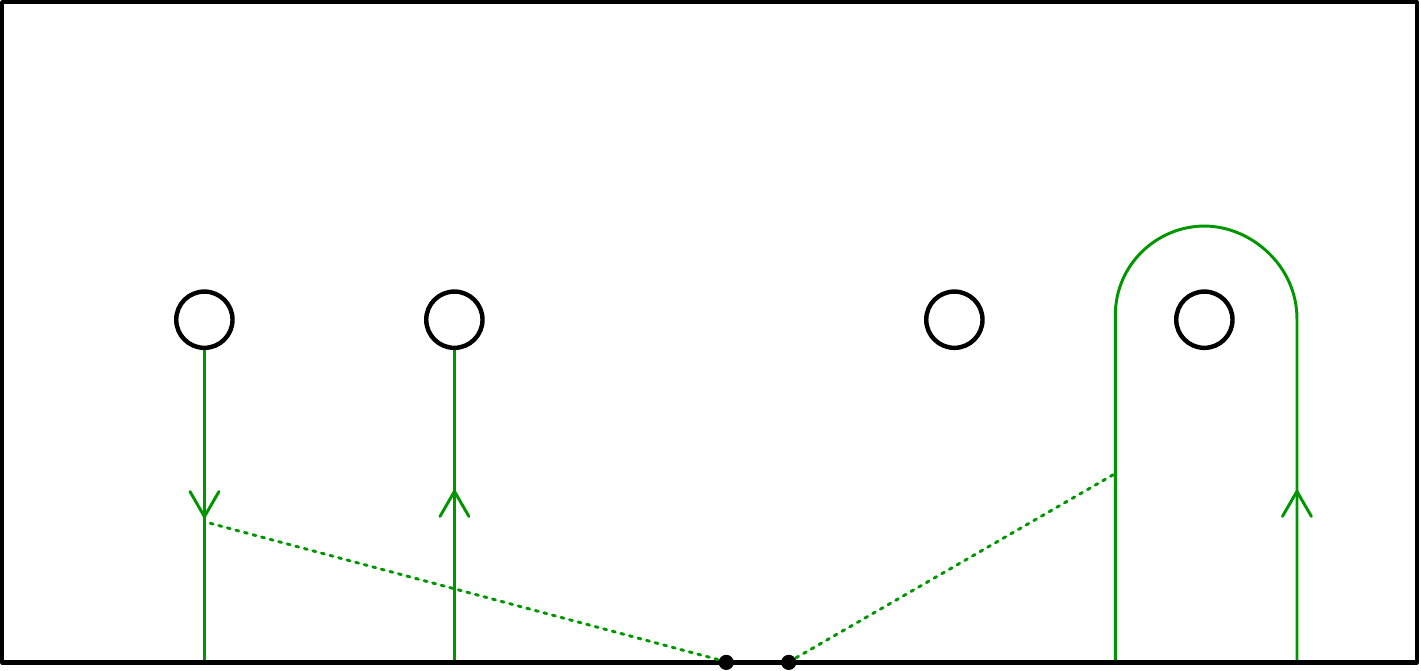}
\\
\includegraphics[scale=0.3]{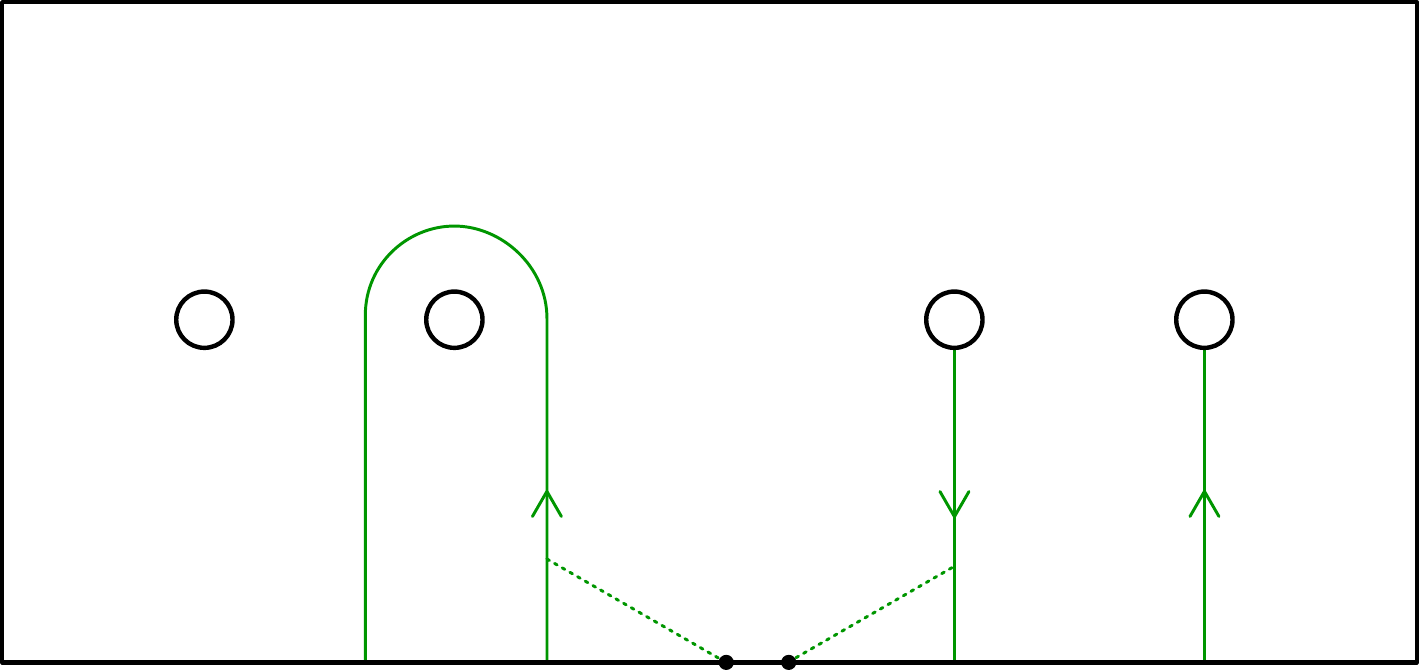}
\includegraphics[scale=0.3]{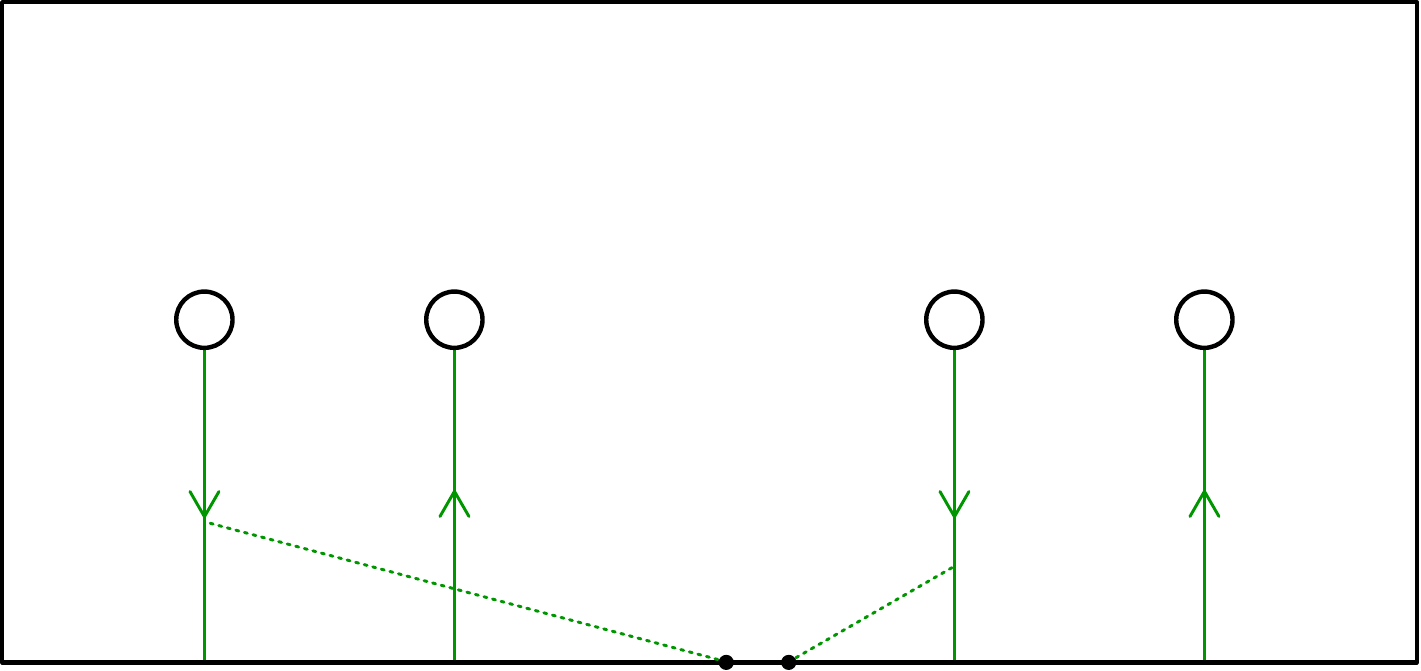}
\caption{More tethers.}
\label{fig:tethers-higher-genus}
\end{figure}

Exactly as in the genus-$1$ setting, there is a non-degenerate pairing \eqref{eq:pairing} defined via Poincar{\'e} duality for the $4$-manifold-with-boundary $\mathcal{C}_2(\Sigma)$. Associated to the collection of arcs $\alpha'_i,\beta'_i$ illustrated in Figure \ref{fig:generators-higher-genus} there are elements of $H_2(\mathcal{C}_2(\Sigma),\partial^+;\Z[\Heis])$:
\begin{itemize}
\item $\overline{w}(\epsilon)$ for $\epsilon \in \{\alpha'_1,\beta'_1,\alpha'_2,\beta'_2,\ldots,\alpha'_g,\beta'_g\}$,
\item $v(\delta,\epsilon)$ for $\delta,\epsilon \in \{\alpha'_1,\beta'_1,\alpha'_2,\beta'_2,\ldots,\alpha'_g,\beta'_g\}$ with $\delta < \epsilon$
\end{itemize}
where we use the ordering $\alpha'_1 < \beta'_1 < \alpha'_2 < \cdots < \alpha'_g < \beta'_g$. Here, $\overline{w}(\epsilon)$ is the subspace of configurations where one point lies on each of $\epsilon^+$ and $\epsilon^-$, where $\epsilon^+,\epsilon^-$ are two parallel, disjoint copies of $\epsilon$. As above, we specify these elements precisely by choosing tethers (paths in $\mathcal{C}_2(\Sigma)$ from a point on the cycle to a configurations contained in the bottom edge of the rectangle). For elements of the form $\overline{w}(\epsilon)$ or $v(\alpha'_i,\beta'_i)$, we choose these exactly as in the genus-$1$ setting; see the bottom row of Figure \ref{fig:tethers}. For other elements of the form $v(\delta,\epsilon)$, we choose them as illustrated in Figure \ref{fig:tethers-higher-genus-prime}.

\begin{figure}[t]
\centering
\includegraphics[scale=0.3]{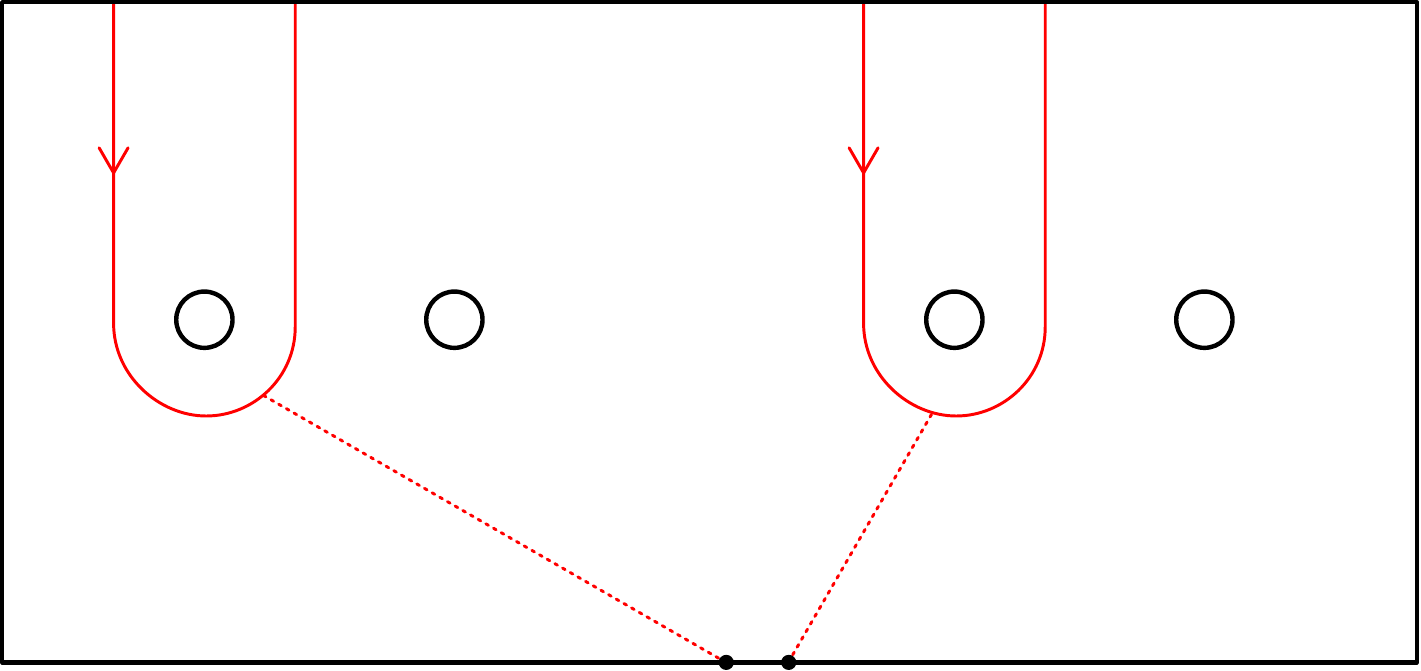}
\includegraphics[scale=0.3]{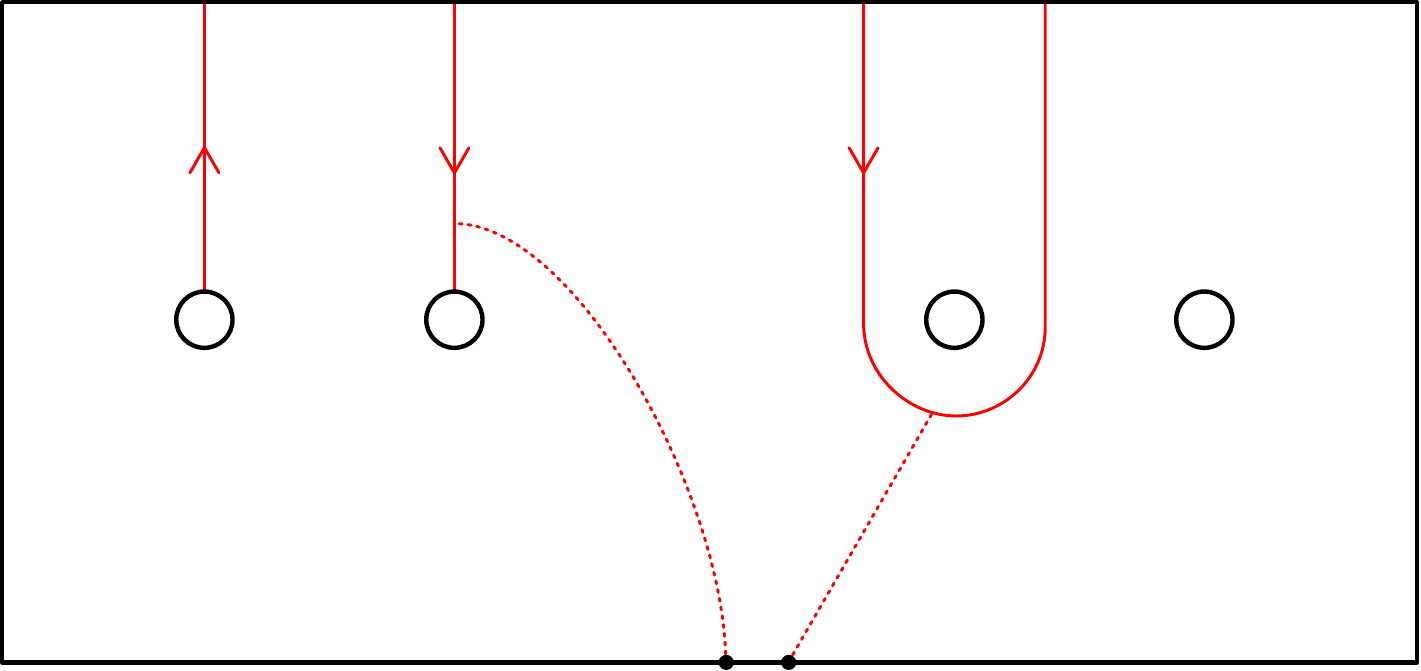}
\\
\includegraphics[scale=0.3]{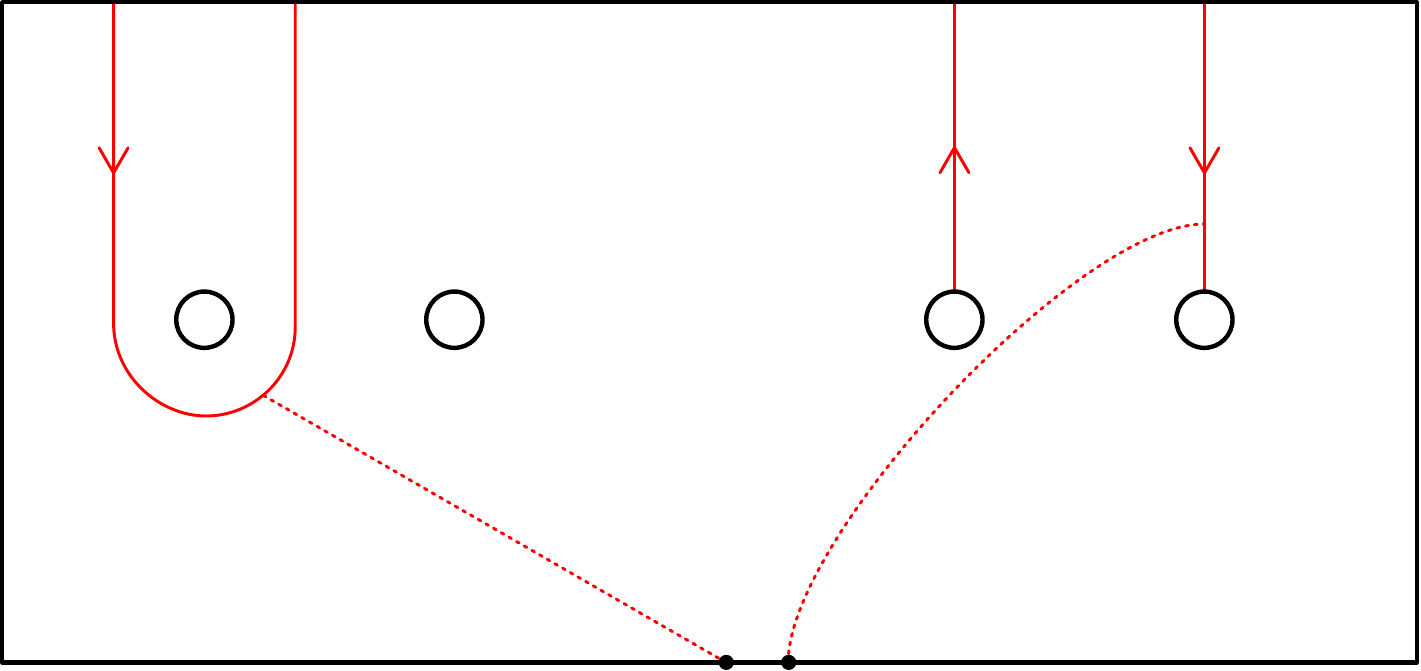}
\includegraphics[scale=0.3]{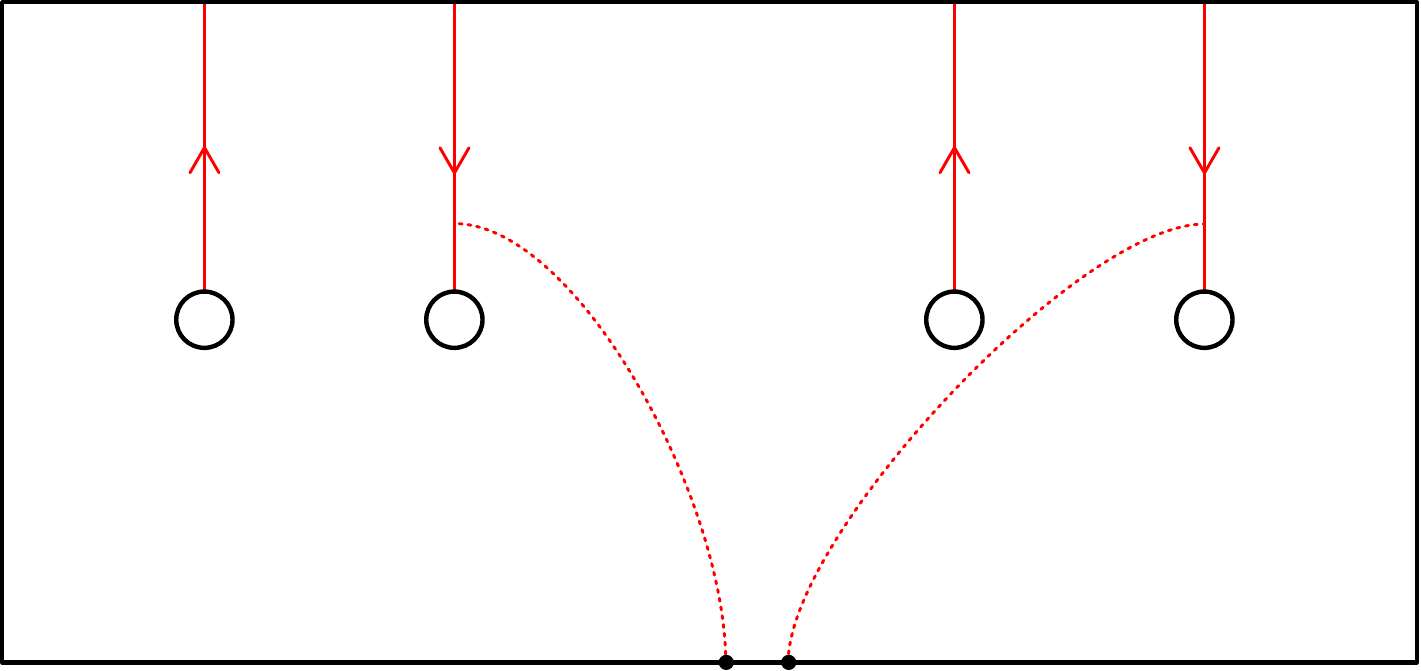}
\caption{Even more tethers.}
\label{fig:tethers-higher-genus-prime}
\end{figure}

\begin{remark}
These choices of tethers may seem a little arbitrary, and indeed they are; however, any different choice would have the effect simply of changing the chosen basis for the Heisenberg homology $H_2^{BM}(\mathcal{C}_2(\Sigma),\partial^-;\Z[\Heis])$ by rescaling each basis vector by a unit of $\Z[\Heis]$. This would have the effect of conjugating the matrices that we calculate by an invertible diagonal matrix.
\end{remark}

The geometric formula \eqref{eq:pairing-formula} for the non-degenerate pairing $\langle - \mathbin{,} - \rangle$ holds exactly as in the genus-$1$ setting, and one may easily verify using this formula that the bases
\begin{align}
\begin{split}
\label{eq:ordered-bases}
\mathcal{B} &= \{ w(\epsilon) , v(\delta,\epsilon) \mid \delta < \epsilon \in \{ \alpha_1,\ldots,\beta_g \} \} \\
\mathcal{B}' &= \{ \overline{w}(\epsilon) , v(\delta,\epsilon) \mid \delta < \epsilon \in \{ \alpha'_1,\ldots,\beta'_g \} \}
\end{split}
\end{align}
for $H_2^{BM}(\mathcal{C}_2(\Sigma),\partial^-;\Z[\Heis])$ and for $H_2(\mathcal{C}_2(\Sigma),\partial^+;\Z[\Heis])$ respectively are dual with respect to this pairing. Choose a total ordering of $\mathcal{B}$ as follows:
\begin{itemize}
\item $w(\alpha_1)$, $w(\beta_1)$, $v(\alpha_1,\beta_1)$,
\item $v(\alpha_1,\epsilon)$ for $\epsilon = \alpha_2,\beta_2,\ldots,\alpha_g,\beta_g$,
\item $v(\beta_1,\epsilon)$ for $\epsilon = \alpha_2,\beta_2,\ldots,\alpha_g,\beta_g$,
\item followed by all other basis elements in any order,
\end{itemize}
and similarly for $\mathcal{B}'$. Denote by $\gamma$ the genus-$1$ separating curve in $\Sigma$ pictured in Figure \ref{fig:generators-higher-genus}.

\begin{theorem}
\label{thm_calculation}
With respect to the ordered bases \eqref{eq:ordered-bases}, the matrix for the automorphism $\mathcal{T}_\gamma = \mathcal{C}_2(T_\gamma)_*$ of $H_2^{BM}(\mathcal{C}_{2}(\Sigma),\partial^-;\Z[\Heis])$ is given in block form as
\begin{equation}
\label{eq:action-of-gamma}
M_\gamma = 
\left(\begin{matrix}
\Lambda & 0 & 0 & 0 \\
0 & p.I & r.I & 0 \\
0 & q.I & s.I & 0 \\
0 & 0 & 0 & I
\end{matrix}\right) ,
\end{equation}
where $\Lambda$ is the $3 \times 3$ matrix depicted in Figure \ref{fig:bigmatrix}, the middle two columns and rows each have width/height $2g-2$ and the Heisenberg polynomials $p,q,r,s \in \Z[\Heis]$ are:
\begin{itemize}
\item $p = -ab^{-1}+u^{-2}b^{-1}+u^{-2}a$,
\item $q = 1-a^{-1}+u^{-2}-u^{-2}a$,
\item $r = a(-b^{-1}+b^{-2}+u^{-2}-u^{-2}b^{-1})$,
\item $s = 1-b^{-1}+u^{-2}+u^{-2}ab^{-1}-u^{-2}a$,
\end{itemize}
where we are abbreviating the elements $a_1,b_1 \in \Heis$ as $a,b$ respectively.
\end{theorem}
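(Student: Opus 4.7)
The strategy rests on the locality of $T_\gamma$: it is supported in an annular collar of the separating curve $\gamma$, which bounds a genus-one subsurface $\Sigma' \cong \Sigma_{1,1}$ on which $T_\gamma$ restricts to the boundary Dehn twist. Since $\gamma$ is separating (and in fact $T_\gamma$ lies in the Johnson kernel), $T_\gamma$ lies in the Chillingworth subgroup, so by Proposition \ref{kernel_of_Psi} the induced action $(T_\gamma)_\Heis$ is trivial and $\mathcal{T}_\gamma$ is a genuine $\mathbb{Z}[\Heis]$-linear automorphism. I will verify the block structure by computing, via the non-degenerate pairing \eqref{eq:pairing}--\eqref{eq:pairing-formula}, the coefficients $\langle \mathcal{T}_\gamma(e_i), e'_j \rangle$ for $e_i \in \mathcal{B}$ and $e'_j \in \mathcal{B}'$, and showing that most of them vanish by disjointness of the supporting arcs.

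For the bottom-right identity block, the basis elements $w(\epsilon)$ and $v(\delta,\epsilon)$ with $\delta,\epsilon \in \{\alpha_2,\ldots,\beta_g\}$, together with their tethers, admit representatives entirely disjoint from the support of $T_\gamma$, and are therefore fixed. For the top-left $3\times 3$ block, the supporting arcs of $w(\alpha_1), w(\beta_1), v(\alpha_1,\beta_1)$ lie in a neighbourhood of $\Sigma'$; applying $T_\gamma$ keeps the image cycles supported in $\Sigma'$, so any pairing with a dual class built from $\alpha'_i,\beta'_i$ for $i \geq 2$ vanishes. The remaining nine pairings are formally identical to those computed in $M_\partial$ in Remark \ref{rmk:Dehn-twist-boundary} (the $\phi$-values of the relevant loops agree under the inclusion $\Heis(\Sigma_{1,1}) \hookrightarrow \Heis$ sending $u,a,b$ to $u,a_1,b_1$), so this block coincides with the matrix $\Lambda$ of Figure \ref{fig:bigmatrix}.

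The middle block requires more care. For each $\epsilon \in \{\alpha_2,\ldots,\beta_g\}$, the cycle $v(\alpha_1,\epsilon)$ has one point in $\Sigma'$ and one outside; under $\mathcal{T}_\gamma$ only the first moves, yielding $v(T_\gamma(\alpha_1),\epsilon)$. Pairing this against a dual class $v(\alpha'_1,\epsilon'')$ or $v(\beta'_1,\epsilon'')$ via \eqref{eq:pairing-formula}: since $T_\gamma(\alpha_1) \subset \Sigma'$ while $\epsilon$ and $\epsilon''$ lie outside $\Sigma'$, the only contributing intersection configurations $\{p_1,p_2\} \in \mathcal{C}_2(\Sigma)$ are those with $p_1 \in T_\gamma(\alpha_1) \cap \alpha'_1$ (resp.\ $\beta'_1$) and $p_2 \in \epsilon \cap \epsilon''$; this forces $\epsilon''$ to be the arc dual to $\epsilon$. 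Moreover each loop $\ell_{(p_1,p_2)}$ in \eqref{eq:pairing-formula} splits, up to homotopy in $\mathcal{C}_2(\Sigma)$, as an ``inside'' loop based in $\Sigma'$ and an ``outside'' loop traversing the tethers, and a direct check using the tether conventions of Figures \ref{fig:tethers-higher-genus} and \ref{fig:tethers-higher-genus-prime} shows that the outside piece contributes trivially in $\Heis$. This both explains the block-diagonal shape and shows that the inside contribution is independent of $\epsilon$, yielding the claimed scalar blocks times $I_{2g-2}$; the same argument applied to $v(\beta_1,\epsilon)$ gives the other column of the $2\times 2$ scalar block.

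The four scalars $p, q, r, s$ are then explicit pairings in a collar of $\gamma$ inside $\Sigma'$, namely between $T_\gamma(\alpha_1), T_\gamma(\beta_1)$ and $\alpha'_1, \beta'_1$. They can be read off by drawing $T_\gamma(\alpha_1)$ and $T_\gamma(\beta_1)$ as arcs with one extra wind around $\gamma$ (viewed as a $1$-cycle representing $[\tilde a_1, \tilde b_1] = u^2$ in $\Heis$), enumerating their signed intersection points with $\alpha'_1$ and $\beta'_1$, and evaluating $\phi$ on the resulting loops using the presentation of Proposition \ref{p:presentation}, in the same vein as the genus-one calculations in the proof of Theorem \ref{thm:MaMb}. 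The main obstacle is precisely this last bookkeeping of the powers of $u$ arising from the non-commutative relations of $\Heis$ when reducing loop words to normal form; once carried out, the stated formulas for $p, q, r, s$ follow directly.
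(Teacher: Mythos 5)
Your proposal is correct and follows essentially the same approach as the paper: the same block decomposition by support relative to the genus-one subsurface cut off by $\gamma$, the same disjointness/locality arguments for the identity block and the top-left $3\times 3$ block, and the same key observation that the second configuration point (travelling via $\epsilon$ and its dual) contributes trivially to the intersection pairing, making the middle-block scalars independent of $\epsilon$. The one difference is that the paper actually carries out the intersection-point bookkeeping for one of the four scalars ($s$) to illustrate the method, whereas you defer all four as "bookkeeping"; to make the argument self-contained you would want to execute at least one of these computations along the lines of the genus-one computations in Theorem~\ref{thm:MaMb}.
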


\begin{proofnobox}
As in the proof of Theorem \ref{thm:MaMb}, this reduces to computing $\langle \mathcal{T}_\gamma(e_i) , e'_j \rangle$ as $e_i$ and $e'_j$ run through the ordered bases \eqref{eq:ordered-bases}.

First note that the basis elements come in three types: those entirely supported in the genus-$1$ subsurface containing $\gamma$ (the first three elements), those supported partially in this subsurface and partially in the complementary genus-$(g-1)$ subsurface (the next $4g-4$ elements) and those supported entirely outside of the genus-$1$ subsurface (the remaining elements). The Dehn twist $T_\gamma$ does not mix these two complementary subsurfaces, so $M_\gamma$ is a block matrix with respect to this partition.

The top-left $3 \times 3$ matrix involves only the basis elements $w(\alpha_1)$, $w(\beta_1)$, $v(\alpha_1,\beta_1)$ and their duals, and so the calculation of this submatrix is identical to the calculation in genus $1$, which is given by the matrix in Figure \ref{fig:bigmatrix}.

The bottom-right submatrix involves only basis elements supported outside of the genus-$1$ subsurface containing $\gamma$, so the effect of $\mathcal{T}_\gamma$ is the identity on these elements.

It remains to calculate the middle $(4g-4) \times (4g-4)$ submatrix, which records the effect of $\mathcal{T}_\gamma$ on $v(\alpha_1,\epsilon)$ and $v(\beta_1,\epsilon)$ for $\epsilon \in \{\alpha_2,\ldots,\beta_g\}$. Since $\epsilon \cap \gamma = \varnothing$, we must have
\begin{align*}
\mathcal{T}_\gamma(v(\alpha_1,\epsilon)) &= p_\epsilon . v(\alpha_1,\epsilon) + q_\epsilon . v(\beta_1,\epsilon) \\
\mathcal{T}_\gamma(v(\beta_1,\epsilon)) &= r_\epsilon . v(\alpha_1,\epsilon) + s_\epsilon . v(\beta_1,\epsilon)
\end{align*}
for some $p_\epsilon,q_\epsilon,r_\epsilon,s_\epsilon \in \Z[\Heis]$. Precisely, we have
\begin{align*}
p_\epsilon &= \big\langle v(T_\gamma(\alpha_1),\epsilon) , v(\alpha'_1,\epsilon') \big\rangle & q_\epsilon &= \big\langle v(T_\gamma(\alpha_1),\epsilon) , v(\beta'_1,\epsilon') \big\rangle \\
r_\epsilon &= \big\langle v(T_\gamma(\beta_1),\epsilon) , v(\alpha'_1,\epsilon') \big\rangle & s_\epsilon &= \big\langle v(T_\gamma(\beta_1),\epsilon) , v(\beta'_1,\epsilon') \big\rangle ,
\end{align*}
where $\epsilon'$ denotes the dual of $\epsilon$, and we have again used the fact that $\epsilon \cap \gamma = \varnothing$ to rewrite $\mathcal{T}_\gamma(v(\alpha_1,\epsilon)) = v(T_\gamma(\alpha_1) , T_\gamma(\epsilon)) = v(T_\gamma(\alpha_1) , \epsilon)$ and similarly for $\mathcal{T}_\gamma(v(\alpha_1,\epsilon))$. From these formulas and \eqref{eq:pairing-formula} it is clear that $p_\epsilon,q_\epsilon,r_\epsilon,s_\epsilon$ do not in fact depend on $\epsilon$. Indeed, when computing these values of the non-degenerate pairing, we may ignore one of the two configuration points (the one that starts on the left in the base configuration and which travels via the arcs $\epsilon$ and $\epsilon'$), since it contributes neither to the signs nor to the loops $\ell_p$ in the formula \eqref{eq:pairing-formula}. We will compute $s_\epsilon = s$, leaving the computation of the other three polynomials as exercises for the reader. In the following computations, as mentioned above, we ignore one of the two configuration points, since it does not contribute anything non-trivial to the formula \eqref{eq:pairing-formula}.
\begin{align*}
s &= \langle v(T_\gamma(\beta_1),\epsilon) , v(\beta'_1,\epsilon') \rangle \\
&= \phantom{+} \raisebox{-0.5\height}{\includegraphics[scale=0.4]{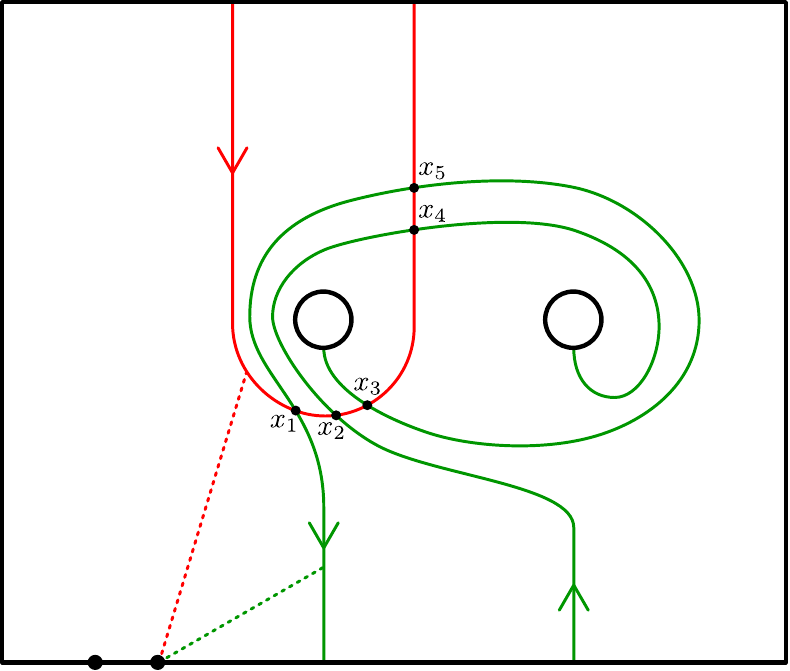}} \;\; (5 \text{ intersection points: } x_1,\ldots,x_5) \\
&= \phantom{+} \phi \left(\; \raisebox{-0.48\height}{\includegraphics[scale=0.3]{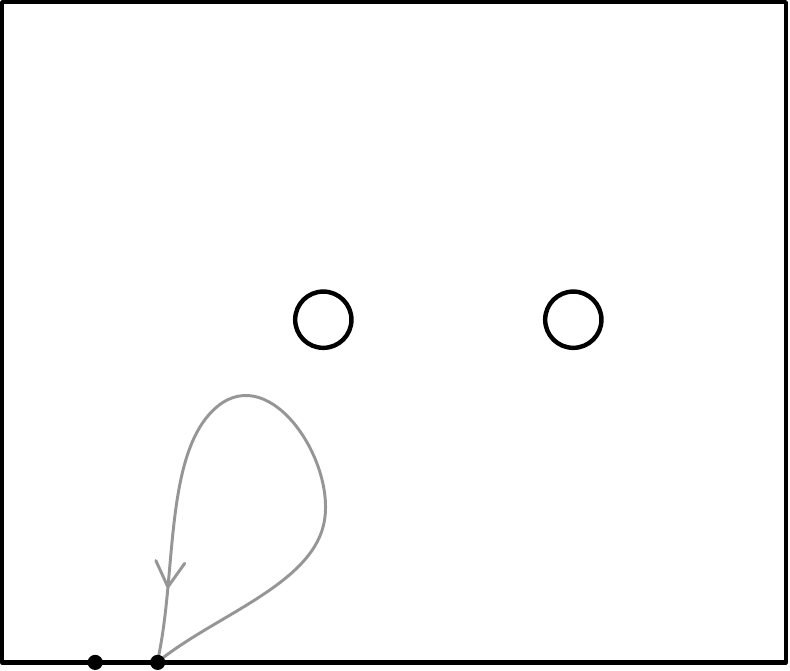}} \;\right) - \phi \left(\; \raisebox{-0.48\height}{\includegraphics[scale=0.3]{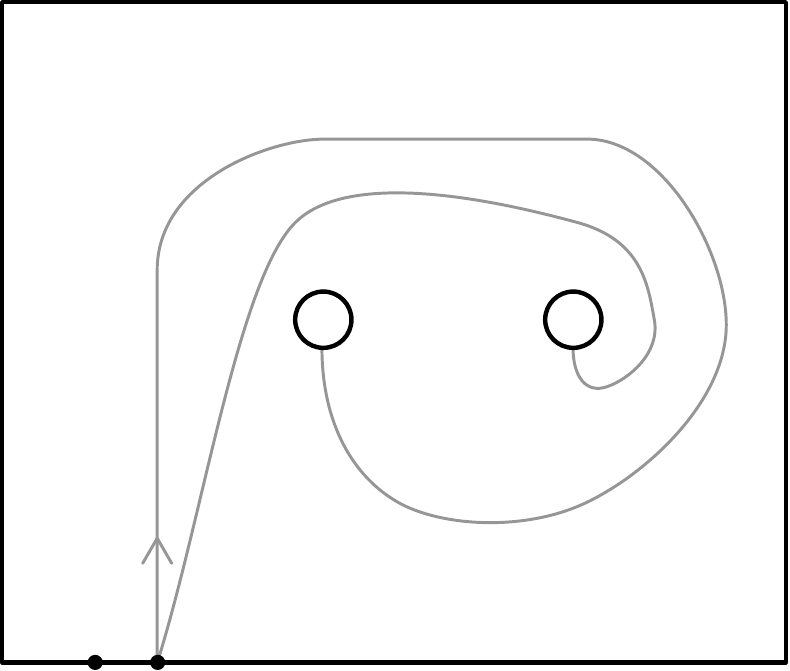}} \;\right) \\
&\phantom{=} + \phi \left(\; \raisebox{-0.48\height}{\includegraphics[scale=0.3]{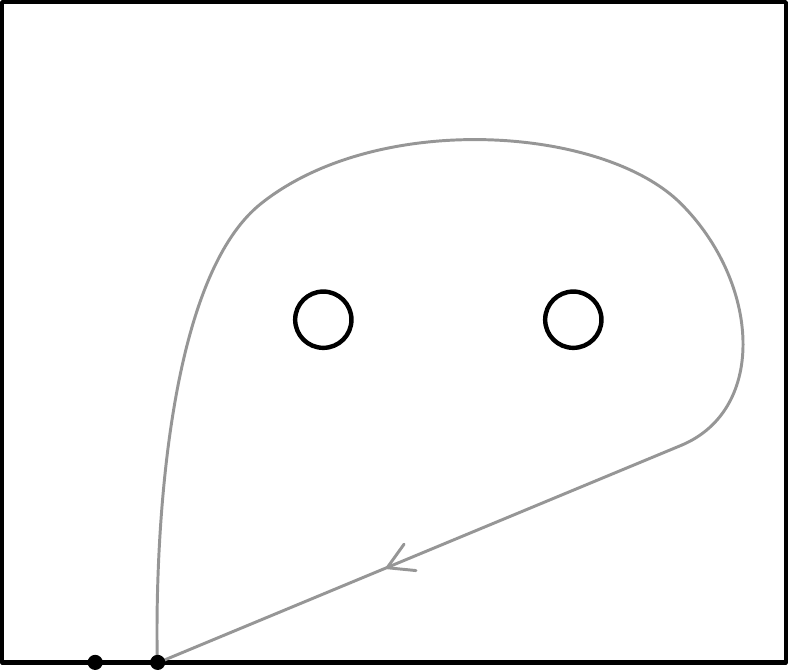}} \;\right) + \phi \left(\; \raisebox{-0.48\height}{\includegraphics[scale=0.3]{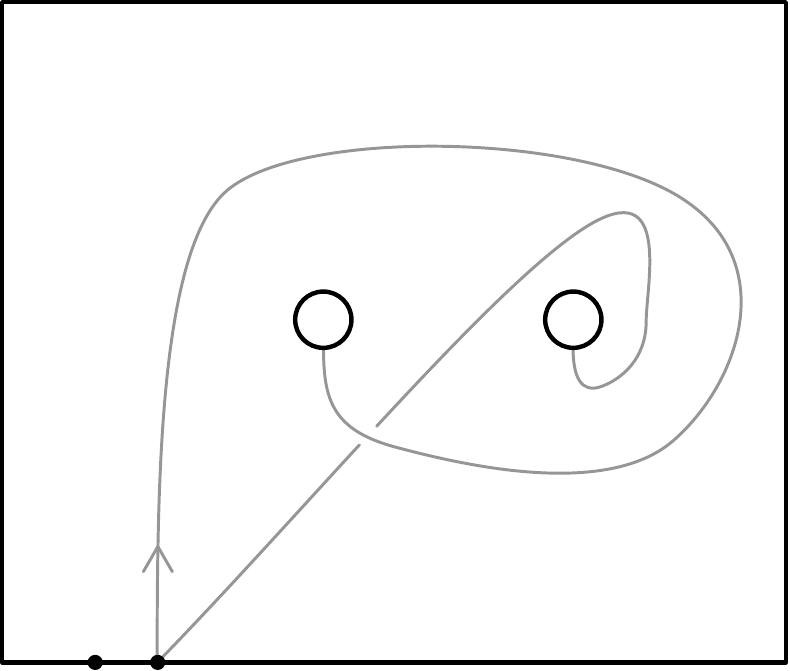}} \;\right) \\
&\phantom{=} - \phi \left(\; \raisebox{-0.48\height}{\includegraphics[scale=0.3]{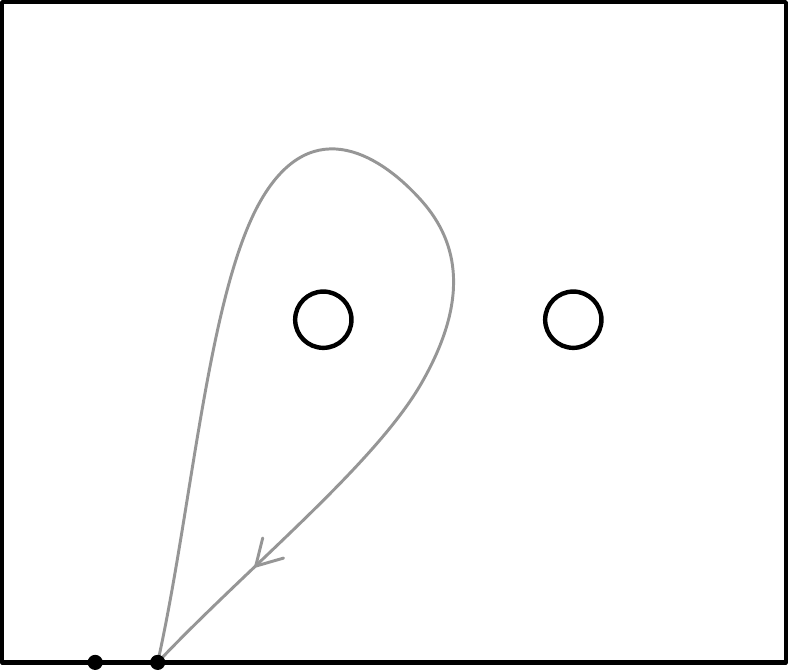}} \;\right) \\
&= \phi(\phantom{-}) - \phi(\sigma^{-1} \beta \alpha^{-1} \beta^{-1} \alpha \beta^{-1} \alpha^{-1} \beta \alpha \beta^{-1} \sigma) + \phi(\sigma^{-1} \alpha^{-1} \beta \alpha \beta^{-1} \sigma) \\
&\qquad + \phi(\sigma^{-1} \alpha \beta^{-1} \alpha^{-1} \beta \alpha \beta^{-1} \sigma) - \phi(\sigma^{-1} \beta \alpha \beta^{-1} \sigma) \\
&= 1 - b^{-1} + u^{-2} + u^{-2}ab^{-1} - u^{-2}a. \hspace{5.5cm} \rule{0.5em}{0.5em}
\end{align*}
\end{proofnobox}

\section*{Appendix A: signs in the intersection pairing formula}
\phantomsection
\addcontentsline{toc}{section}{Appendix A: signs in the intersection pairing formula}
\label{appendixA}

\begin{figure}[t]
\centering
\includegraphics[scale=0.5]{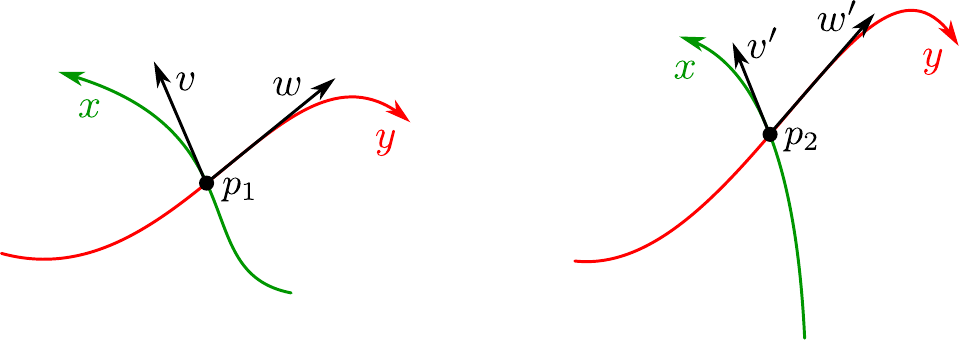}
\caption{Choices of tangent vectors from the computation of the sign of the intersection of $x$ and $y$ at $p = \{p_1,p_2\} \in \mathcal{C}_2(\Sigma)$.}
\label{fig:signs-pairing}
\end{figure}

Here we explain the signs appearing in the formula \eqref{eq:pairing-formula} for the intersection pairing on the homology of $2$-point configuration spaces, including the extra global $-1$ sign that was suppressed in \eqref{eq:pairing-formula} (see the comment in the paragraph below the formula).

We take the viewpoint that an orientation $o$ of a $d$-dimensional smooth manifold $M$ is given by a consistent choice of vector $o(p) \in \Lambda^d T_p M$ for all $p \in M$. We either choose a metric on the bundle $\Lambda^d TM$ and require $o(p)$ to be a unit vector with respect to this metric, or we consider $o(p)$ up to rescaling by positive real numbers.

Let us fix an orientation $o_\Sigma$ for the surface $\Sigma$. This determines an orientation $o_{\mathcal{C}_2(\Sigma)}$ of the configuration space $\mathcal{C}_2(\Sigma)$ by setting
\[
o_{\mathcal{C}_2(\Sigma)}(\{p_1,p_2\}) = o_\Sigma(p_1) \wedge o_\Sigma(p_2).
\]
Recall that we have $2$-dimensional submanifolds $x$ and $y$ of $\mathcal{C}_2(\Sigma)$ that intersect transversely, and let $p = \{p_1,p_2\}$ be a point of $x \cap y$. Let $v,w$ be the tangent vectors at $p_1$ and let $v',w'$ be the tangent vectors at $p_2$ illustrated in Figure \ref{fig:signs-pairing}. We have
\begin{align*}
v \wedge w &= \mathrm{sgn}(p_1).o_\Sigma(p_1) \\
v' \wedge w' &= \mathrm{sgn}(p_2).o_\Sigma(p_2),
\end{align*}
where $\mathrm{sgn}(p_i)$ is the sign of the intersection of the arcs in $\Sigma$ underlying $x$ and $y$ at $p_i$. Similarly, we have
\[
o_x(p) \wedge o_y(p) = \mathrm{sgn}(p).o_{\mathcal{C}_2(\Sigma)}(p),
\]
where $\mathrm{sgn}(p)$ is the sign that we are trying to compute: the sign of the intersection of $x$ and $y$ in the configuration space. The orientations of $x$ and $y$ depend on the tethers $t_x$, $t_y$ that have been chosen. Precisely, we have
\begin{align*}
o_x(p) &= \left\lbrace \begin{matrix} v \wedge v' & (*) \\ v' \wedge v & (\dagger) \end{matrix} \right\rbrace & o_y(p) &= \left\lbrace \begin{matrix} w \wedge w' & (*) \\ w' \wedge w & (\dagger) \end{matrix} \right\rbrace ,
\end{align*}
where the possibilities $((*),(*))$ or $((\dagger),(\dagger))$ occur if $\mathrm{sgn}(\ell_p) = +1$ and the possibilities $((*),(\dagger))$ or $((\dagger),(*))$ occur if $\mathrm{sgn}(\ell_p) = -1$. We therefore have
\[
o_x(p) \wedge o_y(p) = \mathrm{sgn}(\ell_p).(v \wedge v') \wedge (w \wedge w').
\]
Putting this together with the formulas above, we obtain
\begin{align*}
(v \wedge w) \wedge (v' \wedge w') &= \mathrm{sgn}(p_1).\mathrm{sgn}(p_2).o_\Sigma(p_1) \wedge o_\Sigma(p_2) \\
&= \mathrm{sgn}(p_1).\mathrm{sgn}(p_2).o_{\mathcal{C}_2(\Sigma)}(p) \\
&= \mathrm{sgn}(p_1).\mathrm{sgn}(p_2).\mathrm{sgn}(p).o_x(p) \wedge o_y(p) \\
&= \mathrm{sgn}(p_1).\mathrm{sgn}(p_2).\mathrm{sgn}(p).\mathrm{sgn}(\ell_p).(v \wedge v') \wedge (w \wedge w') \\
&= -\mathrm{sgn}(p_1).\mathrm{sgn}(p_2).\mathrm{sgn}(p).\mathrm{sgn}(\ell_p).(v \wedge w) \wedge (v' \wedge w'),
\end{align*}
and hence we have
\[
\mathrm{sgn}(p) = -\mathrm{sgn}(p_1).\mathrm{sgn}(p_2).\mathrm{sgn}(\ell_p).
\]

\section*{Appendix B: Universal coefficient spectral sequence arguments}
\phantomsection
\addcontentsline{toc}{section}{Appendix B: Universal coefficient spectral sequence arguments}
\label{appendixB}

As noted in Remark \ref{rmk:alternative-argument}, the general case of Theorem \ref{basis} (both for Borel--Moore homology and for compactly-supported cohomology) may be deduced from the result for Borel--Moore homology for a specific choice of $V$. The purpose of this appendix is to explain precisely how this may be done, using some slightly delicate universal coefficient spectral sequence arguments that are abstracted in Lemma \ref{lem:universal-coefficients} below, and which are similar to the argument of \cite[Appendix~A]{BigelowMartel}.

In this lemma, we interpret local coefficient systems on a space $X$ as actions of $\pi_1(X)$ on (bi)modules, or equivalently as (bi)modules over the group ring of $\pi_1(X)$.
The $\pi_1(X)$-action should be on the left for homology local coefficients and on the right for cohomology local coefficients. Thus each $(R[\pi_1(X)],S)$-module $V$ determines $(R,S)$-bimodules $H_k^{BM}(X;V)$ and each $(R,S[\pi_1(X)])$-module $W$ determines $(R,S)$-bimodules $H^k_{c}(X;W)$. In particular, we note that $H_k^{BM}(X;R[\pi_1(X)])$ has the structure of an $(R,R[\pi_1(X)])$-bimodule, since $V = R[\pi_1(X)]$ is both a left and right module over itself.

\begin{lemma}
\label{lem:universal-coefficients}
Let $X$ be a based, path-connected space admitting a universal cover. Suppose that there is a sequence of natural numbers $d_k$ such that, for any unital ring $R$ and each $k\geq 0$, there are isomorphisms
\begin{equation}
\label{eq:universal-coefficients-condition}
H_k^{BM}(X;R[\pi_1(X)]) \cong R[\pi_1(X)]^{\oplus d_k}
\end{equation}
of $(R,R[\pi_1(X)])$-bimodules. Then we have isomorphisms
\begin{equation}
\label{eq:universal-coefficients-conclusion}
H_k^{BM}(X;V) \cong V^{\oplus d_k} \qquad\text{and}\qquad H^k_c(X;W) \cong W^{\oplus d_k}
\end{equation}
of $(R,S)$-bimodules for any unital rings $R,S$, any $(R[\pi_1(X)],S)$-bimodule $V$ and $(R,S[\pi_1(X)])$-bimodule $W$. The same is true for Borel--Moore homology and compactly-supported cohomology relative to a subspace $A \subseteq X$.
\end{lemma}

\begin{proof}
Twisted Borel--Moore homology and twisted compactly-supported cohomology may both be computed using the complex of \emph{horizontally locally finite chains} $\mathcal{S}_*^{\hlf}(\widetilde{X};R)$, where $\pi \colon \widetilde{X} \to X$ is the universal covering of $X$. A $k$-dimensional horizontally locally finite chain is a formal sum $\sum_i \lambda_i s_i$ with $\lambda_i \in R$ and $s_i \colon \Delta^k \to \widetilde{X}$ such that each point $x \in X$ has an open neighbourhood $U$ such that $\pi^{-1}(U)$ intersects only finitely many of the $s_i(\Delta^k)$. Note that if $\pi_1(X)$ is finite (in other words $\pi \colon \widetilde{X} \to X$ is a finite-sheeted covering), this is the same as the complex of locally finite chains on $\widetilde{X}$, but when $\pi_1(X)$ is infinite it is a proper subcomplex. For any $(R[\pi_1(X)],S)$-bimodule $V$ and $(R,S[\pi_1(X)])$-bimodule $W$, we have $(R,S)$-bimodule isomorphisms:
\begin{align*}
H_*^{BM}(X;V) &\cong H_* \left( \mathcal{S}_*^{\hlf}(\widetilde{X};R) \otimes_{R[\pi_1(X)]} V \right) \\
H^*_c(X;W) &\cong H_* \left( \Hom_{S[\pi_1(X)]} \bigl( \mathcal{S}_*^{\hlf}(\widetilde{X};S) , W \bigr) \right) .
\end{align*}
Since $\pi_1(X)$ acts freely on $\widetilde{X}$, the chain complex $\mathcal{S}_*^{\hlf}(\widetilde{X};R)$ is free over $R[\pi_1(X)]$ in each degree. Applying the algebraic universal coefficient theorem (see \cite[Ch.~XVII]{CartanEilenberg1956} or \cite[Thm.~2.3]{Levine1977}), we get spectral sequences of $(R,S)$-bimodules:
\begin{align*}
E_{p,q}^2 = \mathrm{Tor}_q^{R[\pi_1(X)]} \left( H_p^{BM}(X;R[\pi_1(X)]) , V \right) \;&\Longrightarrow\; H_*^{BM}(X;V) \\
E^{p,q}_2 = \mathrm{Ext}^q_{S[\pi_1(X)]} \left( H_p^{BM}(X;S[\pi_1(X)]) , W \right) \;&\Longrightarrow\; H^*_c(X;W)
\end{align*}
Assumption \eqref{eq:universal-coefficients-condition} implies that $E^2_{p,q} = 0 = E_2^{p,q}$ for $q>0$, so these spectral sequences degenerate to isomorphisms of $(R,S)$-bimodules:
\begin{align*}
H_k^{BM}(X;V) &\cong R[\pi_1(X)]^{\oplus d_k} \otimes_{R[\pi_1(X)]} V \cong V^{\oplus d_k} \\
H^k_c(X;W) &\cong \Hom_{S[\pi_1(X)]} \bigl( S[\pi_1(X)]^{\oplus d_k} , W \bigr) \cong W^{\oplus d_k}.
\end{align*}
The analogous result for Borel--Moore homology and compactly-supported cohomology relative to a subspace $A \subseteq X$ follows by exactly the same argument if we first replace the chain complex $\mathcal{S}_*^{\hlf}(\widetilde{X};R)$ with its quotient by the subcomplex consisting of those horizontally locally finite chains $\sum_i \lambda_i s_i$ with $s_i(\Delta^k) \subseteq \pi^{-1}(A)$.
\end{proof}

\begin{proof}[Proof of Theorem \ref{basis} assuming that it holds in a special case]
Let us assume that the Borel--Moore homology version of Theorem \ref{basis} holds when $V = S = R[\pi_1(\mathcal{C}_n(\Sigma))]$.\footnote{Recall from Remark \ref{rmk:more-general-coefficients} that Theorem \ref{basis} for Borel--Moore homology is true (with the same proof) when $V$ is \emph{any} left representation of $\mathbb{B}_n(\Sigma) = \pi_1(\mathcal{C}_n(\Sigma))$, not necessarily factoring through the quotient onto the Heisenberg group $\Heis$.} Lemma \ref{lem:universal-coefficients} then immediately implies the general result for Borel--Moore homology (resp.\ for compactly-supported cohomology) with coefficients in any left (resp.\ right) representation $V$ of $\pi_1(\mathcal{C}_n(\Sigma))$ (in particular, for those that factor through the quotient onto the Heisenberg group $\Heis$).
\end{proof}

\section*{Appendix C: Sage computations}
\phantomsection
\addcontentsline{toc}{section}{Appendix C: Sage computations}
\label{appendixC}

Here we give the worksheet of the Sage computations used in the calculation of the matrix $M_\partial$ displayed in Figure \ref{fig:bigmatrix} (cf.~Remark \ref{rmk:Dehn-twist-boundary} on page \pageref{rmk:Dehn-twist-boundary}).

\vspace{1em}

    \begin{tcolorbox}[breakable, size=fbox, boxrule=1pt, pad at break*=1mm,colback=cellbackground, colframe=cellborder]
\prompt{In}{incolor}{1}{\boxspacing}
\begin{Verbatim}[commandchars=\\\{\}]
\PY{n}{load}\PY{p}{(}\PY{l+s+s2}{\PYZdq{}}\PY{l+s+s2}{HeisLatex\PYZus{}.sage}\PY{l+s+s2}{\PYZdq{}}\PY{p}{)} \PY{c+c1}{\PYZsh{}available on demand }
\end{Verbatim}
\end{tcolorbox}

    \begin{tcolorbox}[breakable, size=fbox, boxrule=1pt, pad at break*=1mm,colback=cellbackground, colframe=cellborder]
\prompt{In}{incolor}{2}{\boxspacing}
\begin{Verbatim}[commandchars=\\\{\}]
\PY{c+c1}{\PYZsh{} R is the centre of Heisenberg group ring}
\PY{n}{R}\PY{o}{.}\PY{o}{\PYZlt{}}\PY{n}{u}\PY{o}{\PYZgt{}}\PY{o}{=} \PY{n}{LaurentPolynomialRing}\PY{p}{(}\PY{n}{ZZ}\PY{p}{,}\PY{l+m+mi}{1}\PY{p}{)}
\end{Verbatim}
\end{tcolorbox}

    \begin{tcolorbox}[breakable, size=fbox, boxrule=1pt, pad at break*=1mm,colback=cellbackground, colframe=cellborder]
\prompt{In}{incolor}{3}{\boxspacing}
\begin{Verbatim}[commandchars=\\\{\}]
\PY{c+c1}{\PYZsh{} H is Heisenberg group ring}
\PY{n}{H} \PY{o}{=} \PY{n}{Heis}\PY{p}{(}\PY{n}{base}\PY{o}{=}\PY{n}{R}\PY{p}{,} \PY{n}{category}\PY{o}{=}\PY{n}{Rings}\PY{p}{(}\PY{p}{)}\PY{p}{)}
\end{Verbatim}
\end{tcolorbox}

    \begin{tcolorbox}[breakable, size=fbox, boxrule=1pt, pad at break*=1mm,colback=cellbackground, colframe=cellborder]
\prompt{In}{incolor}{4}{\boxspacing}
\begin{Verbatim}[commandchars=\\\{\}]
\PY{n}{a}\PY{o}{=}\PY{n}{H}\PY{p}{(}\PY{n+nb}{dict}\PY{p}{(}\PY{p}{\PYZob{}}\PY{p}{(}\PY{l+m+mi}{1}\PY{p}{,}\PY{l+m+mi}{0}\PY{p}{)}\PY{p}{:}\PY{l+m+mi}{1}\PY{p}{\PYZcb{}}\PY{p}{)}\PY{p}{)}   \PY{c+c1}{\PYZsh{}generator (0,a)}
\PY{n}{b}\PY{o}{=}\PY{n}{H}\PY{p}{(}\PY{n+nb}{dict}\PY{p}{(}\PY{p}{\PYZob{}}\PY{p}{(}\PY{l+m+mi}{0}\PY{p}{,}\PY{l+m+mi}{1}\PY{p}{)}\PY{p}{:}\PY{l+m+mi}{1}\PY{p}{\PYZcb{}}\PY{p}{)}\PY{p}{)}
\PY{n}{am}\PY{o}{=}\PY{n}{H}\PY{p}{(}\PY{n+nb}{dict}\PY{p}{(}\PY{p}{\PYZob{}}\PY{p}{(}\PY{o}{\PYZhy{}}\PY{l+m+mi}{1}\PY{p}{,}\PY{l+m+mi}{0}\PY{p}{)}\PY{p}{:}\PY{l+m+mi}{1}\PY{p}{\PYZcb{}}\PY{p}{)}\PY{p}{)} \PY{c+c1}{\PYZsh{}inverse generators}
\PY{n}{bm}\PY{o}{=}\PY{n}{H}\PY{p}{(}\PY{n+nb}{dict}\PY{p}{(}\PY{p}{\PYZob{}}\PY{p}{(}\PY{l+m+mi}{0}\PY{p}{,}\PY{o}{\PYZhy{}}\PY{l+m+mi}{1}\PY{p}{)}\PY{p}{:}\PY{l+m+mi}{1}\PY{p}{\PYZcb{}}\PY{p}{)}\PY{p}{)}
\end{Verbatim}
\end{tcolorbox}

    \begin{tcolorbox}[breakable, size=fbox, boxrule=1pt, pad at break*=1mm,colback=cellbackground, colframe=cellborder]
\prompt{In}{incolor}{5}{\boxspacing}
\begin{Verbatim}[commandchars=\\\{\}]
\PY{n}{a}\PY{o}{*}\PY{n}{b}\PY{o}{\PYZhy{}}\PY{n}{u}\PY{o}{\PYZca{}}\PY{l+m+mi}{2}\PY{o}{*}\PY{n}{b}\PY{o}{*}\PY{n}{a} \PY{c+c1}{\PYZsh{}check relation}
\end{Verbatim}
\end{tcolorbox}

            \begin{tcolorbox}[breakable, size=fbox, boxrule=.5pt, pad at break*=1mm, opacityfill=0]
\prompt{Out}{outcolor}{5}{\boxspacing}
\begin{Verbatim}[commandchars=\\\{\}]
O
\end{Verbatim}
\end{tcolorbox}
        
    \begin{tcolorbox}[breakable, size=fbox, boxrule=1pt, pad at break*=1mm,colback=cellbackground, colframe=cellborder]
\prompt{In}{incolor}{6}{\boxspacing}
\begin{Verbatim}[commandchars=\\\{\}]
\PY{c+c1}{\PYZsh{} a\PYZhy{}\PYZgt{}a , b \PYZhy{}\PYZgt{} ba\PYZca{}\PYZhy{}1 (T\PYZus{}a action on H)}
\PY{k}{def} \PY{n+nf}{Ha}\PY{p}{(}\PY{n}{h}\PY{p}{:}\PY{n}{HeisEl}\PY{p}{)}\PY{p}{:}
    \PY{n}{d0}\PY{o}{=}\PY{n}{h}\PY{o}{.}\PY{n}{d}
    \PY{n}{h1}\PY{o}{=}\PY{n}{H}\PY{p}{(}\PY{p}{)}
    \PY{k}{for} \PY{n}{k} \PY{o+ow}{in} \PY{n}{d0}\PY{p}{:}
        \PY{n}{i}\PY{o}{=}\PY{n}{k}\PY{p}{[}\PY{l+m+mi}{0}\PY{p}{]}
        \PY{n}{j}\PY{o}{=}\PY{n}{k}\PY{p}{[}\PY{l+m+mi}{1}\PY{p}{]}
        \PY{n}{h1}\PY{o}{+}\PY{o}{=} \PY{n}{H}\PY{p}{(}\PY{p}{\PYZob{}}\PY{p}{(}\PY{n}{i}\PY{o}{\PYZhy{}}\PY{n}{j}\PY{p}{,}\PY{n}{j}\PY{p}{)}\PY{p}{:}\PY{n}{d0}\PY{p}{[}\PY{n}{k}\PY{p}{]}\PY{o}{*}\PY{n}{u}\PY{o}{\PYZca{}}\PY{p}{(}\PY{n}{j}\PY{o}{*}\PY{p}{(}\PY{n}{j}\PY{o}{+}\PY{l+m+mi}{1}\PY{p}{)}\PY{p}{)}\PY{p}{\PYZcb{}}\PY{p}{)}   
    \PY{k}{return} \PY{n}{h1}
\PY{k}{def} \PY{n+nf}{MHa}\PY{p}{(}\PY{n}{M}\PY{p}{)}\PY{p}{:} \PY{c+c1}{\PYZsh{} same on matrices}
    \PY{n}{M1}\PY{o}{=}\PY{n}{matrix}\PY{p}{(}\PY{n}{H}\PY{p}{,}\PY{l+m+mi}{3}\PY{p}{)}
    \PY{k}{for} \PY{n}{i} \PY{o+ow}{in} \PY{n+nb}{range}\PY{p}{(}\PY{l+m+mi}{3}\PY{p}{)}\PY{p}{:}
        \PY{k}{for} \PY{n}{j} \PY{o+ow}{in} \PY{n+nb}{range}\PY{p}{(}\PY{l+m+mi}{3}\PY{p}{)}\PY{p}{:}
            \PY{n}{M1}\PY{p}{[}\PY{n}{i}\PY{p}{,}\PY{n}{j}\PY{p}{]}\PY{o}{=}\PY{n}{Ha}\PY{p}{(}\PY{n}{M}\PY{p}{[}\PY{n}{i}\PY{p}{,}\PY{n}{j}\PY{p}{]}\PY{p}{)}
    \PY{k}{return} \PY{n}{M1}
\end{Verbatim}
\end{tcolorbox}

    \begin{tcolorbox}[breakable, size=fbox, boxrule=1pt, pad at break*=1mm,colback=cellbackground, colframe=cellborder]
\prompt{In}{incolor}{7}{\boxspacing}
\begin{Verbatim}[commandchars=\\\{\}]
\PY{c+c1}{\PYZsh{} a\PYZhy{}\PYZgt{}a , b \PYZhy{}\PYZgt{} ba (T\PYZus{}a\PYZca{}\PYZhy{}1 action on H)}
\PY{k}{def} \PY{n+nf}{Ham}\PY{p}{(}\PY{n}{h}\PY{p}{:}\PY{n}{HeisEl}\PY{p}{)}\PY{p}{:}
    \PY{n}{d0}\PY{o}{=}\PY{n}{h}\PY{o}{.}\PY{n}{d}
    \PY{n}{h1}\PY{o}{=}\PY{n}{H}\PY{p}{(}\PY{p}{)}
    \PY{k}{for} \PY{n}{k} \PY{o+ow}{in} \PY{n}{d0}\PY{p}{:}
        \PY{n}{i}\PY{o}{=}\PY{n}{k}\PY{p}{[}\PY{l+m+mi}{0}\PY{p}{]}
        \PY{n}{j}\PY{o}{=}\PY{n}{k}\PY{p}{[}\PY{l+m+mi}{1}\PY{p}{]}
        \PY{n}{h1}\PY{o}{+}\PY{o}{=} \PY{n}{H}\PY{p}{(}\PY{p}{\PYZob{}}\PY{p}{(}\PY{n}{i}\PY{o}{+}\PY{n}{j}\PY{p}{,}\PY{n}{j}\PY{p}{)}\PY{p}{:}\PY{n}{d0}\PY{p}{[}\PY{n}{k}\PY{p}{]}\PY{o}{*}\PY{n}{u}\PY{o}{\PYZca{}}\PY{p}{(}\PY{o}{\PYZhy{}}\PY{n}{j}\PY{o}{*}\PY{p}{(}\PY{n}{j}\PY{o}{+}\PY{l+m+mi}{1}\PY{p}{)}\PY{p}{)}\PY{p}{\PYZcb{}}\PY{p}{)}   
    \PY{k}{return} \PY{n}{h1}
\PY{k}{def} \PY{n+nf}{MHam}\PY{p}{(}\PY{n}{M}\PY{p}{)}\PY{p}{:} \PY{c+c1}{\PYZsh{} same on matrices}
    \PY{n}{M1}\PY{o}{=}\PY{n}{matrix}\PY{p}{(}\PY{n}{H}\PY{p}{,}\PY{l+m+mi}{3}\PY{p}{)}
    \PY{k}{for} \PY{n}{i} \PY{o+ow}{in} \PY{n+nb}{range}\PY{p}{(}\PY{l+m+mi}{3}\PY{p}{)}\PY{p}{:}
        \PY{k}{for} \PY{n}{j} \PY{o+ow}{in} \PY{n+nb}{range}\PY{p}{(}\PY{l+m+mi}{3}\PY{p}{)}\PY{p}{:}
            \PY{n}{M1}\PY{p}{[}\PY{n}{i}\PY{p}{,}\PY{n}{j}\PY{p}{]}\PY{o}{=}\PY{n}{Ham}\PY{p}{(}\PY{n}{M}\PY{p}{[}\PY{n}{i}\PY{p}{,}\PY{n}{j}\PY{p}{]}\PY{p}{)}
    \PY{k}{return} \PY{n}{M1}
\end{Verbatim}
\end{tcolorbox}

    \begin{tcolorbox}[breakable, size=fbox, boxrule=1pt, pad at break*=1mm,colback=cellbackground, colframe=cellborder]
\prompt{In}{incolor}{8}{\boxspacing}
\begin{Verbatim}[commandchars=\\\{\}]
\PY{n}{Ha}\PY{p}{(}\PY{n}{b}\PY{p}{)}
\end{Verbatim}
\end{tcolorbox}

            \begin{tcolorbox}[breakable, size=fbox, boxrule=.5pt, pad at break*=1mm, opacityfill=0]
\prompt{Out}{outcolor}{8}{\boxspacing}
\begin{Verbatim}[commandchars=\\\{\}]
(u\^{}2)a\^{}-1b\^{}1
\end{Verbatim}
\end{tcolorbox}
        
    \begin{tcolorbox}[breakable, size=fbox, boxrule=1pt, pad at break*=1mm,colback=cellbackground, colframe=cellborder]
\prompt{In}{incolor}{9}{\boxspacing}
\begin{Verbatim}[commandchars=\\\{\}]
\PY{n}{Ham}\PY{p}{(}\PY{n}{Ha}\PY{p}{(}\PY{n}{am}\PY{o}{\PYZca{}}\PY{l+m+mi}{2}\PY{o}{*}\PY{n}{b}\PY{o}{\PYZca{}}\PY{l+m+mi}{3}\PY{p}{)}\PY{p}{)}
\end{Verbatim}
\end{tcolorbox}

            \begin{tcolorbox}[breakable, size=fbox, boxrule=.5pt, pad at break*=1mm, opacityfill=0]
\prompt{Out}{outcolor}{9}{\boxspacing}
\begin{Verbatim}[commandchars=\\\{\}]
(1)a\^{}-2b\^{}3
\end{Verbatim}
\end{tcolorbox}
        
    \begin{tcolorbox}[breakable, size=fbox, boxrule=1pt, pad at break*=1mm,colback=cellbackground, colframe=cellborder]
\prompt{In}{incolor}{10}{\boxspacing}
\begin{Verbatim}[commandchars=\\\{\}]
\PY{c+c1}{\PYZsh{} a\PYZhy{}\PYZgt{}ba , b \PYZhy{}\PYZgt{} b (T\PYZus{}b action on H)}
\PY{k}{def} \PY{n+nf}{Hb}\PY{p}{(}\PY{n}{h}\PY{p}{:}\PY{n}{HeisEl}\PY{p}{)}\PY{p}{:} 
    \PY{n}{d0}\PY{o}{=}\PY{n}{h}\PY{o}{.}\PY{n}{d}
    \PY{n}{h1}\PY{o}{=}\PY{n}{H}\PY{p}{(}\PY{p}{)}
    \PY{k}{for} \PY{n}{k} \PY{o+ow}{in} \PY{n}{d0}\PY{p}{:}
        \PY{n}{i}\PY{o}{=}\PY{n}{k}\PY{p}{[}\PY{l+m+mi}{0}\PY{p}{]}
        \PY{n}{j}\PY{o}{=}\PY{n}{k}\PY{p}{[}\PY{l+m+mi}{1}\PY{p}{]}
        \PY{n}{h1}\PY{o}{+}\PY{o}{=} \PY{n}{H}\PY{p}{(}\PY{p}{\PYZob{}}\PY{p}{(}\PY{n}{i}\PY{p}{,}\PY{n}{i}\PY{o}{+}\PY{n}{j}\PY{p}{)}\PY{p}{:}\PY{n}{d0}\PY{p}{[}\PY{n}{k}\PY{p}{]}\PY{o}{*}\PY{n}{u}\PY{o}{\PYZca{}}\PY{p}{(}\PY{o}{\PYZhy{}}\PY{n}{i}\PY{o}{*}\PY{p}{(}\PY{n}{i}\PY{o}{+}\PY{l+m+mi}{1}\PY{p}{)}\PY{p}{)}\PY{p}{\PYZcb{}}\PY{p}{)}   
    \PY{k}{return} \PY{n}{h1}
\PY{k}{def} \PY{n+nf}{MHb}\PY{p}{(}\PY{n}{M}\PY{p}{)}\PY{p}{:} \PY{c+c1}{\PYZsh{} same on matrices}
    \PY{n}{M1}\PY{o}{=}\PY{n}{matrix}\PY{p}{(}\PY{n}{H}\PY{p}{,}\PY{l+m+mi}{3}\PY{p}{)}
    \PY{k}{for} \PY{n}{i} \PY{o+ow}{in} \PY{n+nb}{range}\PY{p}{(}\PY{l+m+mi}{3}\PY{p}{)}\PY{p}{:}
        \PY{k}{for} \PY{n}{j} \PY{o+ow}{in} \PY{n+nb}{range}\PY{p}{(}\PY{l+m+mi}{3}\PY{p}{)}\PY{p}{:}
            \PY{n}{M1}\PY{p}{[}\PY{n}{i}\PY{p}{,}\PY{n}{j}\PY{p}{]}\PY{o}{=}\PY{n}{Hb}\PY{p}{(}\PY{n}{M}\PY{p}{[}\PY{n}{i}\PY{p}{,}\PY{n}{j}\PY{p}{]}\PY{p}{)}
    \PY{k}{return} \PY{n}{M1}
\end{Verbatim}
\end{tcolorbox}

    \begin{tcolorbox}[breakable, size=fbox, boxrule=1pt, pad at break*=1mm,colback=cellbackground, colframe=cellborder]
\prompt{In}{incolor}{11}{\boxspacing}
\begin{Verbatim}[commandchars=\\\{\}]
\PY{k}{def} \PY{n+nf}{Hbm}\PY{p}{(}\PY{n}{h}\PY{p}{:}\PY{n}{HeisEl}\PY{p}{)}\PY{p}{:} \PY{c+c1}{\PYZsh{}T\PYZus{}b\PYZca{}\PYZhy{}1 action on H}
    \PY{n}{d0}\PY{o}{=}\PY{n}{h}\PY{o}{.}\PY{n}{d}
    \PY{n}{h1}\PY{o}{=}\PY{n}{H}\PY{p}{(}\PY{p}{)}
    \PY{k}{for} \PY{n}{k} \PY{o+ow}{in} \PY{n}{d0}\PY{p}{:}
        \PY{n}{i}\PY{o}{=}\PY{n}{k}\PY{p}{[}\PY{l+m+mi}{0}\PY{p}{]}
        \PY{n}{j}\PY{o}{=}\PY{n}{k}\PY{p}{[}\PY{l+m+mi}{1}\PY{p}{]}
        \PY{n}{h1}\PY{o}{+}\PY{o}{=} \PY{n}{H}\PY{p}{(}\PY{p}{\PYZob{}}\PY{p}{(}\PY{n}{i}\PY{p}{,}\PY{o}{\PYZhy{}}\PY{n}{i}\PY{o}{+}\PY{n}{j}\PY{p}{)}\PY{p}{:}\PY{n}{d0}\PY{p}{[}\PY{n}{k}\PY{p}{]}\PY{o}{*}\PY{n}{u}\PY{o}{\PYZca{}}\PY{p}{(}\PY{n}{i}\PY{o}{*}\PY{p}{(}\PY{n}{i}\PY{o}{+}\PY{l+m+mi}{1}\PY{p}{)}\PY{p}{)}\PY{p}{\PYZcb{}}\PY{p}{)}   
    \PY{k}{return} \PY{n}{h1}
\PY{k}{def} \PY{n+nf}{MHbm}\PY{p}{(}\PY{n}{M}\PY{p}{)}\PY{p}{:} \PY{c+c1}{\PYZsh{} same on matrices}
    \PY{n}{M1}\PY{o}{=}\PY{n}{matrix}\PY{p}{(}\PY{n}{H}\PY{p}{,}\PY{l+m+mi}{3}\PY{p}{)}
    \PY{k}{for} \PY{n}{i} \PY{o+ow}{in} \PY{n+nb}{range}\PY{p}{(}\PY{l+m+mi}{3}\PY{p}{)}\PY{p}{:}
        \PY{k}{for} \PY{n}{j} \PY{o+ow}{in} \PY{n+nb}{range}\PY{p}{(}\PY{l+m+mi}{3}\PY{p}{)}\PY{p}{:}
            \PY{n}{M1}\PY{p}{[}\PY{n}{i}\PY{p}{,}\PY{n}{j}\PY{p}{]}\PY{o}{=}\PY{n}{Hbm}\PY{p}{(}\PY{n}{M}\PY{p}{[}\PY{n}{i}\PY{p}{,}\PY{n}{j}\PY{p}{]}\PY{p}{)}
    \PY{k}{return} \PY{n}{M1}
\end{Verbatim}
\end{tcolorbox}

    \begin{tcolorbox}[breakable, size=fbox, boxrule=1pt, pad at break*=1mm,colback=cellbackground, colframe=cellborder]
\prompt{In}{incolor}{12}{\boxspacing}
\begin{Verbatim}[commandchars=\\\{\}]
\PY{n}{Hb}\PY{p}{(}\PY{n}{a}\PY{p}{)}
\end{Verbatim}
\end{tcolorbox}

            \begin{tcolorbox}[breakable, size=fbox, boxrule=.5pt, pad at break*=1mm, opacityfill=0]
\prompt{Out}{outcolor}{12}{\boxspacing}
\begin{Verbatim}[commandchars=\\\{\}]
(u\^{}-2)a\^{}1b\^{}1
\end{Verbatim}
\end{tcolorbox}
        
    \begin{tcolorbox}[breakable, size=fbox, boxrule=1pt, pad at break*=1mm,colback=cellbackground, colframe=cellborder]
\prompt{In}{incolor}{13}{\boxspacing}
\begin{Verbatim}[commandchars=\\\{\}]
\PY{n}{Hbm}\PY{p}{(}\PY{n}{Hb}\PY{p}{(}\PY{n}{am}\PY{o}{\PYZca{}}\PY{l+m+mi}{3}\PY{o}{*}\PY{n}{b}\PY{o}{\PYZca{}}\PY{l+m+mi}{2}\PY{p}{)}\PY{p}{)}
\end{Verbatim}
\end{tcolorbox}

            \begin{tcolorbox}[breakable, size=fbox, boxrule=.5pt, pad at break*=1mm, opacityfill=0]
\prompt{Out}{outcolor}{13}{\boxspacing}
\begin{Verbatim}[commandchars=\\\{\}]
(1)a\^{}-3b\^{}2
\end{Verbatim}
\end{tcolorbox}
        
    \begin{tcolorbox}[breakable, size=fbox, boxrule=1pt, pad at break*=1mm,colback=cellbackground, colframe=cellborder]
\prompt{In}{incolor}{14}{\boxspacing}
\begin{Verbatim}[commandchars=\\\{\}]
\PY{k}{def} \PY{n+nf}{Hab}\PY{p}{(}\PY{n}{h}\PY{p}{)}\PY{p}{:}       \PY{c+c1}{\PYZsh{}other actions}
    \PY{k}{return} \PY{n}{Ha}\PY{p}{(}\PY{n}{Hb}\PY{p}{(}\PY{n}{h}\PY{p}{)}\PY{p}{)}
\PY{k}{def} \PY{n+nf}{Hba}\PY{p}{(}\PY{n}{h}\PY{p}{)}\PY{p}{:}
    \PY{k}{return} \PY{n}{Hb}\PY{p}{(}\PY{n}{Ha}\PY{p}{(}\PY{n}{h}\PY{p}{)}\PY{p}{)}
\PY{k}{def} \PY{n+nf}{Haba}\PY{p}{(}\PY{n}{h}\PY{p}{)}\PY{p}{:}
    \PY{k}{return} \PY{n}{Ha}\PY{p}{(}\PY{n}{Hba}\PY{p}{(}\PY{n}{h}\PY{p}{)}\PY{p}{)}
\PY{k}{def} \PY{n+nf}{Hbab}\PY{p}{(}\PY{n}{h}\PY{p}{)}\PY{p}{:}
    \PY{k}{return} \PY{n}{Hb}\PY{p}{(}\PY{n}{Hab}\PY{p}{(}\PY{n}{h}\PY{p}{)}\PY{p}{)}
\PY{k}{def} \PY{n+nf}{Hs}\PY{p}{(}\PY{n}{h}\PY{p}{)}\PY{p}{:}
    \PY{k}{return}\PY{p}{(}\PY{n}{Haba}\PY{p}{(}\PY{n}{Haba}\PY{p}{(}\PY{n}{h}\PY{p}{)}\PY{p}{)}\PY{p}{)}
\end{Verbatim}
\end{tcolorbox}

    \begin{tcolorbox}[breakable, size=fbox, boxrule=1pt, pad at break*=1mm,colback=cellbackground, colframe=cellborder]
\prompt{In}{incolor}{15}{\boxspacing}
\begin{Verbatim}[commandchars=\\\{\}]
\PY{k}{def} \PY{n+nf}{MHab}\PY{p}{(}\PY{n}{M}\PY{p}{)}\PY{p}{:}    \PY{c+c1}{\PYZsh{}same on matrices}
    \PY{k}{return} \PY{n}{MHa}\PY{p}{(}\PY{n}{MHb}\PY{p}{(}\PY{n}{M}\PY{p}{)}\PY{p}{)}
\PY{k}{def} \PY{n+nf}{MHba}\PY{p}{(}\PY{n}{M}\PY{p}{)}\PY{p}{:}
    \PY{k}{return} \PY{n}{MHb}\PY{p}{(}\PY{n}{MHa}\PY{p}{(}\PY{n}{M}\PY{p}{)}\PY{p}{)}
\PY{k}{def} \PY{n+nf}{MHaba}\PY{p}{(}\PY{n}{M}\PY{p}{)}\PY{p}{:}
    \PY{k}{return} \PY{n}{MHa}\PY{p}{(}\PY{n}{MHba}\PY{p}{(}\PY{n}{M}\PY{p}{)}\PY{p}{)}
\PY{k}{def} \PY{n+nf}{MHbab}\PY{p}{(}\PY{n}{M}\PY{p}{)}\PY{p}{:}
    \PY{k}{return} \PY{n}{MHb}\PY{p}{(}\PY{n}{MHab}\PY{p}{(}\PY{n}{M}\PY{p}{)}\PY{p}{)}
\PY{k}{def} \PY{n+nf}{MHs}\PY{p}{(}\PY{n}{M}\PY{p}{)}\PY{p}{:}
    \PY{k}{return}\PY{p}{(}\PY{n}{MHaba}\PY{p}{(}\PY{n}{MHaba}\PY{p}{(}\PY{n}{M}\PY{p}{)}\PY{p}{)}\PY{p}{)}
\end{Verbatim}
\end{tcolorbox}

    \begin{tcolorbox}[breakable, size=fbox, boxrule=1pt, pad at break*=1mm,colback=cellbackground, colframe=cellborder]
\prompt{In}{incolor}{16}{\boxspacing}
\begin{Verbatim}[commandchars=\\\{\}]
\PY{k}{def} \PY{n+nf}{Habm}\PY{p}{(}\PY{n}{h}\PY{p}{)}\PY{p}{:}       \PY{c+c1}{\PYZsh{}other actions}
    \PY{k}{return} \PY{n}{Hbm}\PY{p}{(}\PY{n}{Ham}\PY{p}{(}\PY{n}{h}\PY{p}{)}\PY{p}{)}
\PY{k}{def} \PY{n+nf}{Hbam}\PY{p}{(}\PY{n}{h}\PY{p}{)}\PY{p}{:}
    \PY{k}{return} \PY{n}{Ham}\PY{p}{(}\PY{n}{Hbm}\PY{p}{(}\PY{n}{h}\PY{p}{)}\PY{p}{)}
\PY{k}{def} \PY{n+nf}{Habam}\PY{p}{(}\PY{n}{h}\PY{p}{)}\PY{p}{:}
    \PY{k}{return} \PY{n}{Ham}\PY{p}{(}\PY{n}{Habm}\PY{p}{(}\PY{n}{h}\PY{p}{)}\PY{p}{)}
\PY{k}{def} \PY{n+nf}{Hbabm}\PY{p}{(}\PY{n}{h}\PY{p}{)}\PY{p}{:}
    \PY{k}{return} \PY{n}{Hbm}\PY{p}{(}\PY{n}{Hbam}\PY{p}{(}\PY{n}{h}\PY{p}{)}\PY{p}{)}
\PY{k}{def} \PY{n+nf}{Hsm}\PY{p}{(}\PY{n}{h}\PY{p}{)}\PY{p}{:}
    \PY{k}{return}\PY{p}{(}\PY{n}{Habam}\PY{p}{(}\PY{n}{Habam}\PY{p}{(}\PY{n}{h}\PY{p}{)}\PY{p}{)}\PY{p}{)}
\end{Verbatim}
\end{tcolorbox}

    \begin{tcolorbox}[breakable, size=fbox, boxrule=1pt, pad at break*=1mm,colback=cellbackground, colframe=cellborder]
\prompt{In}{incolor}{17}{\boxspacing}
\begin{Verbatim}[commandchars=\\\{\}]
\PY{k}{def} \PY{n+nf}{MHabm}\PY{p}{(}\PY{n}{M}\PY{p}{)}\PY{p}{:}    \PY{c+c1}{\PYZsh{}same on matrices}
    \PY{k}{return} \PY{n}{MHbm}\PY{p}{(}\PY{n}{MHam}\PY{p}{(}\PY{n}{M}\PY{p}{)}\PY{p}{)}
\PY{k}{def} \PY{n+nf}{MHbam}\PY{p}{(}\PY{n}{M}\PY{p}{)}\PY{p}{:}
    \PY{k}{return} \PY{n}{MHam}\PY{p}{(}\PY{n}{MHbm}\PY{p}{(}\PY{n}{M}\PY{p}{)}\PY{p}{)}
\PY{k}{def} \PY{n+nf}{MHabam}\PY{p}{(}\PY{n}{M}\PY{p}{)}\PY{p}{:}
    \PY{k}{return} \PY{n}{MHam}\PY{p}{(}\PY{n}{MHabm}\PY{p}{(}\PY{n}{M}\PY{p}{)}\PY{p}{)}
\PY{k}{def} \PY{n+nf}{MHbabm}\PY{p}{(}\PY{n}{M}\PY{p}{)}\PY{p}{:}
    \PY{k}{return} \PY{n}{MHbm}\PY{p}{(}\PY{n}{MHbam}\PY{p}{(}\PY{n}{M}\PY{p}{)}\PY{p}{)}
\PY{k}{def} \PY{n+nf}{MHsm}\PY{p}{(}\PY{n}{M}\PY{p}{)}\PY{p}{:}
    \PY{k}{return}\PY{p}{(}\PY{n}{MHabam}\PY{p}{(}\PY{n}{MHabam}\PY{p}{(}\PY{n}{M}\PY{p}{)}\PY{p}{)}\PY{p}{)}
\end{Verbatim}
\end{tcolorbox}

    \begin{tcolorbox}[breakable, size=fbox, boxrule=1pt, pad at break*=1mm,colback=cellbackground, colframe=cellborder]
\prompt{In}{incolor}{18}{\boxspacing}
\begin{Verbatim}[commandchars=\\\{\}]
\PY{n}{Ma}\PY{o}{=}\PY{n}{matrix}\PY{p}{(}\PY{p}{[}\PY{p}{[}\PY{n}{H}\PY{p}{(}\PY{l+m+mi}{1}\PY{p}{)}\PY{p}{,}\PY{n}{u}\PY{o}{\PYZca{}}\PY{l+m+mi}{2}\PY{o}{*}\PY{n}{a}\PY{o}{\PYZca{}}\PY{p}{(}\PY{l+m+mi}{2}\PY{p}{)}\PY{o}{*}\PY{n}{bm}\PY{o}{\PYZca{}}\PY{l+m+mi}{2}\PY{p}{,}\PY{p}{(}\PY{n}{H}\PY{p}{(}\PY{n}{u}\PY{o}{\PYZca{}}\PY{p}{(}\PY{o}{\PYZhy{}}\PY{l+m+mi}{1}\PY{p}{)}\PY{p}{)}\PY{o}{\PYZhy{}}\PY{n}{H}\PY{p}{(}\PY{l+m+mi}{1}\PY{p}{)}\PY{p}{)}\PY{o}{*}\PY{n}{a}\PY{o}{*}\PY{n}{bm}\PY{p}{]}\PY{p}{,}
\qquad \PY{p}{[}\PY{n}{H}\PY{p}{(}\PY{l+m+mi}{0}\PY{p}{)}\PY{p}{,}\PY{n}{H}\PY{p}{(}\PY{l+m+mi}{1}\PY{p}{)}\PY{p}{,}\PY{n}{H}\PY{p}{(}\PY{l+m+mi}{0}\PY{p}{)}\PY{p}{]}\PY{p}{,}\PY{p}{[}\PY{n}{H}\PY{p}{(}\PY{l+m+mi}{0}\PY{p}{)}\PY{p}{,}\PY{n}{H}\PY{p}{(}\PY{o}{\PYZhy{}}\PY{l+m+mi}{1}\PY{p}{)}\PY{o}{*}\PY{n}{a}\PY{o}{*}\PY{n}{bm}\PY{p}{,}\PY{n}{H}\PY{p}{(}\PY{l+m+mi}{1}\PY{p}{)}\PY{p}{]}\PY{p}{]}\PY{p}{)}
\end{Verbatim}
\end{tcolorbox}

    \begin{tcolorbox}[breakable, size=fbox, boxrule=1pt, pad at break*=1mm,colback=cellbackground, colframe=cellborder]
\prompt{In}{incolor}{19}{\boxspacing}
\begin{Verbatim}[commandchars=\\\{\}]
\PY{o}{\PYZpc{}}\PY{k}{display} latex
\PY{n}{Ma} \PY{c+c1}{\PYZsh{}Ta action}
\end{Verbatim}
\end{tcolorbox}

\prompt{Out}{outcolor}{19}{}
    
    $$\newcommand{\Bold}[1]{\mathbf{#1}}\left(\begin{array}{rrr}
 1 &  u^{2} a^{ 2 }b^{ -2 } & ( -1 + u^{-1} )a^{ 1 }b^{ -1 } \\
0 &  1 & 0 \\
0 & -a^{ 1 }b^{ -1 } &  1
\end{array}\right)$$

    \begin{tcolorbox}[breakable, size=fbox, boxrule=1pt, pad at break*=1mm,colback=cellbackground, colframe=cellborder]
\prompt{In}{incolor}{20}{\boxspacing}
\begin{Verbatim}[commandchars=\\\{\}]
\PY{n}{Mb}\PY{o}{=}\PY{n}{matrix}\PY{p}{(}\PY{p}{[}\PY{p}{[}\PY{n}{H}\PY{p}{(}\PY{n}{u}\PY{o}{\PYZca{}}\PY{p}{(}\PY{o}{\PYZhy{}}\PY{l+m+mi}{2}\PY{p}{)}\PY{p}{)}\PY{o}{*}\PY{n}{bm}\PY{o}{\PYZca{}}\PY{l+m+mi}{2}\PY{p}{,}\PY{n}{H}\PY{p}{(}\PY{l+m+mi}{0}\PY{p}{)}\PY{p}{,}\PY{n}{H}\PY{p}{(}\PY{l+m+mi}{0}\PY{p}{)}\PY{p}{]}\PY{p}{,}\PY{p}{[}\PY{n}{H}\PY{p}{(}\PY{o}{\PYZhy{}}\PY{n}{u}\PY{o}{\PYZca{}}\PY{p}{(}\PY{o}{\PYZhy{}}\PY{l+m+mi}{1}\PY{p}{)}\PY{p}{)}\PY{p}{,}\PY{n}{H}\PY{p}{(}\PY{l+m+mi}{1}\PY{p}{)}\PY{p}{,}\PY{n}{H}\PY{p}{(}\PY{l+m+mi}{1}\PY{o}{\PYZhy{}}\PY{n}{u}\PY{o}{\PYZca{}}\PY{p}{(}\PY{o}{\PYZhy{}}\PY{l+m+mi}{1}\PY{p}{)}\PY{p}{)}\PY{p}{]}\PY{p}{,}
\qquad \PY{p}{[}\PY{n}{H}\PY{p}{(}\PY{o}{\PYZhy{}}\PY{n}{u}\PY{o}{\PYZca{}}\PY{p}{(}\PY{o}{\PYZhy{}}\PY{l+m+mi}{1}\PY{p}{)}\PY{p}{)}\PY{o}{*}\PY{n}{bm}\PY{p}{,}\PY{n}{H}\PY{p}{(}\PY{l+m+mi}{0}\PY{p}{)}\PY{p}{,}\PY{n}{bm}\PY{p}{]}\PY{p}{]}\PY{p}{)}
\end{Verbatim}
\end{tcolorbox}

    \begin{tcolorbox}[breakable, size=fbox, boxrule=1pt, pad at break*=1mm,colback=cellbackground, colframe=cellborder]
\prompt{In}{incolor}{21}{\boxspacing}
\begin{Verbatim}[commandchars=\\\{\}]
\PY{n}{Mb} \PY{c+c1}{\PYZsh{}Tb action}
\end{Verbatim}
\end{tcolorbox}

\prompt{Out}{outcolor}{21}{}
    
    $$\newcommand{\Bold}[1]{\mathbf{#1}}\left(\begin{array}{rrr}
 u^{-2} b^{ -2 } & 0 & 0 \\
 -u^{-1} &  1 &  1 - u^{-1} \\
 -u^{-1} b^{ -1 } & 0 & b^{ -1 }
\end{array}\right)$$

    \begin{tcolorbox}[breakable, size=fbox, boxrule=1pt, pad at break*=1mm,colback=cellbackground, colframe=cellborder]
\prompt{In}{incolor}{22}{\boxspacing}
\begin{Verbatim}[commandchars=\\\{\}]
\PY{n}{MHa}\PY{p}{(}\PY{n}{Mb}\PY{p}{)} \PY{c+c1}{\PYZsh{} Ta shifted action of Tb}
\end{Verbatim}
\end{tcolorbox}

\prompt{Out}{outcolor}{22}{}
    
    $$\newcommand{\Bold}[1]{\mathbf{#1}}\left(\begin{array}{rrr}
a^{ 2 }b^{ -2 } & 0 & 0 \\
 -u^{-1} &  1 &  1 - u^{-1} \\
 -u^{-1} a^{ 1 }b^{ -1 } & 0 & a^{ 1 }b^{ -1 }
\end{array}\right)$$

    \begin{tcolorbox}[breakable, size=fbox, boxrule=1pt, pad at break*=1mm,colback=cellbackground, colframe=cellborder]
\prompt{In}{incolor}{23}{\boxspacing}
\begin{Verbatim}[commandchars=\\\{\}]
\PY{n}{MHb}\PY{p}{(}\PY{n}{Ma}\PY{p}{)} \PY{c+c1}{\PYZsh{}Tb shifted action of Ta}
\end{Verbatim}
\end{tcolorbox}

\prompt{Out}{outcolor}{23}{}
    
    $$\newcommand{\Bold}[1]{\mathbf{#1}}\left(\begin{array}{rrr}
 1 &  u^{-4} a^{ 2 } & ( -u^{-2} + u^{-3} )a^{ 1 } \\
0 &  1 & 0 \\
0 &  -u^{-2} a^{ 1 } &  1
\end{array}\right)$$

    \begin{tcolorbox}[breakable, size=fbox, boxrule=1pt, pad at break*=1mm,colback=cellbackground, colframe=cellborder]
\prompt{In}{incolor}{24}{\boxspacing}
\begin{Verbatim}[commandchars=\\\{\}]
\PY{n}{MHab}\PY{p}{(}\PY{n}{Ma}\PY{p}{)} \PY{c+c1}{\PYZsh{}TaTb shifted action of Ta}
\end{Verbatim}
\end{tcolorbox}

\prompt{Out}{outcolor}{24}{}
    
    $$\newcommand{\Bold}[1]{\mathbf{#1}}\left(\begin{array}{rrr}
 1 &  u^{-4} a^{ 2 } & ( -u^{-2} + u^{-3} )a^{ 1 } \\
0 &  1 & 0 \\
0 &  -u^{-2} a^{ 1 } &  1
\end{array}\right)$$

    \begin{tcolorbox}[breakable, size=fbox, boxrule=1pt, pad at break*=1mm,colback=cellbackground, colframe=cellborder]
\prompt{In}{incolor}{25}{\boxspacing}
\begin{Verbatim}[commandchars=\\\{\}]
\PY{n}{MHba}\PY{p}{(}\PY{n}{Mb}\PY{p}{)} \PY{c+c1}{\PYZsh{}TbTa shifted action of Tb}
\end{Verbatim}
\end{tcolorbox}

\prompt{Out}{outcolor}{25}{}
    
    $$\newcommand{\Bold}[1]{\mathbf{#1}}\left(\begin{array}{rrr}
 u^{-6} a^{ 2 } & 0 & 0 \\
 -u^{-1} &  1 &  1 - u^{-1} \\
 -u^{-3} a^{ 1 } & 0 &  u^{-2} a^{ 1 }
\end{array}\right)$$

    \begin{tcolorbox}[breakable, size=fbox, boxrule=1pt, pad at break*=1mm,colback=cellbackground, colframe=cellborder]
\prompt{In}{incolor}{26}{\boxspacing}
\begin{Verbatim}[commandchars=\\\{\}]
\PY{n}{X}\PY{o}{=}\PY{n}{Ma}\PY{o}{*}\PY{n}{MHa}\PY{p}{(}\PY{n}{Mb}\PY{p}{)}\PY{o}{*}\PY{n}{MHab}\PY{p}{(}\PY{n}{Ma}\PY{p}{)} \PY{c+c1}{\PYZsh{}action of TaTbTa}
\PY{n}{X}
\end{Verbatim}
\end{tcolorbox}

\prompt{Out}{outcolor}{26}{}
    
    $$\newcommand{\Bold}[1]{\mathbf{#1}}\left(\begin{array}{rrr}
0 &  u^{2} a^{ 2 }b^{ -2 } & 0 \\
 -u^{-1} &  -u^{-5} a^{ 2 } + 1 +( -u^{-2} + u^{-3} )a^{ 1 } & ( u^{-3} - u^{-4} )a^{ 1 } + 1 - u^{-1} \\
0 & -a^{ 1 }b^{ -1 }  -u^{-1} a^{ 2 }b^{ -1 } &  u^{-1} a^{ 1 }b^{ -1 }
\end{array}\right)$$

    \begin{tcolorbox}[breakable, size=fbox, boxrule=1pt, pad at break*=1mm,colback=cellbackground, colframe=cellborder]
\prompt{In}{incolor}{27}{\boxspacing}
\begin{Verbatim}[commandchars=\\\{\}]
\PY{n}{Y}\PY{o}{=}\PY{n}{Mb}\PY{o}{*}\PY{n}{MHb}\PY{p}{(}\PY{n}{Ma}\PY{p}{)}\PY{o}{*}\PY{n}{MHba}\PY{p}{(}\PY{n}{Mb}\PY{p}{)} \PY{c+c1}{\PYZsh{}action of TbTaTb}
\PY{n}{Y}
\end{Verbatim}
\end{tcolorbox}

\prompt{Out}{outcolor}{27}{}
    
    $$\newcommand{\Bold}[1]{\mathbf{#1}}\left(\begin{array}{rrr}
0 &  u^{2} a^{ 2 }b^{ -2 } & 0 \\
 -u^{-1} &  -u^{-5} a^{ 2 } + 1 +( -u^{-2} + u^{-3} )a^{ 1 } &  1 - u^{-1} +( u^{-3} - u^{-4} )a^{ 1 } \\
0 &  -u^{-1} a^{ 2 }b^{ -1 } -a^{ 1 }b^{ -1 } &  u^{-1} a^{ 1 }b^{ -1 }
\end{array}\right)$$

    \begin{tcolorbox}[breakable, size=fbox, boxrule=1pt, pad at break*=1mm,colback=cellbackground, colframe=cellborder]
\prompt{In}{incolor}{28}{\boxspacing}
\begin{Verbatim}[commandchars=\\\{\}]
\PY{n}{X}\PY{o}{\PYZhy{}}\PY{n}{Y} \PY{c+c1}{\PYZsh{}check braid relation TaTbTa=TbTaTb}
\end{Verbatim}
\end{tcolorbox}

\prompt{Out}{outcolor}{28}{}
    
    $$\newcommand{\Bold}[1]{\mathbf{#1}}\left(\begin{array}{rrr}
0 & 0 & 0 \\
0 & 0 & 0 \\
0 & 0 & 0
\end{array}\right)$$

    \begin{tcolorbox}[breakable, size=fbox, boxrule=1pt, pad at break*=1mm,colback=cellbackground, colframe=cellborder]
\prompt{In}{incolor}{29}{\boxspacing}
\begin{Verbatim}[commandchars=\\\{\}]
\PY{n}{Z}\PY{o}{=}\PY{n}{X}\PY{o}{*}\PY{n}{MHaba}\PY{p}{(}\PY{n}{X}\PY{p}{)} \PY{c+c1}{\PYZsh{}action of (TaTbTa)\PYZca{}2}
\end{Verbatim}
\end{tcolorbox}

    \begin{tcolorbox}[breakable, size=fbox, boxrule=1pt, pad at break*=1mm,colback=cellbackground, colframe=cellborder]
\prompt{In}{incolor}{30}{\boxspacing}
\begin{Verbatim}[commandchars=\\\{\}]
\PY{n}{Z}\PY{p}{[}\PY{p}{:}\PY{p}{,}\PY{l+m+mi}{0}\PY{p}{]} \PY{c+c1}{\PYZsh{} first column}
\end{Verbatim}
\end{tcolorbox}

\prompt{Out}{outcolor}{30}{}
    
    $$\newcommand{\Bold}[1]{\mathbf{#1}}\left(\begin{array}{r}
 -u a^{ 2 }b^{ -2 } \\
 u^{-6} a^{ 2 } + -u^{-1} +( u^{-3} - u^{-4} )a^{ 1 } \\
 u^{-1} a^{ 1 }b^{ -1 } + u^{-2} a^{ 2 }b^{ -1 }
\end{array}\right)$$

    \begin{tcolorbox}[breakable, size=fbox, boxrule=1pt, pad at break*=1mm,colback=cellbackground, colframe=cellborder]
\prompt{In}{incolor}{31}{\boxspacing}
\begin{Verbatim}[commandchars=\\\{\}]
\PY{n}{Z}\PY{p}{[}\PY{p}{:}\PY{p}{,}\PY{l+m+mi}{1}\PY{p}{]}
\end{Verbatim}
\end{tcolorbox}

\prompt{Out}{outcolor}{31}{}
    
    $$\newcommand{\Bold}[1]{\mathbf{#1}}\left(\begin{array}{r}
 -u^{-3} a^{ 2 } + u^{2} a^{ 2 }b^{ -2 } +( -1 + u^{-1} )a^{ 2 }b^{ -1 } \\
 -u^{-5} b^{ 2 } + -u^{-5} a^{ 2 } + 1 +( -u^{-2} + u^{-3} )a^{ 1 } +( -u^{-2} + u^{-3} )b^{ 1 } +( -u^{-5} + u^{-6} )a^{ 1 }b^{ 1 } \\
 u^{-5} a^{ 1 }b^{ 1 } -a^{ 1 }b^{ -1 } + -u^{-1} a^{ 2 }b^{ -1 } +( u^{-2} - u^{-3} )a^{ 1 } + -u^{-4} a^{ 2 }
\end{array}\right)$$

    \begin{tcolorbox}[breakable, size=fbox, boxrule=1pt, pad at break*=1mm,colback=cellbackground, colframe=cellborder]
\prompt{In}{incolor}{32}{\boxspacing}
\begin{Verbatim}[commandchars=\\\{\}]
\PY{n}{Z}\PY{p}{[}\PY{p}{:}\PY{p}{,}\PY{l+m+mi}{2}\PY{p}{]}
\end{Verbatim}
\end{tcolorbox}

\prompt{Out}{outcolor}{32}{}
    
    $$\newcommand{\Bold}[1]{\mathbf{#1}}\left(\begin{array}{r}
( u^{-1} - u^{-2} )a^{ 2 }b^{ -1 } +( u^{2} - u )a^{ 2 }b^{ -2 } \\
( u^{-3} - u^{-4} )b^{ 1 } +( u^{-6} - u^{-7} )a^{ 1 }b^{ 1 } +( -u^{-5} + u^{-6} )a^{ 2 } + 1 - u^{-1} +( -u^{-2} + 2 u^{-3} - u^{-4} )a^{ 1 } \\
( -u^{-3} + u^{-4} )a^{ 1 } + u^{-5} a^{ 2 } +( -1 + u^{-1} )a^{ 1 }b^{ -1 } +( -u^{-1} + u^{-2} )a^{ 2 }b^{ -1 }
\end{array}\right)$$

    \begin{tcolorbox}[breakable, size=fbox, boxrule=1pt, pad at break*=1mm,colback=cellbackground, colframe=cellborder]
\prompt{In}{incolor}{33}{\boxspacing}
\begin{Verbatim}[commandchars=\\\{\}]
\PY{n}{ZZ}\PY{o}{=}\PY{n}{Z}\PY{o}{*}\PY{n}{MHs}\PY{p}{(}\PY{n}{Z}\PY{p}{)} \PY{c+c1}{\PYZsh{}action of Tc=(TaTbTa)\PYZca{}4}
\end{Verbatim}
\end{tcolorbox}

    \begin{tcolorbox}[breakable, size=fbox, boxrule=1pt, pad at break*=1mm,colback=cellbackground, colframe=cellborder]
\prompt{In}{incolor}{34}{\boxspacing}
\begin{Verbatim}[commandchars=\\\{\}]
\PY{n}{ZZ}\PY{p}{[}\PY{p}{:}\PY{p}{,}\PY{l+m+mi}{0}\PY{p}{]}
\end{Verbatim}
\end{tcolorbox}

\prompt{Out}{outcolor}{34}{}
    
    $$\tiny\newcommand{\Bold}[1]{\mathbf{#1}}\left(\begin{array}{r}
 u^{-8} b^{ -2 } + u^{-4} a^{ 2 } + -u a^{ 2 }b^{ -2 } +( u^{-1} - u^{-2} )a^{ 2 }b^{ -1 } +( u^{-3} - u^{-4} )a^{ 1 }b^{ -2 } +( u^{-4} - u^{-5} )a^{ 1 }b^{ -1 } \\[5pt]
 -u^{-1} - u^{-3} + 2 u^{-4} - u^{-5} - u^{-7} + u^{-2} a^{ -2 } +( u^{-1} - u^{-2} - u^{-4} + u^{-5} )a^{ -1 } + u^{-6} a^{ 2 } +( u^{-3} - u^{-4} - u^{-6} + u^{-7} )a^{ 1 } \\[5pt]
 -u^{-6} a^{ -1 }b^{ -1 } +( -u^{-3} + u^{-4} - u^{-7} )b^{ -1 } + -u^{-4} +( u^{-1} - u^{-4} + u^{-5} )a^{ 1 }b^{ -1 } + u^{-2} a^{ 2 }b^{ -1 } +( -u^{-3} + u^{-6} )a^{ 1 } + u^{-5} a^{ 2 }
\end{array}\right)$$

    \begin{tcolorbox}[breakable, size=fbox, boxrule=1pt, pad at break*=1mm,colback=cellbackground, colframe=cellborder]
\prompt{In}{incolor}{35}{\boxspacing}
\begin{Verbatim}[commandchars=\\\{\}]
\PY{n}{ZZ}\PY{p}{[}\PY{p}{:}\PY{p}{,}\PY{l+m+mi}{1}\PY{p}{]}
\end{Verbatim}
\end{tcolorbox}

\prompt{Out}{outcolor}{35}{}

    $$\tiny\newcommand{\Bold}[1]{\mathbf{#1}}\left(\begin{array}{r}
( u^{2} + 1 - 2 u^{-1} + u^{-2} + u^{-4} )a^{ 2 }b^{ -2 } + -u a^{ 2 }b^{ -4 } +( -u^{2} + u + u^{-1} - u^{-2} )a^{ 2 }b^{ -3 } + -u^{-3} a^{ 2 } +( -1 + u^{-1} + u^{-3} - u^{-4} )a^{ 2 }b^{ -1 } \\[5pt]
 1 + u^{-2} - u^{-3} + u^{-6} + u^{-6} a^{ 2 }b^{ -2 } + -u^{-1} b^{ -2 } +( u^{-3} - u^{-4} )a^{ 1 }b^{ -2 } +( -1 + u^{-1} + u^{-3} - u^{-4} )b^{ -1 } \\
+ ( u^{-2} - 2 u^{-3} + u^{-4} + u^{-6} - u^{-7} )a^{ 1 }b^{ -1 } + -u^{-5} a^{ 2 } +( -u^{-2} + u^{-3} + u^{-5} - u^{-6} )a^{ 1 } +( u^{-5} - u^{-6} )a^{ 2 }b^{ -1 } \\[5pt]
( -1 - u^{-2} + 2 u^{-3} - u^{-6} )a^{ 1 }b^{ -1 } + u^{-1} a^{ 1 }b^{ -3 } + u^{-2} a^{ 2 }b^{ -3 } +( 1 - u^{-1} - u^{-3} + u^{-4} )a^{ 1 }b^{ -2 } +( u^{-1} - u^{-2} + u^{-5} )a^{ 2 }b^{ -2 } \\
+( -u^{-1} + u^{-4} - u^{-5} )a^{ 2 }b^{ -1 } +( u^{-2} - u^{-5} )a^{ 1 } + -u^{-4} a^{ 2 }
\end{array}\right)$$

    \begin{tcolorbox}[breakable, size=fbox, boxrule=1pt, pad at break*=1mm,colback=cellbackground, colframe=cellborder]
\prompt{In}{incolor}{36}{\boxspacing}
\begin{Verbatim}[commandchars=\\\{\}]
\PY{n}{ZZ}\PY{p}{[}\PY{p}{:}\PY{p}{,}\PY{l+m+mi}{2}\PY{p}{]}
\end{Verbatim}
\end{tcolorbox}

\prompt{Out}{outcolor}{36}{}

    $$\tiny\newcommand{\Bold}[1]{\mathbf{#1}}\left(\begin{array}{r}
( -1 + 2 u^{-1} - u^{-2} - u^{-4} + u^{-5} )a^{ 2 }b^{ -1 } +( u - 1 )a^{ 2 }b^{ -3 } +( u^{2} - u - u^{-1} + 2 u^{-2} - u^{-3} )a^{ 2 }b^{ -2 } +( -u^{-3} + u^{-4} )a^{ 1 }b^{ -1 } \\
+( u^{-4} - u^{-5} )a^{ 1 }b^{ -3 } +( -u^{-2} + u^{-3} + u^{-5} - u^{-6} )a^{ 1 }b^{ -2 } +( -u^{-3} + u^{-4} )a^{ 2 } \\[5pt]
( -u^{-6} + u^{-7} )a^{ 2 }b^{ -1 } + ( u^{-1} - u^{-2} - u^{-4} + 2 u^{-5} - u^{-6} )b^{ -1 } +( -u^{-3} + 2 u^{-4} - u^{-5} - u^{-7} + u^{-8} )a^{ 1 }b^{ -1 } + 1 - u^{-1} + u^{-2} 
- 3 u^{-3} + 2 u^{-4} \\
+ u^{-6} - u^{-7} +( -u^{-2} + 2 u^{-3} - u^{-4} + u^{-5} - 2 u^{-6} + u^{-7} )a^{ 1 } +( u^{-2} - u^{-3} )a^{ -1 }b^{ -1 } +( -1 + u^{-1} + u^{-3} - u^{-4} )a^{ -1 } +( -u^{-5} + u^{-6} )a^{ 2 } \\[5pt]
 u^{-3} +( u^{-2} - u^{-3} - u^{-5} + u^{-6} )a^{ 1 } +( -u^{-1} + u^{-2} - u^{-5} + u^{-6} )a^{ 1 }b^{ -2 } +( -u^{-2} + u^{-3} )a^{ 2 }b^{ -2 }
  +( -1 + u^{-1} + 2 u^{-3} - 3 u^{-4} + u^{-7} )a^{ 1 }b^{ -1 }\\ +( -u^{-1} + u^{-2} - u^{-5} + u^{-6} )a^{ 2 }b^{ -1 } +( -u^{-4} + u^{-5} )b^{ -2 }
  +( u^{-2} - u^{-3} - u^{-5} + u^{-6} )b^{ -1 } +( -u^{-4} + u^{-5} )a^{ 2 }
\end{array}\right)$$

    \begin{tcolorbox}[breakable, size=fbox, boxrule=1pt, pad at break*=1mm,colback=cellbackground, colframe=cellborder]
\prompt{In}{incolor}{37}{\boxspacing}
\begin{Verbatim}[commandchars=\\\{\}]
\PY{n}{ZZ}\PY{o}{*}\PY{n}{Ma}\PY{o}{\PYZhy{}}\PY{n}{Ma}\PY{o}{*}\PY{n}{MHa}\PY{p}{(}\PY{n}{ZZ}\PY{p}{)} \PY{c+c1}{\PYZsh{} check that Tc is central}
\end{Verbatim}
\end{tcolorbox}

\prompt{Out}{outcolor}{37}{}
    
    $$\newcommand{\Bold}[1]{\mathbf{#1}}\left(\begin{array}{rrr}
0 & 0 & 0 \\
0 & 0 & 0 \\
0 & 0 & 0
\end{array}\right)$$

    \begin{tcolorbox}[breakable, size=fbox, boxrule=1pt, pad at break*=1mm,colback=cellbackground, colframe=cellborder]
\prompt{In}{incolor}{38}{\boxspacing}
\begin{Verbatim}[commandchars=\\\{\}]
\PY{n}{ZZ}\PY{o}{*}\PY{n}{Mb}\PY{o}{\PYZhy{}}\PY{n}{Mb}\PY{o}{*}\PY{n}{MHb}\PY{p}{(}\PY{n}{ZZ}\PY{p}{)}
\end{Verbatim}
\end{tcolorbox}

\prompt{Out}{outcolor}{38}{}
    
    $$\newcommand{\Bold}[1]{\mathbf{#1}}\left(\begin{array}{rrr}
0 & 0 & 0 \\
0 & 0 & 0 \\
0 & 0 & 0
\end{array}\right)$$

\clearpage
\bibliographystyle{plainitalic}
\bibliography{biblio}

\end{document}